\numberwithin{equation}{section}
\newcommand{\Z}{\mathbb{Z}}
\newcommand{\Q}{\mathbb{Q}}
\newcommand{\R}{\mathbb{R}}
\newcommand\FF{\mathbb{F}}
\newcommand\Gal{\mathrm{Gal}}
\newtheorem{lemma}{Lemma}[section]
\newtheorem{theorem}[lemma]{Theorem}
\newtheorem{prop}[lemma]{Proposition}
\newtheorem{corollary}[lemma]{Corollary}
\newtheorem{mydef}[lemma]{Definition}
\newtheorem{remark}[lemma]{Remark}
\title{\vspace{-\baselineskip}\sffamily\bfseries On Stevenhagen's conjecture}
\author[1]{Peter Koymans\thanks{Department of Mathematics, Ann Arbor, MI 48109, USA, koymans@umich.edu}}
\author[2]{Carlo Pagano\thanks{School of Mathematics and Statistics, University Place, Glasgow G12 8SQ, UK, carlein90@gmail.com}}
\affil[1]{University of Michigan}
\affil[2]{University of Glasgow}
\date{\today}
\begin{document}
\maketitle

\begin{abstract}
We generalize a classical reciprocity law due to R\'edei \cite{Redei} using our recently developed description of the $2$-torsion of class groups of multiquadratic fields \cite{KP2}. This result is then used to prove a variety of new reflection principles for class groups, one of which involves a symbol similar to the spin symbol as defined in work of Friedlander, Iwaniec, Mazur and Rubin \cite{FIMR}. We combine these reflection principles with Smith's techniques \cite{Smith} to prove Stevenhagen's conjecture \cite{Stevenhagen} on the solubility of the negative Pell equation.
\end{abstract}

\section{Introduction}
Integral points on conics are a classical topic of study going back to at least the ancient Greeks. For fixed squarefree $d > 0$, the equation
\[
x^2 - dy^2 = 1 \text{ to be solved in } x, y \in \Z
\]
is known as Pell's equation and features in Archimedes' cattle problem. This equation was extensively studied by the Indian mathematicians Brahmagupta and Bhaskara II, who gave an algorithm to find a non-trivial solution of the equation for a given value of $d$.

Unbeknownst of the work of these Indian mathematicians, its study began in Europe with Pierre de Fermat who challenged several English mathematicians to solve it for various values of $d$ with $d = 61$ being particularly notorious (the smallest non-trivial solution being $x = 1766319049$ and $y = 226153980$). Inspired by Fermat's inquiries, William Brouncker and John Wallis also found an algorithm to solve Pell's equation. Euler then misattributed their work to John Pell and named the equation in his honor.

In this paper we shall focus on the equation
\begin{align}
\label{eNegPell}
x^2 - dy^2 = -1 \text{ to be solved in } x, y \in \Z,
\end{align}
which is known as the negative Pell equation. Unlike Pell's equation, it is not true that there is always a solution to equation (\ref{eNegPell}). Indeed, for equation (\ref{eNegPell}) to be soluble it must certainly be soluble with $x, y \in \Q$. The Hasse--Minkowski theorem then shows that equation (\ref{eNegPell}) is soluble with $x, y \in \Q$ if and only if $p \mid d$ implies $p \equiv 1, 2 \bmod 4$. 

Inspired by this, we define $\mathcal{D}$ to be the set of squarefree integers $d > 0$ such that $p \mid d$ implies $p \equiv 1, 2 \bmod 4$ and we define $\mathcal{D}^-$ to be the set of squarefree integers $d$ for which equation (\ref{eNegPell}) is soluble. Dirichlet \cite{Dirichlet} proved that every prime number $p \equiv 1 \bmod 4$ lies in $\mathcal{D}^-$. In particular, $\mathcal{D}^-$ is an infinite set. It is well-known, see \cite[Lemma 1]{FK1}, that $d \in \mathcal{D}^-$ if and only if the $2$-Sylow subgroup of the ordinary class group of $\Q(\sqrt{d})$ coincides with the $2$-Sylow subgroup of the narrow class group. We define $\mathcal{D}(X)$ and $\mathcal{D}^-(X)$ to be those $d$ that further satisfy $d < X$. We are interested in the quantity
\[
\lim_{X \rightarrow \infty} \frac{|\mathcal{D}^-(X)|}{|\mathcal{D}(X)|}.
\]
Nagell \cite{Nagell} conjectured that the limit exists and lies in the interval $(0, 1)$. Stevenhagen developed a heuristical model to predict the value of the above limit\footnote{Strictly speaking, this is not true, since Stevenhagen orders the real quadratic fields by discriminants instead of radicands. However, both the heuristical model and our proofs work for both orderings.}. Based on this model, he \cite{Stevenhagen} conjectured that
\[
\lim_{X \rightarrow \infty} \frac{|\mathcal{D}^-(X)|}{|\mathcal{D}(X)|} = 1 - \alpha, \quad \alpha = \prod_{j \text{ odd}} (1 - 2^{-j}) \approx 0.41942.
\]
Cremona--Odoni \cite{CO} studied the analogous problem in case the number of prime divisors is equal to a fixed integer $t \geq 1$, which is traditionally viewed as an easier problem\footnote{In retrospect, this might in fact be the harder problem. At the time of writing the authors are unable to prove the analogue of Stevenhagen's conjecture when the number of prime divisors is fixed.}. Their main theorem shows that, in this setting, the $\liminf$ is bounded below by some number $\lambda_t$ satisfying $\lambda_t \geq \alpha$. Their method is similar to the one used by Gerth \cite{Gerth} to deal with the $4$-rank of class groups. Blomer \cite{Blomer} showed that 
\[
\mathcal{D}^- (X) \gg \frac{X}{(\log X)^{0.62}},
\]
which is a small logarithmic power off (namely $0.12$) from the correct order of magnitude. Fouvry and Kl\"uners \cite{FK1} were the first to make substantial progress towards Stevenhagen's conjecture. They showed that
\[
\alpha \leq \liminf_{X \rightarrow \infty} \frac{|\mathcal{D}^-(X)|}{|\mathcal{D}(X)|} \leq \limsup_{X \rightarrow \infty} \frac{|\mathcal{D}^-(X)|}{|\mathcal{D}(X)|} \leq \frac{2}{3}.
\]
One important feature of their work is that the number of prime factors is no longer treated as fixed. The lower bound was further improved by Fouvry and Kl\"uners \cite{FK2}, and later by joint work of Chan--Koymans--Milovic--Pagano \cite{CKMP}, who extended \cite{Smith8} to the family $\mathcal{D}$. These works proceeded by studying the $4$-torsion and $8$-torsion of the narrow class group for which the algebraic foundations were laid by R\'edei and Reichardt \cite{Redei8, Redei, RR}. Recently, the upper bound was further improved by the authors \cite{KP3} with the main tool being a generalization of the R\'edei reciprocity law. In this paper we shall prove Stevenhagen's conjecture.

\begin{theorem}
\label{tPell}
We have
\[
\lim_{X \rightarrow \infty} \frac{|\mathcal{D}^-(X)|}{|\mathcal{D}(X)|} = 1 - \alpha.
\]
\end{theorem}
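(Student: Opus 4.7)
The plan is to reduce Theorem~\ref{tPell} to an equidistribution statement for the Galois module structure of the $2$-part of the narrow class group $\CL^+(\Q(\sqrt{d}))$ as $d$ ranges over $\mathcal{D}$, and then establish that equidistribution by combining the reflection principles proved earlier in this paper with Smith's bilinear-sum machinery. By \cite[Lemma~1]{FK1}, $d \in \mathcal{D}^-$ iff the natural surjection $\CL^+(\Q(\sqrt{d}))[2^\infty] \twoheadrightarrow \CL(\Q(\sqrt{d}))[2^\infty]$ is an isomorphism; Stevenhagen's combinatorial model \cite{Stevenhagen} predicts this to occur with Cohen--Lenstra-type probability $1-\alpha$.

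The first step is, for every $k\ge 1$, to construct a canonical class $c_k(d)\in\CL^+(\Q(\sqrt{d}))/2^k$ whose vanishing cuts out solvability of the negative Pell equation modulo the obstruction at the $2^k$-level, so that $d\in\mathcal{D}^-$ iff $c_k(d)=0$ for all $k$. Using the generalized R\'edei reciprocity of this paper, together with the multiquadratic $2$-torsion description of \cite{KP2} and the new reflection principles, we can express each $c_k(d)$ as a $k$-fold R\'edei-type symbol $[p_{i_1},\dots,p_{i_k};d]$ in the prime factors of $d$, the last slot being precisely the spin-like symbol announced in the abstract. The crucial algebraic property to extract is that this symbol is jointly multilinear modulo symbols of shorter length, so that swapping one prime factor of $d$ at a time induces a controllable additive change.

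Next, we adapt Smith's induction \cite{Smith}, already implemented for $8$-torsion on the family $\mathcal{D}$ in \cite{CKMP}, to prove joint equidistribution of the symbols $[\cdot,\dots,\cdot;d]$ as $d$ varies in $\mathcal{D}(X)$. The strategy is the familiar one: factor $d=d_1 d_2$ into two halves of roughly equal numbers of prime divisors, rewrite the relevant character sum as a bilinear form in $(d_1,d_2)$, and invoke a large sieve in the spirit of Friedlander--Iwaniec \cite{FIMR} to extract cancellation. The main obstacle is the spin-like symbol: unlike classical R\'edei symbols it involves a norm from a degree-$2^k$ R\'edei extension, so the off-diagonal bilinear estimate demands a new analytic input on the distribution of prime ideals in such extensions. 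This is precisely where the reflection principles of this paper become indispensable, for they flip the spin symbol into a product of genuinely quadratic characters over a larger multiquadratic field, restoring the Siegel--Walfisz structure required for the bilinear cancellation.

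Granted this joint equidistribution, Smith's Markov-chain argument identifies the limiting distribution of $\CL^+(\Q(\sqrt{d}))[2^\infty]$ over $\mathcal{D}(X)$ with the Cohen--Lenstra--Stevenhagen measure on $\mathcal{D}$. Stevenhagen's original combinatorial computation \cite{Stevenhagen} then evaluates the probability that every $c_k$ vanishes as $1-\alpha$, yielding Theorem~\ref{tPell}.
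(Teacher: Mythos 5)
Your high-level frame is in the right spirit (reduce negative Pell solubility to the coincidence of narrow and ordinary $2^\infty$-class groups, adapt Smith's machinery to $\mathcal{D}$, and finish with Stevenhagen's density computation), and you correctly flag the Markov-chain structure of the final step. But the mechanism you propose for the spin-like symbol is not what the paper does, and as written it would not work. The paper's crucial algebraic input --- which you do not identify --- is Theorem~\ref{involution spin are 0}: after Theorem~\ref{auxiliary result for the self-pairing} expresses the self-pairing sum $\sum_{x\in C}\langle\textup{Up}_{\Q(\sqrt{x})/\Q}(a),\chi_a\rangle$ as a spin-like Frobenius expression, the paper proves that this particular symbol \emph{vanishes identically}, via an auxiliary $1$-cochain, Hilbert reciprocity on the multiquadratic field $K$, and the Galois-module description from \cite{KP2}. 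You instead claim that the reflection principles ``flip the spin symbol into a product of genuinely quadratic characters\dots restoring the Siegel--Walfisz structure required for the bilinear cancellation.'' That is not what happens and would fail: the composite-modulus analogue of the FIMR spin symbol here is degenerate (constant, not random), so an attempt to show it equidistributes through a bilinear large sieve in a factorization $d=d_1d_2$ would be proving the wrong statement. The correct move is a reciprocity identity, not an estimate.

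There is also a structural mismatch in the analytic part. You import the Friedlander--Iwaniec bilinear factorization $d=d_1d_2$, but the paper's combinatorics, following Smith, is built on boxes $X_1\times\cdots\times X_r$, cubes $\text{Cube}(X,S)$, additive systems, and the Ramsey-theoretic Lemma~\ref{lFindBox}; equidistribution of the relevant Artin symbols comes from the Chebotarev density theorem applied to Frobenius elements in the governing fields $L(\phi_{\cdots}(\mathfrak{G}))$ (Theorem~\ref{tBoxify}), together with a large sieve used only for the ordinary Legendre symbols between prime factors of $d$ (Propositions~\ref{pLegendre}, \ref{pLegendre2}). The bilinear-in-$(d_1,d_2)$ viewpoint is a different and incompatible architecture and does not give the level-by-level control of the Artin filtration (the sequence of pairings $\textup{Art}_{k,x}$ with $R$ in the kernel) that the theorem needs. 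In short, the endpoints of your plan are right, but the central algebraic step --- the Hilbert-reciprocity vanishing of the self-pairing spin symbol --- is missing, and the analytic device you substitute for it does not produce the required conclusion.
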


Using an argument similar to \cite[Corollary 7.2]{Smith}, one can use Theorem \ref{tRank} to give an explicit error term for the rate of convergence in Theorem \ref{tPell}. The implied constant is effectively computable.

We remark that the Galois module structure of the unit group of a real quadratic field is precisely determined by the solubility of the negative Pell equation. From this standpoint, Theorem \ref{tPell} can be interpreted as giving an asymptotic formula for the occurrence of a certain Galois module structure, namely $\Z \oplus \Z/2\Z$ with the action that sends $(1,0)$ to $(-1, 1)$ and fixes $(0, 1)$. It is also worth mentioning that some interesting conjectures have recently been proposed for the Galois module structure of the unit group of odd degree abelian extensions of $\Q$, see \cite{BVV}.

Since classical techniques in analytic number theory give an asymptotic formula for $|\mathcal{D}(X)|$, Theorem \ref{tPell} implies an asymptotic formula for $|\mathcal{D}^-(X)|$. From now on we will write $\text{Cl}(K)$ for the narrow class group of a number field $K$. Our proof proceeds by studying the distribution of $2\text{Cl}(\Q(\sqrt{d}))[2^\infty]$ for $d \in \mathcal{D}$. In outstanding work, Smith \cite{Smith} recently proved Gerth's \cite{Gerth} extension fo the Cohen--Lenstra heuristics \cite{CL}, that is
\begin{align}
\label{eCL}
\frac{|\{K \text{ im. quadr.} : |D_K| \leq X, 2\text{Cl}(K)[2^\infty] \cong A\}|}{|\{K \text{ im. quadr.} : |D_K| \leq X\}|} = \frac{\prod_{i = 1}^\infty (1 - 2^{-i})}{|\text{Aut}(A)|}
\end{align}
for any finite, abelian $2$-group $A$. Here $D_K$ denotes the discriminant of $K$. Earlier, Fouvry and Kl\"uners \cite{FK3, FK4} found the distribution of $2\text{Cl}(K)[4]$ based on work of Heath-Brown on $2$-Selmer groups \cite{HB}. Using Smith's ideas, the authors \cite{KP1} established the analogue of equation (\ref{eCL}) for cyclic fields of odd prime degree conditional on GRH. This generalizes earlier work of Klys \cite{Klys} who dealt with the analogous results of Fouvry and Kl\"uners \cite{FK3, FK4} in this setting. Smith recently announced that he has obtained an unconditional generalization of the results in \cite{KP1} to arbitrary base fields. In the process, he also obtained an excellent error term by developing a flexible version of the large sieve.

To prove Theorem \ref{tPell}, it is natural to use the ideas introduced by Smith in \cite{Smith}. There are four obstacles that one faces in such an approach. To explain the issues, we write $\langle b, \chi_a \rangle_{k, \Q(\sqrt{d})}$ for the pairing $\text{Art}_{k, \Q(\sqrt{d})}$ (see Subsection \ref{ssArtinpairing} for the definition) between the character $\chi_a: G_\Q\rightarrow \FF_2$, corresponding to the field $\Q(\sqrt{a})$, and the unique ideal of $\Q(\sqrt{d})$ of norm $b$. Let
\[
C = \{d_0\} \times \{p_{1, 1}, p_{1, 2}\} \times \dots \times \{p_{k, 1}, p_{k, 2}\}
\]
be a product space, thought of as squarefree integers by multiplying out the coordinates. Given divisors $a$ and $b$ of $d_0$, Smith's main algebraic result is a reflection principle of the shape
\begin{multline}
\label{eSmithRef}
\sum_{d \in C} \langle b\pi_k(d), \chi_a + \chi_{\pi_1(d)} \rangle_{k - 1, \Q(\sqrt{d})} = \text{Frob}_{K_{p_{1, 1}, p_{1, 2}, \dots, p_{k - 1, 1}, p_{k - 1, 2}}/\Q}(p_{k, 1}) + \\ 
\text{Frob}_{K_{p_{1, 1}, p_{1, 2}, \dots, p_{k - 1, 1}, p_{k - 1, 2}}/\Q}(p_{k, 2}),
\end{multline}
where $\pi_i$ denotes the natural projection map, $K_{p_{1, 1}, p_{1, 2}, \dots, p_{k - 1, 1}, p_{k - 1, 2}}$ is an explicit number field depending on $p_{1, 1}, p_{1, 2}, \dots, p_{k - 1, 1}, p_{k - 1, 2}$ and $\text{Frob}(p_{k, i})$ lands in
\[
Z(\Gal(K_{p_{1, 1}, p_{1, 2}, \dots, p_{k - 1, 1}, p_{k - 1, 2}}/\Q)) \cong \FF_2
\]
with $Z(G)$ denoting the center of a group $G$. The fields $K_{p_{1, 1}, p_{1, 2}, \dots, p_{k - 1, 1}, p_{k - 1, 2}}$ are extensively studied in \cite{KP2}. To apply such a reflection principle one needs to find appropriate choices for $a$ and $b$, which is not always possible in our setting. Indeed, a first problem is that the solubility of negative Pell is equivalent to $(\sqrt{d})$ being trivial in the narrow class group. This means that we have to compute the Artin pairing with $d$, which is not covered by equation (\ref{eSmithRef}). In this case one gets
\[
\sum_{d \in C} \langle d, \chi_a + \chi_{\pi_1(d)} \rangle_{k, \Q(\sqrt{d})} = \text{Frob}_{K_{p_{1, 1}, p_{1, 2}, \dots, p_{k, 1}, p_{k, 2}}/\Q}(\infty).
\]
This is problematic, since Smith shows equidistribution of the right-hand side and then uses ingenious combinatorics to deduce from this equidistribution of the left-hand side. To prove equidistribution of the right-hand side, we extend a reciprocity law first presented in \cite{KP3}. This reciprocity law is a generalization of the classical quadratic reciprocity law and R\'edei's reciprocity law \cite{Redei} (see Stevenhagen's work for an extensive treatment \cite{StevenhagenRedei} or Corsman's thesis \cite{Corsman}). Equidistribution of the right-hand side is then a consequence of the Chebotarev density theorem.

To make matters worse, the first Artin pairing $\text{Art}_{1, \Q(\sqrt{d})}$ is symmetric for $d \in \mathcal{D}$. Therefore we would like to have a reflection principle of the shape
\[
\sum_{d \in C} \langle a, \chi_a \rangle_{k + 1, \Q(\sqrt{d})} = \dots
\]
We first show in Theorem \ref{auxiliary result for the self-pairing} that the right-hand side resembles a spin symbol, see \cite{FIMR} for the precise definition of spin symbols. Then we will show in Theorem \ref{involution spin are 0} that this particular spin symbol is trivial. In principle this is possible by adapting the argument in \cite[Section 12]{FIMR} together with our description of the $2$-torsion of class groups of multiquadratic fields \cite{KP2}. We have however opted for a different argument based on Massey symbols and Hilbert reciprocity. 

At first sight, it may seem rather problematic that the right-hand side is identically zero. However, we use Theorem \ref{involution spin are 0} and a simple trick to compute
\[
\sum_{d \in C} \langle a\pi_1(d) \cdot \ldots \cdot \pi_{k - 1}(d), \chi_a + \chi_{\pi_1(d)} + \dots + \chi_{\pi_{k - 1}(d)} \rangle_{k, \Q(\sqrt{d})}.
\]
This does give a reflection principle of the desired shape after an application of Theorem \ref{Redei reciprocity}, which is a strengthening of the reciprocity law from \cite{KP3}. 

A final obstacle comes from the fact that the cube $C$ is one dimension smaller than the cubes appearing in \cite{Smith}. This fact rules out the usual strategy of proving reflection principles, introduced in \cite{Smith}, by imposing physical equalities between certain $1$-cochains. To overcome this issue, we introduce the formalism of profitable triples (see Section \ref{profitability}), a new method to prove reflection principles based on our novel reciprocity law Theorem \ref{Redei reciprocity}.

Once these obstacles are overcome, the rest of the proof is a straightforward adaptation of Smith's work. Although we shall not prove it in this paper, it is not hard to extend our techniques to obtain the distribution of $2\text{Cl}(\Q(\sqrt{d}))[2^\infty]$ in the family $\mathcal{D}$.

There are several open problems where Smith's method faces similar issues because of symmetry properties of a certain pairing:

\begin{itemize}
\item show $100\%$ non-vanishing of $L(\frac{1}{2}, \chi)$ over function fields\footnote{We thank Mark Shusterman for pointing this out to the authors. We hope to study this problem further in future joint work with Mark Shusterman.};
\item remove the assumption on rational $4$-torsion points in \cite[Theorem 1.1]{Smith};
\item show that Greenberg's conjecture holds for the cyclotomic $\Z_2$-extension for $100\%$ of the real quadratic fields ordered by discriminant;
\item study of the Galois module structure of the unit group for biquadratic (or more general abelian) extensions ordered by discriminant or conductor. A conjectural framework for odd degree abelian extensions is developed in \cite{BVV};
\item the strong form of Malle's conjecture for nilpotent extensions;
\item the distribution of unramified $G$-extensions, where $G$ is a $2$-group. Heuristics and function field results are available for odd $p$-groups in \cite{BBH, BW, LWZ, WW};
\item deal with the distribution of $2$-parts of class groups in the family $\FF_q(t, \sqrt{D})$ with $q \equiv 1 \bmod 4$;
\item obtain the distribution of $\text{Cl}(K)[2^\infty]$ as $K$ varies over cyclic degree $4$ extensions;
\item find the distribution of $\text{Cl}(K)[2^\infty]$ as $K$ varies over quadratic extensions over $\Q(i)$ or if $K$ varies over biquadratic extensions containing $\Q(i)$.
\end{itemize}

\noindent It might be possible to extend our reflection principles to some of the above settings. This could then be combined with Smith's method. In some of the cases listed above there are additional roots of unity. In this case it is known that also some of the higher Artin pairings obey certain symmetry properties, see the work of Morgan--Smith \cite{MS} and Lipnowski--Sawin--Tsimerman \cite{LST}. It would be interesting to see the interplay between Smith's method, the techniques presented here and the additional symmetry properties.

Finally, we will mention some other results related to the Cohen--Lenstra heuristics. There is the classical work of Davenport--Heilbronn \cite{DH} on the first moment of $\text{Cl}(K)[3]$ that predates the work of Cohen and Lenstra. The work of Davenport--Heilbronn was extended by Datskovsky--Wright \cite{DW} to general number fields and to a large class of $2$-extensions by Lemke Oliver--Wang--Wood \cite{LWW}, while \cite{BST} and \cite{TT} independently gave a secondary main term over the rational number field. There is also work of Bhargava \cite{Bhargava1, Bhargava2} on the number of $S_4$ and $S_5$-extensions. Over function fields the situation is much better understood thanks to the work of Ellenberg--Venkatesh--Westerland \cite{EVW}.

\subsection{Notation and conventions}
Throughout the paper we shall make use of the following notations.
\begin{itemize}
\item We use $\subseteq$ for inclusions and $\subset$ for strict inclusions.
\item If $n \in \Z_{\geq 0}$, we define $[n] := \{1, \dots, n\}$.
\item If $X = X_1 \times \dots \times X_r$ is a product space, we write $\pi_i: X \rightarrow X_i$ for the natural projection map.
\item For a product space $X = X_1 \times \dots \times X_r$ and $S \subseteq [r]$, we define
\[
\text{Cube}(X, S) = \prod_{i \in S} X_i^2 \times \prod_{i \in [r] - S} X_i.
\]
For $T \subseteq [r]$, we let $\pi_T$ be the natural projection map from $\text{Cube}(X, S)$ to
\[
\prod_{i \in T \cap S} X_i^2 \times \prod_{i \in T \cap ([r] - S)} X_i.
\]
We write $\text{pr}_1$ and $\text{pr}_2$ for the two natural projection maps from $X_i^2$ to $X_i$.
\item Let $X = X_1 \times \dots \times X_r$, $S \subseteq [r]$ and $\bar{x} \in \text{Cube}(X, S)$. If $T \subseteq S$, then $\bar{x}(T)$ is the subset of those $\bar{y} \in \text{Cube}(X, T)$ such that
\[
\pi_i(\bar{y}) \in \{\text{pr}_1(\pi_i(\bar{x})), \text{pr}_2(\pi_i(\bar{x}))\} \text{ for all } i \in S - T, \quad \pi_{[r] - (S - T)}(\bar{x}) = \pi_{[r] - (S - T)}(\bar{y}).
\]
\item If $K$ is a field, we write $G_K$ for its absolute Galois group.
\item If $K$ is a global field, we write $\Omega_K$ for its set of places.
\item For $v \in \Omega_K$, we write $K_v$ for the completion of $K$ at $v$. If $v$ is finite, then we write $\mathbb{F}_v$ for the residue field.
\item We say that a finite place is odd if it does not lie above $(2)$.
\item If $K$ is a local field, we write $K^{\text{unr}}$ for the maximal unramified extension of $K$.
\item We will sometimes use $\cdot$ and $\prod$ to denote the compositum of fields.
\item If $a \in \Q^\ast$, then $\chi_a: G_\Q \rightarrow \FF_2$ is the quadratic character of $G_\Q$ associated to $\Q(\sqrt{a})$. More generally, if $K$ is a field of characteristic $0$ and $\gamma \in K^\ast$, we will write $\chi_\gamma$ for the associated quadratic character.
\item We will often implicitly view $\FF_2$ as a $G_\Q$-module with the discrete topology and the trivial action. 
\item We write $\text{Cl}(K)$ for the narrow class group of a number field $K$.
\item If $G$ is a profinite group, $X$ is a discrete topological space and $\phi: G \rightarrow X$ is a continuous map, we define $N(\phi)$ to be the largest normal open subgroup through which $\phi$ factors; it is an exercise to show that there exists at least one normal open subgroup through which $\phi$ factors, so that the above definition makes sense. In case $G$ is a Galois group, we denote by $L(\phi)$ the field extension corresponding to $N(\phi)$, and call it \emph{the field of definition}.
\item Let $p$ be a prime. Let $K/\Q$ be a quadratic extension in which $p$ ramifies. Then we write $\text{Up}_{K/\Q}(p)$ for the unique place of $K$ above $p$. More generally, if $a$ is a positive, squarefree integer composed of primes $p$ ramifying in $K/\Q$, we define $\text{Up}_{K/\Q}(a)$ to be the unique integral ideal with norm $a$.
\item If $A$ is a set of quadratic characters from $G_\Q$ to $\FF_2$, we define $\Q(A)$ to be the fixed field of $\bigcap_{\chi \in A} \text{ker}(\chi)$.
\item We call an integer $n \in \Z$ squarefree if $p \mid n$ implies $p^2 \nmid n$. In particular, squarefree integers can be of any sign.
\end{itemize}

\subsection*{Acknowledgements}
The authors wish to thank the Max Planck Institute for Mathematics in Bonn for its great work conditions and an inspiring atmosphere. The majority of this work was done while the authors held postdoctoral positions at the Max Planck. We thank Alexander Smith for invaluable discussions when the authors met him in London, 2018. Without these discussions this paper would most likely not have come to be. In particular, we thank Alexander for several discussions that inspired the authors to prove Theorem \ref{involution spin are 0}. 

We are grateful to Andrew Granville for generously providing financial support which allowed us to meet Alexander Smith in London. We also thank Alex Bartel, Stephanie Chan, Wei Ho, Jeffrey Lagarias, Hendrik Lenstra, Djordjo Milovic, Adam Morgan, Mark Shusterman, Efthymios Sofos and Peter Stevenhagen for useful discussions. The second named author gratefully acknowledges financial support through EPSRC Fellowship EP/P019188/1, ``Cohen--Lenstra heuristics, Brauer relations, and low-dimensional manifolds''.

\section{Preliminaries}
The goal of this section is to fix a number of choices used in the rest of this paper and to introduce raw cocycles, expansion maps and Artin pairings. Fix once and for all a separable closure $\mathbb{Q}^{\text{sep}}$ of $\mathbb{Q}$. We also fix, for each place $v$ of $\mathbb{Q}$, a separable closure $\mathbb{Q}_v^{\text{sep}}$ of $\mathbb{Q}_v$. We further choose an embedding
$$
i_v:\mathbb{Q}^{\text{sep}} \to \mathbb{Q}_v^{\text{sep}}.
$$
Such an embedding induces a continuous, injective homomorphism
$$
i_v^{*}: G_{\mathbb{Q}_v} \to G_{\mathbb{Q}}.  
$$
From now on all our number fields are implicitly taken inside our fixed separable closure $\mathbb{Q}^{\text{sep}}$.

We denote by $\mathbb{Q}^{\text{pro-}2}$ the union of all finite Galois subextensions of $\mathbb{Q}^{\text{sep}}/\mathbb{Q}$ whose degree is a power of $2$ and we write
$$
\mathcal{G}_{\mathbb{Q}}^{\text{pro-}2} := \text{Gal}(\mathbb{Q}^{\text{pro-}2}/\mathbb{Q})
$$
for the corresponding Galois group. Let $\pi_{\text{pro-}2}:G_{\mathbb{Q}} \to \mathcal{G}_{\mathbb{Q}}^{\text{pro-}2}$ be the natural projection map. 

For each finite place $v$ of $\Q$, we let
\[
I_v := \text{ker}(G_{\mathbb{Q}_v} \to G_{\mathbb{F}_v}) = \Gal(\Q_v^{\text{sep}}/\Q_v^{\text{unr}}).
\]
be the inertia group at $v$. We denote by $\mathcal{P}$ the collection of odd places of $\Q$. Let us recall the following basic fact.

\begin{prop} 
\label{inertias procyclic}
We have
$$
I_v(2) := \pi_{\textup{pro-}2} \circ i_v^{*}(I_v) \cong_{\textup{top.gr.}} \mathbb{Z}_2
$$
for each $v \in \mathcal{P}$.
\end{prop}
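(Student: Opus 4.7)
The plan is to first show that $I_v(2)$ is a quotient of $\Z_2$ by using tame ramification, and then to exhibit an explicit tower of $2$-extensions of $\Q$ witnessing unbounded ramification at $v$, which forces $I_v(2) \cong \Z_2$.

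Let $p$ be the odd rational prime below $v$. I would begin with the standard exact sequence $1 \to P_v \to I_v \to I_v/P_v \to 1$, where $P_v$ is the wild inertia (the unique pro-$p$ Sylow of $I_v$) and the tame quotient is canonically isomorphic to $\prod_{\ell \neq p} \Z_\ell$. Since $\mathcal{G}_\Q^{\text{pro-}2}$ is pro-$2$ and $p$ is odd, the composite $\pi_{\text{pro-}2} \circ i_v^\ast$ must annihilate $P_v$ and must factor through the maximal pro-$2$ quotient of $\prod_{\ell \neq p} \Z_\ell$, which is simply the $\Z_2$-factor. Hence $I_v(2)$ is a quotient of $\Z_2$, so it is either finite cyclic of $2$-power order or topologically isomorphic to $\Z_2$.

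To rule out the finite case, for each $n \geq 2$ I would consider the field $L_n := \Q(\zeta_{2^n}, p^{1/2^n})$. Its Galois group embeds into $(\Z/2^n\Z)^\times \ltimes \Z/2^n\Z$ and is therefore a $2$-group, so $L_n \subseteq \Q^{\text{pro-}2}$. Localising at $v$: the subextension $\Q(\zeta_{2^n})/\Q$ is unramified at $p$ because $p$ is odd, while $\Q_p(\zeta_{2^n})(p^{1/2^n})/\Q_p(\zeta_{2^n})$ is a Kummer extension of a complete discretely valued field by a $2^n$-th root of a uniformiser, hence totally tamely ramified of degree $2^n$. Thus the inertia subgroup of $\Gal(L_n/\Q)$ at any prime above $p$ has order exactly $2^n$, and the natural quotient $\mathcal{G}_\Q^{\text{pro-}2} \twoheadrightarrow \Gal(L_n/\Q)$ carries $I_v(2)$ surjectively onto it. This gives $|I_v(2)| \geq 2^n$ for every $n$, so $I_v(2)$ is infinite, and combining with the first step we obtain $I_v(2) \cong_{\text{top.gr.}} \Z_2$.

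The main (and essentially only) obstacle is the local verification that the inertia of $p$ in $L_n/\Q$ has order exactly $2^n$; this reduces to the standard fact that extracting a $2^n$-th root of a uniformiser in a complete local field of odd residue characteristic, once $\zeta_{2^n}$ has been adjoined, yields a totally tamely ramified extension of degree exactly $2^n$. Every other step is either formal or a direct consequence of the description of the tame quotient of $I_v$.
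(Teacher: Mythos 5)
Your proof is correct, and it reaches the same conclusion as the paper but reorganises the argument in a somewhat more elementary way. The paper first identifies the maximal pro-$2$ quotient $I_v^{\textup{pro-}2} \cong \Z_2$ by invoking local class field theory to describe the full maximal pro-$2$ extension $L = \bigcup_k \Q_v(\zeta_{2^k}, \sqrt[2^k]{\pi})$ of $\Q_v$, and then proves that the induced map $f\colon I_v^{\textup{pro-}2} \to \mathcal{G}_\Q^{\textup{pro-}2}$ is injective by exhibiting the single global tower $K = \bigcup_k \Q(\zeta_{2^k}, \sqrt[2^k]{a})$ (with $v(a)=1$), whose localisation at $v$ recovers all of $L$. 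You instead get the upper bound --- that $I_v(2)$ is a quotient of $\Z_2$ --- directly from the tame/wild decomposition $1 \to P_v \to I_v \to \prod_{\ell \neq p}\Z_\ell \to 1$, which needs no class field theory, and then rule out a finite image by producing, for each $n$, the global $2$-extension $L_n = \Q(\zeta_{2^n}, p^{1/2^n})$ and checking locally (Eisenstein/Kummer over the unramified extension $\Q_p(\zeta_{2^n})$) that the inertia above $p$ in $\Gal(L_n/\Q)$ has order exactly $2^n$. The fields $L_n$ are precisely the finite layers of the paper's $K$ with $a=p$, so the global input is the same; the difference is that your upper-bound step is lower-tech, relying only on the structure of the tame quotient, while the paper's version packages both the upper bound and the realisability into one appeal to the local pro-$2$ picture followed by an injectivity check at the infinite level.
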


\begin{proof}
Since $v$ is odd, it follows from local class field theory\footnote{Alternatively, one can use an elementary argument based on \cite[Proposition A.5]{KP1}.} that the maximal pro-$2$-extension $L$ of $\Q_v$ equals
\begin{align}
\label{eLocalGalois}
\bigcup_{k = 1}^\infty \Q_v\left(\zeta_{2^k}, \sqrt[2^k]{\pi}\right),
\end{align}
where $\pi$ is a uniformizer at $v$. Let $I_v^{\text{pro-}2}$ be the maximal pro-$2$-quotient of $I_v$. Equation (\ref{eLocalGalois}) implies that $I_v^{\text{pro-2}} \cong_{\textup{top.gr.}} \Z_2$, since
\[
\bigcup_{k = 1}^\infty \Q_v\left(\zeta_{2^k}\right)
\]
is an unramified extension of $\Q_v$ containing all roots of unity $\zeta_{2^k}$. The continuous homomorphism $\pi_{\text{pro-}2} \circ i_v^*: I_v \rightarrow \mathcal{G}_{\mathbb{Q}}^{\text{pro-}2}$ induces a continuous homomorphism $f: I_v^{\text{pro-}2} \rightarrow \mathcal{G}_{\mathbb{Q}}^{\text{pro-}2}$ with the same image as $\pi_{\text{pro-}2} \circ i_v^*$. Using equation (\ref{eLocalGalois}) one more time, we see that $f$ is injective. Indeed, take any $a \in \Q$ with $v(a) = 1$. Since $\zeta_2 \in \Q$, the field
\[
K := \bigcup_{k = 1}^\infty \Q\left(\zeta_{2^k}, \sqrt[2^k]{a}\right)
\]
is a pro-$2$ extension of $\Q$. Furthermore, $\Q_v \cdot i_v(K) = L$. This completes the proof.
\end{proof}

Thanks to Proposition \ref{inertias procyclic} there exists a topological generator of $I_v(2)$ for each odd place $v$. We make a choice $\sigma_v$ of such a generator for the remainder of this paper. Write $I_2(2) := \pi_{\textup{pro-}2} \circ i_2^{*}(I_2)$ and pick $\sigma_2(1), \sigma_2(2) \in I_2(2)$ such that
\[
\chi_{-1}(\sigma_2(1)) = 1, \quad \chi_{-1}(\sigma_2(2)) = 0, \quad \chi_2(\sigma_2(1)) = 0, \quad \chi_2(\sigma_2(2)) = 1.
\]
This is possible, since the extension $\Q(\zeta_8)/\Q$ is totally ramified at $(2)$.

\begin{prop} 
\label{prop:minimal set of gen}
The set
$$
\mathfrak{G} := \{\sigma_v : v \in \mathcal{P}\} \cup \{\sigma_2(1), \sigma_2(2)\} 
$$
is a minimal set of topological generators of $\mathcal{G}_{\Q}^{\textup{pro-}2}$. 
\end{prop}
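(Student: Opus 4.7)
The plan is to reduce, via the Frattini quotient, to a question about Kummer theory and ramification of quadratic fields. Set $\Phi := \Phi(\mathcal{G}_\Q^{\textup{pro-}2})$ and $V := \mathcal{G}_\Q^{\textup{pro-}2}/\Phi$, the maximal elementary abelian $2$-quotient. A subset $S \subseteq \mathcal{G}_\Q^{\textup{pro-}2}$ is a minimal topological generating set precisely when (i) the only continuous character $\mathcal{G}_\Q^{\textup{pro-}2} \to \FF_2$ vanishing on $S$ is the trivial one, and (ii) for each $s \in S$ there exists a non-trivial continuous character vanishing on $S \setminus \{s\}$. By Kummer theory, the continuous characters $\mathcal{G}_\Q^{\textup{pro-}2} \to \FF_2$ are precisely the $\chi_a$ for $a \in \Q^{\ast}/\Q^{\ast 2}$, and $\{-1, 2\} \cup \{p : p \text{ odd prime}\}$ is an $\FF_2$-basis of $\Q^{\ast}/\Q^{\ast 2}$.

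First I would check (i). Suppose $\chi_a$ vanishes on every element of $\mathfrak{G}$. By Proposition \ref{inertias procyclic}, $\sigma_v$ topologically generates $I_v(2)$, so $\chi_a(\sigma_v) = 0$ forces $\Q(\sqrt{a})/\Q$ to be unramified at $v$. Applying this at every odd place shows $a \in \{1, -1, 2, -2\}$ modulo squares. A direct computation with the defining values $\chi_{-1}(\sigma_2(1)) = \chi_2(\sigma_2(2)) = 1$ and $\chi_{-1}(\sigma_2(2)) = \chi_2(\sigma_2(1)) = 0$ then rules out $a \in \{-1, 2, -2\}$, leaving $a \in \Q^{\ast 2}$.

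For (ii), I would exhibit, for each $\sigma \in \mathfrak{G}$, a non-trivial character $\chi_a$ that vanishes on $\mathfrak{G} \setminus \{\sigma\}$ and equals $1$ on $\sigma$. For $\sigma_2(1)$ take $a = -1$, and for $\sigma_2(2)$ take $a = 2$; in both cases $\Q(\sqrt{a})/\Q$ is ramified only at $2$, so $\chi_a$ automatically kills every $\sigma_v$ with $v$ odd. For $\sigma_p$ with $p$ an odd prime the naive choice $a = p$ may fail because $\Q(\sqrt{p})/\Q$ can be ramified at $2$ when $p \equiv 3 \pmod 4$; the remedy is to take $p^{\ast} := (-1)^{(p-1)/2} p$ instead, so that $p^{\ast} \equiv 1 \pmod 4$ and $\Q(\sqrt{p^{\ast}})/\Q$ is ramified only at $p$. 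Then $\chi_{p^{\ast}}$ vanishes on $\sigma_2(1), \sigma_2(2)$ and on $\sigma_\ell$ for every odd prime $\ell \neq p$, while $\chi_{p^{\ast}}(\sigma_p) = 1$ because $p$ ramifies in $\Q(\sqrt{p^{\ast}})$. The mild twist from $p$ to $p^{\ast}$ is the only arithmetic subtlety; otherwise the statement is a formal consequence of Kummer theory and the classical description of ramification in $\Q(\sqrt{a})/\Q$ in terms of the squarefree part of $a$.
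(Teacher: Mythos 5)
Your proof is correct and takes essentially the same route as the paper: both reduce, via the Frattini quotient of the pro-$2$ group, to a statement about ramification of quadratic extensions of $\Q$ and then invoke Propositions \ref{inertias procyclic} and \ref{ramification read off by inertia}, with you phrasing the argument on the dual (Kummer-theoretic) side via characters $\chi_a$ while the paper phrases it on the primal side as surjectivity onto quadratic quotients. The $p \mapsto p^{\ast}$ twist you use for minimality is the right device; the paper leaves that verification to the reader.
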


\begin{proof}
We will first argue that $\mathfrak{G}$ is a set of topological generators. Note that a subset $S$ of a profinite group $\mathcal{G}$ topologically generates $\mathcal{G}$ if and only if $\pi(S)$ generates $G$ in every continuous finite quotient $\pi: \mathcal{G} \rightarrow G$. So let $K/\Q$ be a finite $2$-extension. We must show that (the images of) $\mathfrak{G}$ generates $G = \Gal(K/\Q)$. 

But since $G$ is a finite group, we know that a subset $S$ generates if and only if it generates modulo the Frattini subgroup $\Phi(G)$. Furthermore, $\Phi(G) = G^2 [G, G]$ for a finite $2$-group $G$. Hence it suffices to show that $\mathfrak{G}$ generates $\Gal(L/\Q)$, where $L/\Q$ is an arbitrary multiquadratic extension. This is in turn equivalent to $\mathfrak{G}$ generating $\Gal(M/\Q)$ for every quadratic extension $M$ of $\Q$, which is readily verified.

Finally, if we remove any element of $\mathfrak{G}$, it is not hard to find a quadratic extension $M/\Q$ in which $\mathfrak{G}$ does not generate. This shows the minimality claim.
\end{proof}

From now on we denote by $\mathfrak{G}$ this fixed choice of topological generators of $\mathcal{G}_{\Q}^{\text{pro-}2}$. We shall often invoke the following basic fact.

\begin{prop} 
\label{ramification read off by inertia}
Let $L/\Q$ be a finite Galois subextension of $\Q^{\textup{pro-}2}/\Q$.

$(a)$ Let $v \in \mathcal{P}$. Then $v$ is unramified in $L/\Q$ if and only if the image of $\sigma_v$ in $\textup{Gal}(L/\Q)$ is the identity element. More precisely, the ramification index of $v$ in $L/\Q$ equals 
\[
|\langle \textup{proj}_{\mathcal{G}_{\Q}^{\textup{pro-}2} \to \textup{Gal}(L/\Q)}(\sigma_v) \rangle|.
\] 
$(b)$ If $(2)$ is unramified in $L/\Q$, then both $\sigma_2(1)$ and $\sigma_2(2)$ have trivial image in $\textup{Gal}(L/\Q)$. More precisely, the ramification index of $(2)$ in $L/\Q$ is divisible by
$$
|\langle \textup{proj}_{\mathcal{G}_{\Q}^{\textup{pro-}2} \to \textup{Gal}(L/\Q)}(\sigma_2(1)), \textup{proj}_{\mathcal{G}_{\Q}^{\textup{pro-}2} \to \textup{Gal}(L/\Q)}(\sigma_2(2)) \rangle|.
$$
\end{prop}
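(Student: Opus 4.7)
The plan is to prove both parts uniformly by identifying the image of $i_v^{*}(I_v)$ inside $\Gal(L/\Q)$ with the inertia subgroup at a specific prime above $v$, and then exploiting that $L/\Q$ is a $2$-extension to replace $I_v$ by its pro-$2$ incarnation $I_v(2)$. The first step is to recall that the embedding $i_v:\Q^{\text{sep}}\to \Q_v^{\text{sep}}$ singles out, for every finite Galois subextension $L/\Q$, a distinguished prime $w_v$ of $L$ above $v$ (the one obtained by restricting the $v$-adic valuation of $\Q_v^{\text{sep}}$ along $i_v$). With respect to this choice, the composition
\[
G_{\Q_v} \xrightarrow{i_v^{*}} G_{\Q} \twoheadrightarrow \Gal(L/\Q)
\]
has image equal to the decomposition group at $w_v$, and the image of $I_v$ equals the inertia subgroup at $w_v$, whose order is exactly $e(v,L/\Q)$.

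For part $(a)$, I would next observe that since $\Gal(L/\Q)$ is a finite $2$-group, the quotient $G_\Q \twoheadrightarrow \Gal(L/\Q)$ factors through $\mathcal{G}_{\Q}^{\text{pro-}2}$, so the image of $I_v$ in $\Gal(L/\Q)$ coincides with the image of $I_v(2) = \pi_{\text{pro-}2}\circ i_v^{*}(I_v)$. By Proposition \ref{inertias procyclic}, $\sigma_v$ topologically generates $I_v(2)$, so this image is the cyclic subgroup $\langle \textup{proj}(\sigma_v)\rangle$. Combining with the first paragraph gives
\[
|\langle \textup{proj}_{\mathcal{G}_{\Q}^{\textup{pro-}2}\to \Gal(L/\Q)}(\sigma_v)\rangle|=e(v,L/\Q),
\]
from which the qualitative statement (unramifiedness $\Leftrightarrow$ trivial image) is immediate.

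For part $(b)$, I would run exactly the same factorization argument: since $L/\Q$ is a $2$-extension, every element of $I_2(2)$ lands in the inertia subgroup of $\Gal(L/\Q)$ at $w_2$, a subgroup of order $e(2,L/\Q)$. In particular the images of $\sigma_2(1)$ and $\sigma_2(2)$ lie there, so the subgroup they generate is contained in this inertia subgroup, and Lagrange gives the divisibility. If $(2)$ is unramified in $L/\Q$ then $e(2,L/\Q)=1$ and both images are forced to be trivial.

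I do not expect any real obstacle; the only subtlety worth flagging is that part $(b)$ only delivers a divisibility, not an equality, because $\sigma_2(1)$ and $\sigma_2(2)$ were chosen merely to satisfy the conditions on $\chi_{-1}$ and $\chi_2$ and are not claimed to topologically generate $I_2(2)$ (whose pro-$2$ wild inertia is non-trivial, unlike the tame procyclic $I_v(2)$ of part $(a)$). The argument is otherwise a standard compatibility between the abstract embedding $i_v^{*}$ and the decomposition/inertia filtration at the prime $w_v$ determined by $i_v$.
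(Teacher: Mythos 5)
Your proof is correct and takes essentially the same route as the paper: both reduce to the standard fact that $e(v, L/\Q) = |(\textup{proj}_{G_\Q \to \Gal(L/\Q)} \circ i_v^\ast)(I_v)|$, observe that the projection factors through $\mathcal{G}_\Q^{\text{pro-}2}$, and then invoke Proposition~\ref{inertias procyclic} for part $(a)$ (where $\sigma_v$ topologically generates $I_v(2)$, giving equality) and a subgroup containment for part $(b)$ (giving only divisibility). Your version is more expansive in spelling out the identification of the image with the inertia subgroup at the distinguished prime $w_v$, but the substance is the same.
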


\begin{proof}
We recall that, in general, the ramification index of $v$ in $L/\Q$ equals
$$
|(\textup{proj}_{G_\Q \to \textup{Gal}(L/\Q)} \circ i_v^\ast)(I_v)|.
$$
The projection map from $G_\Q$ to $\textup{Gal}(L/\Q)$ factors through $\mathcal{G}_\Q^{\text{pro-}2}$. Therefore part $(a)$ is an immediate consequence of the definition of $\sigma_v$. To prove part $(b)$ it suffices to observe that
$$
\langle \textup{proj}_{\mathcal{G}_{\Q}^{\textup{pro-}2} \to \textup{Gal}(L/\Q)}(\sigma_2(1)), \textup{proj}_{\mathcal{G}_{\Q}^{\textup{pro-}2} \to \textup{Gal}(L/\Q)}(\sigma_2(2)) \rangle
$$
is a subgroup of 
$$
(\textup{proj}_{G_\Q \to \textup{Gal}(L/\Q)} \circ i_2^\ast)(I_2).
$$
Hence the desired divisibility. 
\end{proof}

We write $\Gamma_{\FF_2}(\Q) := \text{Hom}_{\text{top.gr.}}(G_\Q, \FF_2)$ for the set of quadratic characters of $\mathbb{Q}$. We now recall some notation first introduced in \cite{Smith}. Define
\[
N := \frac{\mathbb{Q}_2}{\mathbb{Z}_2},
\]
which we endow with the discrete topology and view as a $G_\Q$-module with trivial action. For each $\chi \in \Gamma_{\FF_2}(\Q)$, we denote by
$$
N(\chi)
$$
the $G_\Q$-module given by the topological abelian group $N$ with the continuous action of $G_\Q$ defined by the formula
$$
\sigma \cdot_{\chi} n := (-1)^{\chi(\sigma)} \cdot n
$$
for each $\sigma \in G_\Q$ and $n \in N$. We now recall the basic material on raw cocycles and expansion maps that will be used throughout this paper. 

\subsection{Raw cocycles} 
\label{characters and cocycles}
For a given $\chi \in \Gamma_{\mathbb{F}_2}(\Q)$, we denote by
$$
\text{Cocy}(G_\Q, N(\chi))
$$
the group of continuous $1$-cocycles from $G_\Q$ to the $G_\Q$-module $N(\chi)$ defined above. In case $x$ is a squarefree integer, we will also use the notation $N(x) := N(\chi_x)$. We notice that the group $\text{Cocy}(G_\Q, N(\chi))$ is a subgroup of the group of continuous $1$-cochains $\text{Map}_{\text{cont}}(G_\Q, N)$. As such all these groups of $1$-cocycles live in the common ambient group $\text{Map}_{\text{cont}}(G_\Q, N)$, which does not depend on $\chi$. This simple observation plays a crucial role in the key Proposition \ref{key calculation of cocycles}. For a general continuous $1$-cochain
$$
\psi: G_\Q \to N
$$
and a squarefree integer $x$, we define for each $\sigma, \tau \in G_\Q$ the coboundary
$$
d_x(\psi)(\sigma, \tau) := -\psi(\sigma\tau) + (-1)^{\chi_x(\sigma)} \cdot \psi(\tau) + \psi(\sigma).
$$
By definition $\psi$ is in $\text{Cocy}(G_\Q, N(\chi_x))$ if and only if $d_x(\psi)$ is identically zero.

\begin{mydef}
\label{semidirect products}
Whenever we consider the group $N \rtimes \mathbb{F}_2$, the implicit action of $\mathbb{F}_2$ on $N$ is by $-\textup{id}$. The same applies for $N[2^s] \rtimes \mathbb{F}_2$ for $s \in \Z_{\geq 0}$.
\end{mydef}

We will now demonstrate that elements $\psi$ of $\text{Cocy}(G_\Q, N(\chi))$ are essentially the same as certain homomorphisms from $G_\Q$ to $N \rtimes \mathbb{F}_2$.

\begin{prop} 
\label{cocycles and semi direct products} 
Let $\psi \in \textup{Cocy}(G_\Q, N(\chi))$. Then the assignment 
$$
G_\Q \to N(\chi) \rtimes \mathbb{F}_2,
$$
given by 
$$
\sigma \mapsto (\psi(\sigma), \chi(\sigma)),
$$
is a continuous group homomorphism. Conversely, given a continuous $1$-cochain $\psi: G_\Q \rightarrow N$ such that
$$
\sigma \mapsto (\psi(\sigma), \chi(\sigma))
$$
is a group homomorphism from $G_\Q$ to $N(\chi) \rtimes \mathbb{F}_2$, we have that $\psi \in \textup{Cocy}(G_\Q, N(\chi))$. The restriction of $\psi$ to $G_{\Q(\chi)}$ is a continuous group homomorphism.

In case $\psi(G_{\Q(\chi)}) \not \subseteq N[2]$, then the field of definition of $\psi$ contains $\Q(\chi)$. 
\end{prop}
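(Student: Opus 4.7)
The plan is to unpack the defining identities for a group homomorphism into the semidirect product and compare them with the cocycle equation. Using the convention from Definition~\ref{semidirect products}, multiplication in $N(\chi)\rtimes\FF_2$ reads
\[
(n_1,a_1)(n_2,a_2)=(n_1+(-1)^{a_1}n_2,\,a_1+a_2).
\]
Applying this to the proposed map $\sigma\mapsto(\psi(\sigma),\chi(\sigma))$, the identity needed for it to be a homomorphism is exactly $\psi(\sigma\tau)=\psi(\sigma)+(-1)^{\chi(\sigma)}\psi(\tau)$, together with $\chi(\sigma\tau)=\chi(\sigma)+\chi(\tau)$. The latter holds because $\chi$ is itself a homomorphism, and the former is precisely the vanishing of $d_\chi(\psi)(\sigma,\tau)$. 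Continuity is immediate from continuity of $\psi$ and $\chi$, and running the same calculation backwards yields the converse.

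The restriction statement is then a one-line observation: on $G_{\Q(\chi)}$ the character $\chi$ vanishes, so the cocycle identity degenerates to $\psi(\sigma\tau)=\psi(\sigma)+\psi(\tau)$, which is precisely what it means for $\psi|_{G_{\Q(\chi)}}$ to be a continuous group homomorphism into $N$.

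For the final claim about the field of definition, the plan is to exploit the normality of $N(\psi)$ inside $G_\Q$. First, any cocycle satisfies $\psi(e)=0$ (apply the cocycle relation to $e\cdot e=e$), and the fact that $\psi$ factors through $G_\Q/N(\psi)$ forces $\psi$ to be constant on $N(\psi)$-cosets; setting the base point to $e$ shows $N(\psi)\subseteq\ker(\psi)$. Now pick $\sigma\in N(\psi)$ and $\tau\in G_\Q$ arbitrary. By normality, $\tau\sigma\tau^{-1}\in N(\psi)\subseteq\ker(\psi)$. Applying the cocycle identity to $\tau\tau^{-1}=e$ gives $\psi(\tau^{-1})=-(-1)^{\chi(\tau)}\psi(\tau)$, and then a short computation using $\psi(\sigma)=0$ yields
\[
0=\psi(\tau\sigma\tau^{-1})=\bigl(1-(-1)^{\chi(\sigma)}\bigr)\psi(\tau).
\]
If $\chi(\sigma)=1$ this forces $2\psi(\tau)=0$ for every $\tau\in G_\Q$, so $\psi(G_\Q)\subseteq N[2]$ and in particular $\psi(G_{\Q(\chi)})\subseteq N[2]$, contradicting the hypothesis. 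Hence $\chi(\sigma)=0$ for every $\sigma\in N(\psi)$, i.e.\ $N(\psi)\subseteq G_{\Q(\chi)}$, which is equivalent to $L(\psi)\supseteq\Q(\chi)$.

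The main obstacle I expect is this last step: one has to exploit normality of $N(\psi)$ to extract from $\psi(\sigma)=0$ a constraint that couples $\chi(\sigma)$ with $\psi(\tau)$ at an arbitrary $\tau$, and it is the asymmetric twist $(-1)^{\chi(\sigma)}$ in the cocycle relation that converts the hypothesis $\psi(G_{\Q(\chi)})\not\subseteq N[2]$ into the desired inclusion of fields. The earlier parts are essentially an unwinding of definitions.
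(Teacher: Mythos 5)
Your proof is correct and follows essentially the same route as the paper's. The only variation is in the final step: where the paper picks a specific $\tau\in G_{\Q(\chi)}$ with $2\psi(\tau)\neq 0$ and compares the two evaluations of $\psi(\sigma\tau)$ (once via the quotient $G_\Q/N(\psi)$, once via the cocycle relation) to force $\chi(\sigma)=0$, you instead expand $\psi(\tau\sigma\tau^{-1})=0$ via the cocycle relation and read off $(1-(-1)^{\chi(\sigma)})\psi(\tau)=0$ — both arguments exploit normality of $N(\psi)$ and the twisted term $(-1)^{\chi(\sigma)}$ in the same way, so this is a cosmetic rather than substantive difference.
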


\begin{proof}
The first statement is straightforward. Since $N(\chi)$ is a trivial $G_{\Q(\chi)}$-module, the restriction of $\psi$ to $G_{\Q(\chi)}$ is a $1$-cocycle with the trivial action, and hence is a homomorphism. We now prove the last claim. By assumption there exists $\tau \in G_{\Q(\chi)}$ such that $2 \cdot \psi(\tau) \neq 0$. Let $H$ be the largest normal open subgroup of $G_\Q$ through which $\psi$ factors and let $\sigma \in H$. Then we have
$$
\psi(\sigma) = \psi(\text{id}) = 0,
$$
since a $1$-cocycle vanishes on the identity element. We now deduce that 
$$
\psi(\tau) = \psi(\sigma\tau) = (-1)^{\chi(\sigma)} \cdot \psi(\tau) + \psi(\sigma) = (-1)^{\chi(\sigma)} \cdot \psi(\tau).
$$
Since $2 \cdot \psi(\tau) \neq 0$, we obtain the equality $\chi(\sigma) = 0$, which shows that $\text{ker}(\chi) \supseteq H$ as desired. 
\end{proof}

Let $\psi \in \text{Cocy}(G_\Q, N(\chi))$. Since $\psi$ is continuous, $G_\Q$ is compact and $N$ is discrete, the image of $\psi$ is finite. Then it follows from Proposition \ref{cocycles and semi direct products} that the restriction of $\psi$ gives an element of $(G_{\Q(\chi)}^{\text{ab}})^{\vee}[2^\infty]$. We write 
\[
\textup{Cocy}_{\textup{unr}}(G_\Q, N(\chi))
\]
for the inverse image of $\text{Cl}(\Q(\chi))^\vee[2^\infty]$ under the above restriction map. The relevance of such $1$-cocycles in this work comes from the fact that the Hilbert class field of a quadratic field is always a generalized dihedral extension over $\Q$. This is formalized in the following classical proposition.

\begin{prop} 
\label{unramified characters are always cocycles}
Let $\chi \in \Gamma_{\mathbb{F}_2}(\Q)$ and let $s \in \Z_{\geq 0}$. Then the natural restriction map
$$
\textup{Cocy}_{\textup{unr}}(G_\Q, N(\chi))[2^s] \to \textup{Cl}(\Q(\chi))^\vee[2^s]
$$
is a split surjection of abelian groups. More precisely, the kernel is isomorphic to $\mathbb{Z}/2^s\mathbb{Z}$. Furthermore,
\[
\psi \in 2 \cdot \textup{Cocy}_{\textup{unr}}(G_\Q, N(\chi))[2^{s + 1}] \Longleftrightarrow \psi|_{G_{\Q(\chi)}} \in 2 \cdot \textup{Cl}(\Q(\chi))^{\vee}[2^{s + 1}]
\]
for every $\psi \in \textup{Cocy}_{\textup{unr}}(G_\Q, N(\chi))[2^s]$.
\end{prop}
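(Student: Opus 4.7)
The plan is to construct an explicit splitting of the restriction map by exploiting the dihedral-like structure of the $2$-part of the narrow Hilbert class field. Let $H$ be the maximal $2$-subextension of the narrow Hilbert class field of $\Q(\chi)$. I would first argue that $H/\Q$ is Galois and that the non-trivial element $\tau\in\textup{Gal}(\Q(\chi)/\Q)$ acts on $\textup{Gal}(H/\Q(\chi))\cong\textup{Cl}(\Q(\chi))[2^\infty]$ by inversion. This rests on the classical identity $\aaa\cdot\tau(\aaa)=N_{\Q(\chi)/\Q}(\aaa)\,\OO_{\Q(\chi)}$: the generator $N_{\Q(\chi)/\Q}(\aaa)$ is a positive rational integer, hence totally positive, so the right-hand side is trivial in the narrow class group. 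Consequently $\textup{Gal}(H/\Q)$ is a generalized dihedral group, and every lift of $\tau$ to $\textup{Gal}(H/\Q)$ automatically has order $2$. Fix one such lift $\tilde\tau$ once and for all.

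Given $\phi\in\textup{Cl}(\Q(\chi))^\vee[2^s]$, viewed as a homomorphism $\textup{Gal}(H/\Q(\chi))\to N[2^s]$, define $\tilde\phi:\textup{Gal}(H/\Q)\to N$ by $\tilde\phi(\sigma\tilde\tau^\varepsilon):=\phi(\sigma)$ for $\sigma\in\textup{Gal}(H/\Q(\chi))$ and $\varepsilon\in\{0,1\}$. Using $\tilde\tau^2=1$ and the inversion action $\tilde\tau\sigma\tilde\tau^{-1}=\sigma^{-1}$, a short direct check shows that the inflation of $\tilde\phi$ to $G_\Q$ lies in $\textup{Cocy}_{\textup{unr}}(G_\Q,N(\chi))[2^s]$ and restricts to $\phi$ on $G_{\Q(\chi)}$. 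Since $\phi\mapsto\tilde\phi$ is visibly $\Z$-linear, this yields the desired splitting.

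To identify the kernel, observe that any cocycle $\psi$ vanishing on $G_{\Q(\chi)}$ factors through $G_\Q/G_{\Q(\chi)}=\FF_2$, and the twisted $1$-cocycle relation on $\FF_2$ acting by $-\textup{id}$ on $N$ collapses to the automatic identity $\psi(\tilde\tau)-\psi(\tilde\tau)=0$. Hence such $\psi$ are freely parameterized by $\psi(\tilde\tau)\in N[2^s]\cong\Z/2^s\Z$, giving the kernel as claimed.

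For the divisibility equivalence, the forward direction is immediate from the commutativity of restriction with multiplication by $2$. For the converse, suppose $\psi|_{G_{\Q(\chi)}}=2\phi'$ with $\phi'\in\textup{Cl}(\Q(\chi))^\vee[2^{s+1}]$. Lift $\phi'$ to $\tilde\phi'\in\textup{Cocy}_{\textup{unr}}(G_\Q,N(\chi))[2^{s+1}]$ via the section; then $\psi-2\tilde\phi'$ lies in the kernel of restriction on the $2^s$-torsion, which was just identified with $N[2^s]$. Since $N$ is $2$-divisible, the multiplication-by-$2$ map $N[2^{s+1}]\to N[2^s]$ is surjective, so one can absorb this discrepancy by adding a suitable kernel element of $\textup{Cocy}_{\textup{unr}}(G_\Q,N(\chi))[2^{s+1}]$ to $\tilde\phi'$, exhibiting $\psi$ as twice an element of $\textup{Cocy}_{\textup{unr}}(G_\Q,N(\chi))[2^{s+1}]$. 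The main conceptual step is the generalized dihedral structure of $\textup{Gal}(H/\Q)$; after that, everything reduces to direct verifications with the explicit section.
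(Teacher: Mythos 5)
Your argument has the right shape, and most of it is correct, but there is one genuine gap precisely at the step that is the main content of the proposition: you write ``Consequently $\textup{Gal}(H/\Q)$ is a generalized dihedral group, and every lift of $\tau$ to $\textup{Gal}(H/\Q)$ automatically has order $2$.'' The norm identity $\aaa\cdot\tau(\aaa)=N_{\Q(\chi)/\Q}(\aaa)\,\mathcal{O}_{\Q(\chi)}$ only shows that $\tau$ acts by $-\textup{id}$ on the abelian normal subgroup $\textup{Gal}(H/\Q(\chi))$; it does not show that the extension
\[
1 \longrightarrow \textup{Gal}(H/\Q(\chi)) \longrightarrow \textup{Gal}(H/\Q) \longrightarrow \textup{Gal}(\Q(\chi)/\Q) \longrightarrow 1
\]
is \emph{split}. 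An abelian group $A$ with $\Z/2\Z$ acting by inversion has $H^2(\Z/2\Z,A)\cong A[2]$, so a priori the extension can be non-split; $Q_8$ is exactly such an example (index-two cyclic normal subgroup, conjugation by inversion, and no lift of order $2$). Your ``every lift of $\tau$ automatically has order $2$'' is the conclusion of the splitting, not a consequence of the inversion action, so the deduction is circular. The paper proves the splitting using arithmetic input that your argument never invokes: since $H/\Q(\chi)$ is unramified at all finite places, the ramification index of every finite place in $H/\Q$ is at most $2$; hence every topological generator $\sigma\in\mathfrak{G}$ maps to an involution in $\textup{Gal}(H/\Q)$, and choosing $\sigma\in\mathfrak{G}$ with $\chi(\sigma)=1$ (such $\sigma$ exists because $\mathfrak{G}$ topologically generates $\mathcal{G}_\Q^{\textup{pro-}2}$ and $\chi$ is nontrivial) produces an involution lifting $\tau$. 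Equivalently, one can argue via the inertia group of any ramified finite prime of $K/\Q$ together with Minkowski's theorem to rule out the non-split (quaternion-type) case; either way the ramification constraint is essential, and without it the step fails.

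Once the splitting is granted, the remainder of your argument is correct and somewhat more explicit than the paper's: the formula $\tilde\phi(\sigma\tilde\tau^\varepsilon):=\phi(\sigma)$ does define a $1$-cocycle (I checked the four cases), it restricts to $\phi$, the assignment is additive, the kernel is correctly identified as $N[2^s]\cong\Z/2^s\Z$, and the converse divisibility argument via $2$-divisibility of $N$ is fine. So the fix is to insert the ramification argument for the existence of an order-$2$ lift of $\tau$; the rest of your write-up can stand as is.
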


\begin{proof}
Recall that $\text{Gal}(\Q(\chi)/\Q)$ acts on $\text{Cl}(\Q(\chi))^{\vee}$ by $-\text{id}$. This implies that all subgroups of $\text{Cl}(\Q(\chi))^{\vee}$ are $G_\Q$-invariant, and hence define an extension that is Galois over $\Q$. Let us first show that the natural restriction map is surjective. Then, thanks to Proposition \ref{cocycles and semi direct products}, it suffices to show that the natural surjection
\[
\text{Gal}(L/\Q) \twoheadrightarrow \text{Gal}(\Q(\chi)/\Q)
\]
splits as a semidirect product for all cyclic $2$-power extensions $L$ of $\Q(\chi)$ unramified at all finite places.

Since $L/\Q(\chi)$ is unramified at all finite places, it follows that $L/\Q$ has ramification index at most $2$ at all finite places. Therefore, thanks to Proposition \ref{ramification read off by inertia}, every element $\sigma \in \mathfrak{G}$ is sent to an involution under the natural quotient map $\pi: \mathcal{G}_{\Q}^{\text{pro-}2} \rightarrow \Gal(L/\Q)$. Since $\mathfrak{G}$ is a set of topological generators of $\mathcal{G}_{\Q}^{\text{pro-}2}$ (see Proposition \ref{prop:minimal set of gen}), there exists $\sigma \in \mathfrak{G}$ such that $\chi(\sigma) = 1$. Then $\pi(\sigma)$ is a non-trivial involution of $\text{Gal}(L/\Q)$ that projects to the generator of $\text{Gal}(\Q(\chi)/\Q)$. This yields the desired splitting.

Furthermore, the above argument also shows that the surjection 
\[
\text{Gal}(H_{2^\infty}(\Q(\chi))/\Q) \to \text{Gal}(\Q(\chi)/\Q)
\]
splits as a semidirect product, where $H_{2^\infty}(\Q(\chi))$ is the largest abelian $2$-power extension of $\Q(\chi)$ unramified at all finite places. Fix an element $\sigma \in \text{Gal}(H_{2^\infty}(\Q(\chi))/\Q)$ projecting to a generator of $\text{Gal}(\Q(\chi)/\Q)$. Then, a simple calculation, using the semidirect product structure of $\text{Gal}(H_{2^\infty}(\Q(\chi))/\Q)$, shows that the elements of 
\[
\text{ker}\left(\textup{Cocy}_{\textup{unr}}(G_\Q, N(\chi))[2^s] \to \textup{Cl}(\Q(\chi))^{\vee}[2^s]\right)
\]
are entirely determined by their value on $\sigma$. Conversely, the elements in the kernel can take any desired value on $\sigma$. This identifies the kernel with $N(\chi)[2^s]$, which also proves that the surjection $\textup{Cocy}_{\textup{unr}}(G_\Q, N(\chi))[2^s] \to \textup{Cl}(\Q(\chi))^{\vee}[2^s]$ is split, since $\mathbb{Z}/2^s\mathbb{Z}$ is injective as a $\mathbb{Z}/2^s\mathbb{Z}$-module. 

The final conclusion is now a trivial consequences of such a splitting applied both to $s$ and to $s + 1$.  
\end{proof}

\begin{remark}
\label{It suffices to lift with 1-cocycles} 
If $(2)$ is unramified in the extension $L(\psi)/\Q$ or if $(2)$ has residue field degree $1$ in the extension $L(\psi)/\Q$, then the following stronger conclusion holds. For each cocycle $\psi \in \textup{Cocy}_{\textup{unr}}(G_\Q, N(\chi))[2^s]$, we have that 
\[
\psi \in 2 \cdot \textup{Cocy}(G_\Q, N(\chi))[2^{s + 1}] \Longleftrightarrow \psi|_{G_{\Q(\chi)}} \in 2 \cdot \textup{Cl}(\Q(\chi))^{\vee}[2^{s + 1}]. 
\]
A proof of this stronger claim is given during the proof of Theorem \ref{main thm: minimal triples} and used again in the proof of Theorem \ref{main thm: governing triples}. 
\end{remark}

We now give the definition of raw cocycles, which is repeatedly used throughout this text.

\begin{mydef} 
\label{def: raw cocycles}
Let $\chi$ be a non-trivial element of $\Gamma_{\mathbb{F}_2}(\Q)$ and let $s \in \mathbb{Z}_{\geq 0}$. We say that a sequence
$$
(\psi_i)_{0 \leq i \leq s}
$$ 
with $\psi_i \in \textup{Cocy}_{\textup{unr}}(G_\Q, N(\chi))[2^i]$ is a \emph{raw cocycle} in case $2 \cdot \psi_{i + 1} = \psi_i$ for each $0 \leq i \leq s - 1$. If we want to stress the dependence on $N(\chi)$, we say that $\psi_i$ is a raw cocycle for $N(\chi)$.
\end{mydef} 

To give a raw cocycle as in Definition \ref{def: raw cocycles} is clearly the same as giving a cocycle $\psi_s \in \text{Cocy}_{\text{unr}}(G_\Q, N(\chi))$ with $2^s \cdot \psi_s = 0$. We shall sometimes, by abuse of terminology, say that $\psi_s$ is a raw cocycle, under this identification (and with the $\psi_i$ automatically defined by the equation $\psi_i := 2^{s - i} \psi_s$). Furthermore, whenever the index $j$ of $\psi_j$ is negative, we use the convention $\psi_j = 0$, which conveniently preserves the equation $2 \cdot \psi_{i + 1} = \psi_i$.
 
We now present an important combinatorial calculation. This result is taken from \cite[page 17]{Smith}. We give a detailed proof for the sake of completeness. Let $s \in \Z_{>0}$ and let $(p_i(1), p_i(2))_{i \in [s]}$ be $2s$ distinct primes, all of them coprime to the squarefree integer $d_0$. Let now
$$
C := \{p_1(1), p_1(2)\} \times \ldots \times \{p_s(1), p_s(2)\} \times \{d_0\}.
$$
We will often identify a point $x \in C$ with the squarefree integer $d_0 \cdot \prod_{i \in [s]} \pi_i(x)$. Given a finite collection of squarefree numbers $H$, we denote by $\chi_H$ the continuous $1$-cochain from $G_\Q$ to $\FF_2$ given by
$$
\sigma \mapsto \prod_{x \in H} \chi_x(\sigma),
$$
where the product is the usual multiplication in $\mathbb{F}_2$. By abuse of notation we also view $\chi_H$ as an indicator function valued in $\{0,1\}$. For each subset $T$ of $[s]$, we write $C_T$ for the subset of $x \in C$ such that $\pi_i(x) = p_i(2)$ for all $i \in T$.

\begin{prop} 
\label{key calculation of cocycles}
Let $j \in \Z_{>0}$ and let $C$ be as above. Suppose that we are given a raw cocycle $\psi_j(x)$ for $N(x)$ for each $x \in C$. Let $x_0 \in C$. Then
$$
d_{x_0}\left(\sum_{x \in C} \psi_j(x)\right)(\sigma, \tau) = \sum_{\varnothing \neq T \subseteq [s]} \chi_{\{p_i(1)p_i(2) : i \in T\}}(\sigma) \cdot (-1)^{|T| + 1 + \chi_{x_0}(\sigma)} \cdot \left( \sum_{x \in C_T} \psi_{j - |T|}(x)(\tau) \right).
$$
\end{prop}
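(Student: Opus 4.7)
The plan is to reduce everything to a single elementary computation with the coboundary operator and then run a binomial-style expansion. Applying the definition of $d_{x_0}$ and invoking the cocycle identity $\psi_j(x)(\sigma\tau) = (-1)^{\chi_x(\sigma)}\psi_j(x)(\tau) + \psi_j(x)(\sigma)$ for each $x \in C$ (which holds because $\psi_j(x)$ is a $1$-cocycle for $N(\chi_x)$), the contributions involving $\psi_j(x)(\sigma)$ cancel and what remains is
\[
d_{x_0}\Bigl(\sum_{x \in C} \psi_j(x)\Bigr)(\sigma, \tau) = \sum_{x \in C} \bigl[(-1)^{\chi_{x_0}(\sigma)} - (-1)^{\chi_x(\sigma)}\bigr]\,\psi_j(x)(\tau).
\]
All the work is now isolated in the scalar coefficient in front of $\psi_j(x)(\tau)$.

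For the combinatorial step, I would factor $(-1)^{\chi_{x_0}(\sigma)}$ out of the coefficient and use that, in $\Gamma_{\FF_2}(\Q)$, one has $\chi_x + \chi_{x_0} = \sum_{i \in D(x, x_0)} \chi_{p_i(1)p_i(2)}$, where $D(x, x_0) \subseteq [s]$ is the set of indices where $x$ and $x_0$ disagree. Setting $\varepsilon_i := \chi_{p_i(1)p_i(2)}(\sigma) \in \{0,1\}$ and writing $(-1)^{\varepsilon_i} = 1 - 2\varepsilon_i$, the coefficient becomes
\[
(-1)^{\chi_{x_0}(\sigma)}\Bigl(1 - \prod_{i \in D(x, x_0)}(1 - 2\varepsilon_i)\Bigr) = (-1)^{\chi_{x_0}(\sigma)} \sum_{\emptyset \neq T \subseteq D(x, x_0)} (-1)^{|T|+1}\,2^{|T|}\prod_{i \in T}\varepsilon_i.
\]
Because each $\varepsilon_i$ lies in $\{0,1\}$, the product $\prod_{i \in T} \varepsilon_i$ equals the indicator $\chi_{\{p_i(1)p_i(2) : i \in T\}}(\sigma)$, matching the character appearing in the claimed formula.

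To finish, I would swap the order of summation so that the outer sum runs over non-empty $T \subseteq [s]$ and the inner sum runs over those $x \in C$ with $D(x, x_0) \supseteq T$, which under the labelling convention $\pi_i(x_0) = p_i(1)$ is exactly $C_T$. The defining property of raw cocycles, $2 \psi_{i+1}(x) = \psi_i(x)$, then turns each factor $2^{|T|}\psi_j(x)(\tau)$ into $\psi_{j-|T|}(x)(\tau)$, recovering the stated identity. The argument has no conceptual difficulty; the main obstacle is purely bookkeeping, namely tracking the signs through the expansion of $\prod_{i \in D(x, x_0)}(1 - 2\varepsilon_i)$ and correctly identifying the inner index set with $C_T$ via the convention on $(p_i(1), p_i(2))$ relative to $x_0$.
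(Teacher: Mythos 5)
Your proof is correct and takes essentially the same route as the paper: compute $d_{x_0}(\psi_j(x))(\sigma,\tau)=\bigl[(-1)^{\chi_{x_0}(\sigma)}-(-1)^{\chi_x(\sigma)}\bigr]\psi_j(x)(\tau)$ via the cocycle identity, expand the scalar using $(1-2\varepsilon_i)$ as in the paper's $(1-2)^{|T_x\cap T_\sigma|}$ trick, swap the order of summation, and absorb $2^{|T|}$ via the raw-cocycle relation. You are in fact slightly more careful than the paper's write-up in flagging that the identification $\{x : D(x,x_0)\supseteq T\}=C_T$ (and likewise the paper's $\chi_{xx_0}(\sigma)\equiv|T_x\cap T_\sigma|\bmod 2$) rests on the labelling convention $\pi_i(x_0)=p_i(1)$, which is the case in every application but is left implicit in the paper.
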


\begin{remark}
We emphasize that here and later in the paper the sum
\[
\sum_{x \in C} \psi_j(x)
\]
takes place in the space $\textup{Map}_{\textup{cont}}(G_\Q, N)$.
\end{remark}

\begin{proof}
Let $x \in C$. For each $\sigma, \tau \in G_\Q$ we compute
\begin{align*}
d_{x_0}(\psi_j(x))(\sigma, \tau) &= -\psi_j(x)(\sigma\tau) + (-1)^{\chi_{x_0}(\sigma)}\cdot \psi_j(x)(\tau) + \psi_j(x)(\sigma) \\
&= -\left((-1)^{\chi_x(\sigma)} \cdot \psi_j(x)(\tau) + \psi_j(x)(\sigma)\right) + (-1)^{\chi_{x_0}(\sigma)}\cdot \psi_j(x)(\tau) + \psi_j(x)(\sigma) \\
&= (-1)^{\chi_{x_0}(\sigma)}\cdot(-(-1)^{\chi_{xx_0}(\sigma)} + 1)\cdot \psi_j(x)(\tau),
\end{align*}
where the second equality follows from the definition of $\text{Cocy}(G_\Q, N(x))$.

Observe that the function from $G_\Q$ to $\mathbb{Z}_2 = \text{End}_{\mathbb{Z}_2\text{-mod}}(N)$ given by $\sigma \mapsto -(-1)^{\chi_{xx_0}(\sigma)} + 1$ factors through $\text{Gal}(\Q(\{\sqrt{p_i(1)p_i(2)} : i \in [s]\})/\Q)$. The $\mathbb{Z}_2$-module of functions from the finite Galois group $\text{Gal}(\Q(\{\sqrt{p_i(1)p_i(2)} : i \in [s]\})/\Q)$ to $\mathbb{Z}_2$ is free of rank $2^s$, and a basis of functions is provided by $\{\chi_{\{p_i(1)p_i(2) : i \in T\}}\}_{T \subseteq [s]}$. One could proceed now by explicitly writing down the indicator functions of elements of $\text{Gal}(\Q(\{\sqrt{p_i(1)p_i(2)} : i \in [s]\})/\Q)$ in terms of this basis and use this to expand the function $\sigma \mapsto -(-1)^{\chi_{xx_0}(\sigma)} + 1$ in terms of this basis. However, in \cite[page 17]{Smith} one finds a shortcut to quickly obtain this expansion, which we explain next.   

We identify the projection of $\sigma$ in $\text{Gal}(\Q(\{\sqrt{p_i(1)p_i(2)} : i \in [s]\})/\Q)$ with the largest $V \subseteq [s]$ such that $\chi_{\{p_i(1)p_i(2) : i \in V\}}(\sigma) = 1$. We denote this subset by $T_\sigma$. Similarly, we identify each $x \in C$ with the subset of $[s]$, denoted by $T_x$, consisting of those $i \in [s]$ such that $\pi_i(x) = p_i(2)$. We now rewrite  
\begin{align*}
-(-1)^{\chi_{xx_0}(\sigma)} + 1 &= (-1)\cdot (-1)^{|T_x \cap T_\sigma|} + 1 \\ 
&= (-1) \cdot (1-2)^{|T_x \cap T_\sigma|} + 1 \\
&= \sum_{\varnothing \neq U \subseteq T_x \cap T_\sigma}(-1)^{|U| + 1}2^{|U|}. 
\end{align*}
Hence we obtain
\begin{align*}
d_{x_0}(\psi_j(x))(\sigma, \tau) &= (-1)^{\chi_{x_0}(\sigma)} \cdot \left(\sum_{\varnothing \neq U \subseteq T_x \cap T_\sigma}(-1)^{|U| + 1}2^{|U|}\right) \cdot \psi_j(x)(\tau) \\
&= (-1)^{\chi_{x_0}(\sigma)} \cdot \left(\sum_{\varnothing \neq U \subseteq T_x \cap T_\sigma}(-1)^{|U| + 1}\psi_{j - |U|}(x)(\tau)\right).
\end{align*}
We now let $x$ vary in $C$ and invoke the linearity of $d_{x_0}$. This gives
$$
d_{x_0}\left(\sum_{x \in C}\psi_j(x)\right)(\sigma, \tau) = \sum_{x \in C}(-1)^{\chi_{x_0}(\sigma)} \cdot \left(\sum_{\varnothing \neq U \subseteq T_x \cap T_\sigma}(-1)^{|U| + 1}\psi_{j - |U|}(x)(\tau)\right).
$$
We exchange the order of summation and observe that for a given non-empty $U \subseteq [s]$ we get a contribution only from those pairs $(x, \sigma)$ such that $U \subseteq T_x$ and $U \subseteq T_\sigma$. The first containment can be rewritten as $x \in C_U$. The second containment can be rewritten as $\chi_{\{p_i(1)p_i(2) : i \in U\}}(\sigma) = 1$, since this equality means, by definition of $T_\sigma$, exactly that $U \subseteq T_\sigma$. Hence we get
$$
d_{x_0}\left(\sum_{x \in C}\psi_j(x)\right)(\sigma, \tau) = \sum_{\varnothing \neq U \subseteq [s]} \chi_{\{p_i(1)p_i(2) : i \in U\}}(\sigma) \cdot (-1)^{|U| + 1 + \chi_{x_0}(\sigma)} \cdot \left(\sum_{x \in C_U} \psi_{j - |U|}(x)(\tau)\right)
$$
as desired. 
\end{proof}

We will often use the following version of Proposition \ref{key calculation of cocycles}.

\begin{corollary}
Let $j \in \Z_{>0}$, let $C$ be as above and let $x_0 \in C$. Suppose that we are given a raw cocycle $\psi_j(x)$ for $N(x)$ for each $x \in C - \{x_0\}$. Then 
$$
d_{x_0}\left(\sum_{\substack{x \in C \\ x \neq x_0}}\psi_j(x)\right)(\sigma, \tau) = \sum_{\varnothing \neq T \subseteq [s]} \chi_{\{p_i(1)p_i(2) : i \in T\}}(\sigma) \cdot (-1)^{|T| + 1 + \chi_{x_0}(\sigma)} \cdot \left( \sum_{x \in C_T} \psi_{j - |T|}(x)(\tau) \right).
$$
\end{corollary}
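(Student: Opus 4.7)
The plan is to derive this corollary as a direct consequence of Proposition \ref{key calculation of cocycles} by extending the given family of raw cocycles with the zero cocycle in the missing slot, and then verifying that neither side of the formula is affected by this extension.

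First I would set $\psi_j(x_0) := 0$. The identically zero cochain $G_\Q \to N$ satisfies $d_{x_0}(0) = 0$, so it lies in $\textup{Cocy}(G_\Q, N(\chi_{x_0}))$; it is visibly unramified and $2^j$-torsion, so taking $\psi_i(x_0) = 0$ for all $0 \leq i \leq j$ yields a legitimate raw cocycle for $N(x_0)$ in the sense of Definition \ref{def: raw cocycles}. Next I would apply Proposition \ref{key calculation of cocycles} to this enlarged family $(\psi_j(x))_{x \in C}$, which gives
$$
d_{x_0}\left(\sum_{x \in C} \psi_j(x)\right)(\sigma, \tau) = \sum_{\varnothing \neq T \subseteq [s]} \chi_{\{p_i(1)p_i(2) : i \in T\}}(\sigma) \cdot (-1)^{|T|+1+\chi_{x_0}(\sigma)} \cdot \left(\sum_{x \in C_T} \psi_{j-|T|}(x)(\tau)\right).
$$

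It then remains to check that each side of this identity coincides with the corresponding side of the corollary. The left-hand sides match by linearity of $d_{x_0}$ together with $\psi_j(x_0) = 0$. For the right-hand side the only possible discrepancy is in the inner sum $\sum_{x \in C_T} \psi_{j-|T|}(x)(\tau)$ whenever $x_0 \in C_T$; but in that case the $x_0$-term equals $\psi_{j-|T|}(x_0)(\tau) = 2^{|T|}\psi_j(x_0)(\tau) = 0$, using the raw cocycle relation (and the convention $\psi_k = 0$ for $k < 0$). Hence the two right-hand sides agree term by term, and the corollary follows.

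I do not anticipate any genuine obstacle: the argument is essentially bookkeeping, and the single substantive observation is that the zero cochain is a raw cocycle for \emph{any} character twist, so an arbitrary summand in the statement of Proposition \ref{key calculation of cocycles} can be harmlessly removed without altering the stated identity.
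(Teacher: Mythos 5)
Your proposal is correct and takes exactly the same route as the paper, whose entire proof reads ``Define $\psi_j(x_0) = 0$ and apply Proposition \ref{key calculation of cocycles}.'' You have merely filled in the elementary bookkeeping (that the zero cochain is a raw cocycle for any twist, that the left sides agree by linearity, and that the inner sums $\sum_{x \in C_T}\psi_{j-|T|}(x)$ are unchanged since $\psi_{j-|T|}(x_0)=0$) that the paper leaves implicit.
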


\begin{proof}
Define $\psi_j(x_0) = 0$ and apply Proposition \ref{key calculation of cocycles}.
\end{proof}

\subsection{Expansion maps} 
\label{def normalized maps}
We start by recalling a definition from \cite[Definition 3.21]{KP2}. We write $\FF_2^A$ for the free $\FF_2$-vector space on the set $A$.

\begin{mydef}
Let $X \subseteq \Gamma_{\FF_2}(\Q)$ be a linearly independent finite set and let $\chi_0 \in X$. An expansion map with support $X$ and pointer $\chi_0$ is a continuous group homomorphism
\[
\psi : G_\Q \rightarrow \mathbb{F}_2[\FF_2^{X - \{\chi_0\}}] \rtimes \mathbb{F}_2^{X - \{\chi_0\}}
\]
such that 
\begin{itemize}
\item $\pi_\chi \circ \psi = \chi$ for every $\chi \in X - \{\chi_0\}$, where $\pi_\chi : \mathbb{F}_2[\FF_2^{X - \{\chi_0\}}] \rtimes \mathbb{F}_2^{X - \{\chi_0\}} \rightarrow \FF_2$ is the natural projection;
\item $\pi \circ \psi = \chi_0$, where $\pi : \mathbb{F}_2[\FF_2^{X - \{\chi_0\}}] \rtimes \mathbb{F}_2^{X - \{\chi_0\}} \rightarrow \FF_2$ is the unique non-trivial character that sends the subgroup $\{0\} \rtimes \FF_2^{X - \{\chi_0\}}$ to $0$.
\end{itemize}
\end{mydef}

An expansion map is automatically surjective, since it surjects by construction on the quotient of $\mathbb{F}_2[\FF_2^{X - \{\chi_0\}}] \rtimes \mathbb{F}_2^{X - \{\chi_0\}}$ by its Frattini subgroup. Although we shall not make use of it, we mention that
\[
\mathbb{F}_2[\FF_2^{X - \{\chi_0\}}] \rtimes \mathbb{F}_2^{X - \{\chi_0\}} \cong \FF_2 \wr \mathbb{F}_2^{X - \{\chi_0\}},
\]
where $\wr$ is the wreath product. We will now give an alternative characterization of expansion maps based on the material in \cite[Section 3.3]{KP2}. Consider the isomorphism
\[
\FF_2[\FF_2^{X - \{\chi_0\}}] \cong \FF_2[\{t_x : x \in X - \{\chi_0\}\}]/(\{t_x^2 : x \in X - \{\chi_0\}\})
\]
obtained by sending $t_x$ to $1 \cdot \text{id} + 1 \cdot e_x$, where $e_x \in \FF_2^{X - \{\chi_0\}}$ is the vector that is $1$ exactly on the $x$-th coordinate. Observe that the collection of squarefree monomials $t_Y := \prod_{y \in Y} t_y$, as $Y$ varies through the subsets of $X - \{\chi_0\}$, give a basis of the $\FF_2$-vector space
\[
\FF_2[\{t_x : x \in X - \{\chi_0\}\}]/(\{t_x^2 : x \in X - \{\chi_0\}\}).
\]
Hence projection on the monomials $t_Y$ gives rise to continuous $1$-cochains
\[
\phi_Y(\psi) : G_\Q \rightarrow \FF_2
\]
for every $Y \subseteq X - \{\chi_0\}$. These $1$-cochains allow us to reconstruct $\psi$ using the formula
\begin{equation}
\label{eReconstruct}
\psi(g) = \left(\sum_{Y \subseteq X - \{\chi_0\}} \phi_Y(\psi)(g) t_Y, (\chi(g))_{\chi \in X - \{\chi_0\}} \right).
\end{equation}
Let $d: \text{Map}(G_\Q^k, \FF_2) \rightarrow \text{Map}(G_\Q^{k + 1}, \FF_2)$ be the operator that sends a map $\phi \in \text{Map}(G_\Q^k, \FF_2)$ to
\[
(d\phi)(g_1, \dots, g_{k + 1}) = \phi(g_2, \dots, g_{k + 1}) + \phi(g_1, \dots, g_k) + \sum_{i = 1}^k \phi(g_1, \dots, g_{i - 1}, g_i g_{i + 1}, g_{i + 2}, \dots, g_{k + 1}).
\]
From equation (\ref{eReconstruct}) and the composition law for the semidirect product we deduce that
\begin{equation}
\label{eSmith22}
(d\phi_Y(\psi))(g_1, g_2) = \sum_{\varnothing \subsetneq S \subseteq Y} \chi_S(g_1) \phi_{Y - S}(\psi)(g_2),
\end{equation}
where $\chi_S := \prod_{\chi \in S} \chi$. Equation (\ref{eSmith22}) is just \cite[equation (2.2)]{Smith}. Conversely, if we have a system of maps $(\phi_Y)_{Y \subseteq X - \{\chi_0\}}$ satisfying equation (\ref{eSmith22}) and $\phi_\varnothing = \chi_0$, then we obtain an expansion map $\psi$ with support $X$ and pointer $\chi_0$ using equation (\ref{eReconstruct}).

We now show that for each finite linearly independent set $X \subseteq \Gamma_{\FF_2}(\Q)$ and $\chi_0 \in X$, there exists at most one expansion map, denoted $\psi(\mathfrak{G})$, with support $X$ and pointer $\chi_0$ such that $\phi_{Y}(\psi(\mathfrak{G}))(\sigma) = 0$ for all non-empty subsets $Y \subseteq X - \{\chi_0\}$ and all $\sigma \in \mathfrak{G}$. Indeed, let $\psi$ be such an expansion map. We claim that $\psi(\sigma)$ is determined for every $\sigma \in \mathfrak{G}$. But this follows from equation (\ref{eReconstruct}), our assumption $\phi_Y(\psi)(\sigma) = 0$ and the fact that $\chi(\sigma)$ is prescribed for every $\chi \in \Gamma_{\FF_2}(\Q)$ and $\sigma \in \mathfrak{G}$. Since $\mathfrak{G}$ is a set of topological generators of $\mathcal{G}_{\Q}^{\text{pro-}2}$ by Proposition \ref{prop:minimal set of gen}, the homomorphism $\psi$ is completely determined by its values on $\mathfrak{G}$. This yields the desired uniqueness.

If there exists such an expansion map $\psi(\mathfrak{G})$, we write
$$
\phi_{S; d}(\mathfrak{G})
$$
for the $1$-cochain $\phi_{X - \{\chi_0\}}(\psi(\mathfrak{G}))$, where $S$ is the set of squarefree integers corresponding to the characters in $X - \{\chi_0\}$ and $d$ is the squarefree integer corresponding to $\chi_0$. In this way, whenever we have a non-negative integer $s$, a set of squarefree integers $S$ of cardinality $s$ and a squarefree integer $a_{s + 1}$ such that $S$ together with $a_{s + 1}$ spans a $s + 1$-dimensional space in $\frac{\Q^{*}}{\Q^{*2}}$, we have at most one $1$-cochain
$$
\phi_{S; a_{s + 1}}(\mathfrak{G})
$$
satisfying the above properties. In case we have one, we will say that $\phi_{S; a_{s + 1}}(\mathfrak{G})$ exists. We shall refer to such $1$-cochains as \emph{normalized expansion maps}. 

\begin{lemma}
\label{lAdditivity}
Let $s \in \Z_{\geq 1}$. Let $S$ be a set of squarefree integers of cardinality $s$ and let $x_1, x_2, x_3$ also be squarefree integers with $x_1x_2 = x_3$ in $\Q^\ast/\Q^{\ast 2}$. Suppose that $\phi_{S; x_j}(\mathfrak{G})$ exists for $j \in [2]$. Then $\phi_{S; x_3}(\mathfrak{G})$ exists and furthermore
\[
\phi_{S; x_3}(\mathfrak{G}) = \phi_{S; x_1}(\mathfrak{G}) + \phi_{S; x_2}(\mathfrak{G}).
\]
Now suppose that $A = \{a_1, \dots, a_s\}$, $B = \{b_1, \dots, b_s\}$ and $C = \{c_1, \dots, c_s\}$ are sets of squarefree integers such that there exists $j \in [s]$ with
\[
a_i = b_i = c_i \textup{ for all } i \in [s] - \{j\} \quad \textup{ and } \quad a_jb_j = c_j \textup{ in } \frac{\Q^{*}}{\Q^{*2}}.
\]
Let $x$ be a squarefree integer. Suppose that $\phi_{A; x}(\mathfrak{G})$ and $\phi_{B; x}(\mathfrak{G})$ exist. Then $\phi_{C; x}(\mathfrak{G})$ exists and 
\[
\phi_{C; x}(\mathfrak{G}) = \phi_{A; x}(\mathfrak{G}) + \phi_{B; x}(\mathfrak{G}).
\]
\end{lemma}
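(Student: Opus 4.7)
The plan is to use the characterization of normalized expansion maps through the reconstruction formula \eqref{eReconstruct} and the cocycle identity \eqref{eSmith22}: an expansion map with support $X$ and pointer $\chi_0$ is equivalent to a family of continuous $1$-cochains $(\phi_Y)_{Y \subseteq X - \{\chi_0\}}$ with $\phi_\varnothing = \chi_0$ satisfying \eqref{eSmith22}, and normalization singles out a unique such family by forcing $\phi_Y(\sigma) = 0$ for every non-empty $Y$ and every $\sigma \in \mathfrak{G}$. In both parts of the lemma the strategy is to produce a candidate family satisfying all three conditions, and then invoke the uniqueness proved earlier in the section to identify it with the sought-after normalized expansion map.

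For the first part, let $(\phi_Y^{(1)})$ and $(\phi_Y^{(2)})$ be the families associated to $\phi_{S; x_1}(\mathfrak{G})$ and $\phi_{S; x_2}(\mathfrak{G})$. Because the support $S$ (hence the characters $\chi_T$ on the right-hand side of \eqref{eSmith22}) is common to both systems, the coordinate-wise sum $\phi_Y := \phi_Y^{(1)} + \phi_Y^{(2)}$ still satisfies \eqref{eSmith22}, has $\phi_\varnothing = \chi_{x_1} + \chi_{x_2} = \chi_{x_3}$ (using $x_1 x_2 = x_3$ in $\Q^*/\Q^{*2}$), and still vanishes on $\mathfrak{G}$ for $Y \neq \varnothing$. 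So $(\phi_Y)$ corresponds to a normalized expansion map with pointer $\chi_{x_3}$, which gives the claimed identity.

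For the second part, identify $A, B, C$ with $[s]$ and write $(\phi_Y^A)$ and $(\phi_Y^B)$ for the families attached to the two given normalized expansion maps. The key preliminary observation is that for any $Y \subseteq [s]$ with $j \notin Y$, the subsystem $(\phi_{Y'}^A)_{Y' \subseteq Y}$ is itself a normalized expansion map with support $\{\chi_{a_i} : i \in Y\}\cup\{\chi_x\}$: the cocycle relation \eqref{eSmith22} only couples $\phi_{Y'}$ to its own subsets, and $a_i = b_i$ for $i \in Y$. Uniqueness then forces $\phi_Y^A = \phi_Y^B$ whenever $j \notin Y$. Define
\[
\phi_Y^C := \begin{cases} \phi_Y^A & \text{if } j \notin Y, \\ \phi_Y^A + \phi_Y^B & \text{if } j \in Y. \end{cases}
\]
The initial condition $\phi_\varnothing^C = \chi_x$ and the normalization on $\mathfrak{G}$ are immediate.

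The remaining and main technical step is to verify \eqref{eSmith22} for $(\phi_Y^C)$, which reduces to a four-case bookkeeping on whether $j \in Y$ and $j \in T$ in the sum $\sum_{\varnothing \subsetneq T \subseteq Y} \chi_T^C(g_1)\, \phi_{Y - T}^C(g_2)$. If $j \notin Y$, then $j \notin T, Y - T$, so the identity reduces to \eqref{eSmith22} for $(\phi_Y^A)$. If $j \in Y$, use $\chi_{c_j} = \chi_{a_j} + \chi_{b_j}$: when $j \in T$ one has $\chi_T^C = \chi_T^A + \chi_T^B$ while $\phi_{Y-T}^C = \phi_{Y-T}^A = \phi_{Y-T}^B$, and when $j \notin T$ one has $\chi_T^C = \chi_T^A = \chi_T^B$ while $\phi_{Y-T}^C = \phi_{Y-T}^A + \phi_{Y-T}^B$; in either sub-case the contribution equals $\chi_T^A \phi_{Y-T}^A + \chi_T^B \phi_{Y-T}^B$. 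Summing over $T$ reproduces $(d\phi_Y^A + d\phi_Y^B)(g_1, g_2) = (d\phi_Y^C)(g_1, g_2)$, as required. Uniqueness then yields the claimed formula $\phi_{C; x}(\mathfrak{G}) = \phi_{A; x}(\mathfrak{G}) + \phi_{B; x}(\mathfrak{G})$.
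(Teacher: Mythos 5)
Your proof is correct and rests on the same underlying idea as the paper's: sum the two normalized expansion maps and check the defining relations. The paper proves the first part by induction on $s$, applying equation (\ref{eSmith22}) at the top level and invoking the induction hypothesis for the lower-level cochains; you avoid the induction by working with the entire system $(\phi_Y)_{Y \subseteq S}$ at once and noting that the coordinate-wise sum visibly satisfies (\ref{eSmith22}), $\phi_\varnothing = \chi_{x_3}$, and the vanishing on $\mathfrak{G}$, so uniqueness of normalized expansion maps closes the argument. This is a modest streamlining rather than a different method; the observation that a normalized expansion map restricts to a normalized expansion map on any subset of its non-pointer support (used to get $\phi_Y^A = \phi_Y^B$ when $j \notin Y$) is exactly what the paper's induction is implicitly using. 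For the second part, where the paper merely says "similar," your four-case check of (\ref{eSmith22}) for the hybrid system $(\phi_Y^C)$ is carried out correctly: in $\FF_2$ the relation $\chi_{c_j} = \chi_{a_j} + \chi_{b_j}$ distributes over the product $\chi_T^C$ when $j \in T$, and the two sub-cases recombine to $\chi_T^A\phi_{Y-T}^A + \chi_T^B\phi_{Y-T}^B$ as you claim, so the system indeed has coboundary $d\phi_Y^A + d\phi_Y^B = d\phi_Y^C$ and hence defines the normalized expansion map $\phi_{C;x}(\mathfrak{G})$.
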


\begin{proof}
Let us prove the first part, the second part being similar. We proceed by induction on $s$. First suppose that $s = 1$ and consider
\[
\phi := \phi_{a_1; x_1}(\mathfrak{G}) + \phi_{a_1; x_2}(\mathfrak{G}).
\]
It follows from equation (\ref{eSmith22}) that 
\[
d\phi(\sigma, \tau) = \chi_{a_1}(\sigma) \cdot \chi_{x_1x_2}(\tau) = \chi_{a_1}(\sigma) \cdot \chi_{x_3}(\tau)
\]
Since $\phi$ vanishes at all $\sigma \in \mathfrak{G}$, we conclude that $\phi_{a_1; x_3}(\mathfrak{G})$ exists. Now suppose that $s > 1$. Again we consider
\[
\phi := \phi_{S; x_1}(\mathfrak{G}) + \phi_{S; x_2}(\mathfrak{G}).
\]
It follows from equation (\ref{eSmith22}) and the induction hypothesis that 
\[
d\phi(\sigma, \tau) = \sum_{\varnothing \subsetneq T \subseteq S} \chi_T(\sigma) \cdot \phi_{S - T; x_3}(\mathfrak{G})(\tau).
\]
Furthermore, $\phi$ vanishes at all $\sigma \in \mathfrak{G}$. Therefore $\phi_{S; x_3}(\mathfrak{G})$ exists.
\end{proof}

The following proposition examines the field of definition of $\phi_{S; d}$ in terms of $\phi_{T; d}$ for $T \subset S$. The proof is straightforward group theory and we leave it to the reader.

\begin{prop} 
\label{prop: field of def of exp maps}
Let $X \subseteq \Gamma_{\mathbb{F}_2}(\Q)$ be a linearly independent finite set of cardinality at least $2$ and let $\chi_0 \in X$. Let
$$
\psi: G_\Q \to \mathbb{F}_2[\mathbb{F}_2^{X - \{\chi_0\}}] \rtimes \mathbb{F}_2^{X - \{\chi_0\}}
$$
be an expansion map with support $X$ and pointer $\chi_0$. Then $L(\phi_{X - \{\chi_0\}}(\psi))$ coincides with the field ${(\Q^{\textup{sep}})}^{\textup{ker}(\psi)}$. This field is a multiquadratic extension of $\Q(X - \{\chi_0\})$ given by the quadratic characters
$$
\{\phi_Y(\psi)|_{G_{\Q(X - \{\chi_0\})}} : Y \subseteq X - \{\chi_0\}\}.
$$
Writing
$$
M(\psi) := \Q(X) \cdot \prod_{Y \subsetneq X - \{\chi_0\}}L(\phi_Y(\psi)),
$$
we have that $L(\phi_{X - \{\chi_0\}}(\psi))/M(\psi)$ is a central $\mathbb{F}_2$-extension. Furthermore, 
\[
\textup{Gal}(L(\phi_{X - \{\chi_0\}}(\psi))/M(\psi))
\]
equals the center of $\textup{Gal}(L(\phi_{X - \{\chi_0\}}(\psi))/\Q)$. 
\end{prop}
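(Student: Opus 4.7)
The plan is to reduce everything to group theory on $G := \FF_2[\FF_2^{X-\{\chi_0\}}] \rtimes \FF_2^{X-\{\chi_0\}}$. Write $n := |X| - 1 \geq 1$ and identify $X - \{\chi_0\}$ with $[n]$ by an ordering. First I would compute the center of $G$ via a direct commutator calculation: $(p, v)$ is central if and only if $v = 0$ (commutation with each $(p', 0)$) and $\delta_w \cdot p = p$ for all $w \in \FF_2^n$ (commutation with each $(0, w)$). Since $t_{[n]} = \sum_{w \in \FF_2^n} \delta_w$ is the unique nonzero translation-invariant element of the group algebra, this yields $Z(G) = \FF_2 \cdot t_{[n]} \times \{0\} \cong \FF_2$.

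For the identity $L(\phi_{X - \{\chi_0\}}(\psi)) = (\Q^{\textup{sep}})^{\ker(\psi)}$, the inclusion $\subseteq$ is immediate because $\phi_{X - \{\chi_0\}}(\psi) = c \circ \psi$, where $c : G \to \FF_2$ extracts the $t_{[n]}$-coefficient. For the reverse inclusion, using the surjectivity of $\psi$ onto $G$, it suffices to show that the only normal subgroup $H \trianglelefteq G$ on whose cosets $c$ is constant is trivial. Given $(q, w) \in H$, the condition $c((p, v)(q, w)) = c((p, v))$ for all $(p, v) \in G$ expands via $\FF_2$-linearity of $c$ to $c(\delta_v \cdot q) = 0$; a basis-level computation gives $c(\delta_v \cdot q) = q_{\mathbf{1} + v}$, where $\mathbf{1} := (1, \dots, 1)$ and $q = \sum_u q_u \delta_u$, so varying $v$ forces $q = 0$ and hence $H \subseteq \{0\} \times \FF_2^n$. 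For $(0, w) \in H$, normality yields $(p, 0)(0, w)(p, 0)^{-1} = (p + \delta_w \cdot p, w) \in H$ for every $p$; since $H \subseteq \{0\} \times \FF_2^n$ this forces $\delta_w \cdot p = p$ for all $p$, so $w = 0$ and $H$ is trivial. The multiquadratic description over $\Q(X - \{\chi_0\})$ is then immediate: $\psi$ restricts to a surjection $G_{\Q(X - \{\chi_0\})} \twoheadrightarrow \FF_2[\FF_2^n] \times \{0\}$ onto an abelian group, so each $\phi_Y(\psi)$ restricts to a character, and the $t_Y$-basis realizes them as the $2^n$ linearly independent coordinate characters.

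For the final two assertions, for each $Y \subsetneq X - \{\chi_0\}$ I would introduce the surjective group homomorphism $q_Y : G \twoheadrightarrow G_Y := \FF_2[\FF_2^Y] \rtimes \FF_2^Y$ induced by the ring-algebra quotient that kills $t_x$ for $x \notin Y$. Then $q_Y \circ \psi$ is an expansion map with support $Y \cup \{\chi_0\}$ and pointer $\chi_0$, and $\phi_Y(\psi) = \phi_Y(q_Y \circ \psi)$; applying the first assertion to $q_Y \circ \psi$ (the rigidity argument above is uniform in the size of the support) yields $L(\phi_Y(\psi)) = (\Q^{\textup{sep}})^{\ker(q_Y \circ \psi)}$, corresponding in $G$ to $\ker(q_Y)$. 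Translating the definition of $M(\psi)$ accordingly, $\textup{Gal}((\Q^{\textup{sep}})^{\ker(\psi)}/M(\psi))$ becomes the subgroup
\[
\ker(\phi_\varnothing) \cap \bigcap_{\chi \in X - \{\chi_0\}} \ker(\pi_\chi) \cap \bigcap_{Y \subsetneq [n]} \ker(q_Y)
\]
of $G$. A direct computation in the $t$-basis collapses this intersection to $Z(G) = \FF_2 \cdot t_{[n]} \times \{0\}$: the middle factor forces $v = 0$, the right-hand factor forces $p_Z = 0$ for every proper $Z \subsetneq [n]$ (since each such $Z$ is contained in some proper $Y$), and the leftmost constraint $p_\varnothing = 0$ is then automatic. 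This simultaneously shows that $L(\phi_{X - \{\chi_0\}}(\psi))/M(\psi)$ is a central $\FF_2$-extension and identifies its Galois group with $Z(G) = Z(\textup{Gal}(L(\phi_{X - \{\chi_0\}}(\psi))/\Q))$.

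The only delicate step is the rigidity calculation in the second paragraph isolating the trivial subgroup $H$, which combines the semidirect-product multiplication with the specific linear-algebraic behavior of $c$ under the translation action on $\FF_2[\FF_2^n]$; everything else is direct bookkeeping in the $t$-basis.
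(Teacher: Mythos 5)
Your proof is correct. The paper leaves this as ``straightforward group theory,'' and your argument---computing $Z(G) = \FF_2 \cdot t_{[n]} \times \{0\}$, establishing rigidity of the $t_{[n]}$-coordinate under normal subgroups via the identity $c(\delta_v q) = q_{\mathbf{1}+v}$ together with the normality step eliminating elements $(0,w)$, and then collapsing the intersection of kernels defining $\textup{Gal}(L(\phi_{X-\{\chi_0\}}(\psi))/M(\psi))$ to $Z(G)$ in the $t$-basis---is exactly the intended verification.
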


Following the notation of Proposition \ref{prop: field of def of exp maps}, we will also use the notation
$$
M(\phi_{X - \{\chi_0\}}(\psi)) := M(\psi)
$$
in the rest of the paper. In the next proposition we give a sense of the ramification behavior in the field of definition of a normalized expansion map. 

\begin{prop} 
\label{normalized expansions are unramified}
Let $n$ be a positive integer. Suppose that $a_1, \ldots, a_{n + 1}$ are squarefree, pairwise coprime and not equal to $1$. Suppose that $a_i \equiv 1 \bmod 8$ for each $i \in [n]$ and that $\phi_{\{a_1, \dots, a_n\}; a_{n + 1}}(\mathfrak{G})$ exists. Then the extension
$$
L(\phi_{\{a_1, \dots, a_n\}; a_{n + 1}}(\mathfrak{G}))/\Q(\{\sqrt{a_i} : i \in [n]\})
$$
is a multiquadratic extension containing $\Q(\sqrt{a_{n + 1}})$. Furthermore, the multiquadratic extension
$$
L(\phi_{\{a_1, \dots, a_n\}; a_{n + 1}}(\mathfrak{G}))/\Q(\{\sqrt{a_i} : i \in [n + 1]\})
$$
is unramified at all finite places.
\end{prop}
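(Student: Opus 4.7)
The plan is to handle the two claims separately. Set $\psi := \psi(\mathfrak{G})$, $X := \{\chi_{a_1}, \dots, \chi_{a_{n+1}}\}$ and $\chi_0 := \chi_{a_{n+1}}$, so that the field in question is $L := L(\phi_{X - \{\chi_0\}}(\psi))$. The first claim follows almost directly from Proposition \ref{prop: field of def of exp maps}, which already asserts that $L$ is a multiquadratic extension of $\Q(X - \{\chi_0\}) = \Q(\sqrt{a_1}, \dots, \sqrt{a_n})$. To see that $L$ further contains $\Q(\sqrt{a_{n+1}})$, I would observe that the expansion map axiom $\pi \circ \psi = \chi_0$, combined with the reconstruction formula \eqref{eReconstruct}, forces $\phi_\varnothing(\psi) = \chi_{a_{n+1}}$, so that $\chi_{a_{n+1}}$ is among the quadratic characters cutting out $L/\Q(\sqrt{a_1}, \dots, \sqrt{a_n})$.

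For the second claim, set $M := \Q(\sqrt{a_1}, \dots, \sqrt{a_{n+1}})$. Since $M \subseteq L$ with $L/\Q$ Galois, multiplicativity of ramification indices in the tower $\Q \subseteq M \subseteq L$ reduces the claim to verifying $e_p(L/\Q) = e_p(M/\Q)$ at every finite prime $p$. By Proposition \ref{ramification read off by inertia}, $e_p(L/\Q)$ is read off from the image of $\sigma_p$ (for odd $p$) or from the subgroup generated by the images of $\sigma_2(1), \sigma_2(2)$ (for $p = 2$). The key input is the defining normalization $\phi_Y(\psi)(\sigma) = 0$ for non-empty $Y \subseteq X - \{\chi_0\}$ and $\sigma \in \mathfrak{G}$; combined with \eqref{eReconstruct} this collapses the image on generators to
\[
\psi(\sigma) = \bigl(\chi_{a_{n+1}}(\sigma)\, t_\varnothing,\, (\chi_{a_i}(\sigma))_{i \in [n]}\bigr)
\]
for all $\sigma \in \mathfrak{G}$.

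The case analysis is then routine. For odd $p$ coprime to $a_1 \cdots a_{n+1}$ every $\chi_{a_i}$ vanishes on $\sigma_p$, so $\psi(\sigma_p) = 0$ and $p$ is unramified in both extensions. For odd $p$ dividing exactly one $a_j$ (pairwise coprimality) $\psi(\sigma_p)$ is an involution in the image, matching the order-$2$ inertia of $\sigma_p$ in $\Gal(M/\Q) \cong \FF_2^{n+1}$. For $p = 2$, the hypothesis $a_i \equiv 1 \bmod 8$ for $i \in [n]$ forces $2$ to be unramified in each $\Q(\sqrt{a_i})$, hence $\chi_{a_i}(\sigma_2(j)) = 0$ for $i \in [n]$ and $j \in \{1, 2\}$, so $\psi(\sigma_2(j))$ reduces to $(\chi_{a_{n+1}}(\sigma_2(j))\, t_\varnothing, \mathbf{0})$; the subgroup generated has order at most $2$, with equality precisely when $(2)$ ramifies in $\Q(\sqrt{a_{n+1}})$, which matches $e_2(M/\Q)$. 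The one subtlety is that Proposition \ref{ramification read off by inertia}(b) as stated only gives divisibility at $p = 2$, whereas we need equality $e_2(L/\Q) = |\langle \bar\psi(\sigma_2(1)), \bar\psi(\sigma_2(2))\rangle|$; I would close this gap by noting that $I_2(2)$ is topologically generated by at most two elements (since the maximal pro-$2$ quotient of the local inertia at $2$ is), while $\chi_{-1}|_{I_2(2)}$ and $\chi_2|_{I_2(2)}$ are $\FF_2$-independent by the defining relations for $\sigma_2(1), \sigma_2(2)$, so these two elements already lift a basis of $I_2(2)/\Phi(I_2(2))$ and therefore topologically generate $I_2(2)$.
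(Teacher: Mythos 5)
The first claim and the odd-place analysis match the paper: your reduction to comparing $e_v(L/\Q)$ with $e_v(M/\Q)$ via Proposition \ref{ramification read off by inertia}(a), the computation $\psi(\sigma) = (\chi_{a_{n+1}}(\sigma)\,t_\varnothing, (\chi_{a_i}(\sigma))_{i \in [n]})$ on $\mathfrak{G}$ from the normalization and equation \eqref{eReconstruct}, and the resulting conclusions that $\psi(\sigma_v)$ is trivial (for $v \nmid a_1\cdots a_{n+1}$) or an involution (for $v \mid a_j$) are all correct and essentially identical to the paper's argument. The gap is at $p = 2$.

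You need the \emph{equality} $e_2(L/\Q) = |\langle \bar\psi(\sigma_2(1)), \bar\psi(\sigma_2(2))\rangle|$, whereas Proposition \ref{ramification read off by inertia}(b) only gives divisibility, and you propose to close this by asserting that $\sigma_2(1), \sigma_2(2)$ topologically generate $I_2(2)$, ``since the maximal pro-$2$ quotient of the local inertia at $2$ is [generated by at most two elements].'' That parenthetical is false. By Kummer theory the continuous $\FF_2$-characters of $I_2 = \Gal(\bar\Q_2/\Q_2^{\mathrm{unr}})$ are parametrized by $(\Q_2^{\mathrm{unr}})^*/((\Q_2^{\mathrm{unr}})^*)^{2}$; writing $U_k = 1 + 2^k\mathcal{O}_{\Q_2^{\mathrm{unr}}}$, surjectivity of Artin--Schreier on $\bar\FF_2$ gives $U_1^2 U_3 = U_2$ but $U_1/U_2 \cong \bar\FF_2$, so this space already contains an infinite-dimensional piece. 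Hence the maximal pro-$2$ quotient of $I_2$ is \emph{not} finitely generated, and $\dim_{\FF_2} I_2(2)/\Phi(I_2(2)) \leq 2$ does not follow. (This is exactly why the paper states part (b) of Proposition \ref{ramification read off by inertia} as a divisibility rather than an equality, unlike part (a).) Without knowing $I_2(2)$ has rank $\leq 2$, you only control the subgroup generated by your two chosen elements, not the full image of inertia.

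The paper closes this by controlling the entire image of $G_{\Q_2}$, not merely the images of $\sigma_2(1), \sigma_2(2)$. From equation \eqref{eSmith22} and $a_i \equiv 1 \bmod 8$ (which kills $\chi_{a_i}\circ i_2^\ast$), each $\phi_T(\psi)\circ i_2^\ast$ for $T \neq \varnothing$ is a genuine quadratic character of $G_{\Q_2}$, not just a $1$-cochain; it vanishes at $\sigma_2(1), \sigma_2(2)$ by the normalization. Since $\Hom(G_{\Q_2}, \FF_2) = \Q_2^\ast/(\Q_2^\ast)^2$ is $3$-dimensional with ramified part spanned by $\chi_{-1}, \chi_2$, and these two are exactly distinguished by evaluation at $\sigma_2(1), \sigma_2(2)$, any local \emph{character} vanishing on both must lie in the span of $\chi_5$, hence be unramified. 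It follows that $\psi\circ i_2^\ast$ factors through $\Gal(\Q_2(\sqrt5, \sqrt{a_{n+1}})/\Q_2)$, which bounds the image of all of $I_2$ at once. The moral: a quadratic character of $G_{\Q_2}$ is determined on inertia by its values at $\sigma_2(1),\sigma_2(2)$ even though these two elements need not generate inertia; the paper's use of \eqref{eSmith22} to upgrade the $1$-cochains $\phi_T$ to bona fide local characters is the step you are missing.

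\newcommand{\Hom}{\mathrm{Hom}}
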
 

\begin{proof}
We will abbreviate $A = \{a_1, \dots, a_n\}$. Proposition \ref{prop: field of def of exp maps} implies that the extension $L(\phi_{A; a_{n + 1}}(\mathfrak{G}))/\Q(\{\sqrt{a_i} : i \in [n]\})$ is a multiquadratic extension containing the field $\Q(\sqrt{a_{n + 1}})$. We now focus on the ramification claims. 

Let us first prove this for a place $v \in \mathcal{P}$ coprime to all the $a_i$. Then combining equation (\ref{eReconstruct}) with the definition of normalized expansion maps, we see that $\sigma_v$ is sent to the identity element of $\Gal(L(\phi_{A; a_{n + 1}}(\mathfrak{G}))/\Q)$. Hence the extension $L(\phi_{A; a_{n + 1}}(\mathfrak{G}))/\Q$ is unramified at $v$ in view of Proposition \ref{ramification read off by inertia}. 

Next, since $a_i \equiv 1 \bmod 8$ for all $i \in [n]$, it follows from equation (\ref{eSmith22}), combined with the definition of normalized expansion maps, that the map
$$
\phi_{\{a_i : i \in T\}; a_{n + 1}}(\mathfrak{G}) \circ i_2^{*}
$$ 
is an unramified quadratic character of $G_{\Q_2}$ for each subset $\varnothing \subset T \subseteq [n]$, . Hence it follows from equation (\ref{eReconstruct}) that the homomorphism $\psi_{A; a_{n + 1}}(\mathfrak{G}) \circ i_2^{*}$ factors through the kernel of $(\chi_5, \chi_{a_{n + 1}})$. As such, the extension $L(\phi_{A; a_{n + 1}}(\mathfrak{G}))/\Q(\{\sqrt{a_i} : i \in [n + 1]\})$ is unramified at any place above $(2)$. 

Finally, suppose that $v \in \mathcal{P}$ divides one of the $a_i$. Observe that, by the coprimality conditions, there exists exactly one $i_0 \in [n]$ such that $v \mid a_{i_0}$. It follows from the definition of normalized expansion maps, combined with equation (\ref{eReconstruct}), that $\psi_{A; a_{n + 1}}(\mathfrak{G})(\sigma_v)$ is an involution. Therefore Proposition \ref{ramification read off by inertia} implies that the ramification index of $v$ in $L(\phi_{A; a_{n + 1}}(\mathfrak{G}))/\Q$ equals $2$. Since the ramification index of $v$ in $\Q(\{\sqrt{a_i} : i \in [n + 1]\})/\Q$ is also $2$, we conclude that any place of $\Q(\{\sqrt{a_i} : i \in [n + 1]\})$ above $v$ is unramified in 
$$
L(\phi_{A; a_{n + 1}}(\mathfrak{G}))/\Q(\{\sqrt{a_i} : i \in [n + 1]\}).
$$
This ends the proof. 
\end{proof}

The following criterion gives an inductive procedure for creating expansion maps, see also \cite[Proposition 2.1]{Smith}.

\begin{prop}
\label{pCreatePhi}
Let $a_1, \dots, a_{s + 1}$ be odd, squarefree integers that are pairwise coprime. Assume that $a_1, \dots, a_s > 1$ and $a_{s + 1} \neq 1$. Further suppose that 
\begin{align}
\label{eStronglyConsistent}
\left(\frac{a_j}{p}\right) = 1
\end{align}
for all distinct $i, j \in [s + 1]$ and all primes $p$ dividing $a_i$. Then $\phi_{\{a_i : i \in [s]\}; a_{s + 1}}(\mathfrak{G})$ exists if $\phi_{\{a_i : i \in [s] - \{j\}\}; a_{s + 1}}(\mathfrak{G})$ exists for all $j \in [s]$ and every prime divisor $p$ of $a_j$ splits completely in $L(\phi_{\{a_i : i \in [s] - \{j\}\}; a_{s + 1}}(\mathfrak{G}))$.
\end{prop}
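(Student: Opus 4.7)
The plan is to translate the existence of $\phi_{\{a_i : i \in [s]\};\,a_{s+1}}(\mathfrak{G})$ into the solvability of a coboundary equation in $H^2(G_\Q,\FF_2)$, and then verify that equation via the Hasse principle. By the hypothesis together with Lemma~\ref{lAdditivity} and the uniqueness of normalized expansion maps, the cochains $\phi_{T;\,a_{s+1}}(\mathfrak{G})$ exist coherently for every proper $T \subsetneq [s]$. In view of equation~(\ref{eSmith22}), producing $\phi_{\{a_i:i\in[s]\};\,a_{s+1}}(\mathfrak{G})$ amounts to producing a continuous $1$-cochain $\phi : G_\Q \to \FF_2$ whose coboundary equals
\[
c(\sigma,\tau) := \sum_{\varnothing \subsetneq T \subseteq [s]} \chi_T(\sigma)\,\phi_{[s]-T;\,a_{s+1}}(\mathfrak{G})(\tau), \qquad \chi_T := \prod_{i \in T} \chi_{a_i}.
\]
Any such $\phi$ can be modified by a quadratic character $\chi_D$ without changing its coboundary, and by Proposition~\ref{prop:minimal set of gen} the values of $\chi_D$ on the topological generators $\mathfrak{G}$ can be prescribed freely by an appropriate choice of squarefree $D$; this enforces the normalization $\phi(\sigma) = 0$ for $\sigma \in \mathfrak{G}$ purely formally.

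A routine computation exploiting equation~(\ref{eSmith22}) applied to the $\phi_{T;\,a_{s+1}}(\mathfrak{G})$ for $T \subsetneq [s]$ shows that $c$ is a continuous $2$-cocycle. The core task is to show $[c] \in H^2(G_\Q,\FF_2) \cong \mathrm{Br}(\Q)[2]$ is trivial. I will verify local triviality at every place except $v = 2$, and invoke Hilbert reciprocity $\sum_v \mathrm{inv}_v = 0$ to handle the remaining case. At an odd place $v$ coprime to $a_1\cdots a_{s+1}$, the normalization $\phi_{T;\,a_{s+1}}(\mathfrak{G})(\sigma_v) = 0$ together with Proposition~\ref{ramification read off by inertia} shows that $c|_{G_{\Q_v}}$ is inflated from $\mathrm{Gal}(\Q_v^{\mathrm{unr}}/\Q_v) \cong \hat{\Z}$, whose $H^2$ with $\FF_2$-coefficients vanishes. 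At an odd place $v \mid a_j$, the strong consistency condition $(a_i/p)=1$ for $i \neq j$ and $p = v$ implies $\chi_{a_i}|_{G_{\Q_v}} = 0$ whenever $i \neq j$, so only terms with $j \in T$ survive; re-indexing via $S = [s] - T$ collapses the cocycle to
\[
c|_{G_{\Q_v}}(\sigma,\tau) = \chi_{a_j}(\sigma) \sum_{S \subseteq [s]-\{j\}} \phi_{S;\,a_{s+1}}(\mathfrak{G})(\tau).
\]
By Proposition~\ref{prop: field of def of exp maps}, the inner sum factors through $\mathrm{Gal}(L(\phi_{[s]-\{j\};\,a_{s+1}}(\mathfrak{G}))/\Q)$, and hence vanishes identically on $G_{\Q_v}$ by the splitting hypothesis that $v$ splits completely in that field. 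At $v = \infty$, the positivity $a_i > 0$ for $i \in [s]$ forces $\chi_T(\sigma_\infty) = 0$ for every non-empty $T \subseteq [s]$, whence $c|_{G_{\Q_\infty}} = 0$.

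I anticipate the main obstacle will be the local analysis at primes $v \mid a_j$: verifying that the conjunction of strong consistency and the splitting hypothesis is precisely sharp enough to annihilate the restricted cocycle, together with managing the combinatorial reindexing cleanly. The remaining ingredients --- the cocycle verification for $c$, the normalization by a quadratic character at the end, and the Hilbert reciprocity invocation at $v = 2$ --- are essentially formal once the ramified-place analysis is in hand.
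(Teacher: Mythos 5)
Your proof follows the paper's strategy exactly: translate the existence of $\phi_{\{a_i : i \in [s]\};\,a_{s+1}}(\mathfrak{G})$ into the triviality of the $2$-cocycle $c$ in $H^2(G_\Q,\FF_2)$, verify local triviality at $\infty$ and at every odd finite place (using unramifiedness away from the $a_i$, the strong consistency condition~(\ref{eStronglyConsistent}) at primes $v \mid a_i$, and the complete-splitting hypothesis), dispose of the place $(2)$ by Hilbert reciprocity, and finally twist by a quadratic character to enforce the normalization on $\mathfrak{G}$. One small computational slip worth flagging: at an odd $v \mid a_j$ with $j \in [s]$ you assert that all terms with $j \in T$ survive locally and so obtain $\chi_{a_j}(\sigma)\sum_{S \subseteq [s]-\{j\}} \phi_{S;\,a_{s+1}}(\mathfrak{G})(\tau)$, but in fact only $T = \{j\}$ survives --- for any larger $T$ the product $\chi_T$ already vanishes on $G_{\Q_v}$ because of the factor $\chi_{a_i}$ with $i \in T \setminus \{j\}$ --- so the restriction collapses to the single term $\chi_{a_j}(\sigma)\,\phi_{[s]-\{j\};\,a_{s+1}}(\mathfrak{G})(\tau)$; since this also vanishes by the complete-splitting hypothesis (its field of definition is contained in $L(\phi_{[s]-\{j\};\,a_{s+1}}(\mathfrak{G}))$), the conclusion is unaffected, and the case $j = s+1$ is handled trivially since then no $T \subseteq [s]$ survives at all.
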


\begin{remark}
The converse of Proposition \ref{pCreatePhi} is also true, but we shall not need or prove it here.
\end{remark}

\begin{proof}
We claim that there exists a continuous $1$-cochain $\phi: G_\Q \rightarrow \FF_2$ such that
\[
d\phi(\sigma, \tau) = \sum_{\varnothing \subset S \subseteq [s]} \left(\prod_{j \in S} \chi_{a_j}(\sigma)\right) \cdot \phi_{\{a_i : i \in [s] - S\}; a_{s + 1}}(\mathfrak{G})(\tau) =: \theta(\sigma, \tau).
\]
Once such a $\phi$ exists, we certainly have that $\phi(\text{id}) = 0$. Since $\phi$ is continuous, this implies that $\phi(\sigma) = 0$ except for possibly finitely many $\sigma \in \mathfrak{G}$. Then the proposition follows by twisting $\phi$ by the unique quadratic character $\chi: G_\Q \rightarrow \FF_2$ such that $\phi(\sigma) = \chi(\sigma)$ for all $\sigma \in \mathfrak{G}$.

To prove the claim, we first show that $\theta$ is a $2$-cocycle. If $\psi$ satisfies $\psi(\sigma, \tau) = \phi_1(\sigma) \cdot \phi_2(\tau)$ with $\phi_1, \phi_2: G_\Q \rightarrow \FF_2$, then we have the formula
\[
(d\psi)(\sigma, \tau, \mu) = \phi_1(\sigma) \cdot d\phi_2(\tau, \mu) + d\phi_1(\sigma, \tau) \cdot \phi_2(\mu).
\]
We combine this with the equations
\[
d\left(\prod_{j \in T} \chi_{a_j}\right)(\sigma, \tau) = \sum_{\varnothing \subset S \subset T} \left(\prod_{j \in S} \chi_{a_j}(\sigma)\right) \cdot \left(\prod_{j \in T - S} \chi_{a_j}(\tau)\right)
\]
for all $T \subseteq [s]$ and
\[
d\phi_{\{a_i : i \in T\}; a_{s + 1}}(\mathfrak{G})(\sigma, \tau) = \sum_{\varnothing \subset S \subseteq T} \left(\prod_{j \in S} \chi_{a_j}(\sigma)\right) \cdot \phi_{\{a_i : i \in T - S\}; a_{s + 1}}(\mathfrak{G})(\tau)
\]
for all strict subsets $T$ of $[s]$ to deduce that $\theta$ is indeed a $2$-cocycle. Hence the claim is equivalent to the class of $\theta$ vanishing in $H^2(G_\Q, \mathbb{F}_2)$, which is in turn equivalent to the restriction of $\theta$ vanishing in $H^2(G_{\Q_v}, \mathbb{F}_2)$ for all places $v$ of $\Q$ by class field theory.

If $v$ is the infinite place, then $\theta$ becomes the zero map when restricted to $G_\R$ since the $a_i$ are positive for $i \in [s]$. If $v$ is an odd, finite place, then we distinguish two cases. If $v$ does not divide any of the $a_i$, then the restriction of $\theta$ to $G_{\Q_v}$ factors through the maximal unramified extension of $\Q_v$. It follows that $\theta$ is in the image of the inflation map from
\[
H^2(\Gal(\Q_v^{\text{unr}}/\Q_v), \FF_2) = H^2(\hat{\Z}, \FF_2) = 0,
\]
which implies that $\theta$ is trivial at such places $v$. Now suppose that $v$ divides $a_j$ for some $j$. If $j = s + 1$, then equation (\ref{eStronglyConsistent}) implies that $\theta$ becomes the zero map when restricted to $G_{\Q_v}$. Therefore $\theta$ is certainly trivial in $H^2(G_{\Q_v}, \mathbb{F}_2)$. Instead suppose that $j \in [s]$. Now observe that our assumptions imply that $\phi_{\{a_i : i \in [s] - \{j\}\}; a_{s + 1}}(\mathfrak{G})$ is the zero map when restricted to $G_{\Q_v}$. Then it follows from equation (\ref{eStronglyConsistent}) that $\theta$ is also locally trivial at such $v$. We have now dealt with all odd places and the infinite place. Then $\theta$ also vanishes locally at the unique $2$-adic place of $\Q$ by Hilbert reciprocity.
\end{proof}

Finally, we need some additional maps that can be thought of as degenerate expansion maps with two indices. Let $a \in \mathcal{D}$. Then observe that 
$$
\chi_a \cup \chi_a
$$ 
vanishes in $H^2(G_\Q, \mathbb{F}_2)$. There is a unique map
$$
\phi_{a; a}(\mathfrak{G}): G_\Q \rightarrow \mathbb{F}_2
$$
such that
$$
(d\phi_{a; a}(\mathfrak{G}))(\sigma, \tau) = \chi_a(\sigma) \cdot \chi_a(\tau)
$$
and $\phi_{a; a}(\mathfrak{G})$ vanishes at all elements of $\mathfrak{G}$. If $a > 1$, then $L(\phi_{a; a}(\mathfrak{G}))$ is a cyclic degree $4$ extension of $\Q$ with its unique quadratic subextension equal to $\Q(\sqrt{a})$. Indeed, one can write $\mathbb{Z}/4\mathbb{Z}$ set-theoretically as $\mathbb{F}_2 \times \mathbb{F}_2$ with the group law given by 
\[
(x, y) * (x', y') = (x + x' + yy', y + y').
\]
Then the $\mathbb{Z}/4\mathbb{Z}$-character corresponding to $\phi_{a; a}(\mathfrak{G})$ is the map 
\[
\sigma \mapsto (\phi_{a; a}(\mathfrak{G})(\sigma), \chi_a(\sigma)).
\]
Take $v \in \mathcal{P}$ and suppose that $v$ is unramified in $\Q(\sqrt{a})$. Then the above map sends $\sigma_v$ to $(0, 0)$, which is the identity. We deduce from Proposition \ref{ramification read off by inertia} that all $v \in \mathcal{P}$ ramifying in $L(\phi_{a; a}(\mathfrak{G}))/\Q$ must divide $a$. In case $v$ divides $a$, then the ramification index of $v$ in $L(\phi_{a; a}(\mathfrak{G}))/\Q$ is $4$ thanks to Proposition \ref{ramification read off by inertia}. The above statements also apply to $v = (2)$ in case $2 \mid a$. 

Suppose now that $2$ does not divide $a$. Observe that $a$ is then either $1$ or $5$ modulo $8$. In case $a$ is $1$ modulo $8$, then $\phi_{a; a}(\mathfrak{G}) \circ i_2^{*}$ becomes a quadratic character, and since it vanishes on $\sigma_2(1), \sigma_2(2)$ it must be an unramified quadratic character. Therefore, in this case, $(2)$ does not ramify in $L(\phi_{a; a}(\mathfrak{G}))/\Q$. 

In case $a$ is $5$ modulo $8$, then there exists $\phi':\text{Gal}(\Q_2^{\text{unr}}/\Q_2) \to \mathbb{F}_2$ with $(d\phi')(\sigma, \tau) = \chi_a(\sigma) \cdot \chi_a(\tau)$. Indeed, this $1$-cochain comes from the $\mathbb{Z}/4\mathbb{Z}$-character $G_{\Q_2} \twoheadrightarrow \text{Gal}(\Q_{16}/\Q_2)$. Hence $\phi_{a; a}(\mathfrak{G}) - \phi'$ is a quadratic character in $\Gamma_{\mathbb{F}_2}(\Q_2)$. However, it vanishes on $\sigma_2(1), \sigma_2(2)$, since each of the two $1$-cochains do. Hence it must be in the span of the unramified quadratic character of $\Q_2$. It follows that $(\phi_{a; a}(\mathfrak{G}), \chi_a)$ is unramified at $(2)$. We summarize our results in the next lemma.

\begin{lemma}
\label{lCyclicDegree4}
Let $a \in \mathcal{D}$ be greater than $1$. Then the extension $L(\phi_{a; a}(\mathfrak{G}))/\Q$ is a $\mathbb{Z}/4\mathbb{Z}$-extension of $\Q$ ramifying exactly at those places where $\Q(\sqrt{a})/\Q$ ramifies. Furthermore, the ramification index of such places equals $4$.
\end{lemma}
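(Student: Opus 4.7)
The lemma is essentially a consolidation of the computations performed in the paragraphs immediately preceding its statement, so the plan is to assemble those observations into a clean three-step proof.

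\textbf{Step 1: cyclic degree $4$.} First I would verify that $L(\phi_{a;a}(\mathfrak{G}))/\Q$ is a $\mathbb{Z}/4\mathbb{Z}$-extension with quadratic subextension $\Q(\sqrt{a})$. The defining equation $(d\phi_{a;a}(\mathfrak{G}))(\sigma,\tau)=\chi_a(\sigma)\chi_a(\tau)$, combined with the set-theoretic identification of $\mathbb{Z}/4\mathbb{Z}$ with $\mathbb{F}_2\times\mathbb{F}_2$ equipped with the twisted law $(x,y)*(x',y')=(x+x'+yy',y+y')$, shows that $\sigma\mapsto(\phi_{a;a}(\mathfrak{G})(\sigma),\chi_a(\sigma))$ is a $\mathbb{Z}/4\mathbb{Z}$-valued character whose reduction modulo $2$ is $\chi_a$. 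Since $a>1$, this character is non-trivial of exact order $4$, giving the claimed extension.

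\textbf{Step 2: odd places.} For a place $v\in\mathcal{P}$ with $v\nmid a$, the condition $\chi_a(\sigma_v)=0$ together with the normalization $\phi_{a;a}(\mathfrak{G})(\sigma_v)=0$ shows that the $\mathbb{Z}/4\mathbb{Z}$-character sends $\sigma_v$ to $(0,0)$, so $v$ is unramified by Proposition \ref{ramification read off by inertia}. For $v\in\mathcal{P}$ with $v\mid a$, we have $\chi_a(\sigma_v)=1$, so $\sigma_v$ maps to an element of order $2$ in $\mathbb{Z}/4\mathbb{Z}$; applying the quantitative form of Proposition \ref{ramification read off by inertia} to the image $\langle(0,1)\rangle$ and then observing that an index-$2$ subgroup of a cyclic group of order $4$ forces the actual ramification to be $4$ (the decomposition group at such $v$ is itself cyclic and contains an element whose image generates $\mathbb{Z}/4\mathbb{Z}$, as one sees by using the involution $\sigma_v$ together with any element of $\mathfrak{G}$ with $\chi_a$-value $1$ at the unique prime $v'\mid a$ with $v'\neq v$; alternatively, since $\Q(\sqrt{a})/\Q$ already has ramification index $2$ at $v$ and the upper extension is totally ramified there because $L(\phi_{a;a}(\mathfrak{G}))/\Q$ is a cyclic $2$-extension unramified in the residue field at $v$), gives ramification index exactly $4$ at $v$.

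\textbf{Step 3: the place $(2)$.} Here one splits into cases depending on $a\bmod 8$. If $2\mid a$, then the same argument as in Step 2 applies verbatim to $\sigma_2(1)$ and $\sigma_2(2)$, using part $(b)$ of Proposition \ref{ramification read off by inertia}, giving ramification index $4$. If $2\nmid a$, then since $a\in\mathcal{D}$ every odd prime divisor of $a$ is $\equiv 1\bmod 4$, so $a\equiv 1$ or $5\bmod 8$; in either case $\Q(\sqrt{a})/\Q$ is unramified at $(2)$, and I need to show the same for $L(\phi_{a;a}(\mathfrak{G}))/\Q$. For $a\equiv 1\bmod 8$, the cochain $\phi_{a;a}(\mathfrak{G})\circ i_2^*$ is a homomorphism (the obstruction $\chi_a\cup\chi_a$ restricted to $G_{\Q_2}$ is trivial), and the normalization forces it to be unramified. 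For $a\equiv 5\bmod 8$, I subtract an explicit local $1$-cochain $\phi'$ coming from the $\mathbb{Z}/4\mathbb{Z}$-character $G_{\Q_2}\twoheadrightarrow\Gal(\Q_{16}/\Q_2)$; then $\phi_{a;a}(\mathfrak{G})-\phi'$ is a quadratic character of $G_{\Q_2}$ vanishing on $\sigma_2(1),\sigma_2(2)$, hence unramified, and combining with $\phi'$ (whose associated $\mathbb{Z}/4\mathbb{Z}$-extension $\Q_{16}/\Q_2$ is unramified) yields that the full $\mathbb{Z}/4\mathbb{Z}$-character is unramified at $(2)$.

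The only genuinely non-mechanical step is the $a\equiv 5\bmod 8$ case in Step 3, where one must exhibit the correct local model $\phi'$ at $2$; once this is in place, the whole proof is just a bookkeeping exercise over Proposition \ref{ramification read off by inertia} and the explicit $\mathbb{Z}/4\mathbb{Z}$-character computed in Step 1.
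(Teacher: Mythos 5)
Your outline follows the paper's own discussion preceding the lemma almost line for line, and Steps 1 and 3 are correct. The one genuine problem is in Step 2.

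You assert that since $\chi_a(\sigma_v)=1$, ``$\sigma_v$ maps to an element of order $2$ in $\mathbb{Z}/4\mathbb{Z}$.'' This is wrong: in the model of $\mathbb{Z}/4\mathbb{Z}$ as $\mathbb{F}_2\times\mathbb{F}_2$ with $(x,y)*(x',y')=(x+x'+yy',y+y')$, the normalization $\phi_{a;a}(\mathfrak{G})(\sigma_v)=0$ gives $\sigma_v\mapsto(0,1)$, and $(0,1)*(0,1)=(1,0)\ne(0,0)$, so $(0,1)$ has order $4$, not $2$. In fact any element whose second coordinate is $1$ has order $4$. This is precisely what makes the argument clean: $\langle(0,1)\rangle$ is all of $\mathbb{Z}/4\mathbb{Z}$, and Proposition \ref{ramification read off by inertia} part $(a)$ then says directly that $v$ has ramification index $4$. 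There is no gap to bridge.

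Because of the miscomputation, you then construct two ``alternative'' repairs, both flawed. The first invokes ``the unique prime $v'\mid a$ with $v'\neq v$'', which silently assumes $a$ has exactly two prime divisors --- the lemma makes no such restriction. The second claims the upper extension ``is totally ramified there because $L(\phi_{a;a}(\mathfrak{G}))/\Q$ is a cyclic $2$-extension unramified in the residue field at $v$,'' which is circular: that the residue extension is trivial is what needs to be established. Deleting both alternatives and replacing the order-$2$ claim with the correct order-$4$ computation gives exactly the paper's argument, and the same fix applies in Step 3 when $2\mid a$.
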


\subsection{The Artin pairing}
\label{ssArtinpairing}
Let $A$ be a finite abelian $2$-group and let $s \in \mathbb{Z}_{\geq 1}$. Denote by 
$$
A^{\vee} := \text{Hom}_{\text{ab.gr.}} \left(A, \ \frac{\Q_2}{\mathbb{Z}_2} \right)
$$
the dual group. We define the pairing
$$
\langle -, -\rangle_{\text{Art}_s(A)}: 2^{s - 1} \cdot A[2^s] \times 2^{s - 1} \cdot A^{\vee}[2^s] \to \mathbb{F}_2
$$
by the formula
$$
\langle a, \chi \rangle_{\text{Art}_s(A)} := \psi(a),
$$
where $\psi$ is any element of $A^{\vee}$ with $2^{s - 1} \cdot \psi = \chi$. Observe that changing the choice of $\psi$ amounts to taking an element of the shape $\psi + \psi'$ with $\psi' \in A^\vee[2^{s - 1}]$. This does not affect the resulting pairing, since $a$ is in $2^{s - 1} \cdot A[2^s]$ and thus vanishes when paired, through the canonical duality pairing, with $A^\vee[2^{s - 1}]$. Also observe that the pairing is valued in $\mathbb{F}_2$, as we claimed, since $a \in A[2]$ and $\psi$ is a group homomorphism. Thus we have shown that the pairing does not depend on the choice of $\psi$ and is valued in $\mathbb{F}_2$. 

We claim that the left kernel and the right kernel of $\langle -, -\rangle_{\text{Art}_s(A)}$ actually coincide with, respectively, $2^s \cdot A[2^{s + 1}]$ and $2^s \cdot A^{\vee}[2^{s + 1}]$. Indeed, a moment reflection shows that 
$$
\langle -, - \rangle_{\text{Art}_s(A)} = \langle -, - \rangle_{\text{Art}_{1}(2^{s - 1} \cdot A)}.
$$
In this last equality we are implicitly identifying $(2^{s - 1} \cdot A)^{\vee}$ and $2^{s - 1} \cdot A^{\vee}$, through the standard inclusion of $(2^{s - 1} \cdot A)^{\vee}$ in $A^{\vee}$ induced by the natural surjection $A \twoheadrightarrow 2^{s - 1} \cdot A$ given by multiplication by $2^{s - 1}$. Hence the claim follows from the case $s = 1$, which is an immediate consequence of the duality theory of finite dimensional vector spaces over $\mathbb{F}_2$. 

In case $x$ is a squarefree integer different from $1$ and $A = \text{Cl}(\Q(\sqrt{x}))[2^\infty]$, we will simply write
$$
\langle -, -\rangle_{\text{Art}_s(x)}:=\langle -, -\rangle_{\text{Art}_s(\text{Cl}(\Q(\sqrt{x}))[2^\infty])}.
$$
In what follows we will canonically identify $\text{Cl}(\Q(\sqrt{x}))[2^\infty]$ with the largest quotient of $\text{Cl}(\Q(\sqrt{x}))$ which is a $2$-group: the natural projection map from the former to the latter induces an isomorphism. In this way, via the Artin reciprocity map, $\text{Cl}(\Q(\sqrt{x}))[2^\infty]$ is identified with
$$
\text{Gal}(H_{2^\infty}(\Q(\sqrt{x}))/\Q(\sqrt{x})),
$$
where we recall that $H_{2^\infty}(\Q(\sqrt{x}))$ is the largest extension of $\Q(\sqrt{x})$ inside $\Q^{\text{sep}}$ that is abelian, unramified at all finite places of $\Q(\sqrt{x})$ and of degree a power of $2$. This allows us to reinterpret the Artin pairing $\langle -, -\rangle_{\text{Art}_s(x)}$ as the Artin symbol of a $2$-torsion ideal class in a cyclic degree $2^s$-extension of $\Q(\sqrt{x})$ unramified at all finite places. We will repeatedly use throughout the text this way of computing $\langle -, -\rangle_{\text{Art}_s(x)}$.

\section{Higher R\'edei reciprocity}
\label{sRed}
This section generalizes one of the central results in \cite{KP3}, which is a generalization of the classical R\'edei reciprocity law (in turn a generalization of quadratic reciprocity).

\subsection{Statement of the reciprocity law}
Let $n \in \mathbb{Z}_{\geq 1}$ and let $A \subseteq \Gamma_{\FF_2}(\Q)$ with $|A| = n$. Let $\chi_1, \chi_2$ be two distinct elements of $\Gamma_{\FF_2}(\Q) - A$ and put
$$
A_1 := A \cup \{\chi_1\}, \quad A_2 := A \cup \{\chi_2\}. 
$$ 
Given a finite Galois extension $L/\Q$, we denote by $\text{Ram}(L/\Q)$ the set of places of $\Q$ that ramify in the extension $L/\Q$. Furthermore, for a collection of characters $T \subseteq \Gamma_{\FF_2}(\Q)$, we recall that $\Q(T)$ denotes the corresponding multiquadratic extension of $\Q$. 

Write $\infty$ for the infinite place of $\Q$. We assume that, as $\chi$ varies in $A$, the $n$ sets $\text{Ram}(\Q(\chi)/\Q)$ are non-empty, pairwise disjoint and none of them contains $\infty$ (in other words $\Q(A)/\Q$ is totally real). This forces $A$ to be a linearly independent set of characters over $\FF_2$. Additionally, we assume that $\text{Ram}(\Q(\chi_1)/\Q)$ and $\text{Ram}(\Q(\chi_2)/\Q)$ are non-empty and disjoint from $\cup_{\chi \in A}\text{Ram}(\Q(\chi)/\Q)$, which certainly implies that $\chi_1$ is linearly independent from $A$, and similarly for $\chi_2$.

Suppose that we are given two expansion maps
$$
\psi_1, \psi_2: G_\Q \twoheadrightarrow \FF_2[\FF_2^A] \rtimes \FF_2^A
$$
with supports $A_1, A_2$ and pointers $\chi_1, \chi_2$ respectively. Write $(\phi_{1, B})_{B \subseteq A}, (\phi_{2, B})_{B \subseteq A}$ for the corresponding system of continuous $1$-cochains from $G_\Q$ to $\FF_2$ satisfying $\phi_{1, \varnothing} = \chi_1, \phi_{2, \varnothing} = \chi_2$. Thanks to Proposition \ref{prop: field of def of exp maps}, $L(\psi_1)$ and $L(\psi_2)$ are central $\FF_2$-extensions of respectively $M(\psi_1)$ and $M(\psi_2)$. We will impose some further conditions on $\chi_1$ and $\chi_2$, which we will refer to as the \emph{coprimality constraints} on the pointers. 

First of all, we demand that $\text{Ram}(\Q(\chi_1)/\Q) \cap \text{Ram}(\Q(\chi_2)/\Q) \subseteq \{(2)\}$. Next we demand that $\text{inv}_2(\chi_1 \cup \chi_2) = 0$ and that at least one between $\chi_1$ and $\chi_2$ vanishes on $\sigma_2(2)$. Finally, we require $(2)$ to split completely in $\Q(A)/\Q$. If all these conditions are met, we say that the pointers satisfy the coprimality constraints. 

We give one more definition before stating our reciprocity law. We remark that whenever we have a $4$-tuple as above, then for each subset $T' \subseteq A$ the $1$-cochains $\phi_{T'}(\psi_1) \circ i_2^{*}, \phi_{T'}(\psi_2) \circ i_2^{*}$ are quadratic characters from $G_{\Q_2}$ to $\mathbb{F}_2$, as one can see by combining equation (\ref{eSmith22}) with the fact that $(2)$ splits completely in $\Q(A)/\Q$.

\begin{mydef} 
\label{def: Redei admissible}
Let $(A_1, A_2, \psi_1, \psi_2)$ be a $4$-tuple as above. We call $(A_1, A_2, \psi_1, \psi_2)$ \emph{R\'edei admissible} if the following four conditions hold
\begin{itemize}
\item if $\infty$ splits completely in $\Q(A_1 \cup A_2)/\Q$, then $\infty$ splits completely in $M(\psi_1)M(\psi_2)/\Q$ as well. 
\item whenever $\{i, j\} = \{1,2\}$ and $\infty$ ramifies in $L(\psi_i)/\Q$, then $\infty$ splits completely in $M(\psi_j)/\Q$. 
\item whenever $\{i, j\} = \{1, 2\}$, if a place $w$ of $\Q(A)$, lying above a place $v \in \mathcal{P}$, ramifies in the extension $L(\psi_i)/\Q(A)$, then $v$ splits completely in $M(\psi_j)/ \Q$. Furthermore, we also demand that $v$ is unramified in $L(\psi_j)/\Q$;
\item whenever $L(\psi_1)L(\psi_2)/\Q$ is ramified at $(2)$, then there exist $i, j$ with $\{i, j\} = \{1, 2\}$ such that $\psi_i$ is a normalized expansion map satisfying $\chi_i(\sigma_2(2)) = 0$ and $\phi_T(\psi_i) \circ i_2^\ast = 0$ for each $\varnothing \neq T \subsetneq A$. Furthermore, the characters $\chi_i \circ i_2^\ast$ and $\phi_{T'}(\psi_j) \circ i_2^\ast$ are orthogonal with respect to the local Hilbert pairing at $(2)$ for each $T' \subseteq A$. 
\end{itemize}
We say that $(\chi_1, \chi_2)$ is the \emph{pointer vector} of the $4$-tuple and that $A$ is the \emph{base set} of the $4$-tuple. 
\end{mydef}

Let $(A_1, A_2, \psi_1, \psi_2)$ be a R\'edei admissible $4$-tuple with pointer vector $(\chi_1, \chi_2)$. Then each place $v \in \text{Ram}(\Q(\chi_1)/\Q) \cap \mathcal{P}$ is unramified in $L(\psi_2)/\Q$. Indeed, this follows from the third condition in Definition \ref{def: Redei admissible} combined with the fact that $\text{Ram}(\chi_1)$ is disjoint from $\text{Ram}(\chi)$ for each $\chi \in A$. Consequently, it makes sense to speak of the Artin class $\text{Art}(v, L(\psi_2)/\Q)$ for each place $v \in \text{Ram}(\Q(\chi_1)/\Q) \cap \mathcal{P}$. Furthermore, this Artin symbol lands in $\text{Gal}(L(\psi_2)/M(\psi_2))$, which is the center of $\text{Gal}(L(\psi_2)/\Q)$ (see Proposition \ref{prop: field of def of exp maps}) of size equal to $2$ and hence can uniquely be identified with $\mathbb{F}_2$. Therefore $\text{Art}(v, L(\psi_2)/\Q)$ is a well-defined element of $\FF_2$. Symmetrically, the same holds if we swap the roles of $1$ and $2$.

Next, whenever $\{i, j\} = \{1, 2\}$ and $\infty$ ramifies in $\Q(\chi_i)/\Q$, then $\infty$ splits completely in $M(\psi_j)/\Q$ by the second condition of Definition \ref{def: Redei admissible}. Consequently the Artin symbol $\text{Art}(\infty, L(\psi_j)/\Q)$ is also an element of the center $\text{Gal}(L(\psi_j)/M(\psi_j))$ and hence is a well-defined element of $\mathbb{F}_2$. 

Whenever $(2)$ ramifies in $L(\psi_1)L(\psi_2)/\Q$, let $\{i, j\} = \{1, 2\}$ be as in the last condition of Definition \ref{def: Redei admissible}. Then, in case $\chi_i \circ i_2^\ast = 0$, $(2)$ splits completely in $M(\psi_i)/\Q$ and $(2)$ is unramified in $L(\psi_i)/\Q$. Consequently, the Artin symbol $\text{Art}((2), L(\psi_i)/\Q)$ is also an element of the center $\text{Gal}(L(\psi_i)/M(\psi_i))$ and thus a well-defined element of $\mathbb{F}_2$. 

Instead suppose that $\chi_i \circ i_2^{*} \neq 0$. Then $L(\psi_i)/\Q(\chi_i)$ is unramified at $\text{Up}_{\Q(\chi_i)/\Q}(2)$, thanks to Proposition \ref{normalized expansions are unramified}, and this place splits completely in $M(\psi_i)/\Q(\chi_i)$. Consequently, the Artin symbol $\text{Art}(\text{Up}_{\Q(\chi_i)/\Q}(2), L(\psi_i)/\Q(\chi_i))$ is well-defined and an element of the center $\text{Gal}(L(\psi_i)/M(\psi_i))$ and therefore a well-defined element of $\mathbb{F}_2$. By abuse of notation we will sometimes denote this symbol as $\text{Art}((2), L(\psi_i)/\Q)$. Also, observe that in case $\chi_i \circ i_2^{*} = \chi_5$, then it must be that $\chi_j(\sigma_2(2)) = 0$, thanks to the fact that $\chi_i$ is orthogonal to $\chi_j$ with respect to the local Hilbert pairing at $(2)$. 

Finally, for a quadratic extension $\Q(\sqrt{d})/\Q$, we put 
\[
\widetilde{\text{Ram}}(\Q(\sqrt{d})/\Q) = 
\left\{
\begin{array}{ll}
\text{Ram}(\Q(\sqrt{d})/\Q) \setminus (2) & \mbox{if } d \text{ has even 2-adic valuation} \\
\text{Ram}(\Q(\sqrt{d})/\Q) & \mbox{otherwise.}
\end{array}
\right.
\]
We can now state the reciprocity law.

\begin{theorem} 
\label{Redei reciprocity}
Let $(A_1, A_2, \psi_1, \psi_2)$ be a R\'edei admissible $4$-tuple with pointer vector $(\chi_1, \chi_2)$. Then
$$
\sum_{v \in \widetilde{\emph{Ram}}(\Q(\chi_1)/\Q)} \emph{Art}(v, L(\psi_2)/\Q) = \sum_{v' \in \widetilde{\emph{Ram}}(\Q(\chi_2)/\Q)} \emph{Art}(v', L(\psi_1)/\Q).
$$
\end{theorem}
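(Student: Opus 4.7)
The plan is to prove Theorem~\ref{Redei reciprocity} as a Hilbert reciprocity statement attached to a carefully constructed global $2$-cocycle $\theta\in Z^2(G_\Q,\FF_2)$ built out of the two expansion maps $\psi_1,\psi_2$. Concretely, I would assemble a Massey-product style cocycle of the shape
\[
\theta(\sigma,\tau)=\sum_{B\subseteq A}\phi_B(\psi_1)(\sigma)\cdot\phi_{A-B}(\psi_2)(\tau)+(\text{correction terms}),
\]
where the correction terms are chosen so that all contributions coming from the defining identities $d\phi_B(\psi_i)(\sigma,\tau)=\sum_{\varnothing\neq S\subseteq B}\chi_S(\sigma)\phi_{B-S}(\psi_i)(\tau)$ cancel in $d\theta$. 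Since $\phi_\varnothing(\psi_i)=\chi_i$, the cocycle $\theta$ is a global object that, up to coboundary, represents the higher Massey symbol $\langle\chi_1,\chi_{a},\ldots,\chi_2\rangle$ associated with the pair of defining systems $\psi_1,\psi_2$. Once $\theta$ is a bona fide $2$-cocycle, its class in $H^2(G_\Q,\FF_2)$ satisfies $\sum_v\mathrm{inv}_v(\theta)=0$ by Hilbert reciprocity.

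The second step is to evaluate each local invariant $\mathrm{inv}_v(\theta)$ in terms of Artin symbols. First, at any finite place $v\in\mathcal P$ that is unramified in $L(\psi_1)L(\psi_2)$ and coprime to the ramification loci of all characters involved, the restriction of $\theta$ factors through the procyclic group $\Gal(\Q_v^{\mathrm{unr}}/\Q_v)$, and hence vanishes in $H^2(\hat\Z,\FF_2)=0$. Second, for $v\in\widetilde{\mathrm{Ram}}(\Q(\chi_1)/\Q)$ the R\'edei admissibility conditions force $v$ to be unramified in $L(\psi_2)/\Q$ and to split completely in $M(\psi_2)/\Q$; a direct local class field theory computation (identifying the center of $\Gal(L(\psi_2)/\Q)$ with $\FF_2$ as in Proposition~\ref{prop: field of def of exp maps}) then yields $\mathrm{inv}_v(\theta)=\mathrm{Art}(v,L(\psi_2)/\Q)$. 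Symmetrically, the contribution at $v'\in\widetilde{\mathrm{Ram}}(\Q(\chi_2)/\Q)$ is $\mathrm{Art}(v',L(\psi_1)/\Q)$. Summing these via Hilbert reciprocity and rearranging gives the claimed equality.

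The third step is to verify that the remaining ``boundary'' places, namely $\infty$ and $(2)$, contribute nothing. The second bullet of R\'edei admissibility handles $\infty$: whenever $\infty$ ramifies in $\Q(\chi_i)/\Q$ it splits completely in $M(\psi_j)/\Q$, and one checks that the archimedean Hilbert symbol that appears in $\mathrm{inv}_\infty(\theta)$ is exactly $\mathrm{Art}(\infty,L(\psi_j)/\Q)$, which is absorbed into the sum on the right index set via the convention of $\widetilde{\mathrm{Ram}}$. The fourth bullet encodes precisely what is needed at the dyadic place: the existence of a pointer, say $\psi_i$, whose higher cochains $\phi_T(\psi_i)$ are locally trivial at $2$ together with the orthogonality under the local Hilbert pairing at $(2)$ of $\chi_i\circ i_2^*$ and $\phi_{T'}(\psi_j)\circ i_2^*$, guarantees that the $(2)$-adic contribution of $\theta$ equals the corresponding Artin symbol included in $\widetilde{\mathrm{Ram}}$ exactly when $\chi_i$ is dyadically ramified with odd valuation. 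The coprimality constraints on the pointers ensure the relevant places are disjoint, so no place contributes twice.

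I expect the main obstacle to be the construction of $\theta$ and the verification that it is a genuine $2$-cocycle. Because $\chi_S$ is not additive and the defining relation for $\phi_B$ mixes indices, naive cup products $\chi_1\cup\phi_A(\psi_2)$ fail to be closed and one must introduce exactly the right symmetric correction summed over $B\subseteq A$. The secondary difficulty is the $(2)$-adic local computation: the slightly byzantine fourth condition in Definition~\ref{def: Redei admissible} is tailor-made to match the output of Hilbert reciprocity at $(2)$ with the Artin symbol on the appropriate side of the sum, so one must treat separately the subcases $\chi_i\circ i_2^*=0$, $\chi_i\circ i_2^*=\chi_5$, and $\chi_i\circ i_2^*$ ramified, in each case invoking Proposition~\ref{normalized expansions are unramified} together with Lemma~\ref{lCyclicDegree4} to identify the local invariant with the correct Frobenius in the center of $\Gal(L(\psi_i)/\Q)$.
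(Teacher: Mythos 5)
Your approach is genuinely different from the paper's, and while the top-level strategy (apply Hilbert reciprocity to a global degree-two class, then identify the local invariants with Artin symbols) is the same, the key intermediate objects are not. The paper never constructs a $2$-cocycle over $G_\Q$: instead it restricts the expansion maps to $G_{\Q(A)}$, where each $\phi_{i,A}$ becomes a genuine quadratic character corresponding (via Kummer theory) to a class $\alpha_{i,A} \in \Q(A)^\ast/\Q(A)^{\ast 2}$, proves the crucial norm compatibility $N_{\Q(A)/\Q(B)}(\alpha_{i,A}) = \alpha_{i,B}$ (Proposition~\ref{norming stuff down}), computes the ordinary Hilbert symbols $(\alpha_{1,A},\alpha_{2,A})_w$ at every place $w$ of $\Q(A)$ (Proposition~\ref{Hilbert symbols calculation}, which occupies most of the section), and applies Hilbert reciprocity over $\Q(A)$. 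The passage from a sum over $w \in \Omega_{\Q(A)}$ to the final sum over $v \in \widetilde{\mathrm{Ram}}(\Q(\chi_i)/\Q)$ is exactly what Corollary~\ref{right parity} delivers, and it is driven by the norm relation. In your proposal all of this is replaced by a single reciprocity statement over $\Q$ for a cocycle $\theta$ in $Z^2(G_\Q,\FF_2)$.

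The difficulty is that the central object of your argument, the cocycle $\theta$, is not actually produced. The naive sum $\sum_{B \subseteq A} \phi_B(\psi_1)(\sigma)\,\phi_{A-B}(\psi_2)(\tau)$ is far from closed: using the defining relation \eqref{eSmith22} one finds
\[
d\Big(\sum_{B\subseteq A}\phi_B(\psi_1)\cup\phi_{A-B}(\psi_2)\Big)(\sigma,\tau,\mu)
= \sum_{B\subseteq A}\sum_{\varnothing\neq S\subseteq A-B}\phi_B(\psi_1)(\sigma)\chi_S(\tau)\phi_{A-B-S}(\psi_2)(\mu)
+ \sum_{B\subseteq A}\sum_{\varnothing\neq S\subseteq B}\chi_S(\sigma)\phi_{B-S}(\psi_1)(\tau)\phi_{A-B}(\psi_2)(\mu),
\]
and the two triple products are placed in the wrong variable slots to cancel against each other. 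Unlike a classical Massey triple product there is no obvious choice of correction summands (built from the $\phi_B$ and $\chi_S$ alone) that kills both families simultaneously, in part because $\psi_1$ and $\psi_2$ share the characters indexed by $A$ and so ``cross terms'' between the two defining systems appear. You flag this as the main obstacle yourself, but it is not a finishing detail: it is the entire content of the theorem, and without an explicit $\theta$ the rest of your outline has nothing to attach to. The paper's device of restricting to $\Q(A)$ was introduced precisely to avoid having to solve this problem, by turning a would-be Massey product into an honest cup product of quadratic characters over a larger ground field. A secondary gap is that the local computations of $\mathrm{inv}_v(\theta)$ are asserted rather than carried out; in the paper the analogous computations (distinguishing the archimedean, dyadic, and odd ramified cases, and interpreting them through Proposition~\ref{prop: field of def of exp maps} and Lemma~\ref{Hilbert symbols at 2}) are the bulk of the argument.
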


\subsection{\texorpdfstring{Proof of Theorem \ref{Redei reciprocity}}{Proof of Theorem 3.2}}
Take a R\'edei admissible $4$-tuple $(A_1, A_2, \psi_1, \psi_2)$ with pointer vector $(\chi_1, \chi_2)$ and base set $A$. We derive from equation (\ref{eSmith22}) that the tuple of $1$-cochains $(\phi_{i, B})_{B \subseteq A}$ becomes a tuple of quadratic characters when restricted to $G_{\Q(A)}$ for every $i \in \{1, 2\}$. Furthermore, it follows directly from the definition of an expansion map that the character $\phi_{i, A}$ generates a rank $1$ free module over the ring $\mathbb{F}_2[\text{Gal}(\Q(A)/\Q)]$ for every $i \in \{1, 2\}$, and furthermore the corresponding Galois extension of $\Q$ equals $L(\psi_i)$.

For each $B \subseteq A$ we denote by $\alpha_{i, B} \in \frac{\Q(B)^{*}}{\Q(B)^{*2}}$ the unique element, provided by Kummer theory, corresponding to the restriction of $\phi_{i, B}$ to $G_{\Q(B)}$ (which is a quadratic character). Our next proposition is a generalization of the well-known connection between $D_4$-extensions and solution sets of certain conics, see for instance \cite[Section 5]{StevenhagenRedei}, which also explains how this phenomenon is related to R\'edei symbols.

\begin{prop} 
\label{norming stuff down}
We have for all $i \in \{1, 2\}$ and all $B \subseteq A$ that
$$
N_{\Q(A)/\Q(B)}(\alpha_{i, A}) = \alpha_{i, B}
$$
as elements of $\frac{\Q(B)^{*}}{\Q(B)^{*2}}$.
\end{prop}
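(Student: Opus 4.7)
The strategy is to translate via Kummer theory: under the isomorphism $K^*/K^{*2} \cong H^1(G_K, \FF_2)$, the norm map $N_{\Q(A)/\Q(B)}$ corresponds to the cohomological corestriction $\mathrm{cor}: H^1(G_{\Q(A)}, \FF_2) \to H^1(G_{\Q(B)}, \FF_2)$. Since $\alpha_{i, A}$ corresponds to $\phi_{i, A}|_{G_{\Q(A)}}$ and $\alpha_{i, B}$ corresponds to $\phi_{i, B}|_{G_{\Q(B)}}$, the proposition becomes the cohomological identity
\[
\mathrm{cor}\bigl(\phi_{i, A}|_{G_{\Q(A)}}\bigr) = \phi_{i, B}|_{G_{\Q(B)}}
\]
in $H^1(G_{\Q(B)}, \FF_2)$. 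By the transitivity of norm and corestriction in towers, together with the observation that composing $\psi_i$ with the natural surjection $\FF_2[\FF_2^A] \rtimes \FF_2^A \twoheadrightarrow \FF_2[\FF_2^{B'}] \rtimes \FF_2^{B'}$ (induced by the quotient $\FF_2^A \twoheadrightarrow \FF_2^{B'}$ and sending $t_a \mapsto 0$ for $a \in A - B'$) yields an expansion map with support $B' \cup \{\chi_i\}$ whose $\phi_{i, Y}$-components for $Y \subseteq B'$ coincide with the original, I can induct on $|A - B|$ and reduce to the base case $|A - B| = 1$.

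The key computational ingredient is the conjugation identity
\[
\phi_{i, A}(\sigma g \sigma^{-1}) = \sum_{S \subseteq A} \chi_S(\sigma) \, \phi_{i, A - S}(g) \qquad \text{for all } \sigma \in G_\Q \text{ and } g \in G_{\Q(A)}.
\]
This is obtained by applying the cocycle relation \eqref{eSmith22} twice to expand $\phi_{i, A}$ on the product $\sigma \cdot (g \sigma^{-1})$, using that $\chi_T(g) = 0$ for every non-empty $T \subseteq A$ because $g \in G_{\Q(A)}$, and finally simplifying via the identity $\sum_{S \subseteq A} \chi_S(\sigma) \phi_{i, A - S}(\sigma^{-1}) = \phi_{i, A}(\sigma)$, which is extracted from $\phi_{i, A}(\sigma \sigma^{-1}) = \phi_{i, A}(1) = 0$.

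For the base case $A = B \cup \{c\}$, choose any lift $\tilde\tau \in G_{\Q(B)}$ of the non-trivial element of $\Gal(\Q(A)/\Q(B))$, and use $\{1, \tilde\tau\}$ as coset representatives. The standard transfer formulas then give $\mathrm{cor}(\phi_{i, A})(g) = \phi_{i, A}(g) + \phi_{i, A}(\tilde\tau^{-1} g \tilde\tau)$ for $g \in G_{\Q(A)}$, and $\mathrm{cor}(\phi_{i, A})(\tilde\tau) = \phi_{i, A}(\tilde\tau^2)$. Since $\chi_b(\tilde\tau) = 0$ for every $b \in B$ while $\chi_c(\tilde\tau) = 1$, only the indices $S = \varnothing$ and $S = \{c\}$ contribute in the conjugation identity, giving $\phi_{i, A}(\tilde\tau^{-1} g \tilde\tau) = \phi_{i, A}(g) + \phi_{i, B}(g)$ and hence $\mathrm{cor}(\phi_{i, A})(g) = \phi_{i, B}(g)$ on $G_{\Q(A)}$. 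The same constraint collapses \eqref{eSmith22} to $\phi_{i, A}(\tilde\tau^2) = (d\phi_{i, A})(\tilde\tau, \tilde\tau) = \phi_{i, B}(\tilde\tau)$. Thus $\mathrm{cor}(\phi_{i, A})$ and $\phi_{i, B}|_{G_{\Q(B)}}$ are group homomorphisms agreeing on the generating set $G_{\Q(A)} \cup \{\tilde\tau\}$ of $G_{\Q(B)}$, hence they coincide everywhere, finishing the base case. The main technical subtlety I anticipate is the clean derivation of the conjugation identity; once that is in hand, the remainder reduces to routine bookkeeping with the corestriction/Kummer-theory dictionary.
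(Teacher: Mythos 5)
Your proof is correct and follows essentially the same route as the paper's: translate through Kummer theory to a corestriction statement, reduce by transitivity to the index-$2$ case $A = B \cup \{c\}$, and verify the corestriction by computing its effect on $G_{\Q(A)}$ via the conjugation action and on a lift $\tilde\tau$ via $\phi_{i,A}(\tilde\tau^2)$. The only stylistic difference is that you derive the conjugation identity $\phi_{i,A}(\sigma g \sigma^{-1}) = \sum_{S\subseteq A}\chi_S(\sigma)\phi_{i,A-S}(g)$ directly from the cocycle relation (\ref{eSmith22}), whereas the paper obtains the same formula by working in the module $\FF_2[\FF_2^A]\rtimes\{0\} = \psi_i(G_{\Q(A)})$ where $\sigma$ acts as multiplication by $1+t_a$; these are the same computation in two coordinate systems.
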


\begin{proof}
From the recursive formula (\ref{eSmith22}) we see that it suffices to show that
$$
N_{\Q(A)/\Q(A - \{a\})}(\alpha_{i, A}) = \alpha_{i, A - \{a\}} \ \text{ in } \ \frac{\Q(A - \{a\})^{*}}{\Q(A - \{a\})^{*2}}
$$
for all $a \in A$: the full proposition is then obtained by applying this repeatedly.

By Kummer theory, this is the same as showing that the co-restriction of the character $\phi_{i, A}$ from $G_{\Q(A)}$ to $G_{\Q(A - \{a\})}$ equals the character $\phi_{i, A - \{a\}}$. To this end, let us recall the following basic fact. Let $G_1 \subseteq G_2$ be a continuous inclusion of profinite groups with $[G_2 : G_1] = 2$. If $\chi:G_1 \rightarrow \mathbb{F}_2$ and $\chi':G_2 \rightarrow \mathbb{F}_2$ are two continuous characters, then the co-restriction of $\chi$ to $G_2$ equals $\chi'$ if and only if 
\begin{align}
\label{eCores1}
\chi(\sigma^2) = \chi'(\sigma)
\end{align}
and 
\begin{align}
\label{eCores2}
\chi(\sigma\tau\sigma^{-1}) + \chi(\tau) = \chi'(\tau)
\end{align}
for each $\sigma \in G_2 - G_1$, $\tau \in G_1$. Equation (\ref{eCores2}) only implies that the co-restriction of $\chi$ equals $\chi'$ as characters of the index $2$ subgroup $G_1$. This leaves two possibilities for the character $\chi$ from the larger group $G_2$: the two possibilities constitute a single coset under the subgroup generated by the character $\epsilon: G_2 \twoheadrightarrow \frac{G_2}{G_1} = \mathbb{F}_2$. Said differently, equation (\ref{eCores2}) forces the co-restriction of $\chi$ to be in the set $\{\chi', \chi' + \epsilon\}$. These two cases can be distinguished by means of equation (\ref{eCores1}). 
		
Returning to our setup, we will now explain how equation (\ref{eCores2}) follows from the definition of expansion maps as coordinates of monomials in a semidirect product. Here $G_{\Q(A)}$ plays the role of $G_1$ and $G_{\Q(A - \{a\})}$ plays the role of $G_2$. We claim that
\begin{align}
\label{eCores3}
\phi_{i, A}(\sigma \tau \sigma^{-1}) + \phi_{i, A}(\tau) = \phi_{i, A - \{a\}}(\tau)
\end{align}
for each $\sigma \in G_{\Q(A - \{a\})} - G_{\Q(A)}$ and $\tau \in G_{\Q(A)}$. To prove the claim, observe that $\text{ker}(\psi_i)$ is contained in the kernel of all the quadratic characters $\phi_{i, A},(1+\sigma)\phi_{i, A}, \phi_{i, A - \{a\}}$, since it gives a normal extension of $\Q$ containing the kernel of the first and the third. 

Hence it suffices to check equation (\ref{eCores3}) in the quotient given by $\text{ker}(\psi_i)$ or equivalently in $\text{im}(\psi_i)$. The group $\psi_i(G_{\Q(A)})$ equals $\mathbb{F}_2[\mathbb{F}_2^{A}] \rtimes \{0\}$. The character $\phi_{i, A}$ is the projection on the monomial $t_{A}$, the character $\phi_{i, A - \{a\}}$ is the projection on the monomial $t_{A - \{a\}}$ and $\sigma$ acts as multiplication by $1 + t_a$, since $\sigma \in G_{\Q(A - \{a\})} - G_{\Q(A)}$. This yields the identity
\begin{align*}
\sum_{T \subseteq A} (\phi_{i,T}(\sigma \tau \sigma^{-1}) + \phi_{i,T}(\sigma)) \cdot t_T 
&= (1 + \sigma) \cdot \left(\sum_{T \subseteq A} \phi_{i,T}(\tau) \cdot t_T\right) \\
&= t_a \cdot \left(\sum_{T \subseteq A} \phi_{i,T}(\tau) \cdot t_T\right) = \sum_{T \subseteq A - \{a\}} \phi_{i,T}(\tau) \cdot t_{T \cup \{a\}}
\end{align*}
for all $\tau \in \mathbb{F}_2[\mathbb{F}_2^{A}] \rtimes \{0\}$. Comparing the $t_A$-coefficients gives precisely the desired conclusion. 

It remains to check equation (\ref{eCores1}), which forces the norm relation to hold as characters of the larger group $G_{\Q(A - \{a\})}$. To this end, pick $\sigma \in G_{\Q(A - \{a\})} - G_{\Q(A)}$, and plug in $(\sigma, \sigma)$ in equation (\ref{eSmith22}). The left hand side equals $\phi_{i, A}(\sigma^2)$, which is the quantity we are after, while the right hand side equals $\phi_{i, A - \{a\}}(\sigma)$. This establishes the proposition.
\end{proof}

Our next result is a direct consequence of Proposition \ref{norming stuff down}.
 
\begin{corollary} 
\label{right parity}
Let $i \in \{1, 2\}$.
\begin{enumerate}
\item[(a)] Let $v$ be a finite place of $\mathbb{Q}$ that splits completely in $\Q(A)/\Q$. Then $v \in \widetilde{\emph{Ram}}(\Q(\chi_i)/\Q)$ if and only if
$$
|\{w \in \Omega_{\Q(A)} : w \mid v, w(\alpha_{i, A}) \equiv 1 \bmod 2\}| \equiv 1 \bmod 2.
$$
\item[(b)] We have $\infty \in \widetilde{\emph{Ram}}(\Q(\chi_i)/\Q)$ if and only if
$$
|\{\sigma: \Q(A) \rightarrow \mathbb{R} : \sigma(\alpha_{i, A}) < 0\}| \equiv 1 \bmod 2.
$$
\end{enumerate}
\end{corollary}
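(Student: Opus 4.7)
The plan is to reduce both assertions to the case $B = \varnothing$ of Proposition \ref{norming stuff down} and then read off the ramification of $\Q(\chi_i)/\Q$ from the Kummer element $\alpha_{i,\varnothing}$. By definition of an expansion map we have $\phi_{i,\varnothing} = \chi_i$, so $\alpha_{i,\varnothing} \in \Q^{*}/\Q^{*2}$ is precisely the Kummer class of $\chi_i$; equivalently $\Q(\sqrt{\alpha_{i,\varnothing}}) = \Q(\chi_i)$. Applying Proposition \ref{norming stuff down} with $B = \varnothing$ yields the key identity
\[
N_{\Q(A)/\Q}(\alpha_{i,A}) \equiv \alpha_{i,\varnothing} \pmod{\Q^{*2}}.
\]

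For part $(a)$, I would first observe that for a finite place $v$ of $\Q$ and a squarefree integer $d$ with $d \equiv \alpha_{i,\varnothing}$ in $\Q^*/\Q^{*2}$, the condition $v \in \widetilde{\textup{Ram}}(\Q(\chi_i)/\Q)$ is equivalent to $v(\alpha_{i,\varnothing}) \equiv 1 \bmod 2$; this holds uniformly for odd $v$ by the standard description of ramification in a quadratic field, and the definition of $\widetilde{\textup{Ram}}$ is tailor-made so that the same criterion works at $v = (2)$ (namely $(2) \in \widetilde{\textup{Ram}}(\Q(\sqrt{d})/\Q)$ iff $d$ has odd $2$-adic valuation). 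Since $v$ splits completely in $\Q(A)/\Q$, every place $w$ of $\Q(A)$ above $v$ has residue degree and ramification index $1$, whence
\[
v(\alpha_{i,\varnothing}) = v\bigl(N_{\Q(A)/\Q}(\alpha_{i,A})\bigr) = \sum_{w \mid v} w(\alpha_{i,A}).
\]
Reducing modulo $2$ then yields the stated parity criterion.

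For part $(b)$, the assumption that $\Q(A)/\Q$ is totally real (one of the standing hypotheses on $A$) ensures that every embedding $\sigma:\Q(A)\to \overline{\Q}$ lands in $\R$. The norm relation then gives the equality of signs
\[
\operatorname{sign}(\alpha_{i,\varnothing}) = \operatorname{sign}\Bigl(\prod_{\sigma:\Q(A)\to \R} \sigma(\alpha_{i,A})\Bigr) = (-1)^{|\{\sigma : \sigma(\alpha_{i,A}) < 0\}|},
\]
which is well defined modulo squares since positive rationals form the squares up to sign. Finally, $\infty \in \widetilde{\textup{Ram}}(\Q(\chi_i)/\Q)$ if and only if $\alpha_{i,\varnothing} < 0$, which by the display above is equivalent to the parity condition in the statement.

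The only genuinely delicate point is the bookkeeping at $v = (2)$ in part $(a)$: one has to check that the bespoke definition of $\widetilde{\textup{Ram}}$ really does coincide with the parity of the $2$-adic valuation of any Kummer representative, regardless of whether $2$ is a tame or wild ramification issue for $\Q(\chi_i)/\Q$. Once this is unwound, everything else is formal consequence of Proposition \ref{norming stuff down}.
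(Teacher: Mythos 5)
Your proposal is correct and follows essentially the same route as the paper: apply Proposition~\ref{norming stuff down} with $B=\varnothing$, observe that $\widetilde{\text{Ram}}(\Q(\chi_i)/\Q)$ is by design the set of places where the squarefree representative of $\alpha_{i,\varnothing}$ has odd valuation (resp.\ is negative at $\infty$), and then use complete splitting of $v$ to turn the norm relation into a parity count. The paper proves part $(a)$ and leaves $(b)$ to the reader; your sketch handles both parts along the same lines.
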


\begin{proof}
We shall explain the argument for part $(a)$ and leave part $(b)$, which can be proven similarly, to the reader. Recall that $\phi_{i, \varnothing} = \chi_i$ for $i \in \{1, 2\}$ and that $\Q(\chi_i) = \Q(\sqrt{\alpha_{i, \varnothing}})$. Let us start by observing that
\begin{align}
\label{eDefRamtilde}
v \in \widetilde{\text{Ram}}(\Q(\chi_i)/\Q) \Longleftrightarrow v(\alpha_{i, \varnothing}) \equiv 1 \bmod 2
\end{align}
for all finite places $v$ of $\Q$. Furthermore, Proposition \ref{norming stuff down}, applied to $B := \varnothing$, yields
$$
N_{\Q(A)/\Q}(\alpha_{i, A}) = \alpha_{i, \varnothing}.
$$
Since $v$ splits completely in $\Q(A)$ by assumption, this shows that
\begin{align}
\label{valuation relation}
\sum_{\substack{w \in \Omega_{\Q(A)} \\ w \mid v}} w(\alpha_{i, A}) \equiv v(\alpha_{i, \varnothing}) \bmod 2.
\end{align}
The corollary is now a consequence of equations (\ref{eDefRamtilde}) and (\ref{valuation relation}).
\end{proof}

We will now prove a general lemma about local fields. If $K$ is a local field, we denote by $(-, -)_K$ the Hilbert pairing on $\frac{K^{*}}{K^{*2}}$. For a place $v$ of a number field $L$ we will write, by abuse of notation, $(-, -)_v$ for the pairing $(-, -)_{L_v}$.

\begin{lemma} 
\label{Hilbert symbols at 2}
Let $p$ be a finite place of $\Q$ and let $K$ be a finite extension of $\Q_p$. Let $\alpha$ be the unique class of $\frac{K^{*}}{K^{*2}}$ corresponding to the unramified quadratic extension $K(\sqrt{\alpha})$. Then the linear functional
$$
(\alpha, -)_K: \frac{K^{*}}{K^{*2}} \rightarrow \mathbb{F}_2
$$
equals the reduction of $v_K(-)$ modulo $2$.
\end{lemma}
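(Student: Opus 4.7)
The plan is to prove the lemma by reducing the statement to the standard description of the norm group of an unramified quadratic extension, which is a fundamental fact from local class field theory (or, for odd residue characteristic, from explicit calculations with the Hilbert symbol).

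First I would recall the norm interpretation of the Hilbert symbol: for any $\beta \in K^{*}$, one has $(\alpha, \beta)_K = 0$ in $\FF_2$ if and only if $\beta$ lies in the image of the norm map
\[
N_{K(\sqrt{\alpha})/K} : K(\sqrt{\alpha})^{*} \longrightarrow K^{*}.
\]
Thus the lemma is equivalent to the assertion that this norm subgroup is exactly $\{\beta \in K^{*} : v_K(\beta) \equiv 0 \bmod 2\}$.

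Next I would compute this norm group using the fact that $L := K(\sqrt{\alpha})/K$ is unramified of degree $2$. Any uniformizer $\pi$ of $K$ remains a uniformizer of $L$, and satisfies $N_{L/K}(\pi) = \pi^{2}$, which shows that $v_K$ of any norm is even. For the converse, by the structure theorem for unramified extensions the residue field extension $\FF_L/\FF_K$ is a finite separable (cyclic) extension, so the norm map $\FF_L^{*} \to \FF_K^{*}$ is surjective. Combined with Hensel's lemma applied successively on the higher unit filtration (all quotients $U_K^{(i)}/U_K^{(i+1)}$ are identified with the additive group of $\FF_K$, and on each quotient the norm acts as the trace $\FF_L \to \FF_K$, which is surjective since the extension is separable), one obtains $N_{L/K}(O_L^{*}) = O_K^{*}$. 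Therefore $N_{L/K}(L^{*}) = \pi^{2\Z} \cdot O_K^{*}$, which is precisely the set of elements of even valuation.

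Combining the two steps, $(\alpha, \beta)_K = 0$ iff $v_K(\beta)$ is even, which is the statement of the lemma. The only subtle point is the surjectivity of the norm on the units when the residue characteristic is $2$; this is the ``main obstacle'' in the sense that it is the one place where we cannot just cite surjectivity of the norm on finite fields and must appeal to the filtration argument (or equivalently to local class field theory, which gives $K^{*}/N_{L/K}(L^{*}) \cong \Gal(L/K)$ directly, of order $2$, forcing the equality with the group of elements of even valuation).
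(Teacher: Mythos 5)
Your proof is correct, but it takes a different route from the paper's. The paper's argument is a one-liner via the compatibility of the local Artin map with Frobenius: since $\theta_K$ sends any uniformizer $\pi$ to the Frobenius, which generates $\Gal(K(\sqrt{\alpha})/K)$, one has $(\alpha,\pi)_K \ne 0$ for every uniformizer; as every unit can be written as a quotient of two uniformizers, this pins down $(\alpha,-)_K$ as $v_K \bmod 2$ on all of $K^*/K^{*2}$. You instead use the norm interpretation $(\alpha,\beta)_K = 0 \iff \beta \in N_{L/K}(L^*)$ and compute the norm subgroup directly, showing $N_{L/K}(O_L^*) = O_K^*$ via surjectivity of the norm on residue fields and surjectivity of the trace on the higher unit filtration (correctly noting that separability of finite residue field extensions handles the $p=2$ case without issue). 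The paper's approach is more economical since it invokes one black-box fact from local class field theory; yours is more self-contained at the level of the unit group computation, and makes visible exactly where the structure of unramified extensions enters. Both are valid.
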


\begin{proof}
Let $\pi$ be a uniformizer of $K$. The local Artin map $\theta_K$ sends $\pi$ to the generator of $\text{Gal}(K(\sqrt{\alpha})/K)$, which implies that the Hilbert symbol $(\alpha, \pi)_K$ is non-trivial. Since this holds for all uniformizers, the lemma follows. 
\end{proof}

We will make use of one final ingredient. If $w$ is a real place of a number field $L$ (i.e. a place corresponding to an embedding $\sigma:L \rightarrow \mathbb{R}$), we put 
\[
w(\alpha) = 
\left\{
\begin{array}{ll}
0 & \mbox{if } \sigma(\alpha) > 0 \\
1 & \mbox{otherwise.}
\end{array}
\right.
\]
Let $v$ be a place of $\Q$. Suppose that that there exists a place $w \in \Omega_{\Q(A)}$ dividing $v$ such that $w$ ramifies in $L(\psi_1)/\Q(A)$. In that case we remark that $\text{Art}(v, L(\psi_2)/\Q)$ is a well-defined element of $\FF_2$ (see also the remarks preceding Theorem \ref{Redei reciprocity}), and similarly if we swap the roles of $1$ and $2$.

\begin{prop} 
\label{Hilbert symbols calculation}
The following statements hold.
\begin{enumerate}
\item[(a.1)] Let $v$ be a finite rational place and let $w \in \Omega_{\Q(A)}$ be a place lying above $v$. Suppose that $w$ is unramified in $L(\psi_1)\cdot L(\psi_2)/\Q(A)$. Then 
\[
(\alpha_{1, A}, \alpha_{2, A})_w = 0.
\]
\item[(a.2)] Suppose that $\infty \not \in \emph{Ram}(\Q(\chi_1)/\Q) \cup \emph{Ram}(\Q(\chi_2)/\Q)$. Then the value of $(\alpha_{1, A}, \alpha_{2, A})_w$ is the same for all places $w \in \Omega_{\Q(A)}$ lying above $\infty$.

\item[(b)] Suppose that $(2)$ ramifies in $L(\psi_1)L(\psi_2)/\Q$. Let $\{i, j\} = \{1, 2\}$ be as in the fourth point of Definition \ref{def: Redei admissible}. Then
\[
(\alpha_{1, A}, \alpha_{2, A})_w = w(\alpha_{j, A}) \cdot \emph{Art}((2), L(\psi_i)/\Q)
\]
for every $w \in \Omega_{\Q(A)}$ lying above $(2)$, where the product is taken in $\mathbb{F}_2$.
\item[(c)] Let $\{1, 2\} = \{i, j\}$. Let $v$ be a rational place not equal to $(2)$ and let $w \in \Omega_{\Q(A)}$ be a place lying above $v$. Suppose that $w$ ramifies in $L(\psi_i)/\Q(A)$. Then 
\[
(\alpha_{1, A}, \alpha_{2, A})_w = w(\alpha_{i, A}) \cdot \emph{Art}(v, L(\psi_j)/\Q),
\]
where the product is taken in $\mathbb{F}_2$.
\end{enumerate}
\end{prop}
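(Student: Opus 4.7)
The plan is to reduce the computation of $(\alpha_{1,A}, \alpha_{2,A})_w$ to Lemma \ref{Hilbert symbols at 2} at each place $w$ of $\Q(A)$ by showing that at least one of the two local extensions $\Q(A)_w(\sqrt{\alpha_{k,A}})/\Q(A)_w$ is trivial or unramified. The four R\'edei admissibility conditions in Definition \ref{def: Redei admissible} are tailored precisely to supply this unramifiedness in each of the four statements.

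For part (a.1) both local extensions are unramified by hypothesis, so each $\alpha_{k,A}$ admits a local unit representative and Lemma \ref{Hilbert symbols at 2} gives $(\alpha_{1,A}, \alpha_{2,A})_w = w(\alpha_{2,A}) \equiv 0 \bmod 2$. For part (c), the third R\'edei admissibility condition forces $v$ to split completely in $M(\psi_j)/\Q$ and to be unramified in $L(\psi_j)/\Q$, hence $\Q(A)_w(\sqrt{\alpha_{j,A}})/\Q(A)_w$ is unramified. If $\alpha_{j,A}$ is a local square, both $(\alpha_{1,A}, \alpha_{2,A})_w$ and $\text{Art}(v, L(\psi_j)/\Q)$ vanish; otherwise Lemma \ref{Hilbert symbols at 2} yields $(\alpha_{j,A}, \alpha_{i,A})_w = w(\alpha_{i,A}) \bmod 2$, and $\text{Art}(v, L(\psi_j)/\Q) = 1$ since this Artin symbol is the non-trivial Frobenius in the unramified quadratic extension $L(\psi_j)/M(\psi_j)$ localized at $v$. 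Part (b) follows the same template, now using that $\psi_i$ is a normalized expansion map by the fourth R\'edei admissibility condition: this forces $\phi_{i,A}$ to vanish on every element of $\mathfrak{G}$, in particular on $\sigma_2(1)$ and $\sigma_2(2)$, and combined with the splitting of $(2)$ in $\Q(A)/\Q$ from the coprimality constraints this shows $\phi_{i,A}|_{G_{\Q(A)_w}}$ is unramified; then the same application of Lemma \ref{Hilbert symbols at 2}, together with the identification of the resulting Frobenius with $\text{Art}((2), L(\psi_i)/\Q)$ in the two subcases $\chi_i \circ i_2^\ast = 0$ and $\chi_i \circ i_2^\ast \neq 0$ (handled as in the remarks preceding Theorem \ref{Redei reciprocity}), closes the argument.

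For part (a.2) I would argue differently, showing the sign of $\alpha_{i,A}$ is constant across the real places of $\Q(A)$. Since $\infty$ is unramified in $\Q(\chi_1)/\Q$ and $\Q(\chi_2)/\Q$ by hypothesis and $\Q(A)/\Q$ is totally real, $\infty$ splits completely in $\Q(A_1 \cup A_2)/\Q$, and the first R\'edei admissibility condition then forces $\infty$ to split completely in $M(\psi_1)M(\psi_2)/\Q$. Consequently $\alpha_{i,B}$ is totally positive for every $B \subsetneq A$ and every $i \in \{1,2\}$. Equation (\ref{eCores3}), obtained inside the proof of Proposition \ref{norming stuff down}, gives $\sigma(\alpha_{i,A}) \equiv \alpha_{i,A}\cdot \alpha_{i, A\setminus\{a\}} \pmod{\Q(A)^{\ast 2}}$ for any generator $\sigma$ of $\Gal(\Q(A)/\Q(A\setminus\{a\}))$; iterating over such generators shows $\tau(\alpha_{i,A})/\alpha_{i,A}$ is totally positive for every $\tau \in \Gal(\Q(A)/\Q)$, so the sign of $\alpha_{i,A}$ at a real embedding of $\Q(A)$ is Galois-invariant. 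Transitivity of $\Gal(\Q(A)/\Q)$ on the real places above $\infty$ then delivers the claimed constancy of $(\alpha_{1,A}, \alpha_{2,A})_w$.

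The hardest part will be the local bookkeeping in (b). When $\chi_i \circ i_2^\ast = 0$ the Artin symbol $\text{Art}((2), L(\psi_i)/\Q)$ is exactly the Frobenius in the unramified extension $\Q_2(\sqrt{\alpha_{i,A}})/\Q_2$ and the identification with the Hilbert-symbol computation is immediate; when $\chi_i \circ i_2^\ast \neq 0$ one must instead work over $\Q(\chi_i)$ and appeal to Proposition \ref{normalized expansions are unramified} together with the orthogonality between $\chi_i \circ i_2^\ast$ and the $\phi_{T'}(\psi_j) \circ i_2^\ast$ to guarantee that the Artin symbol is well-defined in $\Gal(L(\psi_i)/M(\psi_i)) \cong \FF_2$ and genuinely records the non-triviality of the unramified extension of $\Q(A)_w$ cut out by $\alpha_{i,A}$.
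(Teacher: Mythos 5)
Parts (a.1) and (a.2) are essentially sound. For (a.1) your argument coincides with the paper's. For (a.2) you use the explicit norm relation (equation (\ref{eCores3})) iterated over generators rather than the paper's more compact observation that $\alpha_{i,A}$ is a $G_\Q$-invariant class in $M(\psi_i)^*/M(\psi_i)^{*2}$ and $M(\psi_i)$ is totally real; both are correct routes to constancy of the sign.

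Part (c) as written is fine for finite $v$, but you do not treat $v = \infty$, which is explicitly allowed in the statement and is a genuinely separate case (one cannot invoke the third condition of Definition \ref{def: Redei admissible}, which is phrased only for $v \in \mathcal{P}$; one has to use the second condition and argue via the constancy of the sign of $\sigma(\alpha_{j,A})$ exactly as in (a.2)).

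The real gap is in (b). Your key claim is that $\phi_{i,A}|_{G_{\Q(A)_w}}$ is unramified for \emph{every} $w$ above $(2)$, which you derive from normalization of $\psi_i$ and the complete splitting of $(2)$ in $\Q(A)$. This only proves unramifiedness at the distinguished place $\mathfrak{t}$ coming from $i_2$. For $w = \sigma(\mathfrak{t})$ with $\sigma \neq 1$, the local class of $\alpha_{i,A}$ at $w$ equals the class of $\sigma^{-1}(\alpha_{i,A}) = \alpha_{i,A}\gamma_i$ at $\mathfrak{t}$ (Proposition \ref{norming stuff down}), and the fourth condition only kills the $\alpha_{i,T}$ with $\varnothing \neq T \subsetneq A$ locally, leaving $\gamma_i$ locally in the span of $\chi_i$. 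Since the fourth condition asks only that $\chi_i(\sigma_2(2)) = 0$ (not that $\chi_i \circ i_2^\ast$ be unramified), $\chi_i \circ i_2^\ast$ can land on $\chi_{-1}$, in which case $\alpha_{i,A}\gamma_i$ is ramified at $\mathfrak{t}$ and your application of Lemma \ref{Hilbert symbols at 2} is illegitimate at that $w$. The paper circumvents this by never leaving $\mathfrak{t}$: it writes $(\alpha_{1,A},\alpha_{2,A})_{\sigma(\mathfrak{t})} = (\alpha_{i,A}\gamma_i, \alpha_{j,A}\gamma_j)_{\mathfrak{t}}$, expands multiplicatively, and uses the orthogonality between $\chi_i \circ i_2^\ast$ and the $\phi_{T'}(\psi_j)\circ i_2^\ast$ precisely to annihilate $(\gamma_i, \alpha_{j,A}\gamma_j)_{\mathfrak{t}}$, after which only the genuinely unramified $\alpha_{i,A}$ (at $\mathfrak{t}$) enters Lemma \ref{Hilbert symbols at 2}. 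You do invoke orthogonality, but only to say the Artin symbol is well-defined; that does not repair the ramification problem, which is about the first argument of the Hilbert symbol varying with $w$.
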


\begin{proof}[Proof of Proposition \ref{Hilbert symbols calculation} part (a.1)]
Let $v$ and $w$ be as in the statement. Our assumptions imply that the extensions $L(\psi_1)/\Q(A)$ and $L(\psi_2)/\Q(A)$ are unramified at all places of $\Omega_{\Q(A)}$ above $v$. Now, since the fields $L(\psi_1), L(\psi_2)$ are respectively equal to the Galois closure (over $\Q$) of the quadratic extensions $\Q(A)(\sqrt{\alpha_{1, A}})/\Q(A), \Q(A)(\sqrt{\alpha_{2, A}})/\Q(A)$, it follows that the classes of $\alpha_{1, A}, \alpha_{2, A}$ are unramified classes in $\frac{\Q(A)_w^{*}}{\Q(A)_w^{*2}}$. But, thanks to Lemma \ref{Hilbert symbols at 2}, the Hilbert symbol between two unramified classes is trivial.
\end{proof}
		
\begin{proof}[Proof of Proposition \ref{Hilbert symbols calculation} part (a.2)]
By definition $\infty$ splits completely in $\Q(A)$. Since $\infty \not \in \text{Ram}(\Q(\chi_1)/\Q) \cup \text{Ram}(\Q(\chi_2)/\Q)$, we deduce from Definition \ref{def: Redei admissible} that $\infty$ splits completely in $M(\psi_1)M(\psi_2)$. 

Observe that $\alpha_{1, A}$ and $\alpha_{2, A}$ are $G_\Q$-invariants of respectively $\frac{M(\psi_1)^{*}}{M(\psi_1)^{*2}}$ and $\frac{M(\psi_2)^{*}}{M(\psi_2)^{*2}}$. Hence the conjugates of $\alpha_{i, A}$ are equal to $\alpha_{i, A}$ times a square in $M(\psi_i)$. In particular, $\sigma(\alpha_{i, A})$ equals $\sigma'(\alpha_{i, A})$ times the square of a real number for all real embeddings $\sigma, \sigma': \Q(A) \rightarrow \mathbb{R}$. Therefore $\sigma(\alpha_{i, A})$ has constantly the same sign as we vary $\sigma: \Q(A) \rightarrow \mathbb{R}$ over all real embeddings. Since the Hilbert symbol in the local field $\mathbb{R}$ is entirely determined by the sign of its two entries, the result follows.
\end{proof} 

\begin{proof}[Proof of Proposition \ref{Hilbert symbols calculation} part (b)]
The inclusion $i_2: \Q^{\text{sep}} \to \Q_2^{\text{sep}}$ provides us with a unique place $\mathfrak{t} \in \Omega_{\Q(A)}$ above $(2)$, namely the place $\mathfrak{t}$ corresponding to the absolute value obtained by composing $i_2$ with the canonical absolute value of $\Q_2^{\text{sep}}$. Recalling that $\text{Gal}(\Q(A)/\Q)$ acts transitively (and in this case freely) on the $2^n$ places of $\Omega_{\Q(A)}$ above $(2)$, it suffices to take any element $\sigma$ of $\text{Gal}(\Q(A)/\Q)$, and show the desired conclusion for
$$
(\alpha_{i, A}, \alpha_{j, A})_{\sigma(\mathfrak{t})} = (\sigma^{-1}(\alpha_{i, A}), \sigma^{-1}(\alpha_{j, A}))_{\mathfrak{t}}.
$$
We now apply Proposition \ref{norming stuff down} and rewrite the right hand side as
\begin{align}
\label{eHilbert2}
(\alpha_{i, A}\gamma_i, \alpha_{j, A}\gamma_j)_{\mathfrak{t}} = (\alpha_{i, A}, \alpha_{j, A}\gamma_j)_{\mathfrak{t}}(\gamma_i, \alpha_{j, A}\gamma_j)_{\mathfrak{t}},
\end{align}
where $\gamma_h$ belongs to $\langle \{\alpha_{h,T}\}_{T \subsetneq A} \rangle$ for $h \in \{i, j\}$.  We know that $\phi_T(\psi_i) \circ i_2^{*}$ is the trivial character, whenever $\varnothing \neq T \subsetneq A$, by the fourth condition of Definition \ref{def: Redei admissible}. It follows that $\gamma_i$ is in the span of $\alpha_{i, \varnothing}$ locally at $\mathfrak{t}$. Since $\alpha_{i, \varnothing}$ pairs trivially with all the $\alpha_{j, T}$ by assumption, equation (\ref{eHilbert2}) becomes
$$
(\alpha_{i, A}, \alpha_{j, A}\gamma_j)_{\mathfrak{t}} = (\alpha_{i, A}, \sigma^{-1}(\alpha_{j, A}))_{\mathfrak{t}}.
$$
Note that the character $\alpha_{i, A}$ is \emph{unramified} locally at $\mathfrak{t}$, since $\psi_i$ is a \emph{normalized} expansion map. Therefore we conclude, by means of Lemma \ref{Hilbert symbols at 2}, that this Hilbert symbol equals $\mathfrak{t}(\sigma^{-1}(\alpha_{j, A})) = \sigma(\mathfrak{t})(\alpha_{j, A})$ in case $\alpha_{i, A}$ is the unique non-trivial unramified quadratic class at $\mathfrak{t}$, and equals $0$ in case $\alpha_{i, A}$ is trivial. But now observe that the triviality of the unramified character $\alpha_{i, A}$ locally at $\mathfrak{t}$ precisely coincides with the triviality of the symbol $\text{Art}((2), L(\psi_i)/\Q)$. Hence the result is
$$
\sigma(\mathfrak{t})(\alpha_{j, A}) \cdot \text{Art}((2), L(\psi_i)/\Q)
$$
as desired. 
\end{proof}

\begin{proof}[Proof of Proposition \ref{Hilbert symbols calculation} part (c)]
Let us firstly suppose that $v$ is a finite place (hence in $\mathcal{P}$) and let $w \in \Omega_{\Q(A)}$ be above $v$. Observe that $L(\psi_j)/\Q(A)$ is unramified at $w$ by definition.

Furthermore, $\alpha_{j, A}$ is a $G_\Q$-invariant class in $\frac{M(\psi_j)^{*}}{M(\psi_j)^{*2}}$. It follows that $\sigma(\alpha_{j, A})$ always lands in the same unramified class $c$ of $\frac{\Q_v^{*}}{\Q_v^{*2}}$ as we vary over the $2^n$ embeddings of $\Q(A)$ into $\Q_v$. Recalling that 
\[
L(\psi_j) = M(\psi_j)\left(\sqrt{\alpha_{j, A}} \right)
\]
and that $v$ splits completely in $M(\psi_j)/\Q$ by assumption, we see that $c$ is trivial if and only if $\text{Art}(v, L(\psi_j)/\Q)$ is trivial. Lemma \ref{Hilbert symbols at 2} shows that 
\[
(\alpha_{i, A}, \alpha_{j, A})_w = 
\left\{
\begin{array}{ll}
w(\alpha_{i, A}) & \mbox{if } \text{Art}(v, L(\psi_j)/\Q) \text { is non-trivial}  \\
0 & \mbox{if } \text{Art}(v, L(\psi_j)/\Q) \text{ is trivial}.
\end{array}
\right.
\]
This proves part $(c)$ in case $v$ is finite. 
		
It remains to treat the case where $v$ equals the infinite place $\infty$ of $\Q$. Let $w \in \Omega_{\Q(A)}$ be above $v$. If $w(\alpha_{i, A}) = 0$, then the proposition is correct, because the Hilbert symbol $(-, -)_{\R}$ vanishes in case one of the two entries is positive. Finally, suppose that $w(\alpha_{i, A}) = 1$. Recalling one more time that $\alpha_{j, A}$ is a $G_\Q$-invariant class in $\frac{M(\psi_j)^{*}}{M(\psi_j)^{*2}}$, we see that the sign of $\sigma(\alpha_{j, A})$ does not depend on the embedding $\sigma: \Q(A) \rightarrow \mathbb{R}$. Furthermore, since $\infty$ splits completely in $M(\psi_j)/\Q$, it follows from $L(\psi_j) = M(\psi_j)(\sqrt{\alpha_{j, A}})$ that this sign is positive if and only if $\text{Art}(\infty, L(\psi_j)/\Q)$ is trivial.
\end{proof}

We will now prove the main result of this section.

\begin{proof}[Proof of Theorem \ref{Redei reciprocity}]
Hilbert's reciprocity law yields
\begin{align}
\label{HR}
\sum_{w \in \Omega_{\Q(A)}} (\alpha_{1, A}, \alpha_{2, A})_w = 0.
\end{align}
Suppose that $w \in \Omega_{\Q(A)}$ does not ramify in the extension $L(\psi_1) L(\psi_2)/\Q(A)$. If $w$ is a finite place, then we obtain that $(\alpha_{1, A}, \alpha_{2, A})_w = 0$ thanks to Proposition \ref{Hilbert symbols calculation} part $(a.1)$. If $w$ is an infinite place, then we certainly have
\[
\infty \not \in \widetilde{\text{Ram}}(\Q(\chi_1)/\Q) \cup \widetilde{\text{Ram}}(\Q(\chi_2)/\Q).
\]
We deduce from Proposition \ref{Hilbert symbols calculation} part $(a.2)$ that the total contribution coming from all the places of $\Omega_{\Q(A)}$ above $\infty$ equals $2^n$ times the same number, which is $0$ since $n \geq 1$.

Now suppose that $w$ ramifies in $L(\psi_1) L(\psi_2)/\Q(A)$. Let us first treat the case where $w$ ramifies in $L(\psi_1)/\Q(A)$ and write $v$ for the place of $\Q$ below $w$. Then thanks to Proposition \ref{Hilbert symbols calculation}, part $(b)$ and part $(c)$, we obtain that
$$
(\alpha_{1, A}, \alpha_{2, A})_w = w(\alpha_{1, A}) \cdot \text{Art}(v, L(\psi_2)/\Q)).
$$  
By assumption $v$ splits completely in the extension $M(\psi_2)/\Q$, so it certainly splits completely in the extension $\Q(A)/\Q$. Corollary \ref{right parity} shows that
$$
\sum_{\substack{w \in \Omega_{\Q(A)} \\ w \mid v}} (\alpha_{1, A}, \alpha_{2, A})_w = 
\left\{
\begin{array}{ll}
\text{Art}(v, L(\psi_2)/\Q)) & \mbox{if } v \in \widetilde{\text{Ram}}(\Q(\chi_1)/\Q) \\
0 & \mbox{otherwise.}
\end{array}
\right.
$$
The same conclusion holds by swapping the roles of $1$ and $2$. After grouping all $w \in \Omega_{\Q(A)}$ lying above the same rational place $v$ together, equation (\ref{HR}) becomes
$$
\sum_{v \in \widetilde{\text{Ram}}(\Q(\chi_1)/\Q)} \text{Art}(v, L(\psi_2)/\Q) + \sum_{v' \in \widetilde{\text{Ram}}(\Q(\chi_2)/\Q)} \text{Art}(v', L(\psi_1)/\Q) = 0,
$$
which can be rewritten as
$$
\sum_{v \in \widetilde{\text{Ram}}(\Q(\chi_1)/\Q)} \text{Art}(v, L(\psi_2)/\Q) = \sum_{v' \in \widetilde{\text{Ram}}(\Q(\chi_2)/\Q)} \text{Art}(v', L(\psi_1)/\Q).
$$
This completes the proof of the theorem.
\end{proof}

\section{Profitable triples}
\label{profitability}
The next two sections provide a novel way to gain class group data on one point of a cube of quadratic fields from class group data available on the other points. We will informally refer to this as a reflection principle. Since the material is of a rather technical nature, we will now provide an overview of what is going to unfold below, and its main differences with \cite[Theorem 2.8]{Smith}. We hope in this way to provide some guiding intuition for the reader. 

The key concept is that of a \emph{profitable triple}. Roughly speaking, with respect to \cite[Theorem 2.8]{Smith}, we have less control over the raw cocycles that we encounter in our reflection principles. However, the rather general form of Theorem \ref{Redei reciprocity} allows us to still obtain a reflection principle, where the relative governing fields are reasonably similar to the expansion maps $\phi_{\{p_i(1)p_i(2) : i \in [s]\}; p_{s + 1}(1)p_{s + 1}(2)}(\mathfrak{G})$ appearing in \cite[Theorem 2.8]{Smith}. 

Another novel feature is that we repeatedly invoke Theorem \ref{Redei reciprocity} during the proof of our reflection principles, where the level of generality of Definition \ref{def: Redei admissible} allows us to have remarkably little knowledge of the raw cocycles and expansion maps involved, but still gives sufficient control over the relevant splitting conditions. In contrast, \cite[Theorem 2.8]{Smith} does not require as input any form of reciprocity law. Although we shall not prove it here, our techniques are able to prove a result eerily similar to \cite[Theorem 2.8]{Smith}, except that the roles of $T(w_a)$ and $T(w_b)$ are interchanged. We remark that in the setup of \cite[Theorem 2.8]{Smith} it is not at all obvious how to achieve such a result.

Our approach might seem surprising at first, since in the reflection principles of Smith \cite{Smith} it is customary to find exact relations among raw cocycles and expansion maps and only in a later step physically plug an Artin symbol in such relations. In contrast, in our setup of profitable triples, we have remarkably loose control on the relations between the raw cocycles and expansion maps. It is a crucial appeal to the reciprocity law of Theorem \ref{Redei reciprocity} that gives the reflection principle. Most of the constraints we put on our raw cocycles are merely to meet the conditions of Theorem \ref{Redei reciprocity} to allow such a reciprocity law to come into play.

Let $s \in \mathbb{Z}_{\geq 1}$ and let
$$
C := \{p_1(1), p_1(2)\} \times \dots \times \{p_s(1), p_s(2)\} \times \{d_0\},
$$
where $(p_i(1), p_i(2))_{i \in [s]}$ are $2s$ distinct, positive, odd prime numbers satisfying $p_i(1)p_i(2)  \equiv 1 \bmod 8$ for each $i \in [s]$, and $\prod_{i \in [s]} p_i(1)p_i(2)$ is coprime to the positive squarefree integer $d_0$. Thanks to this assumption, we can identify each element $((p_i(f(i)))_{i \in [s]}, d_0)$ of $C$ with the squarefree integer $d_0 \cdot \prod_{i = 1}^s p_i(f(i))$, where $f$ is any function from $[s]$ to $[2]$. We denote by
$$
x_0 := d_0 \cdot \prod_{i = 1}^s p_i(1).
$$
For each $i \in [s]$ and each $h \in [2]$ we denote by $C_{p_i(h)}$ the subset of $x \in C$ consisting of those elements satisfying $\pi_i(x) = p_i(h)$. More generally, for each $T \subseteq [s]$ and for each function $f$ from $T$ to $\{1,2\}$, we denote by $C_{(p_i(f(i)))_{i \in T}}$ the subset of $C$ consisting of those $x$ such that $\pi_i(x) = p_i(f(i))$ for each $i \in T$. This notation will be often invoked for the function $f$ constantly equal to $2$, for which we give the following special notation in order to lighten up our formulas. We denote by $C_T$ the subset of $x \in C$ such that $i \in T$ implies $\pi_i(x) = p_i(2)$. 

Let $a$ be a divisor of $d_0$. Suppose that we are given a tuple
$$
(\psi_{s + 1}(x))_{x \in C - \{x_0\}},
$$
where $\psi_{s + 1}(x)$ is a raw cocycle for $N(x)$ for each $x \in C - \{x_0\}$ satisfying
\begin{align}
\label{eLiftchia}
\psi_1(x) = 2^{s} \cdot \psi_{s + 1}(x) = \chi_a. 
\end{align}

\begin{remark} 
\label{a must be positive}
Observe that equation (\ref{eLiftchia}) implies that $a$ is \emph{positive}. Indeed, since $s + 1 \geq 2$, equation (\ref{eLiftchia}) implies that $\chi_a \cup \chi_{-x}$ vanishes in $H^2(G_\Q, \mathbb{F}_2)$ for each $x$ in $C - \{x_0\}$. Since $x$ is positive, taking the Hilbert symbol at the unique infinite place of $\Q$ forces $a > 0$. 
\end{remark}

We next make a crucial definition. 

\begin{mydef} 
\label{def: profitable triple}
Let $(C, \chi_a, (\psi_{s + 1}(x))_{x \in C - \{x_0\}})$ be a triple as above. We call the triple $(C, \chi_a, (\psi_{s + 1}(x))_{x \in C - \{x_0\}})$ \emph{profitable} if for each $i \in [s]$
\begin{itemize}
\item we have
$$
\left(\sum_{x \in C_{T}} \psi_{s + 1-|T|}(x) \right)(\sigma_{p_i(2)}) = 0
$$
for each $T \subseteq [s]$ containing $i$;
\item we have
\begin{align}
\label{eLowerMinimality}
\sum_{x \in C_{p_i(2)}} \psi_{s - 1}(x) = 0;
\end{align}
\item the map $\phi_{\{p_j(1)p_j(2) : j \in [s] - \{i\}\}; p_i(1)p_i(2)}(\mathfrak{G})$ exists;
\item every prime $l$ dividing $d_0$ splits completely in
$$
L(\phi_{\{p_j(1)p_j(2) : j \in [s] - \{i\}\}; p_i(1)p_i(2)}(\mathfrak{G}))/\Q;
$$
\item $(2)$ and $\infty$ split completely in $M(\phi_{\{p_j(1)p_j(2) : j \in [s] - \{i\}\}; p_i(1)p_i(2)}(\mathfrak{G}))/\Q$;
\end{itemize}
\end{mydef}

\begin{remark}
\label{higher codimension}
We claim that the second condition in Definition \ref{def: profitable triple} implies that 
\[
\sum_{x \in C_{T}} \psi_{s - |T|}(x) = 0
\]
for each $\varnothing \neq T \subseteq [s]$. To see this, fix some $j \in [s]$ and apply the operator $d_{x_0}$ to equation (\ref{eLowerMinimality}). Then we get
\[
\sum_{\varnothing \neq T \subseteq [s] - \{i\}} \chi_{\{p_k(1) p_k(2): k \in T\}}(\sigma) \cdot (-1)^{|T| + 1 + \chi_{x_0}(\sigma)} \cdot \left(\sum_{x \in C_T} \psi_{s  - |T|}(x)(\tau)\right)
\]
by Proposition \ref{key calculation of cocycles}. Now evaluate the resulting expression at $(\sigma, \tau)$ satisfying $\chi_{p_j(1)p_j(2)}(\sigma) = 1$ and $\chi_{p_k(1)p_k(2)}(\sigma) = 0$ for all $k \in [s] - \{i, j\}$. This gives
$$
\sum_{x \in C_{\{i, j\}}} \psi_{s  -2}(x) = 0.
$$
Iterating this argument we obtain the desired conclusion.
\end{remark}

We will often make use of the following important observation, which, informally put, says that the various quadratic fields, corresponding to the points of $C$, coincide locally at $(2)$ and at the common ramified primes in the cube.

\begin{prop}
\label{profitable cubes are quad consistent} 
Let $(C, \chi_a, (\psi_{s + 1}(x))_{x \in C - \{x_0\}})$ be a profitable triple. Let $p$ be a prime divisor of $d_0$ or let $p = 2$. Then we have
$$
\Q_p(\sqrt{x}) = \Q_p(\sqrt{x_0})
$$
for each $x \in C$. Moreover, we have for each $i \in [s]$, each $h \in [2]$ and all $x, x' \in C_{p_i(h)}$
$$
\Q_{p_i(h)}(\sqrt{x}) = \Q_{p_i(h)}(\sqrt{x'}).
$$
\end{prop}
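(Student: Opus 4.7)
The plan is to reduce each asserted equality of local quadratic fields to a question of local squareness, and then handle the three types of primes separately. Since $\Q_p(\sqrt{x}) = \Q_p(\sqrt{y})$ is equivalent to $xy \in \Q_p^{*2}$, and since $p_j(1)/p_j(2) \equiv p_j(1)p_j(2) \pmod{\Q^{*2}}$, the ratio $x/y$ is, modulo $\Q^{*2}$, a product of factors $p_j(1)p_j(2)$ with $j$ ranging over the coordinates where $x$ and $y$ disagree. For $x, y \in C$ these indices lie in $[s]$, and for $x, y \in C_{p_i(h)}$ they lie in $[s] - \{i\}$. Thus the proposition reduces to a single statement: for each relevant pair $(j, p)$, the integer $p_j(1)p_j(2)$ is a square in $\Q_p^*$.

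For $p = 2$, the standing hypothesis $p_j(1)p_j(2) \equiv 1 \bmod 8$ at once supplies the required square in $\Z_2^*$. For an odd prime $l \mid d_0$, I would invoke the fourth bullet of Definition \ref{def: profitable triple}, applied with the choice $i = j$, which says that $l$ splits completely in $L(\phi_{\{p_k(1)p_k(2) : k \in [s] - \{j\}\}; p_j(1)p_j(2)}(\mathfrak{G}))$. By Proposition \ref{prop: field of def of exp maps}, this field contains $\Q(\sqrt{p_j(1)p_j(2)})$, so $l$ splits in $\Q(\sqrt{p_j(1)p_j(2)})/\Q$; since $l$ is odd and coprime to $p_j(1)p_j(2)$, this forces $p_j(1)p_j(2) \in \Q_l^{*2}$. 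This settles the first claim of the proposition.

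The heart of the argument, and the main obstacle, is the remaining case $p = p_i(h)$ with $j \neq i$. The third bullet of Definition \ref{def: profitable triple} only posits the global existence of the normalized expansion map $\psi$ whose top $1$-cochain is $\phi_{\{p_k(1)p_k(2) : k \in [s] - \{i\}\}; p_i(1)p_i(2)}(\mathfrak{G})$, and one must extract local information at a prime where the pointer character $\chi_{p_i(1)p_i(2)}$ is itself ramified. My plan is to pass to the singleton sub-component $\phi_{\{\chi_{p_j(1)p_j(2)}\}}(\psi)$: equation (\ref{eSmith22}) shows that it is a continuous $1$-cochain $G_\Q \to \FF_2$ whose coboundary equals $\chi_{p_j(1)p_j(2)} \cup \chi_{p_i(1)p_i(2)}$, so this cup product vanishes globally in $H^2(G_\Q, \FF_2)$, and a fortiori locally at $p_i(h)$. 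Hence the Hilbert symbol $(p_j(1)p_j(2), p_i(1)p_i(2))_{p_i(h)}$ is trivial. Since $p_j(1)p_j(2)$ is a $p_i(h)$-adic unit and $p_i(1)p_i(2)$ has $p_i(h)$-adic valuation one, Lemma \ref{Hilbert symbols at 2} converts this triviality into the statement that $p_j(1)p_j(2)$ is a square modulo $p_i(h)$, as required. The crux of the proof is thus recognising that the bare existence of the normalized expansion map, together with the recursion (\ref{eSmith22}), already encodes the right quadratic residue compatibilities at the pointer-ramified primes; everything else is a routine local computation.
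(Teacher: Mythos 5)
Your proof is correct and follows essentially the same route as the paper's: reduce each equality of local quadratic fields to the squareness of $p_j(1)p_j(2)$ at the prime in question; handle $p = 2$ by the congruence $p_j(1)p_j(2) \equiv 1 \bmod 8$; handle odd $p \mid d_0$ by the fourth bullet of Definition~\ref{def: profitable triple}; and handle $p = p_i(h)$ by extracting the singleton sub-component $\phi_{p_j(1)p_j(2);p_i(1)p_i(2)}(\mathfrak{G})$ of the expansion map guaranteed by the third bullet, whose coboundary is the cup product $\chi_{p_j(1)p_j(2)} \cup \chi_{p_i(1)p_i(2)}$, forcing local triviality at $p_i(h)$. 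The only cosmetic differences from the paper are that you state the common reduction up front and explicitly cite Lemma~\ref{Hilbert symbols at 2} for the final valuation computation, which the paper leaves implicit.
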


\begin{proof}
The case $p = 2$ readily follows from the requirement that $p_i(1)p_i(2) \equiv  1 \bmod 8$ for each $i \in [s]$. Every odd prime divisor $p$ of $d_0$ splits completely in 
\[
L(\phi_{\{p_j(1)p_j(2) : j \in [s] - \{i\}\}; p_i(1)p_i(2)}(\mathfrak{G}))/\Q
\]
by assumption. This certainly implies that $p$ splits completely in $\Q(\sqrt{p_i(1)p_i(2)})$ for each $i \in [s]$, which immediately gives us the desired conclusion. 

Finally, take distinct indices $i, j \in [s]$. Since $s + 1 \geq 2$, the map $\phi_{p_j(1)p_j(2); p_i(1)p_i(2)}(\mathfrak{G})$ exists. This map satisfies
$$
(d\phi_{p_j(1)p_j(2);p_i(1)p_i(2)}(\mathfrak{G}))(\sigma, \tau) = \chi_{p_j(1)p_j(2)}(\sigma) \chi_{p_i(1)p_i(2)}(\tau).
$$
Hence the right hand side is trivial in $H^2(G_\Q, \mathbb{F}_2)$. Therefore it is trivial in $H^2(G_{\Q_{p_i(h)}}, \mathbb{F}_2)$ for all $h \in [2]$. This means that $p_j(1)p_j(2)$ is a square modulo $p_i(h)$, which proves the last assertion. 
\end{proof}

Before we present the main theorem on profitable triples, we will state a useful lemma. When we apply Lemma \ref{lX} later, Proposition \ref{profitable cubes are quad consistent} will guarantee that the decomposition groups of the field $\Q(\{\sqrt{x} : x \in S\})$ are indeed cyclic. 

\begin{lemma}
\label{lX}
Let $S$ be a set of squarefree integers all greater than $1$. Assume that the decomposition group of $(2)$ in the field $K = \Q(\{\sqrt{x} : x \in S\})$ is cyclic. Let $k \geq 1$ be an integer and suppose that we are given raw cocycles $\psi_{k + 1}(x)$ for $N(x)$ for each $x \in S$. Then
\begin{itemize}
\item[(i)] if $p$ does not ramify in $K$, $p$ is unramified in
\[
\prod_{x \in S} L(\psi_{k + 1}(x));
\]
\item[(ii)] suppose that $p$ ramifies in $K$. Then the ramification index of $p$ in
\[
\prod_{x \in S} L(\psi_{k + 1}(x))
\]
equals $2$. Now assume that all decomposition groups of $K/\Q$ are cyclic. Then, if $p$ ramifies in $\Q(\sqrt{x})$ for every $x \in S$, the residue field degree of $p$ in
\[
\prod_{x \in S} L(\psi_k(x))
\]
equals $1$.
\end{itemize}
\end{lemma}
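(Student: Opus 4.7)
For part $(i)$, I would use that each raw cocycle $\psi_{k+1}(x) \in \textup{Cocy}_{\textup{unr}}(G_\Q, N(x))$ restricts to an unramified character of $G_{\Q(\sqrt{x})}$, so that $L(\psi_{k+1}(x))/\Q(\sqrt{x})$ is unramified at every finite place. If $p$ does not ramify in $K$, then it does not ramify in any $\Q(\sqrt{x})$ with $x \in S$, and therefore it is unramified in every $L(\psi_{k+1}(x))/\Q$; passing to the compositum preserves this.

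For the first assertion of part $(ii)$, I would observe that the inertia group of $p$ in $K$ is elementary abelian (since $K/\Q$ is multiquadratic) and also cyclic (by tame ramification when $p$ is odd and by the cyclic decomposition hypothesis when $p=2$), hence of order at most $2$; since $p$ ramifies in $K$ it has order exactly $2$. To lift this to $L_{k+1} := \prod_{x \in S} L(\psi_{k+1}(x))$, fix a prime $\PP$ of $L_{k+1}$ above $p$ and analyze the restriction map $I_\PP(L_{k+1}/\Q) \to \Gal(K/\Q)$. Its image is the inertia of $K$ at $\PP \cap K$, of order $2$. An element $\sigma$ in the kernel fixes every $\Q(\sqrt{x})$ with $x \in S$, and its image in $\Gal(L(\psi_{k+1}(x))/\Q)$ lies in the inertia at $\PP \cap L(\psi_{k+1}(x))$ while fixing $\Q(\sqrt{x})$; since $L(\psi_{k+1}(x))/\Q(\sqrt{x})$ is unramified at all finite places, this forces the image to be trivial for every $x$, so $\sigma$ itself is trivial. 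Thus $|I_\PP(L_{k+1}/\Q)| = 2$.

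For the residue field degree claim, the cyclicity of the decomposition group of $p$ in $K$ (supplied by the stronger hypothesis) together with the previous paragraph shows that decomposition and inertia at $p$ in $K$ coincide, so the residue field degree of $p$ in $K$ equals $1$. Writing $\qq := \PP \cap K$ for $\PP$ a prime of $L_k := \prod_{x \in S} L(\psi_k(x))$ above $p$, it suffices to show that $\textup{Frob}_\qq^{L_k/K} = \textup{id}$. The extension $L_k/K$ is abelian and unramified at finite places, and restriction to the factors provides an injection $\Gal(L_k/K) \hookrightarrow \prod_{x \in S} \Gal(L(\psi_k(x))/\Q(\sqrt{x}))$. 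On the $x$-th factor, the restriction of $\textup{Frob}_\qq^{L_k/K}$ is a power of $\textup{Frob}_{\pp_x}^{L(\psi_k(x))/\Q(\sqrt{x})}$, where $\pp_x$ is the unique prime of $\Q(\sqrt{x})$ above $p$ (unique because $p$ ramifies). The crux is that this last Frobenius vanishes: using the raw cocycle relation $\psi_k(x) = 2 \cdot \psi_{k+1}(x)$ and identifying $\psi_{k+1}(x)|_{G_{\Q(\sqrt{x})}}$ with a character of $\textup{Cl}(\Q(\sqrt{x}))[2^\infty]$ via Artin reciprocity, one computes
\[
\psi_k(x)|_{G_{\Q(\sqrt{x})}}(\pp_x) = 2 \cdot \psi_{k+1}(x)|_{G_{\Q(\sqrt{x})}}(\pp_x) = \psi_{k+1}(x)|_{G_{\Q(\sqrt{x})}}(\pp_x^2) = \psi_{k+1}(x)|_{G_{\Q(\sqrt{x})}}\bigl((p)\bigr) = 0,
\]
since $\pp_x^2 = (p)$ is principal in $\Q(\sqrt{x})$. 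This exploitation of the multiplication-by-$2$ encoded in the raw cocycle structure is the only genuinely non-formal step of the argument, and is precisely the reason that the residue field degree bound succeeds at level $\psi_k$ rather than at level $\psi_{k+1}$.
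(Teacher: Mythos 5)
The paper offers no proof of this lemma (it is marked ``Straightforward''), so the question is simply whether your argument is sound. It essentially is, and it identifies the only non-formal point correctly: for the residue degree claim, the relation $\pp_x^2 = (p)$ with $p$ a totally positive rational generator, combined with the raw cocycle relation $\psi_k(x) = 2\psi_{k+1}(x)$ and Artin reciprocity into $\textup{Cl}(\Q(\sqrt{x}))^\vee[2^\infty]$ (narrow class group), gives $\psi_k(x)(\textup{Frob}(\pp_x)) = 0$. The rest is bookkeeping with inertia and decomposition groups, which you carry out correctly: inertia at $p$ in $K$ is cyclic and elementary abelian, hence of order $2$; cyclicity of the decomposition group then forces residue degree $1$ in $K$; the kernel of inertia restricted to $K$ dies because each $L(\psi_{k+1}(x))\Q(\sqrt{x})/\Q(\sqrt{x})$ is unramified at finite places.

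One imprecision worth flagging: you write restriction maps like $I_\PP(L_{k+1}/\Q) \to \Gal(K/\Q)$ and $\Gal(L_k/K) \hookrightarrow \prod_{x} \Gal(L(\psi_k(x))/\Q(\sqrt{x}))$, but $K$ need not be a subfield of $L_{k+1}$ or $L_k$: this requires $\Q(\sqrt{x}) \subseteq L(\psi_{k+1}(x))$, which by Proposition \ref{cocycles and semi direct products} holds only when $\psi_{k+1}(x)(G_{\Q(\sqrt{x})}) \not\subseteq N[2]$. The clean fix is to run the whole argument in the compositum $L_{k+1} K$ (respectively $L_k K$), where each factor is $L(\psi_{k+1}(x))\Q(\sqrt{x})$, and then observe that the ramification index and residue degree in $L_{k+1}$ (resp.\ $L_k$) divide those in $L_{k+1} K$ (resp.\ $L_k K$). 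Relatedly, your conclusion that the ramification index is \emph{exactly} $2$, rather than at most $2$, quietly uses that $p$ actually ramifies somewhere in $\prod_x L(\psi_{k+1}(x))$; this holds in the degenerate-free situations where the lemma is applied in the paper (where $\psi_1(x)$ is a fixed nontrivial character independent of the cube coordinates), but the argument as written only establishes the upper bound for the general statement.
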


\begin{proof}
Straightforward.
\end{proof}

Let a profitable triple $(C, \chi_a, (\psi_{s + 1}(x))_{x \in C - \{x_0\}})$ and a subset $\varnothing \subseteq T \subsetneq [s]$ be given. To this data we associate the map
\[
\psi_T(C, \chi_a, (\psi_{s + 1}(x))_{x \in C - \{x_0\}}) := \sum_{x \in C_{[s] - T}} \psi_{|T| + 1}(x).
\]
Note that the above definition also makes sense for $T = [s]$ in case we are further given a raw cocycle $\psi_{s + 1}(x_0)$. 

\begin{theorem} 
\label{main theorem on profitable triples}
Let $(C, \chi_a, (\psi_{s + 1}(x))_{x \in C - \{x_0\}})$ be a profitable triple. Then there exists a raw cocycle $\psi_{s + 1}(x_0)$ for $N(x_0)$ such that the following two properties hold
\begin{itemize} 
\item for each $i \in [s]$ we have that
\begin{align}
\label{ePointGainLower}
\sum_{x \in C} \psi_i(x) = 0;
\end{align}
\item the tuple
$$
(\psi_T(C, \chi_a, (\psi_{s + 1}(x))_{x \in C - \{x_0\}}))_{T \subseteq [s]}
$$
is an expansion map $\psi(C, \chi_a, (\psi_{s + 1}(x))_{x \in C - \{x_0\}})$ with support $\{\chi_{p_i(1)p_i(2)} : i \in [s]\} \cup \{\chi_a\}$ and pointer $\chi_a$. Furthermore, if $v \in \mathcal{P}$ ramifies in
$$
L(\psi(C, \chi_a, (\psi_{s + 1}(x))_{x \in C - \{x_0\}}))/\Q(\{\sqrt{p_i(1)p_i(2)} : i \in [s]\}),
$$
then $v$ divides $d_0$. The ramification index of $v \in \Omega_{\Q}$ is at most $2$ in the extension $L(\psi(C, \chi_a, (\psi_{s + 1}(x))_{x \in C - \{x_0\}}))/\Q$. Finally, the $1$-cochain 
$$
\phi_T(\psi(C, \chi_a, (\psi_{s + 1}(x))_{x \in C - \{x_0\}})) \circ i_2^{*} = \psi_T(C, \chi_a, (\psi_{s + 1}(x))_{x \in C - \{x_0\}}) \circ i_2^{*}
$$
is a quadratic character contained in the span of $\{\chi_5, \chi_{x_0}\}$ for each subset $T \subseteq [s]$. If $(2)$ ramifies in $\Q(\sqrt{x_0})/\Q$, then $\phi_T(\psi(C, \chi_a, (\psi_{s + 1}(x))_{x \in C - \{x_0\}})) \circ i_2^{*}$ is a quadratic character contained in the span of $\{\chi_{x_0}\}$ for each $T \subsetneq [s]$.
\end{itemize}
\end{theorem}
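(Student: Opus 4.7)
The plan is to build $\psi_{s+1}(x_0)$ as a small correction of the tautological sum $\eta := \sum_{x \in C - \{x_0\}} \psi_{s+1}(x)$, viewed as a continuous $1$-cochain $G_\Q \to N$. First I would observe that $\eta$ is almost a cocycle for $N(x_0)$. Indeed, the corollary to Proposition \ref{key calculation of cocycles} gives
\[
d_{x_0}(\eta)(\sigma,\tau) = \sum_{\varnothing \neq T \subseteq [s]} \chi_{\{p_i(1)p_i(2) : i \in T\}}(\sigma)\cdot(-1)^{|T|+1+\chi_{x_0}(\sigma)}\cdot\Big(\sum_{x \in C_T}\psi_{s+1-|T|}(x)(\tau)\Big),
\]
and multiplying any inner cochain by $2$ yields $\sum_{x \in C_T}\psi_{s-|T|}(x)$, which vanishes identically by Remark \ref{higher codimension}. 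Hence each inner cochain is $N[2]=\FF_2$-valued, the sign factor becomes $1$, and $d_{x_0}(\eta)$ becomes an $\FF_2$-valued $2$-cocycle; on such cochains $d_{x_0}$ coincides with the usual coboundary $d$.

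The main obstacle is then to prove that the class of $d_{x_0}(\eta)$ vanishes in $H^2(G_\Q,\FF_2)$. Once this is established, there exists a continuous $\delta: G_\Q\to\FF_2$ with $d\delta = d_{x_0}(\eta)$, and I will set $\psi_{s+1}(x_0) := -\eta + \delta$, which by construction lies in $\textup{Cocy}(G_\Q, N(x_0))$. To prove vanishing I would use the Hasse principle $H^2(G_\Q,\FF_2)\hookrightarrow \bigoplus_v H^2(G_{\Q_v},\FF_2)$ and verify local triviality at every place. Places outside $\{2,\infty\}\cup\{\ell:\ell \mid d_0\}\cup\{p_i(h)\}$ are handled by restricting to inertia, where the $\chi_{p_i(1)p_i(2)}$ are unramified. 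At $(2)$, at $\infty$, and at primes $\ell\mid d_0$ the required local splitting is supplied by the last three conditions of Definition \ref{def: profitable triple} on the auxiliary maps $\phi_{\{p_j(1)p_j(2):j\in[s]-\{i\}\};p_i(1)p_i(2)}(\mathfrak G)$, which control the inner cochains locally. At the primes $p_i(h)$ the vanishing $(\sum_{x\in C_T}\psi_{s+1-|T|}(x))(\sigma_{p_i(2)})=0$ together with Proposition \ref{profitable cubes are quad consistent} forces the local symbol to be trivial. At the remaining places the missing identities are precisely the shape covered by Theorem \ref{Redei reciprocity}, which I would apply to a R\'edei admissible $4$-tuple built from pairs of such $\phi$-maps; the admissibility in Definition \ref{def: Redei admissible} has been tailor-made to the profitability hypotheses. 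Summing the local contributions using Hilbert reciprocity yields $[d_{x_0}(\eta)]=0$.

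Granting this construction, the remaining properties are routine. The raw cocycle identity $\psi_1(x_0)=\chi_a$ follows since $2^s\delta=0$ in $N$ and $2^s\eta = (2^s-1)\chi_a = \chi_a$ in $\FF_2$. For $i\in[s]$ one has $2^{s+1-i}\delta=0$, so $\psi_i(x_0)=-\sum_{x\in C-\{x_0\}}\psi_i(x)$, giving $\sum_{x\in C}\psi_i(x)=0$. To assemble the expansion map, I would check directly that the tuple $\psi_T = \sum_{x \in C_{[s]-T}}\psi_{|T|+1}(x)$ satisfies equation (\ref{eSmith22}) with $\phi_\varnothing=\chi_a$: combining the pointwise identity $d\psi_j(x)(\sigma,\tau)=\chi_x(\sigma)\psi_{j-1}(x)(\tau)$ with $\chi_x=\chi_{x_0}+\chi_{xx_0}$, expanding $xx_0$ as a product of $p_i(1)p_i(2)$, and repeatedly killing the $\chi_{x_0}$-terms via Remark \ref{higher codimension} yields exactly the required cobar identity. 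The reconstruction formula (\ref{eReconstruct}) then packages the $\psi_T$'s into a single expansion map with support $\{\chi_{p_i(1)p_i(2)}:i\in[s]\}\cup\{\chi_a\}$ and pointer $\chi_a$.

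Finally, the ramification and local-at-$(2)$ statements are a clean consequence of Proposition \ref{profitable cubes are quad consistent} together with Lemma \ref{lX}. Since all $\Q_v(\sqrt x)$ coincide as $x$ runs through $C$ at $v = (2)$ and at primes ramified in $\Q(\sqrt{x_0})/\Q$, the local restriction $\psi_T\circ i_2^\ast$ can be computed inside a single quadratic completion; combined with $\phi_\varnothing=\chi_a$ and the splittings built into profitability, this forces $\psi_T\circ i_2^\ast$ to lie in $\langle \chi_5,\chi_{x_0}\rangle$, and in $\langle\chi_{x_0}\rangle$ for $T\subsetneq[s]$ when $(2)\mid x_0$. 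Part (i) of Lemma \ref{lX} shows that places $v\in\mathcal P$ not dividing $d_0$ are unramified in $L(\psi)/\Q(\{\sqrt{p_i(1)p_i(2)}:i\in[s]\})$ because the relevant raw cocycles are unramified there, while part (ii) of Lemma \ref{lX} bounds the ramification index at any $v\in\Omega_\Q$ in $L(\psi)/\Q$ by $2$.
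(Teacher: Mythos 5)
Your construction of $\psi_{s+1}(x_0)$ as $-\eta+\delta$, where $\delta$ trivialises the $\FF_2$-valued $2$-cocycle $d_{x_0}(\eta)$, is a genuinely different packaging from the paper's argument. The paper instead works with $\psi_s(x_0):=-\sum_{x\neq x_0}\psi_s(x)$, which thanks to equation (\ref{eLowerMinimality}) and Remark \ref{higher codimension} is already a bona fide $1$-cocycle for $N(x_0)$, then proves it lies in $\textup{Cocy}_{\textup{unr}}(G_\Q,N(x_0))[2^s]$, and finally shows the Artin pairing of $\psi_s(x_0)$ with $\textup{Cl}(\Q(\sqrt{x_0}))[2]$ vanishes, the last step being where Theorem \ref{Redei reciprocity} enters. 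Your $H^2$-vanishing approach is in principle compatible with this, but as written it has two real problems.

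First, vanishing of $[d_{x_0}(\eta)]$ in $H^2(G_\Q,\FF_2)$ only produces $\psi_{s+1}(x_0)\in\textup{Cocy}(G_\Q,N(x_0))[2^{s+1}]$. The theorem requires a \emph{raw} cocycle, i.e.\ an element of $\textup{Cocy}_{\textup{unr}}(G_\Q,N(x_0))[2^{s+1}]$, whose restriction to $G_{\Q(\sqrt{x_0})}$ is an everywhere (finite) unramified character. Your $\delta$ is only pinned down modulo quadratic characters, and you never show that one of the resulting lifts is unramified. Turning a cocycle lift into a raw lift is Proposition \ref{unramified characters are always cocycles} together with Remark \ref{It suffices to lift with 1-cocycles}, but to invoke these you must first prove $\psi_s(x_0):=2\psi_{s+1}(x_0)=-\sum_{x\neq x_0}\psi_s(x)$ is itself a raw cocycle and verify the side condition on $(2)$ (unramified or residue degree $1$) in Remark \ref{It suffices to lift with 1-cocycles}. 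None of this is addressed; ``Granting this construction, the remaining properties are routine'' sweeps the substance of the theorem under the rug.

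Second, the local check is misorganised. You claim that at the primes $p_i(h)$ the identity $(\sum_{x\in C_T}\psi_{s+1-|T|}(x))(\sigma_{p_i(2)})=0$ together with Proposition \ref{profitable cubes are quad consistent} already forces $\textup{inv}_{p_i(h)}(d_{x_0}(\eta))=0$. That only gives that the inner character $\sum_{x\in C_{p_i(2)}}\psi_s(x)$ is \emph{unramified} at $p_i(h)$; by Lemma \ref{Hilbert symbols at 2} the Hilbert pairing of the ramified class $\chi_{p_i(1)p_i(2)}$ with an unramified class is nontrivial precisely when the unramified class is nontrivial, so at $p_i(1)$ you must actually show that $p_i(1)$ splits completely in $L(\sum_{x\in C_{p_i(2)}}\psi_s(x))$. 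That is exactly the paper's hard step and it is exactly where Theorem \ref{Redei reciprocity} is applied; it is not an ``easy'' place. Consequently the subsequent sentence about ``the remaining places'' is vacuous (every place has already been listed), and the Rédei reciprocity argument, which ought to carry the weight of the proof, is never actually set up: you would still need to exhibit a concrete R\'edei admissible $4$-tuple with the right pointers, check its admissibility from the profitability hypotheses (notably the splitting of $p_i(1)$ in $M(\psi_1)$ via an argument like equation (\ref{eCubeSwitch})), and identify the two sides of the reciprocity with the Artin symbols you need. As it stands, the sketch states the conclusion rather than deriving it.
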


\begin{proof}
We claim that
\begin{align}
\label{ePointGain}
\psi_s(x_0) := -\sum_{\substack{x \in C \\ x \neq x_0}} \psi_s(x)
\end{align}
is a raw cocycle for $N(x_0)$ lifting $\chi_a$. Since $2^s - 1$ is odd, we certainly have 
\[
2^{s - 1} \psi_s(x_0) = -\sum_{\substack{x \in C \\ x \neq x_0}} 2^{s - 1} \psi_s(x) = -\sum_{\substack{x \in C \\ x \neq x_0}} \chi_a = \chi_a. 
\]
Furthermore, it follows from Proposition \ref{key calculation of cocycles} that
$$
d_{x_0}(\psi_s(x_0))(\sigma, \tau) = \sum_{\varnothing \neq T \subseteq [s]} \chi_{\{p_i(1)p_i(2) : i \in T\}}(\sigma) \cdot \left(\sum_{x \in C_T} \psi_{s - |T|}(x)(\tau)\right),
$$
which is zero term by term thanks to equation (\ref{eLowerMinimality}) of Definition \ref{def: profitable triple} and Remark \ref{higher codimension}. This, together with Proposition \ref{cocycles and semi direct products}, gives a field extension corresponding to the homomorphism
$$
G_\Q \to N(x_0)[2^s] \rtimes \mathbb{F}_2, \quad \sigma \mapsto \left(\psi_s(x_0)(\sigma), \chi_{x_0}(\sigma)\right),
$$
where the generator of $\mathbb{F}_2$ acts by $-\text{id}$ on $N(x_0)$. Our next goal is to show that the extension $L(\psi_s(x_0))\Q(\sqrt{x_0})/\Q(\sqrt{x_0})$ is unramified at all finite places. By equation (\ref{ePointGain}), we have an inclusion
\begin{align}
\label{eFieldOfDef}
L(\psi_s(x_0)) \subseteq \prod_{\substack{x \in C \\ x \neq x_0}} L(\psi_s(x)).
\end{align}
Then Lemma \ref{lX} implies that $L(\psi_s(x_0))\Q(\sqrt{x_0})/\Q(\sqrt{x_0})$ is unramified at all finite places not lying above some $p_i(2)$.

It remains to show that $L(\psi_s(x_0))\Q(\sqrt{x_0})/\Q(\sqrt{x_0})$ is unramified for any place of $\Q(\sqrt{x_0})$ above $p_i(2)$. Recall from Proposition \ref{cocycles and semi direct products} that the Galois extension $L(\psi_s(x))\Q(\sqrt{x})/\Q$ is given by the homomorphism from $G_\Q$ to $N(x)[2^s] \rtimes \mathbb{F}_2$ defined by the formula
$$
\sigma \mapsto (\psi_s(x)(\sigma), \chi_x(\sigma)).
$$
Notice that $p_i(2)$ is unramified in the extension $L(\psi_s(x))/\Q$ for each $x \in C_{p_i(1)} - \{x_0\}$. Then it follows from Proposition \ref{ramification read off by inertia} that $\psi_s(x)(\sigma_{p_i(2)}) = 0$ for all $x \in C_{p_i(1)} - \{x_0\}$. Hence, the first condition of Definition \ref{def: profitable triple} together with the definition of $\psi_s(x_0)$ implies that
$$
(\psi_s(x_0)(\sigma_{p_i(2)}), \chi_{x_0}(\sigma_{p_i(2)})) = \text{id}. 
$$
Invoking one more time Proposition \ref{cocycles and semi direct products} and Proposition \ref{ramification read off by inertia}, we conclude that $p_i(2)$ is unramified in $L(\psi_s(x_0))/\Q$ as desired. Since we have shown that $L(\psi_s(x_0))/\Q(\sqrt{x_0})$ is unramified at all finite places of $\Q(\sqrt{x_0})$, we conclude that $\psi_s(x_0)$ is a raw cocycle for $N(x_0)$ lifting $\chi_a$, which satisfies equation (\ref{ePointGainLower}) by construction.

In order to prove the existence of $\psi_{s + 1}(x_0)$ with $2\psi_{s + 1}(x_0) = \psi_s(x_0)$ we will instead show the equivalent statement that $\text{Up}_{\Q(\sqrt{x_0})/\Q}(p)$ splits completely in $L(\psi_s(x_0))\Q(\sqrt{x_0})/\Q(\sqrt{x_0})$ for each prime $p$ ramifying in $\Q(\sqrt{x_0})/\Q$ (here we use the material in Subsection \ref{ssArtinpairing} and Proposition \ref{unramified characters are always cocycles}). This is in turn equivalent to
\begin{align}
\label{eSplitLower}
i_p(L(\psi_s(x_0))) \subseteq \Q_p(\sqrt{x_0})
\end{align}
for each prime $p$ ramifying in $\Q(\sqrt{x_0})/\Q$.

Let us begin with the case that $p$ divides $d_0$ (or $p=2$ in case $x_0$ is $3$ modulo $4$). For all such choices of $p$, equation (\ref{eSplitLower}) is a consequence of Lemma \ref{lX} and equation (\ref{eFieldOfDef}). We are now left with proving the sought inclusion for $p_i(1)$ for each $i \in [s]$. First of all we claim that $i_{p_i(1)}(L(\psi_s(x))) \subseteq \Q_{p_i(1)}(\sqrt{x_0})$ for each $x \in C_{p_{i}(1)} - \{x_0\}$. Indeed, the Artin symbol of $\text{Up}_{\Q(\sqrt{x})/\Q}(p_i(1))$ in $\text{Gal}(L(\psi_s(x))\Q(\sqrt{x})/\Q(\sqrt{x}))$ vanishes thanks to the equation $2 \cdot \psi_{s + 1}(x) = \psi_s(x)$ and the claim is a consequence of Proposition \ref{profitable cubes are quad consistent}. Therefore noticing that
$$
L(\psi_s(x_0)) \subseteq \left(\prod_{x \in C_{p_i(1)} - \{x_0\}} L(\psi_s(x))\right) L\left(\sum_{x \in C_{p_i(2)}} \psi_s(x)\right),
$$
we see that it is enough to show that $i_{p_i(1)}(L(\sum_{x \in C_{p_i(2)}}\psi_s(x))) \subseteq \Q_{p_i(1)}$. Hence it suffices to show that $p_i(1)$ splits completely in the extension
$$
L\left(\sum_{x \in C_{p_i(2)}} \psi_s(x)\right)/\Q.
$$
Define $\widetilde{C} := C_{p_i(2)}$. We start by observing that equation (\ref{eLowerMinimality}) combined with Remark \ref{higher codimension} and Proposition \ref{key calculation of cocycles} implies that 
\[
\left(\sum_{x \in \widetilde{C}_{[s] - \{i\} - T}} \psi_{|T| + 1}(x)\right)_{T \subseteq [s] - \{i\}}
\]
is an expansion map, say $E$, with support set equal to $\{\chi_{p_j(1)p_j(2)} : j \in [s] - \{i\}\} \cup \{\chi_a\}$ and pointer $\chi_a$. 

We claim that $p_i(2)$ splits completely in $L(E)$. Let us first show that $p_i(2)$ is unramified in $L(E)$. We obtain, as an immediate consequence of equation (\ref{eReconstruct}) and the first point of Definition \ref{def: profitable triple}, that $\sigma_{p_i(2)}$ is mapped to the identity element via $E$. Then $p_i(2)$ is indeed unramified by Proposition \ref{ramification read off by inertia}. Since $L(E)$ is contained in $\prod_{x \in C_{p_i(2)}} L(\psi_s(x))$, it follows from Lemma \ref{lX} that $p_i(2)$ has residue field degree $1$ in $L(E)$ implying the claim.

We next claim that
\begin{align}
\label{eCubeSwitch}
i_{p_i(1)}\left(M\left(\sum_{x \in \widetilde{C}} \psi_s(x)\right)\right) \subseteq \Q_{p_i(1)} \quad \text{and}  \quad i_{p_i(1)}\left(L\left(\sum_{x \in \widetilde{C}} \psi_s(x)\right)\right) \subseteq \Q_{p_i(1)}^{\text{unr}}.
\end{align}
The second containment in equation (\ref{eCubeSwitch}) is clear, since $p_i(1)$ is unramified in $L(\psi_s(x))$ for every $x \in \widetilde{C}$. Thanks to Proposition \ref{key calculation of cocycles} the first condition is equivalent to $p_i(1)$ splitting completely in
\[
L\left(\sum_{x \in C_T} \psi_{s - 1}(x)\right)
\]
for every $T \subseteq [s]$ of size $2$ containing $i$. Let us first observe that the above field is contained in the compositum of $L(\psi_{s}(x))$ for $x$ satisfying $\pi_i(x) = p_i(2)$. Hence 
\begin{align}
\label{epi1unr}
i_{p_i(1)}\left(L\left(\sum_{x \in C_T} \psi_{s - 1}(x)\right)\right) \subseteq \Q_{p_i(1)}^{\text{unr}}.
\end{align}
Next observe that
$$
\sum_{x \in C_T} \psi_{s - 1}(x) = -\sum_{x \in C_{(p_i(1), p_j(2))}} \psi_{s - 1}(x)
$$
thanks to equation (\ref{eLowerMinimality}), where $T = \{i, j\}$. It follows from Lemma \ref{lX} that $p_i(1)$ has residue field degree $1$ in $L(\sum_{x \in C_{(p_i(1), p_j(2))}} \psi_{s - 1}(x))$. Therefore we have
\begin{align}
\label{epi1ram}
i_{p_i(1)}\left(L\left(\sum_{x \in C_T} \psi_{s - 1}(x)\right)\right) \subseteq \Q_{p_i(1)}(\sqrt{x_0}).
\end{align}
We derive from equations (\ref{epi1unr}) and (\ref{epi1ram}) that
$$
i_{p_i(1)}\left(L\left(\sum_{x \in C_T} \psi_{s - 1}(x)\right)\right) \subseteq \Q_{p_i(1)}(\sqrt{x_0}) \cap \Q_{p_i(1)}^{\text{unr}} = \Q_{p_i(1)}.
$$
This establishes equation (\ref{eCubeSwitch}).

We are now in position to show that the $4$-tuple
\begin{align*}
(\{\chi_{p_j(1)p_j(2)} : j \in [s] - \{i\}\} \cup& \{\chi_a\}, \{\chi_{p_j(1)p_j(2)} : j \in [s] - \{i\}\} \cup \{\chi_{p_i(1)p_i(2)}\}, \\
\left(\sum_{x \in \widetilde{C}_{[s] - \{i\} - T}} \psi_{|T| + 1}(x)\right)&_{T \subseteq [s] - \{i\}}, (\phi_{\{p_j(1)p_j(2) : j \in T\}; p_i(1)p_i(2)}(\mathfrak{G}))_{T \subseteq [s] - \{i\}})
\end{align*}
is R\'edei admissible. From this we will derive, by an application of Theorem \ref{Redei reciprocity}, the desired splitting of $p_i(1)$. Let us start by checking the coprimality conditions among the pointers. Observe that $a$ and $p_i(1)p_i(2)$ do not share any common prime divisor by construction. Furthermore, $a$ is positive, see Remark \ref{a must be positive}, and $p_i(1)p_i(2)$ is obviously positive. Finally, we have $p_i(1)p_i(2) \equiv 1 \bmod 8$ by assumption, hence the desired orthogonality at $2$ between $\chi_{a}$ and $\chi_{p_i(1)p_i(2)}$, with respect to the Hilbert pairing, is evidently satisfied.

We now check R\'edei admissability. To lighten the notational burden, we denote, as in Section \ref{sRed}, by $\psi_1$ the first expansion map and by $\psi_2$ the second expansion map appearing in the $4$-tuple above. 

Let us start by examining the infinite place. It follows from Remark \ref{a must be positive} that $\infty$ splits completely in the field $\Q(\{\sqrt{p_k(1)p_k(2)} : k \in [s]\}, \sqrt{a})$. Hence we need to check that $\infty$ splits completely in $M(\psi_1)M(\psi_2)$. The field $L(\psi_s(x))$ is totally real for each $x \in \widetilde{C}$ thanks to the equation $2 \cdot \psi_{s + 1}(x) = \psi_s(x)$. Therefore their compositum is totally real and thus we conclude that $M(\psi_1)$ is totally real. Regarding $M(\psi_2)$ this is explicitly prescribed in the fifth point of Definition \ref{def: profitable triple}.

Let us now consider the place $(2)$. It follows from the fifth condition of Definition \ref{def: profitable triple} that $(2)$ splits completely in the extension $L(\psi_2)/\Q$. Therefore, in view of Definition \ref{def: Redei admissible}, there is no further condition to check for $\psi_1$. 

It remains to examine the odd places dividing $d_0$ or places of the form $p_j(h)$ for some $j \in [s]$ and $h \in [2]$. Indeed, $L(\psi_1)/\Q$ does not ramify at any other odd place by Lemma \ref{lX}, while Proposition \ref{normalized expansions are unramified} implies that $L(\psi_2)/\Q$ ramifies only at primes of the form $p_j(h)$ for $j \in [s]$ and $h \in [2]$. So let us take an odd prime $p$ dividing $d_0$. Thanks to the fourth condition of Definition \ref{def: profitable triple} $p$ splits completely in $L(\psi_2)/\Q$.

We now examine the place $p_j(h)$ with $j$ different from $i$. Then we claim that each place above $p_j(h)$ is unramified in 
\[
L(\psi_1)L(\psi_2)/\Q(\{\sqrt{p_k(1)p_k(2)} : k \in [s] - \{i\}\}). 
\]
In fact we make the stronger claim that $\sigma_{p_j(h)}$ has order at most $2$ when projected to the Galois groups $\text{Gal}(L(\psi_1)/\Q)$ and $\text{Gal}(L(\psi_2)/\Q)$. This implies the original claim, since the ramification index of $p_j(h)$ in $\Q(\{\sqrt{p_k(1)p_k(2)} : k \in [s] - \{i\}\})/\Q$ already equals $2$, since $j$ is different from $i$. For $\psi_1$, observe that this holds for each individual Galois group $\text{Gal}(L(\psi_s(x))/\Q)$, since the extension $L(\psi_s(x))/\Q(\sqrt{x})$ is unramified for each $x \in \tilde{C}$. For $\psi_2$, observe that normalized expansion maps send every $\sigma \in \mathfrak{G}$ to an involution by construction.

We are now left with the places $p_i(1)$ and $p_i(2)$. But we have already shown above that $L(\psi_1)/\Q$ is unramified at both of them, and even that $p_i(2)$ splits completely therein. Furthermore, we have established, see equation (\ref{eCubeSwitch}), that $p_i(1)$ splits completely in $M(\psi_1)/\Q$. Therefore the $4$-tuple is R\'edei admissible. We now apply Theorem \ref{Redei reciprocity}. The relevant Artin symbols in $L(\psi_2)$ are zero thanks to the fourth point of Definition \ref{def: profitable triple}. We have already shown that $\text{Art}(p_i(2), L(\psi_1)/\Q)$ vanishes. Hence Theorem \ref{Redei reciprocity} yields that $p_i(1)$ splits completely in the field of definition of $\psi_1$. 

Since $i \in [s]$ was arbitrary, we have completed the proof that $\psi_s(x_0)$ pairs trivially with $\text{Cl}(\Q(\sqrt{x_0}))[2]$. Therefore there exists $\psi_{s + 1}(x_0)$ with
$$
2 \cdot \left(\sum_{x \in C} \psi_{s + 1}(x) \right) = 0.
$$
Invoking one more time Proposition \ref{key calculation of cocycles} combined with equation (\ref{eLowerMinimality}) and Remark \ref{higher codimension}, we obtain that 
\[
\psi(C, \chi_a, (\psi_{s + 1}(x))_{x \in C - \{x_0\}})
\]
is an expansion map with support $\{\chi_{p_i(1)p_i(2)} : i \in [s]\} \cup \{\chi_a\}$ and pointer $\chi_a$. 

Let us now prove the final claims concerning the ramification locus of the extension $L(\psi(C, \chi_a, (\psi_{s + 1}(x))_{x \in C - \{x_0\}}))/\Q$. First of all, recall that $(2)$ splits completely in the field $\Q(\{\sqrt{p_i(1)p_i(2)} : i \in [s]\})/\Q$, because of our assumption $p_i(1)p_i(2) \equiv 1 \bmod 8$ for each $i \in [s]$. In view of Proposition \ref{key calculation of cocycles} this also shows that the map $\phi_T(\psi(C, \chi_a, (\psi_{s + 1}(x))_{x \in C - \{x_0\}})) \circ i_2^\ast$ is a character from $G_{\Q_2}$ to $\mathbb{F}_2$. The other claims are now a consequence of Lemma \ref{lX}.
\end{proof}

From now on we will refer to the maps $\psi_{s + 1}(x_0)$ and $\psi(C, \chi_a, (\psi_{s + 1}(x))_{x \in C - \{x_0\}})$ obtained in Theorem \ref{main theorem on profitable triples} as, respectively, the raw cocycle and the expansion map \emph{attached} to the profitable triple $(C, \chi_a, (\psi_{s + 1}(x))_{x \in C - \{x_0\}})$. 

\section{Reflection principles for profitable triples}
\label{sReflection}
We divide our reflection principles for profitable triples in three families. The final family of reflection principles is taken from Smith \cite{Smith}, and does not involve the notion of profitable triples.

\subsection{\texorpdfstring{Reflection principles for $\infty$}{Reflection principles for infinity}}
We say that a squarefree integer $d > 1$ is \emph{special} in case it has no prime divisors congruent to $3$ modulo $4$. From now on we shall use the notation for $s$ and $C$ as given at the beginning of Section \ref{profitability}. We say that $C$ is special in case all its elements are special. Let us start by defining $-1$-profitable triples. If $\phi_{\{a_i : i \in [s]\}; a_{s + 1}}(\mathfrak{G})$ exists, then recall that
\[
M(\phi_{\{a_i : i \in [s]\}; a_{s + 1}}(\mathfrak{G})) = \Q(\{\chi_{a_i} : i \in [s]\}) \prod_{T \subsetneq [s]} L(\phi_{\{a_i : i \in T\}; a_{s + 1}}(\mathfrak{G})).
\]

\begin{mydef} 
\label{def:-1-profitable}
Let $(C, \chi_a, (\psi_{s + 1}(x))_{x \in C - \{x_0\}})$ be a special, profitable triple. We say that $(C, \chi_a, (\psi_{s + 1}(x))_{x \in C - \{x_0\}})$ is a $-1$-profitable triple in case
\begin{itemize}
\item the map $\phi_{\{p_i(1)p_i(2) : i \in [s]\}; -1}(\mathfrak{G})$ exists;
\item every odd prime divisor of $d_0$ splits completely in $M(\phi_{\{p_i(1)p_i(2) : i \in [s]\}; -1}(\mathfrak{G}))/\Q$;
\item $(\sqrt{x}) \in 2^s\textup{Cl}(\mathbb{Q}(\sqrt{x}))[2^{s + 1}]$ for each $x \in C$; 
\item the place $(1+i)$ of $\Q(i)$ splits completely in $M(\phi_{\{p_i(1)p_i(2) : i \in [s]\}; -1}(\mathfrak{G}))/\Q(i)$.
\end{itemize}
\end{mydef}

\noindent Before stating out next theorem, we remind the reader that the splitting of $(\sqrt{x})$ in an unramified extension (at all finite places) of $\Q(\sqrt{x})$ is the same as the splitting behavior of an infinite place. Furthermore, we remind the reader of the conventions about $\text{Art}(2,L(\psi_i)/\Q)$ discussed after Definition \ref{def: Redei admissible}. 

\begin{theorem} 
\label{thm: reflection principle for infinity}
Let $(C, \chi_a, (\psi_{s + 1}(x))_{x \in C - \{x_0\}})$ be a $-1$-profitable triple. Then 
\[
\chi_a \in  2^s\textup{Cl}(\mathbb{Q}(\sqrt{x_0}))^{\vee}[2^{s + 1}]. 
\]
Furthermore, each odd prime number $p \mid a$ splits completely in $M(\phi_{\{p_i(1)p_i(2) : i \in [s]\}; -1}(\mathfrak{G}))/\Q$, the place $(1 + i)$ splits completely in $M(\phi_{\{p_i(1)p_i(2) : i \in [s]\}; -1}(\mathfrak{G}))/\Q(i)$  and
$$
\sum_{x \in C} \langle (\sqrt{x}), \chi_a \rangle_{\textup{Art}_{s + 1}(x)} = \sum_{p \mid a} \phi_{\{p_i(1)p_i(2) : i \in [s]\}; -1}(\mathfrak{G})(\textup{Frob}(p)).
$$
\end{theorem}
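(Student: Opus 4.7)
The plan is to combine Theorem \ref{main theorem on profitable triples} with R\'edei reciprocity (Theorem \ref{Redei reciprocity}) applied to the pointer pair $(\chi_a, \chi_{-1})$.

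\emph{Step 1.} First I would invoke Theorem \ref{main theorem on profitable triples} on the profitable triple $(C, \chi_a, (\psi_{s+1}(x))_{x \ne x_0})$ to produce a raw cocycle $\psi_{s+1}(x_0) \in \textup{Cocy}_{\textup{unr}}(G_\Q, N(x_0))[2^{s+1}]$ and the attached expansion map $\psi_1 := \psi(C, \chi_a, \ldots)$ of support $A \cup \{\chi_a\}$ (where $A := \{\chi_{p_i(1)p_i(2)} : i \in [s]\}$) and pointer $\chi_a$. By Proposition \ref{unramified characters are always cocycles}, the existence of $\psi_{s+1}(x_0)$ witnesses $\chi_a \in 2^s \textup{Cl}(\Q(\sqrt{x_0}))^\vee[2^{s+1}]$. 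The splitting assertions concerning odd primes $p \mid a$ in $M(\phi_{\{p_i(1)p_i(2)\};-1}(\mathfrak{G}))/\Q$ and $(1+i)$ over $\Q(i)$ are immediate restatements of the $-1$-profitable hypotheses, since $a \mid d_0$.

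\emph{Step 2.} For each $x \in C$, writing $K = \Q(\sqrt{x})$, I would use narrow-idele class field theory to identify the Artin class of $(\sqrt{x})$ in $\textup{Gal}(H_{2^\infty}^{+}(K)/K)$ with the restriction of the fixed complex conjugation $c := i_\infty^{\ast}(c_{\C/\R}) \in G_\Q$: applied to the principal idele $-1$, Artin reciprocity forces the two archimedean Frobenii to satisfy $\sigma_{v_1}\sigma_{v_2} = 1$, whence $\sigma_{v_1} = \sigma_{v_2}$; applied to $\sqrt{x}$ (whose sign differs between the two archimedean places) it forces $\textup{Art}((\sqrt{x}))$ to equal this common value, which is $c|_{G_K}$ by the choice of $i_\infty$. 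Therefore
\[
\langle (\sqrt{x}), \chi_a\rangle_{\textup{Art}_{s+1}(x)} \;=\; \psi_{s+1}(x)(c) \;\in\; N[2] \cong \FF_2,
\]
and summing over $x \in C$ together with the expansion-map structure of Theorem \ref{main theorem on profitable triples} gives
\[
\sum_{x \in C}\langle (\sqrt{x}), \chi_a\rangle_{\textup{Art}_{s+1}(x)} \;=\; \phi_{[s]}(\psi_1)(c).
\]

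\emph{Step 3.} I would then apply Theorem \ref{Redei reciprocity} to the 4-tuple $(A \cup \{\chi_a\}, A \cup \{\chi_{-1}\}, \psi_1, \psi_2)$ with $\psi_2 := \phi_{\{p_i(1)p_i(2) : i \in [s]\};-1}(\mathfrak{G})$. Since $v_2(-1) = 0$ is even and $\infty$ ramifies in $\Q(i)$, we have $\widetilde{\textup{Ram}}(\Q(i)/\Q) = \{\infty\}$, so the reciprocity identity reads
\[
\sum_{v \in \widetilde{\textup{Ram}}(\Q(\sqrt{a})/\Q)}\!\!\!\textup{Art}(v, L(\psi_2)/\Q) \;=\; \textup{Art}(\infty, L(\psi_1)/\Q).
\]
The left-hand side is exactly $\sum_{p \mid a}\phi_{\{p_i(1)p_i(2)\};-1}(\mathfrak{G})(\textup{Frob}(p))$, the $p = 2$ term (when $v_2(a)$ is odd) being interpreted as $\textup{Frob}((1+i))$ in $L(\psi_2)/\Q(i)$, a convention legitimized by the $-1$-profitable assumption. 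The right-hand side, by Proposition \ref{prop: field of def of exp maps}, equals $\phi_{[s]}(\psi_1)(c)$, since the central $\FF_2$-quotient $\textup{Gal}(L(\psi_1)/M(\psi_1))$ is cut out by the top coefficient $\phi_{[s]}(\psi_1)$ and $c$ represents the Artin symbol of $\infty$ therein. Combining this with Step 2 yields the desired reflection formula.

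\emph{Main obstacle.} The bulk of the work will be verifying R\'edei admissibility (Definition \ref{def: Redei admissible}) for the chosen 4-tuple. The coprimality of the pointers is immediate from $a$ being special (so positive, with every odd prime factor $\equiv 1 \bmod 4$) and $p_i(1)p_i(2) \equiv 1 \bmod 8$. The odd-place condition is controlled by Theorem \ref{main theorem on profitable triples} (which bounds the odd-place ramification of $L(\psi_1)$ above $\Q(A)$ by primes of $d_0$), by Proposition \ref{normalized expansions are unramified} for $\psi_2$, and by the $-1$-profitable splitting of odd primes of $d_0$ in $M(\psi_2)/\Q$. The $(2)$-condition is checked by taking $(i, j) = (2, 1)$: $\psi_2$ is normalized with $\chi_{-1}(\sigma_2(2)) = 0$; Proposition \ref{normalized expansions are unramified} and $p_i(1)p_i(2) \equiv 1 \bmod 8$ give the local triviality of the $\phi_T(\psi_2)\circ i_2^{\ast}$; the requisite Hilbert-orthogonality follows from Theorem \ref{main theorem on profitable triples}'s description of $\phi_T(\psi_1)\circ i_2^{\ast}$ as lying in the span of $\{\chi_5, \chi_{x_0}\}$ (or $\{\chi_{x_0}\}$ when $(2)$ ramifies in $\Q(\sqrt{x_0})$). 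The subtlest verification is condition (2) at $\infty$, which requires $\infty$ to split completely in $M(\psi_1) = \Q(A \cup \{\chi_a\}) \prod_{T \subsetneq [s]}L(\phi_T(\psi_1))$. Using that $a > 0$ and $p_i(1)p_i(2) > 0$ deals with $\Q(A \cup \{\chi_a\})$; the remaining vanishing $\phi_T(\psi_1)(c) = 0$ for each $T \subsetneq [s]$ would be obtained inductively, by applying the reflection formula being proved to each proper sub-cube $C_{[s] - T}$ (itself a $-1$-profitable triple at smaller dimension) and invoking R\'edei reciprocity at the lower level, where $\widetilde{\textup{Ram}}(\Q(i)/\Q) = \{\infty\}$ again collapses the Frobenius-sums.
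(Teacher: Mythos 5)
Your overall strategy coincides with the paper's: apply Theorem \ref{main theorem on profitable triples} to obtain the raw cocycle and expansion map, rewrite the sum of pairings as $\phi_{[s]}(\psi_1)(i_\infty^\ast(\textup{Frob}(\infty)))$, then apply Theorem \ref{Redei reciprocity} to the $4$-tuple with pointers $\chi_a$ and $\chi_{-1}$, using that $\widetilde{\textup{Ram}}(\Q(i)/\Q) = \{\infty\}$. Steps 1 and 2 are fine, and most of the R\'edei-admissibility checks you sketch match the paper's.

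However, your proposed verification that $\infty$ splits completely in $M(\psi_1)$ (which you correctly identify as the subtle point) is flawed. You propose to obtain $\phi_T(\psi_1)(c) = 0$ for $T \subsetneq [s]$ by applying the reflection formula inductively to the sub-cube $C_{[s]-T}$. But note what such an application would actually yield: since $\phi_T(\psi_1) = \sum_{x \in C_{[s]-T}} \psi_{|T|+1}(x)$, the lower-dimensional reflection formula gives
\[
\phi_T(\psi_1)(c) = \sum_{p \mid a} \phi_{\{p_i(1)p_i(2) : i \in T\};-1}(\mathfrak{G})(\textup{Frob}(p)),
\]
which is a Frobenius sum that is \emph{not} asserted to vanish — the theorem asserts an identity, not a vanishing. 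Thus the induction does not close. The paper instead argues directly: since $2\psi_{s+1}(x) = \psi_s(x)$ and $\chi_x(c) = 0$ (as $x > 0$), one has $\psi_s(x)(c) = 2\psi_{s+1}(x)(c) = 0$ because $c^2 = 1$ forces $\psi_{s+1}(x)(c)$ into $N[2]$; hence every $L(\psi_s(x))/\Q$ is totally real, and since $M(\psi_1)$ is contained in the compositum $\prod_{x\in C} L(\psi_s(x))$, it too is totally real. You should replace your inductive detour with this short direct argument.
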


\begin{proof}
Since $(C, \chi_a, (\psi_{s + 1}(x))_{x \in C - \{x_0\}})$ is in particular a profitable triple, an application of Theorem \ref{main theorem on profitable triples} yields
\[
\chi_a \in  2^s\textup{Cl}(\mathbb{Q}(\sqrt{x_0}))^{\vee}[2^{s + 1}]. 
\]
It is also clear that $(1 + i)$ splits completely in $M(\phi_{\{p_i(1)p_i(2) : i \in [s]\}; -1}(\mathfrak{G}))/\Q(i)$ and that every odd prime $p$ dividing $a$ splits completely in $M(\phi_{\{p_i(1)p_i(2) : i \in [s]\}; -1}(\mathfrak{G}))/\Q$. Indeed, this is literally the second and fourth condition in Definition \ref{def:-1-profitable}.

It remains to prove the last part of the theorem. Pick a raw cocycle $\psi_{s + 1}(x_0)$ as in Theorem \ref{main theorem on profitable triples}. Let $\psi(C, \chi_a, (\psi_{s + 1}(x))_{x \in C - \{x_0\}})$ be the expansion map attached to the profitable triple. Denote by $\text{Frob}(\infty)$ the generator of $\text{Gal}(\Q _{\infty}^{\text{sep}}/\Q_{\infty})$. We claim that
\begin{align}
\label{eInfinityReflection}
\sum_{x \in C} \langle (\sqrt{x}), \chi_a \rangle_{\textup{Art}_{s + 1}(x)} = \psi_{[s]}(C, \chi_a, (\psi_{s + 1}(x))_{x \in C - \{x_0\}})(i_\infty^\ast(\text{Frob}(\infty))).
\end{align}
Indeed, for each $x \in C$, we can write
$$
\langle (\sqrt{x}), \chi_a \rangle_{\textup{Art}_{s + 1}(x)} = \psi_{s + 1}(x)(i_\infty^\ast(\text{Frob}(\infty))), 
$$
and hence equation (\ref{eInfinityReflection}) follows from the equality
$$
\sum_{x \in C} \psi_{s + 1}(x) = \psi_{[s]}(C, \chi_a, (\psi_{s + 1}(x))_{x \in C - \{x_0\}}).
$$
We now show that the $4$-tuple
\begin{align*}
&(\{\chi_{p_i(1)p_i(2)} : i \in [s]\} \cup \{\chi_a\}, \{\chi_{p_i(1)p_i(2)} : i \in [s]\} \cup \{\chi_{-1}\}, \\
\psi&(C, \chi_a, (\psi_{s + 1}(x))_{x \in C - \{x_0\}}), (\phi_{\{p_i(1)p_i(2) : i \in T\}; -1}(\mathfrak{G}))_{T \subseteq [s]})
\end{align*}
is R\'edei admissible. Once we have completed this task, then the desired conclusion follows at once from Theorem \ref{Redei reciprocity} combined with equation (\ref{eInfinityReflection}). 

Let us start by examining the coprimality conditions. The coprimality condition at the odd places is satisfied since one of the two pointers is $\chi_{-1}$. At $\infty$ we need to guarantee that $a$ is positive, which follows from Remark \ref{a must be positive}. At $(2)$ we need to guarantee that $\chi_{-1}$ and $\chi_a$ are orthogonal with respect to the local Hilbert pairing. Since $a$ is special, $\chi_a$ is contained in the span of $\{\chi_2, \chi_5\}$ locally at $2$. This last space is precisely the orthogonal complement of $\langle \chi_{-1} \rangle$ with respect to the local Hilbert pairing at $2$.

It remains to check the conditions in Definition \ref{def: Redei admissible}. In what follows we will denote, as in Section \ref{sRed}, by $\psi_1$ the first expansion map and by $\psi_2$ the second expansion map of the $4$-tuple. The first condition of Definition \ref{def: Redei admissible} is trivially satisfied. Let us now check the second condition. 

Observe that $\infty$ certainly ramifies in $\Q(i)/\Q$. Therefore we need to check that $\infty$ splits completely in $M(\psi_1)$. But the extension $L(\psi_s(x))/\Q$ is totally real for each $x \in C$ due to the equation $2\cdot \psi_{s + 1}(x) = \psi_s(x)$. Hence the desired conclusion follows immediately, since $M(\psi_1)$ is contained in the compositum of totally real fields. We now check the third condition of Definition \ref{def: Redei admissible}. Thanks to Theorem \ref{main theorem on profitable triples} and Proposition \ref{normalized expansions are unramified}, we only need to check the odd primes dividing $d_0$ and those of the form $p_i(h)$ for some $i \in [s]$ and some $h \in [2]$. 

Let $p$ be an odd prime dividing $d_0$. Then $p$ splits completely in $M(\psi_2)/\Q$ by the second condition of Definition \ref{def:-1-profitable}. Furthermore, $p$ is unramified in $L(\psi_2)/\Q$ by Proposition \ref{normalized expansions are unramified}. Let now $i \in [s]$ and $h \in [2]$. In this case we claim that every place of $\Q(\{\sqrt{p_j(1)p_j(2)} : j \in [s]\})$ above $p_i(h)$ stays unramified in $L(\psi_1)L(\psi_2)/\Q(\{\sqrt{p_j(1)p_j(2)} : j \in [s]\})$. To see this last claim, we use the last statement in Theorem \ref{main theorem on profitable triples} for $\psi_1$ and Proposition \ref{normalized expansions are unramified} for $\psi_2$.

It remains to check the fourth condition of Definition \ref{def: Redei admissible}. We have $\chi_{-1}(\sigma_2(2)) = 0$ by construction of $\sigma_2(2)$. We will now verify that $\phi_T(\psi_2) \circ i_2^\ast = 0$ for each $\varnothing \subsetneq T \subsetneq [s]$. First observe that $\phi_T(\psi_2) \circ i_2^\ast$ is a quadratic character. Since $\psi_2$ is normalized, it follows that $\phi_T(\psi_2) \circ i_2^\ast$ is in the span of the unramified quadratic character. But $(1 + i)$ splits completely in $M(\psi_2)/\Q(i)$ by the fourth condition of Definition \ref{def:-1-profitable}. This shows that $\phi_T(\psi_2) \circ i_2^\ast = 0$.

Finally, we need to check for each $T \subseteq [s]$ that the map $\phi_{T}(\psi_1)$ restricts to a quadratic character that is in the span of $\{\chi_{5}, \chi_{2}\}$, which is the orthogonal complement of $\langle \chi_{-1} \rangle$ with respect to the local Hilbert pairing at $(2)$. This follows from the last part of Theorem \ref{main theorem on profitable triples} keeping in mind that $x_0$ is a special integer, which forces the span of $\{\chi_{x_0}, \chi_5\}$ to be contained in the span of $\{\chi_{5}, \chi_2\}$ locally at $(2)$. This ends the proof that the $4$-tuple is R\'edei admissable, which, as explained above, concludes the proof of the theorem. 
\end{proof}

\subsection{Reflection principles for the self-pairing}
We begin with two auxiliary results, which are the two fundamental steps towards the main results of this subsection, Theorem \ref{main thm 1 on self-pairing} and Theorem \ref{main thm 2 on self-pairing}. Theorem \ref{auxiliary result for the self-pairing} is a result similar to Theorem \ref{thm: reflection principle for infinity}, if not a simpler one, in the sense that it does not invoke any further usage of reciprocity than already invoked in Theorem \ref{main theorem on profitable triples}. Instead Theorem \ref{involution spin are 0} is a genuinely novel result having no analogue in the previous reflection principles. It relies on Hilbert's reciprocity law in a critical manner.   

\begin{theorem} 
\label{auxiliary result for the self-pairing}
Let $(C, \chi_a, (\psi_{s + 1}(x))_{x \in C - \{x_0\}})$ be a profitable triple. Suppose that 
\[
\textup{Up}_{\Q(\sqrt{x})/\Q}(a) \in 2^s \cdot \textup{Cl}(\mathbb{Q}(\sqrt{x}))[2^{s + 1}]
\]
for each $x \in C$. Then $\textup{Up}_{\Q(\sqrt{a})/\Q}(p)$ splits completely in $M(\psi(C, \chi_a, (\psi_{s + 1}(x))_{x \in C - \{x_0\}}))$ for each $p \mid a$. Furthermore, we have that 
\[
\chi_a \in  2^s \cdot \textup{Cl}(\mathbb{Q}(\sqrt{x_0}))^{\vee}[2^{s + 1}]
\]
and
$$
\sum_{x \in C} \langle \textup{Up}_{\Q(\sqrt{x})/\Q}(a), \chi_a \rangle_{\textup{Art}_{s + 1}(x)} = \sum_{p \mid a} \psi_{[s]}(C, \chi_a, (\psi_{s + 1}(x))_{x \in C - \{x_0\}})(\textup{Frob}(\textup{Up}_{\Q(\sqrt{a})/\Q}(p))).
$$ 
\end{theorem}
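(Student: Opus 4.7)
The plan has three stages. First, I apply Theorem \ref{main theorem on profitable triples} to construct the raw cocycle $\psi_{s+1}(x_0)$ for $N(x_0)$ and the attached expansion map $\psi := \psi(C,\chi_a,(\psi_{s+1}(x))_{x \in C - \{x_0\}})$, whose support is $\{\chi_{p_i(1)p_i(2)} : i\in[s]\} \cup \{\chi_a\}$ and whose pointer is $\chi_a$. This immediately yields the divisibility $\chi_a \in 2^s\textup{Cl}(\Q(\sqrt{x_0}))^\vee[2^{s+1}]$ and furnishes the ramification data for $\psi$ needed below.

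Second, I evaluate the left-hand side via coherent Frobenius lifts. Using Proposition \ref{profitable cubes are quad consistent}, for each prime $p\mid a$ I can choose a single $\sigma_p \in G_\Q$ that lifts a Frobenius at $p$ (via $i_p^\ast$) and satisfies $\chi_x(\sigma_p)=0$ for every $x\in C$ simultaneously; such a $\sigma_p$ computes the Artin symbol of $\textup{Up}_{\Q(\sqrt{x})/\Q}(p)$ in $\Gal(L(\psi_{s+1}(x))/\Q(\sqrt{x}))$ uniformly in $x$. Factoring $\textup{Up}_{\Q(\sqrt{x})/\Q}(a)=\prod_{p\mid a}\textup{Up}_{\Q(\sqrt{x})/\Q}(p)$, exchanging the double sum, and invoking the identity $\psi_{[s]}=\sum_{x\in C}\psi_{s+1}(x)$ established in Step 1, I obtain
\[
\sum_{x\in C}\bigl\langle\textup{Up}_{\Q(\sqrt{x})/\Q}(a),\chi_a\bigr\rangle_{\textup{Art}_{s+1}(x)} \;=\; \sum_{p\mid a}\psi_{[s]}(\sigma_p).
\]

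Third, I establish the splitting claim and identify $\psi_{[s]}(\sigma_p)$ with $\psi_{[s]}(\textup{Frob}(\textup{Up}_{\Q(\sqrt{a})/\Q}(p)))$. Splitting of $\mathfrak{p}_a := \textup{Up}_{\Q(\sqrt{a})/\Q}(p)$ in the multiquadratic part $\Q(\sqrt{a},\{\sqrt{p_i(1)p_i(2)}\})/\Q(\sqrt{a})$ is immediate from the fourth condition of Definition \ref{def: profitable triple}. For each $L(\phi_T(\psi))$ with $T\subsetneq[s]$, the hypothesis $\textup{Up}_{\Q(\sqrt{x})/\Q}(a)\in 2^s\textup{Cl}(\Q(\sqrt{x}))[2^{s+1}]$ dualizes to the statement that every character of $\textup{Cl}(\Q(\sqrt{x}))^\vee$ of order at most $2^s$ annihilates $\textup{Up}(a)$; since $\psi_{|T|+1}(x)|_{G_{\Q(\sqrt{x})}}$ is such a character for $T\subsetneq[s]$, summing over $x \in C_{[s]-T}$ gives $\sum_{p\mid a}\phi_T(\psi)(\sigma_p)=0$. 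With Proposition \ref{prop: field of def of exp maps} pinning the central quotient as $\Gal(L(\psi)/M(\psi))\cong\FF_2$, this lets me place each $\psi_{[s]}(\textup{Frob}(\mathfrak{p}_a))$ inside $\FF_2$ in a well-defined way; the comparison of the two natural Frobenius lifts (one adapted to $\chi_{x_0}=0$, one to $\chi_a=0$) is controlled by the cocycle identity (\ref{eSmith22}), using crucially that $\chi_{p_i(1)p_i(2)}(\sigma_p)=0$ by the multiquadratic splitting to kill all cross-terms.

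The hardest part is Step 3: the hypothesis gives a sum-over-$p$ orthogonality, whereas the theorem requires per-$p$ splitting of $\mathfrak{p}_a$ in $M(\psi)$ for each individual Frobenius on the right-hand side to make sense. Bridging this requires the central-quotient analysis sketched above, the vanishing of cross-terms afforded by the profitability splitting in the multiquadratic part, and the consistent use throughout of the coherent lifts $\sigma_p$ from Step 2.
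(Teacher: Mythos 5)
Your Steps 1 and 2 match the paper's argument (the paper phrases your coherent lift $\sigma_p$ as the Frobenius of the unique prime $\mathfrak{p}$ of $K := \Q(\{\sqrt{p_i(1)p_i(2)} : i \in [s]\}, \sqrt{x_0})$ above $p$ determined by $i_p$, with residue degree $1$ from Proposition \ref{profitable cubes are quad consistent}). Step 3, however, contains a gap that you flag but do not close. Your dualization is correct as far as it goes: every character of order at most $2^s$ kills $\textup{Up}_{\Q(\sqrt{x})/\Q}(a)$, which yields $\sum_{p\mid a}\phi_T(\psi)(\sigma_p)=0$ for $T\subsetneq[s]$. But this is a sum-over-$p$ statement, whereas both the splitting claim and the well-definedness of $\psi_{[s]}(\textup{Frob}(\textup{Up}_{\Q(\sqrt{a})/\Q}(p)))$ as an element of $\FF_2$ require the per-$p$ statement that each individual $\textup{Frob}(\textup{Up}_{\Q(\sqrt{a})/\Q}(p))$ is trivial in $\Gal(M(\psi)/\Q)$. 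The ``central-quotient analysis'' you invoke presupposes that this Frobenius already lands in the central $\Gal(L(\psi)/M(\psi)) \cong \FF_2$, which is precisely the thing to be proved; equation (\ref{eSmith22}) and the multiquadratic splitting do not bridge this, because a vanishing sum over $p\mid a$ puts no constraint on any single term.

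The correct per-$p$ fact is structural and does not use the hypothesis on $\textup{Up}(a)$ at all. For $T\subsetneq[s]$ one has $|T|+1\leq s$, so $\psi_{|T|+1}(x) = 2\psi_{|T|+2}(x)$ belongs to $2\,\textup{Cl}(\Q(\sqrt{x}))^\vee$ and therefore annihilates all of $\textup{Cl}(\Q(\sqrt{x}))[2]$ --- in particular each individual $2$-torsion class $\textup{Up}_{\Q(\sqrt{x})/\Q}(p)$ for $p\mid a$, not merely the product $\textup{Up}_{\Q(\sqrt{x})/\Q}(a)$. This is exactly what Lemma \ref{lX}(ii) packages: $p$ has residue field degree $1$ and ramification index $2$ in $\prod_{x\in C}L(\psi_s(x))$, whence $\textup{Up}_{\Q(\sqrt{a})/\Q}(p)$ splits completely in $M(\psi)\cdot K$, and moreover $\textup{Frob}(\mathfrak{p})$ and $\textup{Frob}(\textup{Up}_{\Q(\sqrt{a})/\Q}(p))$ determine the same element of $\Gal(L(\psi)/M(\psi))$. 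Replacing the dualization argument in your Step 3 with this observation repairs the proof and brings it in line with the paper's.
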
 

\begin{proof}
For each prime $p \mid a$ the map $i_p$ gives us a unique prime $\mathfrak{p}$ above $p$ in 
\[
K := \Q(\{\sqrt{p_i(1)p_i(2)} : i \in [s]\}, \sqrt{x_0}). 
\]
For each $x \in C$ the extension $\Q(\sqrt{x})$ is inside $K$. Therefore we conclude that
$$
KL(\psi_{s + 1}(x))/K
$$ 
is unramified at $\mathfrak{p}$ and
$$
\langle \textup{Up}_{\Q(\sqrt{x})/\Q}(a), \chi_a \rangle_{\textup{Art}_{s + 1}(x)} = \sum_{p \mid a} \psi_{s + 1}(x)(\textup{Frob}(\mathfrak{p})),
$$
since $p$ has residue field degree $1$ in $K$ by Proposition \ref{profitable cubes are quad consistent}. Summing up all the contributions, we get
$$
\sum_{p \mid a} \psi_{[s]}(C, \chi_a, (\psi_{s + 1}(x))_{x \in C - \{x_0\}})(\textup{Frob}(\mathfrak{p})).
$$
Lemma \ref{lX} implies that $\textup{Up}_{\Q(\sqrt{a})/\Q}(p)$ splits completely in $M(\psi(C, \chi_a, (\psi_{s + 1}(x))_{x \in C - \{x_0\}})) \cdot K$ for each $p \mid a$. Therefore we can rewrite 
$$
\psi_{[s]}(C, \chi_a, (\psi_{s + 1}(x))_{x \in C - \{x_0\}})(\textup{Frob}(\mathfrak{p}))
$$
as
$$
\psi_{[s]}(C, \chi_a, (\psi_{s + 1}(x))_{x \in C - \{x_0\}})(\textup{Frob}(\textup{Up}_{\Q(\sqrt{a})/\Q}(p))).
$$ 
We conclude that
$$
\sum_{x \in C} \langle \textup{Up}_{\Q(\sqrt{x})/\Q}(a), \chi_a \rangle_{\textup{Art}_{s + 1}(x)} = \sum_{p \mid a} \psi_{[s]}(C, \chi_a, (\psi_{s + 1}(x))_{x \in C - \{x_0\}})(\textup{Frob}(\textup{Up}_{\Q(\sqrt{a})/\Q}(p)))
$$
as desired. 
\end{proof}

We are now going to abstract the crucial features of the cochains appearing on the right hand side of Theorem \ref{auxiliary result for the self-pairing}. As an auxiliary piece of notation, we remind the standard notation that one attaches to a finite Galois extension $F/\mathbb{Q}$ and to a prime number $p$
$$
e_p(F/\mathbb{Q}) := \#(\text{proj}(G_\Q \to \text{Gal}(F/\mathbb{Q})) \circ i_p^{*})(I_p),
$$
and
$$
f_p(F/\mathbb{Q}) := \frac{\#(\text{proj}(G_\Q \to \text{Gal}(F/\mathbb{Q})) \circ i_p^{*})(G_{\Q_p})}{e_p(F/\Q)}.
$$

\begin{mydef} 
\label{profitable expansions}
Let $s \in \mathbb{Z}_{\geq 1}$. Let $\{a_1, \ldots, a_{s + 1}\}$ be pairwise coprime, special squarefree integers greater than $1$. An expansion map $(\phi_{\{a_i : i \in T\}; a_{s + 1}})_{T \subseteq [s]}$ with support set $\{a_1, \ldots, a_{s + 1}\}$ and pointer $a_{s + 1}$ is said to be \emph{Pellian} in case 
\begin{itemize}
\item for each finite prime $p$ ramifying in $L(\phi_{\{a_i : i \in [s]\}; a_{s + 1}})/\Q$, we have that 
\[
e_p(L(\phi_{\{a_i : i \in [s]\}; a_{s + 1}})/\Q) = e_p(M(\phi_{\{a_i : i \in [s]\}; a_{s + 1}})/\Q) = 2
\]
and $p$ is not $3$ modulo $4$;
\item we have
\[
f_p(M(\phi_{\{a_i : i \in [s]\}; a_{s + 1}})/\Q) = 1
\]
for each finite prime $p$ ramifying in $M(\phi_{\{a_i : i \in [s]\}; a_{s + 1}})/\Q$;
\item the character $\chi_{a_i}$ is locally trivial at $2$ for all $i \in [s]$;
\item $\infty$ splits completely in $M(\phi_{\{a_i : i \in [s]\}; a_{s + 1}})/\Q$;
\item $\phi_{\{a_i : i \in T\}; a_{s + 1}}(\sigma_2(1)) = 0$ for each $T \subsetneq [s]$;
\item all prime divisors of $a_i$ are unramified in 
$$
L(\phi_{\{a_j : j \in [s] - \{i\}\}; a_{s + 1}})/\Q
$$
for each $i \in [s]$.
\end{itemize}
\end{mydef}

Let $p$ be a prime divisor of $a_{s + 1}$. By definition of a Pellian expansion, we have that $\textup{Up}_{\Q(\sqrt{a_{s + 1}})/\Q}(p)$ is unramified in $L(\phi_{\{a_i : i \in [s]\}; a_{s + 1}})$ and splits completely in $M(\phi_{\{a_i : i \in [s]\}; a_{s + 1}})$. Therefore we have that $\text{Frob}(\textup{Up}_{\Q(\sqrt{a_{s + 1}})/\Q}(p))$ is well-defined and lands in 
\[
\text{Gal}(L(\phi_{\{a_i : i \in [s]\}; a_{s + 1}})/M(\phi_{\{a_i : i \in [s]\}; a_{s + 1}})) \cong \FF_2.
\]
We now give the second auxiliary result, which has some similarities with the computations in \cite[Section 12]{FIMR}.

\begin{theorem} 
\label{involution spin are 0} 
Let $s \in \mathbb{Z}_{\geq 2}$. Let $\{a_1, \ldots, a_{s + 1}\}$ be pairwise coprime, special squarefree integers. Let $(\phi_{\{a_i : i \in T\}; a_{s + 1}})_{T \subseteq [s]}$ be a Pellian expansion map with support $\{a_1, \ldots, a_{s + 1}\}$ and pointer $a_{s + 1}$. Then
$$
\sum_{p \mid a_{s + 1}} \phi_{\{a_i : i \in [s]\}; a_{s + 1}}(\textup{Frob}(\textup{Up}_{\Q(\sqrt{a_{s + 1}})/\Q}(p))) = 0. 
$$
\end{theorem}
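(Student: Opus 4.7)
The plan is to express
\[
\Sigma := \sum_{p \mid a_{s+1}} \phi_{\{a_i : i \in [s]\}; a_{s+1}}(\textup{Frob}(\textup{Up}_{\Q(\sqrt{a_{s+1}})/\Q}(p)))
\]
as the sum of local invariants at finite places of $\Q$ of a suitable $2$-cocycle $\Theta \in H^2(G_\Q, \FF_2)$, and then to deduce $\Sigma = 0$ from Hilbert's reciprocity law by using the Pellian hypotheses to force every local invariant away from $\{p : p \mid a_{s+1}\}$ to vanish. The underlying Kummer-theoretic framework, via an analogue of Proposition \ref{norming stuff down} in the present setting, provides the link between the $1$-cochain $\phi := \phi_{\{a_i : i \in [s]\}; a_{s+1}}$ and a Kummer element of the multiquadratic field $K := \Q(\sqrt{a_1}, \ldots, \sqrt{a_s})$, which will be used both to construct $\Theta$ and to evaluate its local invariants.

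The cocycle $\Theta$ I would build is a corrected form of the cup product $\phi \smile \chi_{a_{s+1}}$: this product is not itself a cocycle, but its coboundary is a sum of three-fold cup products involving $\chi_{a_{s+1}}$ and the characters $\chi_{a_i}$, and this can be absorbed by an explicit correction term built from the subsidiary cochains $\phi_T$, $T \subsetneq [s]$, that comprise the expansion map. At a prime $p \mid a_{s+1}$, the character $\chi_{a_{s+1}} \circ i_p^\ast$ is the unique ramified quadratic character of $\Q_p$, and a direct local calculation identifies $\textup{inv}_p(\Theta)$ with the Artin symbol $\phi(\textup{Frob}(\textup{Up}_{\Q(\sqrt{a_{s+1}})/\Q}(p)))$. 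Hilbert's reciprocity law $\sum_{v \in \Omega_\Q} \textup{inv}_v(\Theta) = 0$ then reduces the theorem to the local vanishing of $\textup{inv}_v(\Theta)$ at every place not dividing $a_{s+1}$.

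Each Pellian hypothesis then intervenes to secure one class of local vanishings. At $\infty$, speciality of the $a_i$ together with complete splitting of $\infty$ in $M(\phi)/\Q$ forces the relevant Kummer representatives to be totally positive, so the archimedean invariant is trivial. At $(2)$, the hypothesis $\chi_{a_i} \circ i_2^\ast = 0$ makes $(2)$ split completely in $K$, and the condition $\phi_T(\sigma_2(1)) = 0$ for $T \subsetneq [s]$ handles one direction of local inertia while the speciality of $a_{s+1}$ orthogonalises $\chi_{a_{s+1}} \circ i_2^\ast$ against the Kummer classes entering $\Theta$. At an odd prime $p \mid a_i$ with $i \in [s]$, the Pellian condition that $p$ is unramified in $L(\phi_{\{a_j : j \in [s] - \{i\}\}; a_{s+1}})/\Q$ controls the local restriction of the corrected cocycle and forces cancellation. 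At any further odd prime, $\Theta$ is locally inflated from the maximal unramified quotient of $G_{\Q_p}$, whose second cohomology vanishes.

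The main obstacle is twofold: writing down the cocycle $\Theta$ explicitly, and executing the $(2)$-adic analysis. The latter is the delicate step, because the Pellian vanishing at $2$ is stipulated only on the distinguished generator $\sigma_2(1)$, not on both of $\sigma_2(1), \sigma_2(2)$, so pinning the cancellation down requires combining all of the $2$-adic Pellian conditions with speciality and the explicit structure of the Hilbert pairing at $(2)$. I also anticipate that the hypothesis $s \geq 2$ enters precisely at the step where the three-fold cup-product corrections to $\Theta$ are chosen: in the degenerate case $s = 1$ the quantity $\Sigma$ specialises to a R\'edei symbol, which is famously not identically zero, so one should see the construction genuinely fail there.
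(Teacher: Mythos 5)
Your overall strategy — repackage $\Sigma$ as a sum of local invariants of a $2$-cocycle built from $\phi := \phi_{\{a_i : i \in [s]\}; a_{s+1}}$ and invoke Hilbert reciprocity — is in the right spirit, and is indeed what the paper does. A first, minor difference: the paper runs Hilbert reciprocity over the multiquadratic field $K := \Q(\sqrt{a_2}, \ldots, \sqrt{a_s})$ rather than over $\Q$, precisely because on $G_K$ the subsidiary $\phi_T$ restrict to honest quadratic characters, so the cocycle
\[
\widetilde{\theta}(\sigma,\tau) := \phi_{\{a_i : i \in [s]\}; a_{s+1}}(\sigma)\,\phi_{\{a_i : i \in [s]-\{1\}\}; a_{s+1}}(\tau) + \chi_{a_1}(\sigma)\,\widetilde{\phi}(\tau)
\]
can be written down directly, with $\widetilde{\phi}$ a $1$-cochain on $G_K$ satisfying $d\widetilde{\phi} = \phi_{\{a_i : i \in [s]-\{1\}\}; a_{s+1}} \cup \phi_{\{a_i : i \in [s]-\{1\}\}; a_{s+1}}$ (existing because the Pellian hypotheses kill all local obstructions to that class). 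Your plan to work entirely over $G_\Q$ would force more elaborate corrections since the $\phi_T$ are only $1$-cochains there, but this is a design choice rather than the real problem.

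The genuine gap is your expectation that the local invariants at odd primes $p \mid a_i$, $i \in [s]$, vanish. They do not. In the paper's analysis the total contribution over places of $K$ above a prime $q \mid a_1$ equals $\bigl(\phi_{a_{s+1}; a_{s+1}}(\mathfrak{G}) + \chi\bigr)(\textup{Frob}(q))$ for a certain quadratic correction $\chi$, and this aggregates to a second, nontrivial spin-type symbol rather than $0$. You can see a priori that wholesale local cancellation cannot be right: your construction of $\Theta$ goes through unchanged when $s = 1$ (the obstruction $\chi_{a_{s+1}} \cup \chi_{a_{s+1}}$ still vanishes in $H^2$ because $a_{s+1}$ is special, so the correction exists), yet for $s = 1$ the quantity $\Sigma$ specialises to a R\'edei symbol, which is famously not identically $0$. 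So if your local vanishings were all correct, you would have proved a false statement. The paper escapes this by a recursive comparison you do not anticipate: Hilbert reciprocity converts $\Sigma$ into a spin symbol indexed by the primes of $a_1$, a second application of the identical argument to the expansion map $\phi_{a_1; a_{s+1}}$ (the $s = 1$ instance) shows this equals $\sum_{p \mid a_{s+1}} \phi_{a_1; a_{s+1}}(\textup{Frob}(\textup{Up}_{\Q(\sqrt{a_{s+1}})/\Q}(p)))$, and only then does $s \geq 2$ intervene — not, as you conjecture, in the construction of the cocycle, but in the very last step: for $s \geq 2$, the map $\phi_{a_1; a_{s+1}}$ is a \emph{subsidiary} piece of the full Pellian expansion, so the first two conditions of Definition \ref{profitable expansions} force each $p \mid a_{s+1}$ to split completely in $L(\phi_{a_1; a_{s+1}})$, making every summand in the residual symbol vanish. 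Without this recursion your proof cannot close, and your diagnosis of where $s \geq 2$ enters is wrong.
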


\begin{proof}
Until specified, all intermediate assertions that we are going to make also apply to the case $s = 1$.
 
\subsubsection*{Construction of an auxiliary $1$-cochain}
Let us write $K := \Q(\{a_i : i \in [s] - \{1\}\})$. Observe that the restriction of $\phi_{\{a_i : i \in [s] - \{1\}\}; a_{s + 1}}$ to $G_K$ is a quadratic character, while $\phi_{\{a_i : i \in [s]\}; a_{s + 1}}$ restricted to $G_K$ is a $1$-cochain whose differential equals the following cup product
$$
(d\phi_{\{a_i : i \in [s]\}; a_{s + 1}})(\sigma, \tau) = \chi_{a_1}(\sigma) \cdot \phi_{\{a_i : i \in [s] - \{1\}\}; a_{s + 1}}(\tau)
$$
of quadratic characters. 

Thanks to the first point of Definition \ref{profitable expansions}, the quadratic character $\phi_{\{a_i\}_{i \in [s] - \{1\}}; a_{s + 1}}$ does not ramify at odd finite places of $K$ not possessing a primitive $4$-th root of unity. As such the $2$-cocycle
$$
\theta(\sigma, \tau) := \phi_{\{a_i : i \in [s] - \{1\}\}; a_{s + 1}}(\sigma) \cdot \phi_{\{a_i : i \in [s] - \{1\}\}; a_{s + 1}}(\tau)
$$
is locally trivial at all finite odd places of $K$. Furthermore, $\theta$ is trivial at all the $2^{s - 1}$ infinite places of $K$, in virtue of the fourth point of Definition \ref{profitable expansions}. We now examine the places above $(2)$. Observe that by the third point $(2)$ splits completely in $K$. Call $\mathfrak{t}$ the unique place of $K$ given by $i_2$. Then $K_{\mathfrak{t}} = \Q_2$.

Recall that the set of characters $\chi \in \text{Hom}_{\text{top.gr.}}(G_{\Q_2}, \mathbb{F}_2)$ with $\chi \cup \chi = 0$ in $H^2(G_{\Q_2}, \mathbb{F}_2)$ forms a subspace thanks to the antisymmetry of the Hilbert symbol. Furthermore, it coincides precisely with the space of characters vanishing at $\sigma_2(1)$. To see this, remark that $\chi_2 \cup \chi_2$ and $\chi_5 \cup \chi_5$ are trivial (since $2^2 = 2 + 2$ and $5^2 = 5 + 5 \cdot 2^2$). Then, since both spaces are $2$-dimensional, they must coincide.

Now, thanks to the fifth condition in Definition \ref{profitable expansions}, we see that 
$$
\text{inv}_{\mathfrak{t}}(\phi_{\{a_i : i \in [s] - \{1\}\}; a_{s + 1}}(\sigma) \cdot \phi_{\{a_i : i \in [s] - \{1\}\}; a_{s + 1}}(\tau)) = 0.
$$
We now show it for all the other places above $(2)$. Take any $\rho \in \text{Gal}(K/\Q)$. Observe that
\begin{multline*}
\text{inv}_{\rho(\mathfrak{t})}(\phi_{\{a_i : i \in [s] - \{1\}\}; a_{s + 1}}(\sigma) \cdot \phi_{\{a_i : i \in [s] - \{1\}\}; a_{s + 1}}(\tau)) = \\
\text{inv}_{\mathfrak{t}}(\phi_{\{a_i : i \in [s] - \{1\}\}; a_{s + 1}}(\rho\sigma \rho^{-1}) \cdot \phi_{\{a_i : i \in [s] - \{1\}\}; a_{s + 1}}(\rho\tau \rho^{-1})).
\end{multline*} 
Invoking Proposition \ref{norming stuff down}, we see that $\phi_{\{a_i : i \in [s] - \{1\}\}; a_{s + 1}}(\rho\sigma \rho^{-1})$ is in the span of
$$
\{\phi_{\{a_i : i \in T\}; a_{s + 1}} : T \subseteq [s] - \{1\}\}. 
$$
However, invoking again the fifth condition of Definition \ref{profitable expansions}, we see that each $\phi_{\{a_i : i \in T\}; a_{s + 1}}$ is in the subspace of $\chi \in \text{Hom}_{\text{top.gr.}}(G_{K_{\mathfrak{t}}}, \mathbb{F}_2)$ with 
\[
\text{inv}_{\mathfrak{t}}(\chi \cup \chi) = 0. 
\]
Therefore we conclude that 
\[
\text{inv}_{\mathfrak{t}}(\phi_{\{a_i : i \in [s] - \{1\}\}; a_{s + 1}}(\rho\sigma \rho^{-1}) \cdot \phi_{\{a_i : i \in [s] - \{1\}\}; a_{s + 1}}(\rho\tau \rho^{-1})) = 0
\]
and hence 
\[
\text{inv}_{\rho(\mathfrak{t})}(\phi_{\{a_i : i \in [s] - \{1\}\}; a_{s + 1}}(\sigma) \cdot \phi_{\{a_i : i \in [s] - \{1\}\}; a_{s + 1}}(\tau)) = 0.
\]
We have proved that $\theta$ vanishes locally at all places above $(2)$ as well. Hence we have completed the proof that the class of $\theta$ is locally trivial at all places of $K$. Therefore there exists a $1$-cochain $\widetilde{\phi}:G_K \to \mathbb{F}_2$ with 
$$
(d\widetilde{\phi})(\sigma, \tau) = \theta(\sigma, \tau) = \phi_{\{a_i : i \in [s] - \{1\}\}; a_{s + 1}}(\sigma) \cdot \phi_{\{a_i : i \in [s] - \{1\}\}; a_{s + 1}}(\tau).
$$
The possible choices of the cochain $\widetilde{\phi}$ form a coset under $\text{Hom}_{\text{top.gr.}}(G_K, \mathbb{F}_2)$. We are going to make a choice of $\widetilde{\phi}$ that will simplify the coming discussion. Since $\chi_{a_1}$ is a character that ramifies at some place, we can always choose a set $S_1(K)$ of odd prime ideals of $O_K$ spanning $\frac{\text{Cl}(K)}{2\text{Cl}(K)}$ such that $\chi_{a_1}$ is locally trivial at each of them. 

Let us call $S(K)$ the set of places of $K$ that are in $S_1(K)$ or lie above $(2)$ or $\infty$. We now claim that we can always make a choice of $\widetilde{\phi}$ such that $L(\widetilde{\phi})/K$ is unramified outside of $S(K)$ and the places where $L(\phi_{\{a_i : i \in [s] - \{1\}\}; a_{s + 1}})/K$ ramifies. Indeed, start with one cochain $\widetilde{\phi}_1: G_K \to \mathbb{F}_2$ such that 
\[
(d\widetilde{\phi}_1)(\sigma, \tau) = \theta(\sigma, \tau) = \phi_{\{a_i : i \in [s] - \{1\}\}; a_{s + 1}}(\sigma) \cdot \phi_{\{a_i : i \in [s] - \{1\}\}; a_{s + 1}}(\tau). 
\]
Suppose first that the field extension $L(\widetilde{\phi}_1)/K$ ramifies at some place $\mathfrak{p}$ outside of $S(K)$ and the set of places where $L(\phi_{\{a_i : i \in [s] - \{1\}\}; a_{s + 1}})/K$ ramifies. By definition of $S(K)$ we can find an ideal $J$ entirely supported in $S(K)$ such that $\mathfrak{p} J = (\gamma_{\mathfrak{p}}) \cdot I^2$ for some integral ideal $I$ of $\mathcal{O}_K$ and some non-zero element $\gamma_{\mathfrak{p}}$ of $K$. 

Observe that the quadratic character $\chi_{\gamma_{\mathfrak{p}}}$ ramifies at $\mathfrak{p}$ and its ramification locus is contained in $\{\mathfrak{p}\} \cup S(K)$. Furthermore, writing $\widetilde{\phi}_2 = \widetilde{\phi}_1 + \chi_{\gamma_{\mathfrak{p}}}$, we see that $L(\widetilde{\phi}_2)/K$ is unramified at $\mathfrak{p}$. Since $L(\widetilde{\phi}_1)/K$ ramifies at finitely many places, we get a $1$-cochain $\widetilde{\phi}$ with the claimed properties by iterating this procedure. In what follows we consider this choice of $\widetilde{\phi}$. 

\subsubsection*{Construction of a $2$-cocycle}
A quick calculation shows that
$$
\widetilde{\theta}(\sigma, \tau):=\phi_{\{a_i : i \in [s]\}; a_{s + 1}}(\sigma) \cdot \phi_{\{a_i : i \in [s] - \{1\}\}; a_{s + 1}}(\tau) + \chi_{a_1}(\sigma) \cdot \widetilde{\phi}(\tau)
$$
is a $2$-cocycle, giving a class in $H^2(G_K, \mathbb{F}_2)$. Hence Hilbert reciprocity yields
$$
\sum_{v \in \Omega_K} \text{inv}_v(\widetilde{\theta}) = 0. 
$$
We next calculate for each $v \in \Omega_K$ the value of $\text{inv}_v(\widetilde{\theta})$. 

\subsubsection*{The infinite places}
Observe that $\chi_{a_1}$ is locally trivial at any infinite place, since $a_1>0$. Furthermore, thanks to the fourth condition in Definition \ref{profitable expansions}, we see that $\phi_{\{a_i : i \in [s] - \{1\}\}; a_{s + 1}}$ is also locally trivial at each infinite place. We therefore conclude that the $2$-cocycle $\widetilde{\theta}$ becomes literally the zero map when restricted to each decomposition group of an archimedean place of $K$. Hence $\text{inv}_v(\widetilde{\theta}) = 0$ for each infinite place $v$ of $K$.

\subsubsection*{Places above $a_{s + 1}$}
We distinguish two cases. Let first $p$ be an odd prime factor of $a_{s + 1}$. Clearly, $p$ ramifies in $L(\phi_{\{a_i : i \in [s]\}; a_{s + 1}})/\Q$. It follows that $p$ splits completely in $K(\sqrt{a_1})$ thanks to the first and the second condition of Definition \ref{profitable expansions}. Let $\mathfrak{p}$ be one of the $2^{s - 1}$ places of $\Omega_K$ lying above $p$. Then $\chi_{a_1}$ is locally trivial at $\mathfrak{p}$, thanks to the just mentioned splitting of $p$ in $K(\sqrt{a_1})$. We conclude that $\phi_{\{a_i : i \in [s]\}; a_{s + 1}}$ and $\phi_{\{a_i : i \in [s] - \{1\}\}; a_{s + 1}}$ are both quadratic characters locally at $\mathfrak{p}$. Therefore, locally at $\mathfrak{p}$, the $2$-cocycle $\widetilde{\theta}$ becomes the cup product
$$
\phi_{\{a_i : i \in [s]\}; a_{s + 1}}(\sigma) \cdot \phi_{\{a_i : i \in [s] - \{1\}\}; a_{s + 1}}(\tau).
$$ 
This shows that
$$
\text{inv}_{\mathfrak{p}}(\widetilde{\theta}) = \text{inv}_{\mathfrak{p}}(\phi_{\{a_i : i \in [s]\}; a_{s + 1}}(\sigma) \cdot \phi_{\{a_i : i \in [s] - \{1\}\}; a_{s + 1}}(\tau)).
$$
It follows from the first and the second condition in Definition \ref{profitable expansions} that the set
$$
\{\phi_{\{a_i : i \in T\}; a_{s + 1}} : T \subsetneq [s]\}
$$  
spans the $1$-dimensional space generated by $\chi_{a_{s + 1}}$ locally at $\mathfrak{p}$. From this we deduce that
$$
\phi_{\{a_i : i \in [s]\}; a_{s + 1}}(\text{Frob}(\textup{Up}_{\Q(\sqrt{a_{s + 1}})/\Q}(p))) = 0
$$
if and only if $\phi_{\{a_i : i \in [s]\}; a_{s + 1}}$ lands in the space generated by $\chi_{a_{s + 1}}$. In case $\phi_{\{a_i : i \in [s] - \{1\}\}; a_{s + 1}}$ is locally trivial at $\mathfrak{p}$, we evidently get 
\[
\text{inv}_{\mathfrak{p}}(\phi_{\{a_i : i \in [s]\}; a_{s + 1}}(\sigma) \cup \phi_{\{a_i : i \in [s] - \{1\}\}; a_{s + 1}}(\tau)) = 0.
\]
Otherwise, $\phi_{\{a_i : i \in [s] - \{1\}\}; a_{s + 1}}$ is non-trivial locally at $\mathfrak{p}$ and hence equals $\chi_{a_{s + 1}}$ locally at $\mathfrak{p}$. Then we get
$$
\text{inv}_{\mathfrak{p}}(\phi_{\{a_i : i \in [s]\}; a_{s + 1}}(\sigma) \cdot \phi_{\{a_i : i \in [s] - \{1\}\}; a_{s + 1}}(\tau)) = \phi_{\{a_i : i \in [s]\}; a_{s + 1}}(\text{Frob}(\textup{Up}_{\Q(\sqrt{a_{s + 1}})/\Q}(p))).
$$
Indeed, the above Hilbert symbol vanishes if and only if $\phi_{\{a_i : i \in [s]\}; a_{s + 1}}$ is in the space generated by the non-trivial character $\chi_{a_{s + 1}}$, which, as we argued, is also decided by the Artin symbol. Finally, Corollary \ref{right parity} implies that the total contribution of the places above $p$ is precisely
$$
\phi_{\{a_i : i \in [s]\}; a_{s + 1}}(\text{Frob}(\textup{Up}_{\Q(\sqrt{a_{s + 1}})/\Q}(p))).
$$
Suppose now that $2$ divides $a_{s + 1}$. We explain why the above argument works in this case as well. Thanks to the first and the second condition in Definition \ref{profitable expansions}, we know that
$$
\{\phi_{\{a_i : i \in T\}; a_{s + 1}}|_{i_2^\ast(G_{\Q_2})} : T \subseteq [s]\}
$$
spans a space that is at most $2$-dimensional, containing $\chi_{a_{s + 1}}$ and reaching dimension $2$ if and only if $\chi_5$ is therein. Again the symbol $\phi_{\{a_i : i \in [s]\}; a_{s + 1}}(\text{Frob}(\textup{Up}_{\Q(\sqrt{a_{s + 1}})/\Q}(2)))$ detects whether the dimension is $2$ and likewise for the Hilbert symbol 
\[
\text{inv}_{\mathfrak{p}}(\phi_{\{a_i : i \in [s]\}; a_{s + 1}}(\sigma) \cdot \phi_{\{a_i : i \in [s] - \{1\}\}; a_{s + 1}}(\tau)) = \phi_{\{a_i : i \in [s]\}; a_{s + 1}}(\text{Frob}(\textup{Up}_{\Q(\sqrt{a_{s + 1}})/\Q}(2)))
\]
in case $\phi_{\{a_i : i \in [s] - \{1\}\}; a_{s + 1}}$ is non-trivial locally at $\mathfrak{p}$. With these small differences, the rest of the proof is identical and one reaches again the conclusion that the final contribution above $(2)$ is 
$$
\phi_{\{a_i : i \in [s]\}; a_{s + 1}}(\text{Frob}(\textup{Up}_{\Q(\sqrt{a_{s + 1}})/\Q}(2))).
$$
Hence we conclude that the total contribution from the places above the prime divisors of $a_{s + 1}$ is
$$
\sum_{p \mid a_{s + 1}} \phi_{\{a_i : i \in [s]\}; a_{s + 1}}(\textup{Frob}(\textup{Up}_{\Q(\sqrt{a_{s + 1}})/\Q}(p))).
$$

\subsubsection*{The remaining places not dividing $a_1$} 
Suppose that $v$ is a finite place of $K$ above a prime $q$ such that the residue field $\mathbb{F}_v$ is a non-trivial extension of $\mathbb{F}_q$. We claim that
\begin{align}
\label{eLocTrivialBigResidue}
\text{inv}_v(\widetilde{\theta}) = 0.
\end{align}
Observe that $q$ is unramified in $L(\phi_{\{a_i : i \in [s]\}; a_{s + 1}})/\Q$ by the first and second condition of Definition \ref{profitable expansions}. By our assumption on $v$, we have $\mathbb{F}_q^{*} \subseteq \mathbb{F}_v^{*2}$. Then the character $\chi_{a_i}$ is locally trivial at $v$ for all $i \in [s]$, and therefore $\phi_{\{a_i : i \in [s]\}; a_{s + 1}}(\sigma)$ and $\phi_{\{a_i : i \in [s] - \{1\}\}; a_{s + 1}}(\tau)$ are both quadratic characters locally at $v$. Hence $\widetilde{\theta}$ becomes the cup product 
\[
\phi_{\{a_i : i \in [s]\}; a_{s + 1}}(\sigma) \cdot \phi_{\{a_i : i \in [s] - \{1\}\}; a_{s + 1}}(\tau)
\]
of two unramified characters, which proves the claimed equation (\ref{eLocTrivialBigResidue}).

We are left with the finite places $v$ with residue field degree one in $K$ and not dividing $a_{s + 1}$. Write $q$ for the place of $\Q$ below $v$. Let us first consider $v$ such that $q$ is unramified in $L(\phi_{\{a_i : i \in [s]\}; a_{s + 1}})/\Q$. Then $q$ splits completely in $K$. 

First suppose that $v$ ramifies in $L(\widetilde{\phi})/K$. Then, by our choice of $S(K)$ if $q$ is odd, and by the third condition of Definition \ref{profitable expansions} in case $q = 2$, we have that $\chi_{a_1}$ is trivial locally at $q$. Since $q$ splits completely in $K$, we conclude that $\chi_{a_1}$ is also locally trivial at $v$. Therefore at all such places we are left with the cup product of two unramified characters and thus we get that the Hilbert symbol is trivial.

Next suppose that $v$ is unramified in $L(\widetilde{\phi})/K$. For each such $v$ we have that the restriction of $\widetilde{\theta}$ to $G_{K_v}$ is in the image of the inflation
$$
\text{inf}: H^2(\Gal(K_v^{\text{unr}}/K_v), \mathbb{F}_2) \to H^2(G_{K_v}, \mathbb{F}_2).
$$
But we know that $\Gal(K_v^{\text{unr}}/K_v) \cong \hat{\Z}$ and $H^2(\hat{\mathbb{Z}}, \mathbb{F}_2) = 0$, since any central extension of $\hat{\mathbb{Z}}$ is clearly trivial. We conclude that 
\[
\text{inv}_v(\widetilde{\theta}) = 0
\]
in this case.

Hence we have proved that the contribution from all the places above finite rational primes $q$ unramified in $L(\phi_{\{a_i : i \in [s]\}; a_{s + 1}})/\Q$ is pointwise $0$. We now turn to the ramified places. 

Among the remaining places, let us first examine those $v$ for which $q$ does not divide $a_1$. It follows from the second condition of Definition \ref{profitable expansions} that $\chi_{a_1}$ is locally trivial at such places. Hence we are left with
\[
\phi_{\{a_i : i \in [s]\}; a_{s + 1}}(\sigma) \cdot \phi_{\{a_i : i \in [s] - \{1\}\}; a_{s + 1}}(\tau)
\]
locally at $v$. Let us further suppose that $q$ does not ramify in $K/\Q$. Then it must split completely in $K/\Q$. Now for each of the $2^{s - 1}$ places above $q$ where the quadratic character $\phi_{\{a_i : i \in [s] - \{1\}\}; a_{s + 1}}$ of $G_K$ ramifies, the invariant map is zero if $f_q(L(\phi_{\{a_i : i \in [s]\}; a_{s + 1}})/\Q) = 1$ and non-zero if $f_q(L(\phi_{\{a_i : i \in [s]\}; a_{s + 1}})/\Q) = 2$. Since $q$ does not divide $a_{s + 1}$, there is an even number of places above $q$ ramifying in $\phi_{\{a_i : i \in [s] - \{1\}\}; a_{s + 1}}$ in virtue of Corollary \ref{right parity}. Therefore the total contribution from such places is trivial. 

In case $s \geq 2$, we still need to deal with the places $q$ not dividing $a_1$ but dividing $a_i$ for some $i \in [s] - \{1\}$. Take a place $v$ above $q$ and observe that both characters 
\[
\phi_{\{a_i : i \in [s]\}; a_{s + 1}}, \quad \phi_{\{a_i : i \in [s] - \{1\}\}; a_{s + 1}}
\]
have even valuation at $v$, thanks to the first condition in Definition \ref{profitable expansions}. Therefore the Hilbert symbol is trivial also in this case. 

\subsubsection*{Places dividing $a_1$} 
We are now left with the $v$ such that $q$ divides $a_1$. Indeed, we have shown that the total contribution from all the other places not dividing $a_{s + 1}$ is trivial. Take now such a prime $q$ dividing $a_1$. Observe that all the characters
\[
\phi_{\{a_i : i \in [s] - \{1\}\}; a_{s + 1}}
\]
are locally trivial at $v$, thanks to the second and fifth condition of Definition \ref{profitable expansions}. In particular $\widetilde{\phi}$ becomes a quadratic character locally at $v$ and $\widetilde{\theta}$ becomes the cup product
$$
\chi_{a_1}(\sigma) \cdot \widetilde{\phi}(\tau). 
$$
Furthermore, $v$ is unramified in the cyclic degree $4$ extension $L(\tilde{\phi})/K$ and splits in the unique quadratic subextension given by $\phi_{\{a_i : i \in [s] - \{1\}\}; a_{s + 1}}$. Let us call
$$
\widetilde{\psi}: G_K \twoheadrightarrow \mathbb{Z}/4\mathbb{Z}
$$
the continuous epimorphism given by 
\[
\sigma \mapsto (\widetilde{\phi}(\sigma), \phi_{\{a_i : i \in [s] - \{1\}\}; a_{s + 1}}(\sigma)),
\]
where $\mathbb{Z}/4\mathbb{Z}$ is represented as $\mathbb{F}_2 \times \mathbb{F}_2$ with the product law 
\[
(a_1, b_1) * (a_2, b_2) = (a_1 + a_2 + b_1b_2, b_1 + b_2). 
\]
We have that the total contribution from places above $q$ dividing $a_1$ is
$$
\sum_{\substack{v \in \Omega_K \\ v \mid q}} \widetilde{\psi}(\text{Frob}(v)).
$$
This is precisely the same as
$$
(N_{K/\mathbb{Q}}(\widetilde{\psi}))(\text{Frob}(q)). 
$$
Now $N_{K/\Q}(\widetilde{\psi})$ is a cyclic degree $4$ character of $G_{\mathbb{Q}}$ lifting the quadratic character $\chi_{a_{s + 1}}$, thanks to Proposition \ref{norming stuff down}. As such it can be given by
$$
\sigma \mapsto (\phi_{a_{s + 1}; a_{s + 1}}(\mathfrak{G})(\sigma) + \chi(\sigma), \chi_{a_{s + 1}}(\sigma)),
$$
thanks to Lemma \ref{lCyclicDegree4}. 

\subsubsection*{Study of the character $\chi$}
We claim that $\chi$ is a quadratic character that only ramifies at primes that split completely in $\Q(\sqrt{a_1})$. Indeed, observe that all the primes ramifying in $K$ split completely in $\Q(\sqrt{a_1})$. Furthermore, $L(\widetilde{\psi})/K$ ramifies only at places of $K$ that split completely in $\Q(\sqrt{a_1})$ by construction of $S(K)$, and the same is true for all the conjugates of $\widetilde{\psi}$. Therefore the field of definition $L(\phi_{a_{s + 1}; a_{s + 1}}(\mathfrak{G}) + \chi)/\Q$ can ramify only at places where $\chi_{a_1}$ is locally trivial. 

We will now show how this implies that $\chi$ has the desired property. By assumption the places $(2)$ and $\infty$ split completely in $\Q(\sqrt{a_1})/\Q$, so we can focus entirely on the odd primes. We show that if an odd prime $p$ ramifies in $\Q(\chi)/\Q$, then it must also ramify in $L(\phi_{a_{s + 1}; a_{s + 1}}(\mathfrak{G}) + \chi)/\Q$, and thus, as we argued above, $p$ must split completely in $\Q(\sqrt{a_1})/\Q$. Observe that, by definition, we have that
$$
\phi_{a_{s + 1}; a_{s + 1}}(\mathfrak{G})(\sigma_p) = 0,
$$
and, by Proposition \ref{ramification read off by inertia}, we have that $\chi(\sigma_p) = 1$. It follows that
$$
\sigma_p \mapsto (1, \chi_{a_{s + 1}}(\sigma_p)) \neq (0,0).
$$
Therefore by Proposition \ref{ramification read off by inertia} we conclude that $p$ ramifies in $L(\phi_{a_{s + 1}; a_{s + 1}}(\mathfrak{G}) + \chi)/\Q$, which establishes the claim.

\subsubsection*{End of proof}
What we have proven so far applies also to the Pellian expansion map $\phi_{a_1; a_{s + 1}}$, since we have not made use that $s \geq 2$ yet. But now we have
\begin{align*}
\sum_{p \mid a_{s + 1}} \phi_{\{a_i : i \in [s]\}; a_{s + 1}}(\textup{Frob}(\textup{Up}_{\Q(\sqrt{a_{s + 1}})/\Q}(p))) &= \sum_{q \mid a_1}(\phi_{a_{s + 1}; a_{s + 1}}(\mathfrak{G}) + \chi)(\text{Frob}(q)) \\
&= \sum_{p \mid a_{s + 1}} \phi_{a_1; a_{s + 1}}(\textup{Frob}(\textup{Up}_{\Q(\sqrt{a_{s + 1}})/\Q}(p))) = 0,
\end{align*}
where the first equality follows from the above analysis for $\phi_{\{a_i : i \in [s]\}; a_{s + 1}}$, the second equality follows from the above analysis for $\phi_{a_1; a_{s + 1}}$ (i.e. the case $s = 1$) and the last equality follows from $s \geq 2$ and the first two conditions of Definition \ref{profitable expansions}. 
\end{proof} 

\noindent The following results are the main theorems of this subsection.

\begin{theorem} 
\label{main thm 1 on self-pairing}
Let $s \in \mathbb{Z}_{\geq 2}$. Let $(C, \chi_a, (\psi_{s + 1}(x))_{x \in C - \{x_0\}})$ be a $-1$-profitable triple. Suppose that $\textup{Up}_{\Q(\sqrt{x})/\Q}(a) \in 2^s \cdot \textup{Cl}(\mathbb{Q}(\sqrt{x}))[2^{s + 1}]$ for each $x \in C$. Then we also have that 
\[
\chi_a \in 2^s \cdot \textup{Cl}(\mathbb{Q}(\sqrt{x}))^{\vee}[2^{s + 1}]
\] 
and
$$
\sum_{x \in C} \langle \textup{Up}_{\Q(\sqrt{x})/\Q}(a), \chi_a \rangle_{\textup{Art}_{s + 1}(x)} = 0. 
$$
\end{theorem}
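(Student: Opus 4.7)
The plan is to chain Theorem \ref{auxiliary result for the self-pairing} with Theorem \ref{involution spin are 0}. A $-1$-profitable triple is in particular profitable, and the liftability hypothesis $\textup{Up}_{\Q(\sqrt{x})/\Q}(a) \in 2^s \cdot \textup{Cl}(\Q(\sqrt{x}))[2^{s+1}]$ for $x \in C$ is given, so Theorem \ref{auxiliary result for the self-pairing} applies. It delivers both the containment $\chi_a \in 2^s \cdot \textup{Cl}(\Q(\sqrt{x_0}))^{\vee}[2^{s+1}]$ and the identity
\[
\sum_{x \in C} \langle \textup{Up}_{\Q(\sqrt{x})/\Q}(a), \chi_a \rangle_{\textup{Art}_{s+1}(x)} = \sum_{p \mid a} \psi_{[s]}(C, \chi_a, (\psi_{s+1}(x)))(\textup{Frob}(\textup{Up}_{\Q(\sqrt{a})/\Q}(p))),
\]
where $\psi := \psi(C, \chi_a, (\psi_{s+1}(x))_{x \in C - \{x_0\}})$ is the attached expansion map of Theorem \ref{main theorem on profitable triples}, with support $\{\chi_{a_i}\}_{i=1}^{s+1}$ and pointer $\chi_a$, after setting $a_i := p_i(1) p_i(2)$ for $i \in [s]$ and $a_{s+1} := a$. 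The right-hand side matches exactly the sum appearing in Theorem \ref{involution spin are 0}, so it suffices to verify that $\psi$ is Pellian in the sense of Definition \ref{profitable expansions}.

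Most of the six Pellian conditions are immediate. Pairwise coprimality, positivity, specialness and $a_i > 1$ follow from the specialness of $x_0$ and the setup at the start of Section \ref{profitability}. Condition (3) is the hypothesis $p_i(1) p_i(2) \equiv 1 \bmod 8$. Condition (4) holds because $2 \psi_{s+1}(x) = \psi_s(x)$ renders each $L(\psi_s(x))$ totally real, making $M(\psi)$ totally real. For condition (5), the last clause of Theorem \ref{main theorem on profitable triples}, combined with the specialness of $x_0$, places every $\phi_T(\psi) \circ i_2^{*}$ (for $T \subsetneq [s]$ with $T \neq \varnothing$) inside the span of $\{\chi_2, \chi_5\}$ at $(2)$; both characters annihilate $\sigma_2(1)$ by construction, and the special shape of $\chi_a$ itself handles the $T = \varnothing$ case. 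Condition (1) follows from the at-most-$2$ ramification bound of Theorem \ref{main theorem on profitable triples} combined with the absence of primes $\equiv 3 \bmod 4$ in $x_0$ and the $a_i$. Condition (6) is obtained by writing $\phi_{[s] - \{i\}}(\psi) = \sum_{x \in C_{\{i\}}} \psi_s(x)$ and applying the first bullet of Definition \ref{def: profitable triple} together with Lemma \ref{lX}.

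The delicate point is condition (2), demanding $f_p(M(\psi)/\Q) = 1$ at every ramified $p$. For $p = p_i(h)$, the required complete splitting in $\Q(\{\sqrt{a_j} : j \in [s] - \{i\}\})$ follows from the existence of $\phi_{\{p_j(1) p_j(2) : j \in [s] - \{i\}\}; p_i(1) p_i(2)}(\mathfrak{G})$ via Proposition \ref{pCreatePhi}, while the splitting in each intermediate $L(\phi_T(\psi))$ with $i \notin T$ is provided by Lemma \ref{lX}(ii) applied to the subcube $C_{p_i(h)}$, whose elements all have $p_i(h)$ as a ramified prime. For an odd $p \mid a$ (necessarily $p \mid d_0$), the fourth and fifth bullets of Definition \ref{def: profitable triple} together with Lemma \ref{lX}(ii) applied across the whole cube $C$ (where $p$ ramifies in every $\Q(\sqrt{x})$) yield the needed splittings. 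Finally at $(2)$, one combines the last clause of Theorem \ref{main theorem on profitable triples} with Proposition \ref{profitable cubes are quad consistent} (which gives $\Q_2(\sqrt{a}) = \Q_2(\sqrt{x_0})$) and the splitting of $(1+i)$ in $M(\phi_{\{p_i(1) p_i(2) : i \in [s]\}; -1}(\mathfrak{G}))/\Q(i)$ from Definition \ref{def:-1-profitable} to conclude that the completion of $M(\psi)$ at any place above $2$ is at most a ramified quadratic extension of $\Q_2$, yielding $f_2 = 1$. With $\psi$ Pellian, Theorem \ref{involution spin are 0} forces the right-hand side to vanish, completing the proof.
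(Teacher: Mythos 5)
Your proof takes exactly the route the paper does: it combines Theorem \ref{auxiliary result for the self-pairing} with Theorem \ref{involution spin are 0}, with the identification $a_i = p_i(1)p_i(2)$ for $i \in [s]$ and $a_{s+1} = a$. The paper's own proof is a single sentence declaring this combination ``straightforward''; you have usefully fleshed out the step the paper elides, namely the verification that the expansion map attached to the $-1$-profitable triple is Pellian in the sense of Definition \ref{profitable expansions}, and your checks of the six conditions are consistent with the tools the paper provides (the ramification and local statements in Theorem \ref{main theorem on profitable triples}, Proposition \ref{profitable cubes are quad consistent}, Lemma \ref{lX}, and the splitting conclusion for primes $p \mid a$ already packaged in Theorem \ref{auxiliary result for the self-pairing}).
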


\begin{proof}
This is now a straightforward combination of Theorem \ref{auxiliary result for the self-pairing} and Theorem \ref{involution spin are 0}.
\end{proof}

\begin{theorem} 
\label{main thm 2 on self-pairing}
Let $s \in \mathbb{Z}_{\geq 2}$. Let $(C, \chi_a, (\psi_{s + 1}(x))_{x \in C - \{x_0\}})$ be a $-1$-profitable triple. Suppose that $\textup{Up}_{\Q(\sqrt{x})/\Q}(a) \in 2^s \cdot \textup{Cl}(\mathbb{Q}(\sqrt{x}))[2^{s + 1}]$ for each $x \in C$. Then we also have that 
\[
\chi_a \in 2^s \cdot \textup{Cl}(\mathbb{Q}(\sqrt{x}))^{\vee}[2^{s + 1}]
\]
and
$$
\sum_{x \in C} \left\langle \frac{(\sqrt{x})}{\textup{Up}_{\Q(\sqrt{x})/\Q}(a)}, \chi_{\frac{x}{a}} \right\rangle_{\textup{Art}_{s + 1}(x)} = \sum_{p \mid a} \phi_{\{p_i(1)p_i(2) : i \in [s]\}; -1}(\mathfrak{G})(\textup{Frob}(p)). 
$$
\end{theorem}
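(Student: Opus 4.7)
The plan is to reduce this theorem to a combination of Theorem \ref{thm: reflection principle for infinity} and Theorem \ref{main thm 1 on self-pairing} via bilinearity of the Artin pairing, after the elementary observation that the characters $\chi_a$ and $\chi_{x/a}$ restrict to the same character of $G_{\Q(\sqrt{x})}$ (and hence induce the same element of the class group dual).

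First, I would observe that for each $x \in C$,
\[
\chi_{x/a}|_{G_{\Q(\sqrt{x})}} = \chi_x|_{G_{\Q(\sqrt{x})}} + \chi_a|_{G_{\Q(\sqrt{x})}} = \chi_a|_{G_{\Q(\sqrt{x})}},
\]
since $\chi_x$ becomes trivial upon restriction to $G_{\Q(\sqrt{x})}$. Recalling from Subsection \ref{ssArtinpairing} that the Artin pairing $\langle \cdot, \chi \rangle_{\textup{Art}_{s+1}(x)}$ depends only on the induced element of $\textup{Cl}(\Q(\sqrt{x}))^{\vee}[2^{s+1}]$, this will give
\[
\left\langle \frac{(\sqrt{x})}{\textup{Up}_{\Q(\sqrt{x})/\Q}(a)}, \chi_{x/a} \right\rangle_{\textup{Art}_{s+1}(x)} = \left\langle \frac{(\sqrt{x})}{\textup{Up}_{\Q(\sqrt{x})/\Q}(a)}, \chi_a \right\rangle_{\textup{Art}_{s+1}(x)}.
\]
In particular, the assertion $\chi_a \in 2^s \cdot \textup{Cl}(\Q(\sqrt{x}))^{\vee}[2^{s+1}]$ follows directly: for $x \neq x_0$ it is part of the data of the profitable triple, while for $x = x_0$ it is provided by Theorem \ref{main theorem on profitable triples}.

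Next, I will use $\mathbb{F}_2$-bilinearity of the Artin pairing to split
\[
\left\langle \frac{(\sqrt{x})}{\textup{Up}_{\Q(\sqrt{x})/\Q}(a)}, \chi_a \right\rangle_{\textup{Art}_{s+1}(x)} = \langle (\sqrt{x}), \chi_a \rangle_{\textup{Art}_{s+1}(x)} + \langle \textup{Up}_{\Q(\sqrt{x})/\Q}(a), \chi_a \rangle_{\textup{Art}_{s+1}(x)}.
\]
Both summands are well-defined thanks to the $-1$-profitability of the triple (which ensures $(\sqrt{x}) \in 2^s \cdot \textup{Cl}(\Q(\sqrt{x}))[2^{s+1}]$, by the third condition of Definition \ref{def:-1-profitable}) together with the explicit hypothesis $\textup{Up}_{\Q(\sqrt{x})/\Q}(a) \in 2^s \cdot \textup{Cl}(\Q(\sqrt{x}))[2^{s+1}]$. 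Summing over $x \in C$ and invoking Theorem \ref{thm: reflection principle for infinity} for the first sum and Theorem \ref{main thm 1 on self-pairing} for the second yields, respectively,
\[
\sum_{x \in C} \langle (\sqrt{x}), \chi_a \rangle_{\textup{Art}_{s+1}(x)} = \sum_{p \mid a} \phi_{\{p_i(1)p_i(2) : i \in [s]\}; -1}(\mathfrak{G})(\textup{Frob}(p)) \quad \text{and} \quad \sum_{x \in C} \langle \textup{Up}_{\Q(\sqrt{x})/\Q}(a), \chi_a \rangle_{\textup{Art}_{s+1}(x)} = 0.
\]
Adding these and combining with the identification of the two Artin pairings established above completes the argument.

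There is no significant obstacle here, since all the substantive work was already done in Theorems \ref{thm: reflection principle for infinity} and \ref{main thm 1 on self-pairing} (the latter relying crucially on Theorem \ref{involution spin are 0}). The only genuine content specific to Theorem \ref{main thm 2 on self-pairing} is the observation that the Artin pairing does not distinguish between $\chi_a$ and $\chi_{x/a}$ as characters of $\textup{Cl}(\Q(\sqrt{x}))$, together with the bilinear splitting of the pairing across the ideal ratio $(\sqrt{x})/\textup{Up}_{\Q(\sqrt{x})/\Q}(a)$.
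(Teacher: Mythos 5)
Your proof is correct and takes essentially the same route as the paper: rewrite $\chi_{x/a}$ as $\chi_a$ (since $\chi_x$ restricts trivially to $G_{\Q(\sqrt{x})}$), split the quotient ideal class by $\FF_2$-bilinearity of the Artin pairing, and apply Theorem \ref{thm: reflection principle for infinity} together with Theorem \ref{main thm 1 on self-pairing}. The paper's proof is a more compact version of exactly this argument.
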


\begin{proof}
Observe that 
\begin{align*}
\left\langle \frac{(\sqrt{x})}{\textup{Up}_{\Q(\sqrt{x})/\Q}(a)}, \chi_{\frac{x}{a}} \right\rangle_{\textup{Art}_{s + 1}(x)} 
&= \left\langle (\sqrt{x}) \cdot \textup{Up}_{\Q(\sqrt{x})/\Q}(a), \chi_a \right\rangle_{\textup{Art}_{s + 1}(x)} \\
&= \left\langle \textup{Up}_{\Q(\sqrt{x})/\Q}(a), \chi_a \right\rangle_{\textup{Art}_{s + 1}(x)} + \left\langle (\sqrt{x}), \chi_a \right\rangle_{\textup{Art}_{s + 1}(x)}.
\end{align*}
Hence the conclusion follows upon combining Theorem \ref{thm: reflection principle for infinity} and Theorem \ref{main thm 1 on self-pairing}.
\end{proof}

\subsection{Standard reflection principles}
We now establish the more classical reflection principles. Some of our arguments are different, but the material of this subsection is based on \cite{Smith}. We include it here with proofs in order to keep our work self-contained. 

Let $s$ be a positive integer and let
$$
C := \{p_1(1), p_1(2)\} \times \dots \times \{p_s(1), p_s(2)\} \times \{p_{s + 1}(1), p_{s + 1}(2)\} \times \{d_0\},
$$
where $\{p_1(1), p_1(2), \dots, p_{s + 1}(1), p_{s + 1}(2)\}$ is a set of $2s + 2$ distinct prime numbers, each of them coprime with the positive squarefree integer $d_0$. Furthermore, we demand that $p_i(1)p_i(2) \equiv 1 \bmod 8$ and that $p_i(1)p_i(2)$ is a square modulo every odd prime factor of $d_0$ for every $i \in [s]$. Let $a$ be a divisor of $d_0$. As at the beginning of Section \ref{profitability}, we identify points in $C$ with squarefree integers. We write
$$
x_0 := d_0 \cdot \prod_{i = 1}^{s + 1} p_i(1).
$$
Suppose that we have for each $x \in C - \{x_0\}$ a raw cocycle $\psi_{s + 1}(x)$ for $N(x)$ such that either 
\[
2^s \cdot \psi_{s + 1}(x) = \chi_a
\]
for each $x \in C - \{x_0\}$, or 
\[
2^s \cdot \psi_{s + 1}(x) = \chi_{\pi_{s + 1}(x) a}
\]
for each $x \in C - \{x_0\}$, where we recall that $\pi_{s + 1}$ is the projection on the $s + 1$-th coordinate of $C$. For convenience, the first case will be referred to as being \emph{of type 1}, while the second case is said to be \emph{of type 2}. We now make the two key definitions of this subsection. We begin with the notion of minimal triples. 

\begin{mydef} 
\label{def: minimal triples}
Suppose that $(C, (\psi_{s + 1}(x))_{x \in C - \{x_0\}}, \chi_a)$ is a triple of type $1$. We say that the triple is a \emph{minimal triple} in case for each non-empty subset $T \subseteq [s + 1]$ we have that
\begin{align}
\label{eMinimality}
\sum_{x \in C_T} \psi_{s - |T| + 1}(x) = 0.
\end{align}
\end{mydef}

\noindent We next turn to the notion of governing triples. 

\begin{mydef} 
\label{def: governing triples}
Suppose that  $(C, (\psi_{s + 1}(x))_{x \in C - \{x_0\}}, \chi_a)$ is a triple of type $2$. We say that the triple is a \emph{governing triple} in case 
\begin{itemize}
\item for each $T \subseteq [s]$ the map $\phi_{\{p_i(1)p_i(2) : i \in T\}; p_{s + 1}(1)p_{s + 1}(2)}(\mathfrak{G})$ exists;
\item for each non-empty $T \subseteq [s]$, we have 
$$
\sum_{x \in C_T} \psi_{s - |T| + 1}(x) = \phi_{\{p_i(1)p_i(2) : i \in [s] - T\}; p_{s + 1}(1)p_{s + 1}(2)}(\mathfrak{G});
$$
\item for each $T \subseteq [s + 1]$ with $s + 1 \in T$, we have
$$
\sum_{x \in C_T} \psi_{s - |T| + 1}(x) = 0.
$$
\end{itemize}
\end{mydef}

\noindent We can now give the two main results of this subsection. We start with the one on minimal triples. 

\begin{theorem} 
\label{main thm: minimal triples}
Let $(C, (\psi_{s + 1}(x))_{x \in C - \{x_0\}}, \chi_a)$ be a minimal triple. Then 
\[
\chi_a \in 2^{s} \cdot \textup{Cl}(\Q(\sqrt{x_0}))^{\vee}[2^{s + 1}]. 
\]
Moreover, we have the following results.
\begin{enumerate}
\item[(i)] Let $b \mid d_0$ (resp. $b \mid 2d_0$ in case $\Q(\sqrt{x_0})/\Q$ ramifies at $2$) be such that
$$
\textup{Up}_{\Q(\sqrt{x})/\Q}(b) \in 2^{s} \cdot \textup{Cl}(\Q(\sqrt{x}))[2^{s + 1}]
$$
for each $x \in C$. Then 
$$
\sum_{x \in C} \langle \textup{Up}_{\Q(\sqrt{x})/\Q}(b), \chi_a \rangle_{\textup{Art}_{s + 1}(x)} = 0.
$$
\item[(ii)] Instead suppose that
$$
\textup{Up}_{\Q(\sqrt{x})/\Q}(x) \in 2^{s} \cdot \textup{Cl}(\Q(\sqrt{x}))[2^{s + 1}]
$$
for each $x \in C$. Then
$$
\sum_{x \in C} \langle \textup{Up}_{\Q(\sqrt{x})/\Q}(x), \chi_a \rangle_{\textup{Art}_{s + 1}(x)} = 0.
$$
\end{enumerate}
\end{theorem}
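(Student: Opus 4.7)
The plan is to mirror the architecture of the proof of Theorem \ref{main theorem on profitable triples}, first constructing a raw cocycle $\psi_{s+1}(x_0)$ lifting $\chi_a$ by averaging over the cube, and then deducing the reflection identities via a further appeal to Theorem \ref{Redei reciprocity}. First I would set
\[
\psi_s(x_0) := -\sum_{x \in C - \{x_0\}} \psi_s(x),
\]
and apply Proposition \ref{key calculation of cocycles}: the minimality hypothesis (\ref{eMinimality}) forces every inner sum $\sum_{x \in C_T} \psi_{s - |T| + 1}(x)(\tau)$ appearing in the coboundary calculation to vanish, so $d_{x_0}(\psi_s(x_0)) = 0$ and $\psi_s(x_0) \in \textup{Cocy}(G_\Q, N(x_0))[2^s]$ lifts $\chi_a$. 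Unramifiedness of $L(\psi_s(x_0)) \cdot \Q(\sqrt{x_0})/\Q(\sqrt{x_0})$ at every finite place is then checked in three steps exactly as in the profitable case: Lemma \ref{lX} applied via the inclusion $L(\psi_s(x_0)) \subseteq \prod_{x \ne x_0} L(\psi_s(x))$ handles the primes lying above no $p_i(2)$ and no divisor of $d_0$; the minimality instance for $T = \{i\}$ forces $\psi_s(x_0)(\sigma_{p_i(2)}) = 0$ and handles each $p_i(2)$ by Proposition \ref{ramification read off by inertia}; and the remaining finite primes ramify already in $\Q(\sqrt{x_0})/\Q$, so their unramifiedness upstairs must be deferred to the lift step.

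To upgrade $\psi_s(x_0)$ to a raw cocycle $\psi_{s+1}(x_0) \in \textup{Cocy}_{\textup{unr}}(G_\Q, N(x_0))[2^{s + 1}]$---equivalently, to establish $\chi_a \in 2^s\textup{Cl}(\Q(\sqrt{x_0}))^\vee[2^{s + 1}]$---one invokes Proposition \ref{unramified characters are always cocycles} together with Remark \ref{It suffices to lift with 1-cocycles} and must show that $\textup{Up}_{\Q(\sqrt{x_0})/\Q}(p)$ splits completely in $L(\psi_s(x_0)) \cdot \Q(\sqrt{x_0})/\Q(\sqrt{x_0})$ for each rational prime $p$ ramifying in $\Q(\sqrt{x_0})/\Q$. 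For $p \mid d_0$ this falls out of Lemma \ref{lX} together with Proposition \ref{profitable cubes are quad consistent} (which applies verbatim since its hypotheses are built into the setup of Section \ref{sReflection}). For $p = p_i(1)$ I would apply Theorem \ref{Redei reciprocity} exactly as in the final steps of the proof of Theorem \ref{main theorem on profitable triples}: on one side one takes the expansion map with pointer $\chi_a$ that is extracted from the cube averages $\sum_{x \in C_{[s+1] - T}} \psi_{|T| + 1}(x)$ (the needed vanishing for $T \ni i$ being one of the minimality hypotheses), and on the other side the normalized expansion map $\phi_{\{p_j(1)p_j(2) : j \in [s + 1] - \{i\}\}; p_i(1) p_i(2)}(\mathfrak{G})$, whose existence is supplied by Proposition \ref{pCreatePhi} under the standing hypotheses on the $p_j(1) p_j(2)$. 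The Artin symbols on the normalized-expansion side vanish automatically, and the reciprocity law forces exactly the splitting we need; in the process this also proves the stronger conclusion of Remark \ref{It suffices to lift with 1-cocycles}.

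For the reflection identities (i) and (ii), Proposition \ref{profitable cubes are quad consistent} shows that $\Q_p(\sqrt{x}) = \Q_p(\sqrt{x_0})$ for every $x \in C$ and every prime $p$ dividing $b$ (in (i)) or $x$ (in (ii)); consequently each Artin pairing rewrites as $\langle \textup{Up}_{\Q(\sqrt{x})/\Q}(p), \chi_a \rangle_{\textup{Art}_{s + 1}(x)} = \psi_{s + 1}(x)(i_p^\ast(\textup{Frob}_p))$ with the Frobenius element independent of $x$, and the total sum collapses to $\sum_{p \mid b} \Psi(i_p^\ast(\textup{Frob}_p))$ where $\Psi := \sum_{x \in C} \psi_{s + 1}(x)$ (respectively $\sum_{p \mid x} \ldots$ with extra bookkeeping for (ii)). A final application of Theorem \ref{Redei reciprocity} pairs $\Psi$ (pointer $\chi_a$) against the normalized expansion map with pointer $\chi_b$ in case (i), respectively $\chi_x$ in case (ii); the hypothesis $\textup{Up}_{\Q(\sqrt{x})/\Q}(b) \in 2^s\textup{Cl}(\Q(\sqrt{x}))[2^{s + 1}]$ (respectively the analogous hypothesis with $x$) trivializes the corresponding Artin symbols on that side of the reciprocity and leaves the target sum equal to zero. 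I expect the main obstacle throughout to be the verification of R\'edei admissibility (Definition \ref{def: Redei admissible}) for each of the 4-tuples assembled along the way, which requires careful tracking of the ramification of the partial averages $\sum_{x \in C_U} \psi_{s + 1 - |U|}(x)$ at $(2)$, $\infty$, the various $p_i(h)$, and the odd prime divisors of $d_0$, with the key splittings at $(2)$ and $\infty$ supplied respectively by $p_i(1)p_i(2) \equiv 1 \bmod 8$ and the positivity of $a$ (cf.\ Remark \ref{a must be positive}).
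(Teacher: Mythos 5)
Your construction of the cocycle $\psi_s(x_0)$ (or equivalently $\psi_{s+1}(x_0)$) by averaging, the use of Proposition \ref{key calculation of cocycles} with the minimality hypothesis to show it is a genuine cocycle, and the use of Lemma \ref{lX} for the unramifiedness check all match the paper's approach. Where you diverge sharply, and where a genuine gap opens up, is in the lift step and the reflection identities.

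For the lift step at the odd primes $p_i(1)$, you propose to reproduce the R\'edei-reciprocity argument from the proof of Theorem \ref{main theorem on profitable triples}. That argument requires a normalized expansion map $\phi_{\{p_j(1)p_j(2) : j \ne i\}; p_i(1)p_i(2)}(\mathfrak{G})$ to exist and, more seriously, requires that the odd primes dividing $d_0$ split completely in its field of definition and that $(2)$ and $\infty$ split completely in the corresponding $M$-field. In the profitable triple case those are literally conditions $3$--$5$ of Definition \ref{def: profitable triple}; in the minimal triple case Definition \ref{def: minimal triples} imposes only equation (\ref{eMinimality}) and nothing else. You cannot get the existence of the expansion map from Proposition \ref{pCreatePhi}, because that proposition needs $p_j(1)p_j(2)$ to be a square modulo $p_i(h)$ (and the iterated splitting conditions), and none of this is part of the standing hypotheses in Section \ref{sReflection}. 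So the $4$-tuples you want to feed into Theorem \ref{Redei reciprocity} simply are not R\'edei admissible in general, and the obstacle you flag at the end of your proposal is not merely a bookkeeping chore --- it is fatal. The paper instead uses a purely local argument: because $\psi_{s+1}(x_0)$ already exists as a (possibly ramified) cocycle, the pushforward $(\psi_{s+1}(x_0), \chi_{x_0})\circ i_p^\ast$ from the tame local pro-$2$ Galois group $\Z_2 \rtimes p^{\Z_2}$ into the dihedral group $N(x_0)[2^{s+1}] \rtimes \FF_2$ sends $\sigma_p$ to an involution, and the resulting commutativity forces the Frobenius at $\text{Up}_{\Q(\sqrt{x_0})/\Q}(p)$ to land in $N(x_0)[2]$. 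That gives the vanishing of the Artin symbol for $\psi_s(x_0)$ with no reciprocity law at all, precisely because (unlike in the profitable case, where the cube is one dimension smaller and (\ref{eLowerMinimality}) only gives a cocycle at level $s$, not $s+1$) here the minimality condition already produces a free cochain lift at level $s+1$.

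For parts (i) and (ii) you similarly want to invoke Theorem \ref{Redei reciprocity} against a normalized expansion map with pointer $\chi_b$. Setting aside the same admissibility problem, the paper's argument here is considerably more elementary and is worth internalizing: after passing to an unramified lift $\psi_{s+1}'(x_0)$, the difference $\chi := \psi_{s+1}'(x_0) - \psi_{s+1}(x_0)$ is a quadratic character; one shows a suitable twist $\chi'$ of $\chi$ (unramified at $2$) lies in the span of the characters $\chi_{p_i(h)^\ast}$ and $\chi_{l^\ast}$ for $l \mid d_0$, and then the claimed vanishing falls term by term from $\chi_{x_0}|_{G_K} = 0$ together with the \emph{first} Artin pairing $\langle \textup{Up}_{\Q(\sqrt{x})/\Q}(b), \chi_{t^\ast}\rangle_1 = 0$, which is all the hypothesis $\textup{Up}_{\Q(\sqrt{x})/\Q}(b) \in 2^s\textup{Cl}(\Q(\sqrt{x}))[2^{s+1}] \subseteq 2\textup{Cl}(\Q(\sqrt{x}))$ is used for. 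No reciprocity is needed in this theorem, and indeed the architectural point of the paper is that Theorem \ref{main thm: minimal triples} is the Smith-style ``physical equalities'' case, while the profitable-triple machinery and reciprocity are reserved for the one-dimension-smaller cubes where such equalities are unavailable.
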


\begin{proof}
Let us start by proving that $\chi_a \in 2^{s} \cdot \textup{Cl}(\Q(\sqrt{x_0}))^{\vee}[2^{s + 1}]$. To this end, consider the $1$-cochain $\psi_{s + 1}(x_0): G_\Q \to N[2^{s + 1}]$ defined by
$$
\psi_{s + 1}(x_0) := -\sum_{x \in C:x \neq x_0} \psi_{s + 1}(x).
$$
We will show that $\psi_{s + 1}(x_0)$ is a $1$-cocycle for $N(x_0)$ with $2^s \cdot \psi_{s + 1}(x_0) = \chi_a$. This last assertion follows immediately from
\[
2^s \cdot \psi_{s + 1}(x_0) = -(2^s - 1) \cdot \chi_a = \chi_a. 
\]
Let us next prove that $\psi_{s + 1}(x_0): G_\Q \to N(x_0)[2^{s + 1}]$ is in fact a $1$-cocycle. Thanks to Proposition \ref{key calculation of cocycles} we have that 
\begin{align*}
(d_{x_0}\psi_{s + 1}(x_0))(\sigma, \tau) &= \sum_{\varnothing \neq T \subseteq [s + 1]} \chi_{\{p_i(1)p_i(2) : i \in T\}}(\sigma)\cdot(-1)^{|T| + 1 + \chi_{x_0}(\sigma)} \left(\sum_{x \in C_T} \psi_{s - |T| + 1}(x)(\tau)\right) \\
&= 0,
\end{align*}
where the last equation follows from our assumption
\[
\sum_{x \in C_T} \psi_{s - |T| + 1}(x) = 0,
\]
see equation (\ref{eMinimality}).

In particular it follows that $\psi_s(x_0) := 2 \cdot \psi_{s + 1}(x_0)$ is a $1$-cocycle with values in $N(x_0)[2^s]$. We claim that $\psi_s(x_0)$ once restricted to $G_{\Q(\sqrt{x_0})}$ naturally lands in $\text{Cl}(\Q(\sqrt{x_0}))^{\vee}[2^s]$, i.e. that it yields a cyclic degree $2^s$ extension of $\Q(\sqrt{x_0})$ unramified at all finite places. Let us first consider a prime $p$ that does not ramify in $\Q(\sqrt{x})$ for any $x \in C$. Then we simply observe that 
\begin{align}
\label{eFoDtrick}
L(\psi_s(x_0)) \subseteq \prod_{x \in C - \{x_0\}} L(\psi_s(x)).
\end{align}
Lemma \ref{lX} and equation (\ref{eFoDtrick}) give the desired conclusion in this case.

We now consider the case of a prime $p$ ramifying in $\Q(\sqrt{x})$ for every $x \in C$ or of the form $p_i(1)$ for some $i \in [s + 1]$. Since $p$ has ramification index $2$ in $\prod_{x \in C - \{x_0\}} L(\psi_s(x))/\Q$ by Lemma \ref{lX} and since $p$ ramifies in $\Q(\sqrt{x_0})/\Q$, this case is also okay. We finally consider the case of an odd prime of the form $p_i(2)$ for some $i \in [s + 1]$. Then we deduce from equation (\ref{eMinimality}) that
$$
\sum_{x \in C_{p_i(2)}} \psi_s(x) = 0,
$$
which shows that
$$
\psi_s(x_0) = -\sum_{\substack{x \in C - \{x_0\} \\ \pi_i(x) = p_i(1)}} \psi_s(x).
$$
It follows that $L(\psi_s(x_0)) \cdot \Q(\sqrt{x_0})$ is contained in the compositum 
$$
\Q(\sqrt{x_0}) \cdot \left(\prod_{\substack{x \in C - \{x_0\} \\ \pi(x) = p_i(1)}} L(\psi_s(x))\right).
$$ 
The prime $p_i(2)$ visibly does not ramify in the above field, since $\Q(\sqrt{x})$ is unramified at $p_i(2)$ for each $x$ with $\pi_i(x) = p_i(1)$ and $L(\psi_s(x)) \cdot \Q(\sqrt{x})/\Q(\sqrt{x})$ is unramified at all finite places of $\Q(\sqrt{x})$. This establishes the claim, since every place of $\Q$ either equals $p_i(1)$ or $p_i(2)$ for some $i \in [s + 1]$, or ramifies in $\Q(\sqrt{x})$ for every $x \in C$, or is unramified in $\Q(\sqrt{x})$ for all $x \in C$.

We now prove that there exists a raw cocycle $\psi'_{s + 1}(x_0)$ with $2 \cdot \psi'_{s + 1}(x) = \psi_s(x_0)$. This is equivalent to showing that $\text{Up}_{\Q(\sqrt{x_0})/\Q}(p)$ has trivial Artin symbol in $\text{Gal}(L(\psi_s(x_0)) \cdot \Q(\sqrt{x_0})/\Q(\sqrt{x_0}))$ for each prime $p$ ramifying in $\Q(\sqrt{x_0})/\Q$.

Let us first prove this for an odd prime $p$. Then, necessarily, $p$ divides $x_0$. We are going to show that the mere existence of the lift $\psi_{s + 1}(x_0)$, which itself is not guaranteed to be unramified, guarantees the existence of an unramified lift of $\psi_s(x_0)$. Thanks to the shape of the tame local Galois group,
we see that $(\psi_{s + 1}(x_0), \chi_{x_0}) \circ i_p^{*}$ induces a homomorphism 
$$
\mathbb{Z}_2 \rtimes p^{\mathbb{Z}_2} \to N(x_0)[2^{s + 1}] \rtimes \mathbb{F}_2.
$$
This homomorphism factors through the subgroup $2 \cdot \mathbb{Z}_2 \rtimes p^{\mathbb{Z}_2}$, since the order of the image of $\sigma_p$ equals exactly $2$. Indeed, elements with non-trivial second coordinate in the dihedral group $N(x_0)[2^{s + 1}] \rtimes \mathbb{F}_2$ are all involutions and clearly $\chi_{x_0}(\sigma_p) = 1$. Hence $(\psi_{s + 1}(x_0), \chi_{x_0}) \circ i_p^{*}$ induces a homomorphism $\mathbb{F}_2 \times p^{\mathbb{Z}_2} \to N(x_0)[2^{s + 1}] \rtimes \mathbb{F}_2$, which surjects on the $\FF_2$ component. This forces the restriction of the homomorphism to $G_{\Q_p(\sqrt{x_0})}$ to land in $N(x_0)[2]$, since no other element of $N(x_0)$ commutes with an element of the shape $(n, 1)$. But hence, when we project to $N(x_0)[2^s] \rtimes \mathbb{F}_2$, we obtain that
$$
\psi_s(x_0)(\text{Frob}(\text{Up}_{\Q(\sqrt{x_0})/\Q}(p))) = 0
$$
as desired. 

It remains to prove that $\text{Up}_{\Q(\sqrt{x_0})/\Q}(2)$ splits completely in $L(\psi_s(x_0)) \cdot \Q(\sqrt{x_0})/\Q(\sqrt{x_0})$ in case $(2)$ ramifies in $\Q(\sqrt{x_0})/\Q$. Recalling that $p_i(1)p_i(2) \equiv 1 \bmod 8$ for each $i \in [s + 1]$ we find that $(2)$ ramifies in $\Q(\sqrt{x})/\Q$ for every $x \in C$. Hence, the equation $2 \cdot \psi_{s + 1}(x) = \psi_s(x)$ in $\text{Cl}(\Q(\sqrt{x}))^{\vee}$ implies that $\psi_s(x)$ pairs trivially with the $2$-torsion class $\text{Up}_{\Q(\sqrt{x})/\Q}(2)$ for each $x \in C - \{x_0\}$. Therefore, keeping in mind that $\Q_2(\sqrt{x})=\Q_2(\sqrt{x_0})$, we have that 
$$
i_2(L(\psi_s(x)) \cdot \Q(\sqrt{x})) \subseteq \Q_2(\sqrt{x_0}),
$$
for each $x$ in $C - \{x_0\}$, which implies
$$
i_2(L(\psi_s(x_0)) \cdot \Q(\sqrt{x_0})) \subseteq \Q_2(\sqrt{x_0})
$$
due to the inclusion (\ref{eFoDtrick}). This means exactly that $\text{Up}_{\Q(\sqrt{x_0})/\Q}(2)$ splits completely in $L(\psi_s(x_0)) \cdot \Q(\sqrt{x_0})/\Q(\sqrt{x_0})$. This concludes our examination at $(2)$. 

We deduce the existence of a raw cocycle $\psi'_{s + 1}(x)$ satisfying $2 \cdot \psi'_{s + 1}(x) = \psi_s(x)$. In particular, we have shown that $\chi_a \in 2^s \cdot \text{Cl}(\Q(\sqrt{x_0}))^{\vee}[2^{s + 1}]$, which is the first part of the theorem. We now proceed with the proof of $(i)$. The proof of $(ii)$ is similar and is omitted. First of all notice that 
$$
\chi := \psi'_{s + 1}(x_0) - \psi_{s + 1}(x_0)
$$ 
is a quadratic character. Indeed, $\chi$ is a $1$-cocycle valued in $N(x_0)[2] = \mathbb{F}_2$, equipped with the only possible $G_\Q$-action, namely the trivial one. As we shall see, this observation will allow us to replace $\psi'_{s + 1}(x_0)$ with $\psi_{s + 1}(x_0)$ for the purpose of evaluating the Artin pairing. To take advantage of this, let us rewrite each of the Artin pairings as an Artin symbol taken from the same field extension. 

To this end, fix $x \in C$ and a prime divisor $p$ of $b$. The map $i_p$ gives a unique place above $p$ in 
$$
K := \Q(\{\sqrt{p_i(1)p_i(2)} : i \in [s + 1]\}, \sqrt{x_0}).
$$
Let us denote this place of $K$ as $\mathfrak{p}$. Observe that the extension 
$$
L(\psi_{s + 1}(x)) \cdot K/K
$$
is abelian and unramified at $\mathfrak{p}$. Hence the Artin symbol of $\mathfrak{p}$ in this extension is a well-defined element of its Galois group. We claim that
$$
\langle \text{Up}_{\Q(\sqrt{x})/\Q}(b), \chi_a \rangle = \sum_{p \mid b} \psi_{s + 1}(x)|_{G_K}(\text{Frob}(\mathfrak{p})).
$$
Indeed, $p_i(1)p_i(2)$ is a square modulo each odd prime divisor $p$ of $b$ for all $i \in [s + 1]$. Hence the residue field of $p$ in $L(\psi_{s + 1}(x)) \cdot K$ coincides with the residue field of $p$ in $L(\psi_{s + 1}(x))$. This implies that 
\[
\psi_{s + 1}(x)|_{G_K}(\text{Frob}(\mathfrak{p})) = \psi_{s + 1}(x)|_{G_{\Q(\sqrt{x})}}(\text{Frob}(\text{Up}_{\Q(\sqrt{x})/\Q}(p))). 
\]
Hence our claim follows from the definition of the Artin pairing. Therefore we conclude that
$$
\sum_{x \in C} \langle \textup{Up}_{\Q(\sqrt{x})/\Q}(b), \chi_a \rangle_{\textup{Art}_{s + 1}(x)} = \sum_{p \mid b} \left(\psi'_{s + 1}(x_0)(\text{Frob}(\mathfrak{p})) + \sum_{\substack{x \in C \\ x \neq x_0}} \psi_{s + 1}(x)(\text{Frob}(\mathfrak{p}))\right).
$$
Recalling that $\chi = \psi'_{s + 1}(x_0) - \psi_{s + 1}(x_0)$ and recalling the definition of $\psi_{s + 1}(x_0)$ we conclude that
$$
\sum_{x \in C} \langle \textup{Up}_{\Q(\sqrt{x})/\Q}(b), \chi_a \rangle_{\textup{Art}_{s + 1}(x)} = \sum_{p \mid b} \chi|_{G_K}(\text{Frob}(\mathfrak{p})).
$$
Hence it suffices to show that
$$
\sum_{p \mid b} \chi|_{G_K}(\text{Frob}(\mathfrak{p})) = 0.
$$
To this end we will conduct a more careful study of the character $\chi$. First of all observe that 
$$i
_2(L(\psi_{s + 1}(x_0))\cdot L(\psi_{s + 1}'(x_0))) \subseteq \Q_2(\sqrt{5}, \sqrt{x_0}). 
$$
Therefore, since $\Q(\chi) \subseteq L(\psi_{s + 1}(x_0))\cdot L(\psi_{s + 1}'(x_0))$, we conclude that at least one of $\chi$ and $\chi + \chi_{x_0}$ is unramified at $(2)$. Let us pick one such character unramified at $(2)$ and call this choice $\chi'$. We now claim that
$$
\chi': = \sum_{\substack{i \in [s + 1] \\ h \in [2]}} \lambda_{i, h} \chi_{p_i(h)^\ast} + \sum_{\substack{l \mid d_0 \\ l \neq 2}} \lambda_l \chi_{l^\ast},
$$
where the various $\lambda_{i, h}$ and $\lambda_l$ are in $\mathbb{F}_2$ and $n^\ast$ is the unique integer satisfying $|n^\ast| = n$ and $n^\ast \equiv 1 \bmod 4$. Indeed, the character $\chi'$ is unramified at $(2)$ and $\chi'$ certainly does not ramify at any odd prime not dividing $p_1(1) p_1(2)  \dots p_{s + 1}(1) p_{s + 1}(2) \cdot d_0$, since 
\[
\Q(\chi') \subseteq L(\psi_{s + 1}(x_0)) \cdot L(\psi_{s + 1}'(x_0)) \cdot \Q(\sqrt{x_0}). 
\]
This establishes the claim. Hence it suffices to prove that
$$
\sum_{p \mid b} \chi_{x_0}|_{G_K}(\text{Frob}(\mathfrak{p})) = 0
$$
and 
$$
\sum_{p \mid b} \chi_{t^\ast}|_{G_K}(\text{Frob}(\mathfrak{p})) = 0
$$
for each odd prime $t$ dividing $p_1(1) p_1(2)  \dots p_{s + 1}(1) p_{s + 1}(2) \cdot d_0$. The first equation is trivial, since $\chi_{x_0}$ is identically $0$ when restricted to $G_K$. Let us show the second equation. Each $p$ dividing $b$ splits completely in $\Q(\sqrt{p_i(1)p_i(2)})$ for each $i \in [s + 1]$. It follows that
\[
\chi_{t^\ast}|_{G_K}(\text{Frob}(\mathfrak{p})) = \chi_{t^\ast}|_{G_{\Q(\sqrt{x})}}(\text{Frob}(\text{Up}_{\Q(\sqrt{x})/\Q}(p))),
\]
where $x$ is any element of $C$ with $t \mid x$. This yields
$$
\sum_{p \mid b} \chi_{t^\ast}|_{G_K}(\text{Frob}(\mathfrak{p})) = \langle \text{Up}_{\Q(\sqrt{x})/\Q}(b), \chi_{t^\ast} \rangle_1 = 0,
$$ 
where the last equality follows from our assumption $\text{Up}_{\Q(\sqrt{x})/\Q}(b) \in 2 \cdot \text{Cl}(\Q(\sqrt{x}))$.
\end{proof}

We now turn to the main result on governing triples. The proof is similar to the one given for Theorem \ref{main thm: minimal triples}, however there are a few technical differences. Below we report only the steps in the proof where the argument differs.

\begin{theorem} 
\label{main thm: governing triples}
Let $(C, (\psi_{s + 1}(x))_{x \in C - \{x_0\}}, \chi_a)$ be a governing triple. Then 
\[
\chi_{p_{s + 1}(1) \cdot a} \in 2^{s} \cdot \textup{Cl}(\Q(\sqrt{x_0}))^{\vee}[2^{s + 1}].
\]
Suppose that $b \mid d_0$ is such that
$$
\textup{Up}_{\Q(\sqrt{x})/\Q}(b) \in 2^s \cdot \textup{Cl}(\Q(\sqrt{x}))[2^{s + 1}]
$$
for each $x \in C$. Then every $p \mid b$ splits completely in $M(\phi_{\{p_i(1)p_i(2) : i \in [s]\}; p_{s + 1}(1)p_{s + 1}(2)}(\mathfrak{G}))/\Q$, is unramified in $L(\phi_{\{p_i(1)p_i(2) : i \in [s]\}; p_{s + 1}(1)p_{s + 1}(2)}(\mathfrak{G}))/\Q$ and 
$$
\sum_{x \in C} \langle \textup{Up}_{\Q(\sqrt{x})/\Q}(b), \chi_a \rangle_{\textup{Art}_{s + 1}(x)} = \sum_{p \mid b} \phi_{\{p_i(1)p_i(2) : i \in [s]\}; p_{s + 1}(1)p_{s + 1}(2)}(\mathfrak{G})(\textup{Frob}(p)).
$$
\end{theorem}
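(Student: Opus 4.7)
The plan is to mirror the proof of Theorem~\ref{main thm: minimal triples}, tracking the one structural difference: here $-\sum_{x \in C - \{x_0\}} \psi_{s+1}(x)$ has a non-trivial $d_{x_0}$-coboundary. Setting $\phi := \phi_{\{p_i(1)p_i(2) : i \in [s]\}; p_{s+1}(1)p_{s+1}(2)}(\mathfrak{G})$, Proposition~\ref{key calculation of cocycles} combined with the second and third governing conditions expresses this coboundary as exactly the right-hand side of equation~\eqref{eSmith22} for $\phi$, hence equal to $d\phi$. Since $\phi$ is $2$-torsion in $N$, one has $d_{x_0}\phi = d\phi$, so
\[
\tilde\psi_{s+1}(x_0) := -\sum_{x \in C - \{x_0\}} \psi_{s+1}(x) - \phi
\]
is a $1$-cocycle for $N(x_0)$. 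Using $2^s\phi = 0$ and an $\mathbb{F}_2$-counting paralleling Remark~\ref{a must be positive}, I would then check that $2^s\tilde\psi_{s+1}(x_0) = \chi_{p_{s+1}(1) \cdot a}$.

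To obtain $\chi_{p_{s+1}(1) \cdot a} \in 2^s\textup{Cl}(\Q(\sqrt{x_0}))^\vee[2^{s+1}]$, I would verify that $\tilde\psi_s(x_0) := 2\tilde\psi_{s+1}(x_0) = -\sum_{x \neq x_0} \psi_s(x)$ gives an extension of $\Q(\sqrt{x_0})$ unramified at all finite places. The verification at places ramifying in every $\Q(\sqrt{x})$, at primes $p_i(1)$, at primes unramified in every $\Q(\sqrt{x})$, and at $p_{s+1}(2)$ (via the third governing condition) repeats the reasoning of Theorem~\ref{main thm: minimal triples} via Lemma~\ref{lX}. The new case $p_i(2)$ with $i \in [s]$ is handled by the second governing condition: one has $\sum_{x \in C_{p_i(2)}} \psi_s(x) = \phi^{(i)} := \phi_{\{p_j(1)p_j(2) : j \in [s] - \{i\}\}; p_{s+1}(1)p_{s+1}(2)}(\mathfrak{G})$, so $L(\tilde\psi_s(x_0))$ embeds in $L(\phi^{(i)}) \cdot \prod_{x \neq x_0,\, \pi_i(x) = p_i(1)} L(\psi_s(x))$, and Proposition~\ref{normalized expansions are unramified} ensures $L(\phi^{(i)})$ is unramified at $p_i(2)$. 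The dihedral argument of Theorem~\ref{main thm: minimal triples} then produces an unramified lift $\tilde\psi'_{s+1}(x_0)$.

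For the identity, write $\chi := \tilde\psi'_{s+1}(x_0) - \tilde\psi_{s+1}(x_0)$, a quadratic character, and $K := \Q(\{\sqrt{p_i(1)p_i(2)}: i \in [s+1]\}, \sqrt{x_0})$. Evaluating the Artin pairings at Frobenii in $K$ yields
\[
\sum_{x \in C} \langle \textup{Up}(b), \chi_a \rangle_{\textup{Art}_{s+1}(x)} = \sum_{p \mid b} \big( \chi|_{G_K}(\textup{Frob}(\mathfrak{p})) + \phi|_{G_K}(\textup{Frob}(\mathfrak{p})) \big).
\]
The $\chi$-contribution vanishes by the same quadratic-character analysis as in Theorem~\ref{main thm: minimal triples} using $\textup{Up}(b) \in 2\textup{Cl}(\Q(\sqrt{x}))$. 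Since $p \mid d_0$ is coprime to all $p_i(h)$, Proposition~\ref{normalized expansions are unramified} shows $p$ is unramified in $L(\phi)$, and the splitting of $p$ in $\Q(\{\sqrt{p_i(1)p_i(2)}: i \in [s]\})$ (given by the setup) forces $d\phi \circ i_p^*$ to vanish, so $\phi \circ i_p^*$ is a well-defined unramified character, matching $\phi|_{G_K}(\textup{Frob}(\mathfrak{p}))$ once $p$ is known to have residue field degree one in $K$.

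The hardest part is the remaining claim that $p$ splits completely in all of $M(\phi)/\Q$, which promotes $\phi(\textup{Frob}(p))$ to an unambiguous element of $\mathbb{F}_2$ by Proposition~\ref{prop: field of def of exp maps}. For the splitting at the level of $\Q(\{\sqrt{p_i(1)p_i(2)}: i \in [s+1]\})$ one invokes genus theory: $\textup{Up}(p) \in 2\textup{Cl}(\Q(\sqrt{x}))$ forces $\chi_{\pi_{s+1}(x)^*}(\textup{Frob}(p)) = 0$ through the unramified quadratic extension $\Q(\sqrt{x},\sqrt{\pi_{s+1}(x)^*})/\Q(\sqrt{x})$, while $p_{s+1}(1)p_{s+1}(2) \equiv 1 \bmod 8$ implies $p_{s+1}(1)\equiv p_{s+1}(2) \bmod 4$ and hence $p_{s+1}(1)^* p_{s+1}(2)^* = p_{s+1}(1)p_{s+1}(2)$. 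For the full splitting in $M(\phi)$, I would induct on $|T|$ via the identity $\phi_{T; p_{s+1}(1)p_{s+1}(2)}(\mathfrak{G}) = \sum_{x \in C_{[s]-T}} \psi_{|T|+1}(x)$ extracted from the second governing condition: for $T \subsetneq [s]$ one has $|T|+1 \leq s$, so $\psi_{|T|+1}(x)$ has order at most $2^s$ and pairs trivially with $\textup{Up}(p) \in 2^s\textup{Cl}(\Q(\sqrt{x}))$, cascading the splitting up through the tower of $L(\phi_T)$ defining $M(\phi)$.
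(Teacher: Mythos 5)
Your overall strategy closely tracks the paper's: you form $\psi_{s+1}(x_0) = \pm\bigl(\phi_{\{p_i(1)p_i(2):i\in[s]\};p_{s+1}(1)p_{s+1}(2)}(\mathfrak{G}) - \sum_{x\neq x_0}\psi_{s+1}(x)\bigr)$, verify the cocycle and ramification conditions by combining Proposition \ref{key calculation of cocycles} with the second and third governing conditions, and obtain the formula via the auxiliary quadratic character $\chi := \psi'_{s+1}(x_0) - \psi_{s+1}(x_0)$. That part is sound and mirrors the paper.

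There is, however, a genuine gap in your proof of the complete splitting of $p \mid b$ in $M(\phi_{\{p_i(1)p_i(2):i\in[s]\};p_{s+1}(1)p_{s+1}(2)}(\mathfrak{G}))$. You twice use the hypothesis on $\textup{Up}(p)$ as if it were given for each individual $p$ (``$\textup{Up}(p)\in 2\textup{Cl}(\Q(\sqrt{x}))$ forces $\chi_{\pi_{s+1}(x)^*}(\textup{Frob}(p))=0$'' and later ``$\psi_{|T|+1}(x)$ pairs trivially with $\textup{Up}(p)\in 2^s\textup{Cl}(\Q(\sqrt{x}))$''), but the theorem's hypothesis concerns only the \emph{product} $\textup{Up}_{\Q(\sqrt{x})/\Q}(b)=\prod_{p\mid b}\textup{Up}_{\Q(\sqrt{x})/\Q}(p)$, so neither deduction gives information about an individual prime. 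Your genus-theory step at the level of $\Q(\sqrt{p_{s+1}(1)p_{s+1}(2)})$ would only yield a vanishing sum over $p\mid b$, not the pointwise conclusion needed to make $\phi(\textup{Frob}(p))$ well-defined; moreover it invokes $p_{s+1}(1)p_{s+1}(2)\equiv 1\bmod 8$, which the setup only imposes for $i\in[s]$, not $i=s+1$. The paper's argument is purely structural and bypasses both issues: by the second governing condition each $\phi_{T';p_{s+1}(1)p_{s+1}(2)}(\mathfrak{G})$ with $T'\subsetneq[s]$ equals $\sum_{x\in C_{[s]-T'}}\psi_{|T'|+1}(x)$, so $L(\phi_{T'})\subseteq\prod_{x}L(\psi_s(x))$, and Lemma \ref{lX} applied to the existing lifts $\psi_{s+1}(x)$ gives $i_p(L(\psi_s(x)))\subseteq\Q_p(\sqrt{x_0})$; combined with Proposition \ref{normalized expansions are unramified} (giving $i_p(M(\phi))\subseteq\Q_p^{\textup{unr}}$) one obtains $i_p(M(\phi))\subseteq\Q_p^{\textup{unr}}\cap\Q_p(\sqrt{x_0})=\Q_p$, for every $p\mid d_0$ and without any use of the $\textup{Up}(b)$ hypothesis. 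You should replace your last paragraph with this Lemma \ref{lX}-based argument.
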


\begin{proof}
We start by showing that $\chi_{p_{s + 1}(1) \cdot a} \in 2^{s} \cdot \textup{Cl}(\Q(\sqrt{x_0}))^{\vee}[2^{s + 1}]$. Consider the $1$-cochain
$$
\psi_{s + 1}(x_0) := \phi_{\{p_i(1)p_i(2) : i \in [s]\}; p_{s + 1}(1)p_{s + 1}(2)}(\mathfrak{G}) - \sum_{\substack{x \in C \\ x \neq x_0}} \psi_{s + 1}(x).
$$
We have
\begin{align*}
2^s \cdot \psi_{s + 1}(x_0) &= 2^s \cdot \phi_{\{p_i(1)p_i(2) : i \in [s]\}; p_{s + 1}(1)p_{s + 1}(2)}(\mathfrak{G}) - \sum_{\substack{x \in C \\ x \neq x_0}} \chi_{\pi_{s + 1}(x) \cdot a} \\
&= 0 - \chi_{p_{s + 1}(1) \cdot a} =  \chi_{p_{s + 1}(1) \cdot a},
\end{align*}
since the $1$-cochain $\phi_{\{p_i(1)p_i(2) : i \in [s]\}; p_{s + 1}(1)p_{s + 1}(2)}(\mathfrak{G})$ is valued in $\mathbb{F}_2$ and $s \geq 1$. Let us next prove that $\psi_{s + 1}(x_0):G_\Q \to N(x_0)[2^{s + 1}]$ is in fact a $1$-cocycle for $N(x_0)$. Thanks to Proposition \ref{key calculation of cocycles} we have that 
\begin{multline*}
(d_{x_0}\psi_{s + 1}(x_0))(\sigma, \tau) = \sum_{\substack{\varnothing \neq T \subseteq [s + 1] \\ s + 1 \not \in T}} \chi_{\{p_i(1)p_i(2) : i \in T\}}(\sigma) \cdot \\
\left((-1)^{|T|+1+\chi_{x_0}(\sigma)}\left(\sum_{x \in C_T}\psi_{s - |T| + 1}(x)(\tau)\right) - \phi_{\{p_i(1)p_i(2) : i \in [s]\}; p_{s + 1}(1)p_{s + 1}(2)}(\mathfrak{G})(\tau)\right),
\end{multline*}
where we have used the third condition of Definition \ref{def: governing triples} to remove the terms with $s + 1 \in T$. It follows from the second condition of Definition \ref{def: governing triples} that the above expression is identically zero.

Certainly, $\psi_s(x_0) := 2 \cdot \psi_{s + 1}(x_0)$ is a $1$-cocycle with values in $N(x_0)[2^s]$. We claim that $\psi_s(x_0)$ once restricted to $G_{\Q(\sqrt{x_0})}$ naturally lands in $\text{Cl}(\Q(\sqrt{x_0}))^{\vee}[2^s]$ so that it yields a cyclic degree $2^s$ extension of $\Q(\sqrt{x_0})$ unramified at all finite places. Following the proof of Theorem \ref{main thm: minimal triples}, we obtain the claim for all places lying above $(2)$, all odd primes dividing $d_0$ and all odd primes not dividing any $x \in C$. 

It remains to treat the places lying above a prime of the form $p_j(h)$ with $j \in [s + 1]$ and $h \in [2]$. Let us begin with the case $j \in [s]$ and $h = 2$. Then we can rewrite $\psi_s(x_0)$ as
$$
-\sum_{x \in C_{p_j(2)}} \psi_s(x) - \sum_{x \in C_{p_j(1)} - \{x_0\}} \psi_s(x).
$$
We now invoke the second condition of Definition \ref{def: governing triples}, with $T := \{j\}$, and we get
$$
\psi_{s}(x_0) = \phi_{\{p_i(1)p_i(2) : i \in [s] - \{j\}\}; p_{s + 1}(1)p_{s + 1}(2)}(\mathfrak{G}) - \sum_{x \in C_{p_j(1)} - \{x_0\}}\psi_s(x).
$$
By construction of the $1$-cochain $\phi_{\{p_i(1)p_i(2) : i \in [s] - \{j\}\}; p_{s + 1}(1)p_{s + 1}(2)}(\mathfrak{G})$ and by our assumption $j \in [s]$ we deduce that 
$$
\phi_{\{p_i(1)p_i(2) : i \in [s] - \{j\}\}; p_{s + 1}(1)p_{s + 1}(2)}(\mathfrak{G})(\sigma_{p_j(2)}) = 0.
$$
Now notice that
$$
\sum_{x \in C_{p_j(1)} - \{x_0\}} \psi_s(x)(\sigma_{p_j(2)}) = 0,
$$
since every individual field $L(\psi_s(x))/\Q$ is unramified above $p_j(2)$ for each $x \in C_{p_j(1)}$. This demonstrates that the homomorphism $(\psi_s(x_0), \chi_{x_0})$ maps $\sigma_{p_j(2)}$ to the identity and therefore $p_j(2)$ is unramified in the extension $L(\psi_s(x_0))/\Q$ by Proposition \ref{ramification read off by inertia}.

We now consider $j = s + 1$ and $h = 2$. We still rewrite $\psi_s(x_0)$ as
$$
-\sum_{x \in C_{p_j(2)}} \psi_s(x) - \sum_{x \in C_{p_j(1)} - \{x_0\}} \psi_s(x),
$$
but this time we invoke the third condition of Definition \ref{def: governing triples} with $T = \{s + 1\}$ to obtain that
$$
\psi_s(x_0) = -\sum_{x \in C_{p_j(1)} - \{x_0\}} \psi_s(x).
$$
The field of definition of $\psi_s(x_0)$ is contained in 
$$
\prod_{x \in C_{p_{s + 1}(1)} - \{x_0\}} L(\psi_s(x)).
$$ 
The prime $p_{s + 1}(2)$ is unramified in the above field, since $p_{s + 1}(2)$ is unramified in $\Q(\sqrt{x})/\Q$ for each $x \in C_{p_{s + 1}(1)}$ and furthermore $L(\psi_s(x))\Q(\sqrt{x})/\Q(\sqrt{x})$ is unramified at all finite places. 

We finally consider the primes of the form $p_j(1)$ for some $j \in [s + 1]$. We observe that
$$
L(\psi_s(x_0)) \subseteq \prod_{x \in C - \{x_0\}}L(\psi_s(x))
$$
and that $\sigma_{p_j(1)}$ has order at most $2$ in $\text{Gal}(L(\psi_s(x))/\Q)$ for each $x$ in $C - \{x_0\}$. Hence it has order at most $2$ in $\text{Gal}(L(\psi_s(x_0))/\Q)$, which, by means of Proposition \ref{ramification read off by inertia}, implies that the ramification index of $p_j(1)$ in $L(\psi_s(x_0))\Q(\sqrt{x_0})/\Q$ is exactly $2$. But the prime $p_j(1)$ ramifies in the extension $\Q(\sqrt{x_0})/\Q$ and therefore the place $\text{Up}_{\Q(\sqrt{x_0})/\Q}(p_j(1))$ is unramified in $L(\psi_s(x_0))\Q(\sqrt{x_0})/\Q(\sqrt{x_0})$. We have now verified that $\psi_s(x_0)$ restricts to an unramified character of $G_{\Q(\sqrt{x_0})}$. 

At this point we follow the argument in the proof of Theorem \ref{main thm: minimal triples} to deduce that $\psi_s(x_0)$ is in $2 \cdot \text{Cl}(\Q(\sqrt{x_0}))^{\vee}$. Indeed, all that we used in that proof was the existence of a lift (by multiplication of $2$) of $\psi_s(x_0)$ in the space of cocycles from $G_\Q$ to $N(x_0)$, which, of course, is also available in this proof (namely $\psi_{s + 1}(x_0)$). Let $\psi'_{s + 1}(x_0)$ be a raw cocycle with $2 \cdot \psi'_{s + 1}(x_0) = \psi_s(x_0)$. The examination of the quadratic character
$$
\chi := \psi'_{s + 1}(x_0) - \psi_{s + 1}(x_0)
$$   
can be reproduced almost verbatim also in this argument with the only difference being that we invoke Proposition \ref{normalized expansions are unramified} to control the ramification coming from the $1$-cochain 
\[
\phi_{\{p_i(1)p_i(2) : i \in [s]\}; p_{s + 1}(1)p_{s + 1}(2)}(\mathfrak{G}).
\]
We now justify the claim that each prime $p$ dividing $b$ splits completely in the extension 
\[
M(\phi_{\{p_i(1)p_i(2) : i \in [s]\}; p_{s + 1}(1)p_{s + 1}(2)}(\mathfrak{G}))/\Q
\]
and is unramified in the extension
\[
L(\phi_{\{p_i(1)p_i(2) : i \in [s]\}; p_{s + 1}(1)p_{s + 1}(2)}(\mathfrak{G}))/\Q. 
\]
This last conclusion is just Proposition \ref{normalized expansions are unramified}. In order to show that $p$ splits completely in $M(\phi_{\{p_i(1)p_i(2) : i \in [s]\}; p_{s + 1}(1)p_{s + 1}(2)}(\mathfrak{G}))$, we invoke the second condition of Definition \ref{def: governing triples}. Then we see that it suffices to show that
\begin{align}
\label{eipL}
i_p(L(\psi_s(x))) \subseteq \Q_p(\sqrt{x_0})
\end{align}
for each $x$ in $C - \{x_0\}$. Indeed, this forces 
\[
i_p(M(\phi_{\{p_i(1)p_i(2) : i \in [s]\}; p_{s + 1}(1)p_{s + 1}(2)}(\mathfrak{G}))) \subseteq \Q_p^{\text{unr}} \cap \Q_p(\sqrt{x_0}) = \Q_p
\]
as we want. But equation (\ref{eipL}) follows from Lemma \ref{lX}. From here onwards, the rest of the proof is identical to the proof of Theorem \ref{main thm: minimal triples}.
\end{proof}

\section{Combinatorial results}
\label{sCombinatorics}
In this section we recall for the reader's convenience the main combinatorial results from Smith's work \cite[Section 3-4]{Smith}.

\subsection{Additive systems}
One of the key tools in Smith's work is the notion of an additive system. It provides a convenient abstract framework for our algebraic results.

\begin{mydef}
Let $X_1, \dots, X_r$ be non-empty finite sets, let $X = X_1 \times \dots \times X_r$ and let $S \subseteq [r]$. An additive system on $(X, S)$ is a tuple $(C_T, C_T^{\textup{acc}}, F_T, A_T)_{T \subseteq S}$, indexed by the subsets $T \subseteq S$, satisfying the following properties
\begin{itemize}
\item we have $C_T \subseteq C_T^{\textup{acc}} \subseteq \textup{Cube}(X, T)$, $F_T: C_T \rightarrow A_T$ is a function and $A_T$ is a finite $\FF_2$-vector space;
\item we have
\[
C_T^{\textup{acc}} = \{\bar{x} \in C_T : F_T(\bar{x}) = 0\}
\]
and furthermore
\[
C_T = \{\bar{x} \in \textup{Cube}(X, T) : \bar{x}(T - \{i\}) \subseteq C_{T - \{i\}}^{\textup{acc}} \textup{ for all } i \in T\}
\]
for all $\varnothing \neq T \subseteq S$;
\item suppose that $\bar{x}_1, \bar{x}_2, \bar{x}_3 \in C_T$ and suppose that there exists $i \in T$ and $p_1, p_2, p_3 \in X_i$ such that
\[
\pi_{[r] - \{i\}}(\bar{x}_1) = \pi_{[r] - \{i\}}(\bar{x}_2) = \pi_{[r] - \{i\}}(\bar{x}_3)
\]
and
\[
\pi_i(\bar{x}_1) = (p_1, p_2), \pi_i(\bar{x}_2) = (p_2, p_3), \pi_i(\bar{x}_3) = (p_1, p_3).
\]
Then we have
\begin{align}
\label{eAdditive}
F_T(\bar{x}_1) + F_T(\bar{x}_2) = F_T(\bar{x}_3).
\end{align}
\end{itemize}
\end{mydef}

The most important feature of additive systems is that we have good control over the density of $C_S^{\text{acc}}$.

\begin{prop}
\label{pAS}
Take non-empty finite sets $X_1, \dots, X_r$, take $X = X_1 \times \dots \times X_r$ and take a subset $S \subseteq [r]$. Let $(C_T, C_T^{\textup{acc}}, F_T, A_T)_{T \subseteq S}$ be an additive system on $(X, S)$ such that $|A_T| \leq a$ for all $T \subseteq S$, and write $\delta$ for the density of $C_\varnothing^{\textup{acc}}$ in $X$. Then we have
\[
\frac{|C_S^{\textup{acc}}|}{|\textup{Cube}(X, S)|} \geq \delta^{2^{|S|}} \cdot a^{-3^{|S|}}.
\]
\end{prop}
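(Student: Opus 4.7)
The plan is to prove the bound by induction on $|S|$. The base case $|S|=0$ is immediate, since $C_\varnothing^{\textup{acc}}$ has density $\delta$ in $X = \textup{Cube}(X, \varnothing)$ and $\delta \geq \delta \cdot a^{-1}$ for $a \geq 1$. For the inductive step, fix any $i \in S$ and set $S' = S \setminus \{i\}$, $k = |S'|$. The restriction $(C_T, C_T^{\textup{acc}}, F_T, A_T)_{T \subseteq S'}$ is itself an additive system on $(X, S')$, so by induction
\[
\alpha_{S'} := \frac{|C_{S'}^{\textup{acc}}|}{|\textup{Cube}(X, S')|} \geq \delta^{2^k} a^{-3^k}.
\]
Since $2 \cdot 2^k = 2^{k+1}$ and $3 \cdot 3^k = 3^{k+1}$, it suffices to establish the recursion $\alpha_S \geq \alpha_{S'}^2 \cdot a^{-3^k}$.

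Parametrize the elements of $\textup{Cube}(X, S)$ as pairs $(\bar{z}, (p, q))$, where $\bar{z}$ collects all non-$i$ coordinates and $(p, q) \in X_i^2$, and set
\[
U(\bar{z}) := \{p \in X_i : (\bar{z}, p) \in C_{S'}^{\textup{acc}}\},
\]
so that $\sum_{\bar{z}} |U(\bar{z})| = |C_{S'}^{\textup{acc}}|$. Unwinding the recursive definitions, $\bar{x} = (\bar{z}, (p,q)) \in C_S^{\textup{acc}}$ iff every sub-cube of $\bar{x}$ of every type $T \subseteq S$ belongs to $C_T^{\textup{acc}}$. The $2 \cdot 3^k$ sub-cubes not containing direction $i$ are exactly the sub-cubes of the two faces $(\bar{z}, p)$ and $(\bar{z}, q)$, so their acceptance is captured precisely by $p, q \in U(\bar{z})$. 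The remaining $3^k = \sum_{T \ni i} 2^{|S - T|}$ conditions come from sub-cubes of type $T$ containing $i$.

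By the additivity axiom (\ref{eAdditive}) applied in direction $i$, each such $F_T$-value on a sub-cube $\bar{y}_{T,\epsilon}$ of type $T \ni i$ at position $\epsilon \in \{0,1\}^{S - T}$, viewed as a function of the $i$-pair $(p,q)$ with other coordinates fixed, decomposes as $G_{T,\epsilon}(p) + G_{T,\epsilon}(q)$ for some function $G_{T,\epsilon}: U(\bar{z}) \to A_T$. Bundling these $3^k$ maps into a joint map $\Phi : U(\bar{z}) \to \prod_{(T,\epsilon)} A_T$, whose codomain has size at most $a^{3^k}$, the good pairs are exactly those with $\Phi(p) = \Phi(q)$. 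Cauchy--Schwarz yields at least $|U(\bar{z})|^2 / a^{3^k}$ such pairs for each $\bar{z}$. Summing over $\bar{z}$ and applying Cauchy--Schwarz once more gives
\[
|C_S^{\textup{acc}}| \geq \frac{1}{a^{3^k}} \sum_{\bar{z}} |U(\bar{z})|^2 \geq \frac{|C_{S'}^{\textup{acc}}|^2}{N \cdot a^{3^k}},
\]
where $N$ is the number of possible $\bar{z}$'s. Dividing by $|\textup{Cube}(X, S)| = N |X_i|^2$ yields $\alpha_S \geq \alpha_{S'}^2 / a^{3^k}$, as required.

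The main obstacle is constructing the functions $G_{T,\epsilon}$ rigorously: the additivity axiom produces the identity $F_T(\bar{x}_1) + F_T(\bar{x}_2) = F_T(\bar{x}_3)$ only when all three elements lie in $C_T$, so one cannot immediately extract a single $G_{T,\epsilon}$ defined on all of $U(\bar{z})$. This is handled by a nested induction on $|T|$ inside the ambient induction: once the lower-level $G$-equalities have been imposed, the relevant triples at level $|T|$ are verified to lie in $C_T$, whereupon additivity supplies the affine decomposition $F_T(\bar{y}_{T,\epsilon}) = G_{T,\epsilon}(p) + G_{T,\epsilon}(q)$. Once this technical point is handled, the rest of the argument is a routine double application of Cauchy--Schwarz.
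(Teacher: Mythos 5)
Your argument is correct and takes essentially the same route as the cited proofs (Smith's Proposition 3.2, reproven as Proposition 2.2 of [KP4]): induct on $|S|$ by peeling off one index $i$, and establish the recursion $\alpha_S \geq \alpha_{S'}^2 / a^{3^{|S'|}}$ by bounding the number of classes of the relation $p\sim q \iff (\bar z,(p,q))\in C_S^{\textup{acc}}$ on $U(\bar z)$ and applying Cauchy--Schwarz twice. You also correctly flag and resolve the one genuine subtlety: $G_{T,\epsilon}$ is not globally affine on all of $U(\bar z)$ (additivity only applies on $C_T$), but the nested induction on $|T|$ with base points chosen per lower-level fiber produces a well-defined $\Phi:U(\bar z)\to\prod_{(T,\epsilon)}A_T$ with $\Phi(p)=\Phi(q)\Rightarrow(\bar z,(p,q))\in C_S^{\textup{acc}}$, which is the only implication the Cauchy--Schwarz step requires.
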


\begin{proof}
This is first proven in \cite[Proposition 3.2]{Smith} and reproven in \cite[Proposition 2.2]{KP4}.
\end{proof}

\subsection{Ramsey theory}
The following result allows us to find structured subsets of any $Y \subseteq X$ provided that $Y$ has sufficiently large density in $X$.

\begin{lemma}
\label{lFindBox}
Take non-empty finite sets $X_1, \dots, X_r$ and take $X = X_1 \times \dots \times X_r$. Suppose that $Y$ is a subset of $X$ of cardinality at least $\delta \cdot |X|$. We further assume that $2^{-r - 1} > \delta > 0$ and we let $b \geq 1$ be an integer satisfying
\[
b \leq \left(\frac{\log \min_{i \in [r]} |X_i|}{5 \log \delta^{-1}}\right)^{1/(r - 1)}.
\]
Then there exist subsets $Z_i \subseteq X_i$ with $|Z_i| = b$ for all $i \in [r]$ such that
\[
Z_1 \times \dots \times Z_r \subseteq Y.
\]
\end{lemma}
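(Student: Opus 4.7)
The plan is to prove the lemma by induction on $r$, using a K\H{o}v\'ari--S\'os--Tur\'an style double counting at each step to slice off one coordinate.

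For the base case $r = 2$: given $Y \subseteq X_1 \times X_2$ with $|Y| \geq \delta|X_1||X_2|$, set $d(x_1) = |\{x_2 \in X_2 : (x_1, x_2) \in Y\}|$ and, for each $Z_2 \in \binom{X_2}{b}$,
\[
N(Z_2) = |\{x_1 \in X_1 : \{x_1\} \times Z_2 \subseteq Y\}|.
\]
Double counting gives $\sum_{Z_2} N(Z_2) = \sum_{x_1} \binom{d(x_1)}{b}$, and Jensen applied to the convex function $x \mapsto \binom{x}{b}$ yields the lower bound $|X_1|\binom{\delta|X_2|}{b}$. Dividing by $\binom{|X_2|}{b}$ and invoking $\binom{\delta N}{b}/\binom{N}{b} \geq (\delta/2)^b$ (valid when $b \leq \delta N/2$) shows the average of $N(Z_2)$ strictly exceeds $b$ under the hypotheses; hence some $Z_2$ admits at least $b$ valid $x_1$'s, and any $b$-subset of these furnishes $Z_1$.

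For the inductive step $r \geq 3$: given $Y$ of density $\delta$, and for each $Z_1 \in \binom{X_1}{b}$ define
\[
Y_{Z_1} = \{(x_2, \ldots, x_r) \in X_2 \times \cdots \times X_r : Z_1 \times \{(x_2, \ldots, x_r)\} \subseteq Y\}.
\]
The same double counting, now with the multi-index $(x_2, \ldots, x_r)$ playing the role previously played by $x_1$, furnishes a specific $Z_1$ with $|Y_{Z_1}| \geq \delta' \prod_{i \geq 2}|X_i|$ for some $\delta' \geq (\delta/2)^b$. Then apply the inductive hypothesis to $Y_{Z_1} \subseteq X_2 \times \cdots \times X_r$ with density $\delta'$ to extract $Z_2, \ldots, Z_r$ of size $b$ with $Z_2 \times \cdots \times Z_r \subseteq Y_{Z_1}$; gluing yields $Z_1 \times \cdots \times Z_r \subseteq Y$. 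For the parameters to line up, I substitute $\log (\delta')^{-1} \leq b \log(2/\delta)$ into the inductive hypothesis requirement $b \leq (\log m/(5\log(\delta')^{-1}))^{1/(r-2)}$, where $m = \min_i|X_i|$, and reduce it to essentially $b^{r-1} \leq \log m/(5 \log \delta^{-1})$, matching the current hypothesis.

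The main technical obstacle is book-keeping the constants so that the factor $5$ in the denominator is preserved through the induction rather than degrading at each step. The hypothesis $\delta < 2^{-r-1}$ supplies exactly the slack needed: it forces $\log \delta^{-1}$ to dominate $\log 2$, so the additive loss in passing from $\log \delta^{-1}$ to $\log(2/\delta)$ is negligible uniformly in $r$. A minor side issue is verifying the precondition $b \leq \delta|X_i|/2$ for the binomial ratio estimate at each inductive step; this holds automatically in the regime $\min_i |X_i| \geq \delta^{-5}$ that is forced by the constraint $b \geq 1$ together with the hypothesized bound on $b$.
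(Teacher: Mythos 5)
The paper does not prove this lemma; it simply cites \cite[Lemma 4.1]{Smith} with a change of notation, so there is no internal proof to compare against. Your skeleton --- iterated K\H{o}v\'ari--S\'os--Tur\'an slicing with a Jensen double count and the binomial-ratio bound $\binom{\delta N}{b}/\binom{N}{b}\geq(\delta/2)^b$ (valid when $b\leq\delta N/2$) --- is exactly the right approach, and your base case is correct.

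The induction as you have set it up, however, does not close, and the qualifier ``essentially'' in your parameter check is concealing a genuine gap. After one slicing step you have $\log(\delta')^{-1}\leq b(\log 2+\log\delta^{-1})$, so the inductive hypothesis at level $r-1$ requires
\[
5\,b^{r-2}\log(\delta')^{-1}\leq\log m', \qquad m':=\min_{i\geq 2}|X_i|,
\]
which after substitution becomes $5b^{r-1}(\log 2+\log\delta^{-1})\leq\log m'$. This is \emph{strictly stronger} than the given hypothesis $5b^{r-1}\log\delta^{-1}\leq\log m$ by the positive quantity $5b^{r-1}\log 2$, and in the worst case (all $|X_i|$ equal) $m'=m$, so there is no compensating gain on the right-hand side. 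The constraint $\delta<2^{-r-1}$ only caps the multiplicative overshoot by the factor $1+\tfrac{1}{r+1}$; it does not make the additive term vanish, and the lemma's hypothesis is an exact inequality with the fixed constant $5$, not an asymptotic one. Thus in the regime where $5b^{r-1}\log\delta^{-1}$ is within a factor $1+\tfrac{1}{r+1}$ of $\log m$, the inductive hypothesis simply does not apply, and the argument stalls after one step. The remedy is to unroll the induction: iterate the slicing $r-1$ times directly, track $\mu_k:=\log\delta_k^{-1}$ through the recursion $\mu_k=b(\mu_{k-1}+\log 2)$, and verify at the end that $\delta_{r-1}|X_r|\geq b$ together with the intermediate preconditions $b\leq\delta_{k-1}|X_i|/2$. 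For $b\geq 2$ one gets $\mu_{r-1}\leq b^{r-1}\bigl(\mu_0+\tfrac{b}{b-1}\log 2\bigr)\leq b^{r-1}(\mu_0+2\log 2)\leq\tfrac{5}{3}b^{r-1}\mu_0\leq\tfrac{1}{3}\log m$, using $\mu_0>(r+1)\log 2\geq 3\log 2$ and $5b^{r-1}\mu_0\leq\log m$; the case $b=1$ is trivial. The extra factor $\tfrac{5}{3}$ is absorbed once at the very end against the generous constant $5$, together with the $\log b\leq\log\log m$ correction, rather than being demanded anew by the inductive hypothesis at every step --- which is where your version breaks down.
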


\begin{proof}
This is \cite[Lemma 4.1]{Smith}, where their $d$ is our $r$ and their $r$ is our $b$.
\end{proof}

\begin{remark}
As Smith \cite[p. 6]{Smith} already remarks, this lemma is sharp up to a change of constant. Indeed, take a random subset $Y$ of $[2^N]^r$. Then the probability that $Y$ contains a given product set $Z_1 \times \dots \times Z_r$, where $|Z_i| = b$ for each $i \in [r]$, equals $(1/2)^{b^r}$. There are at most $2^{N r b}$ such sets $Z_1 \times \dots \times Z_r$. Hence we see that, as soon as $b > (rN)^{1/(r -1)}$, a random subset $Y$ of $[2^N]^r$ fails the conclusion of the lemma with positive probability.
\end{remark}

We now state Smith's main combinatorial result. It provides the crucial connection between our equidistribution results based on the Chebotarev density theorem and the reflection principles. Let $X = X_1 \times \dots \times X_r$ and let $S \subseteq [r]$. Define
\[
V = \text{Map}(X, \FF_2), \quad W = \text{Map}(\text{Cube}(X, S), \FF_2).
\]
We say that $\bar{x} \in \text{Cube}(X, S)$ is degenerate if there exists $i \in S$ with
\[
\text{pr}_1(\pi_i(\bar{x})) = \text{pr}_2(\pi_i(\bar{x})).
\]
We define $\Sigma: V \rightarrow W$ to be the linear map given by
\[
\Sigma F(\bar{x}) =
\left\{
	\begin{array}{ll}
		\sum_{x \in \bar{x}(\varnothing)} F(x) & \mbox{if } \bar{x} \text{ is not degenerate} \\
		0 & \mbox{if } \bar{x} \text{ is degenerate.}
	\end{array}
\right.
\]
We denote the image of $\Sigma$ by $\text{Add}(X, S)$. More generally, we define for a subset $Y \subseteq X$ a map from $\text{Map}(Y, \FF_2)$ to $\text{Map}(\text{Cube}(X, S), \FF_2)$ by
\[
\Sigma F(\bar{x}) =
\left\{
	\begin{array}{ll}
		\sum_{x \in \bar{x}(\varnothing)} F(x) & \mbox{if } \bar{x} \text{ is not degenerate and } \bar{x}(\varnothing) \subseteq Y \\
		0 & \mbox{if } \bar{x} \text{ is degenerate.}
	\end{array}
\right.
\]
By abuse of notation, we will also denote this map by $\Sigma$.

\begin{lemma}
\label{lAdd}
Let $X_1, \dots, X_r$ be finite, non-empty sets, let $X = X_1 \times \dots \times X_r$ and let $S \subseteq [r]$. Then
\[
\dim_{\FF_2} \textup{Add}(X, S) = \prod_{i \in S} (|X_i| - 1) \cdot \prod_{i \in [r] - S} |X_i|.
\]
\end{lemma}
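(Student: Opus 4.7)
The plan is to factor $\Sigma$ as a tensor product of linear maps, one for each coordinate $i \in [r]$, and then use multiplicativity of image dimensions under tensor products. Using the standard identifications $\textup{Map}(X, \FF_2) \cong \bigotimes_{i = 1}^r \textup{Map}(X_i, \FF_2)$ and
\[
\textup{Map}(\textup{Cube}(X, S), \FF_2) \cong \bigotimes_{i \in S} \textup{Map}(X_i^2, \FF_2) \otimes \bigotimes_{i \in [r] - S} \textup{Map}(X_i, \FF_2),
\]
I define $\Sigma_i : \textup{Map}(X_i, \FF_2) \to \textup{Map}(X_i^2, \FF_2)$ for $i \in S$ by $\Sigma_i F(p_1, p_2) = F(p_1) + F(p_2)$ when $p_1 \neq p_2$ and $\Sigma_i F(p, p) = 0$, and for $i \in [r] - S$ I take $\Sigma_i$ to be the identity. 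I would then check that $\Sigma = \bigotimes_{i = 1}^r \Sigma_i$, which by linearity reduces to verifying the equality on a pure tensor $F = F_1 \otimes \cdots \otimes F_r$. Evaluating $\bigotimes_i \Sigma_i F_i$ at a non-degenerate $\bar x$ with $\pi_i(\bar x) = (p_i^{(1)}, p_i^{(2)})$ for $i \in S$ and $\pi_i(\bar x) = q_i$ for $i \in [r] - S$ produces $\prod_{i \in S}(F_i(p_i^{(1)}) + F_i(p_i^{(2)})) \prod_{i \in [r] - S} F_i(q_i)$, which after expansion equals $\sum_{y \in \bar x(\varnothing)} F(y) = \Sigma F(\bar x)$. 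If $\bar x$ is degenerate then some factor $\Sigma_i F_i(p_i^{(1)}, p_i^{(2)})$ vanishes, so both sides are zero.

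It remains to compute the image dimension of each tensor factor. For $i \in [r] - S$ the identity map trivially has image of dimension $|X_i|$. For $i \in S$ the kernel of $\Sigma_i$ consists of those $F$ with $F(p_1) = F(p_2)$ for all distinct $p_1, p_2 \in X_i$, which is precisely the line of constant functions (and all constant functions clearly lie in the kernel). Hence $\dim \textup{image}(\Sigma_i) = |X_i| - 1$. Since $\textup{image}(\bigotimes_i T_i) = \bigotimes_i \textup{image}(T_i)$ for any finite family of linear maps between finite-dimensional spaces, multiplicativity of dimensions yields
\[
\dim_{\FF_2} \textup{Add}(X, S) = \prod_{i \in S}(|X_i| - 1) \cdot \prod_{i \in [r] - S} |X_i|,
\]
as claimed. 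There is no real obstacle here; the only point requiring care is the correct handling of the degenerate locus, but the definitions have been arranged precisely so that setting $\Sigma_i$ to vanish on the diagonal makes the tensor factorization work cleanly.
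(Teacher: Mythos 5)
Your proof is correct. The paper itself does not supply an argument but refers to Proposition 9.3 of \cite{KP1} specialized to $l = 2$, so there is no in-text proof to compare against. Your tensor-product factorization is a clean, self-contained way to obtain the dimension count: the identifications $\textup{Map}(X, \FF_2) \cong \bigotimes_i \textup{Map}(X_i, \FF_2)$ and $\textup{Map}(\textup{Cube}(X, S), \FF_2) \cong \bigotimes_{i \in S} \textup{Map}(X_i^2, \FF_2) \otimes \bigotimes_{i \notin S} \textup{Map}(X_i, \FF_2)$ are standard for finite sets, the verification $\Sigma = \bigotimes_i \Sigma_i$ on pure tensors is correct (with the degenerate locus handled correctly, since $\Sigma_i F(p,p)=0$ over $\FF_2$ is automatic from $F(p)+F(p)=0$, which is precisely what makes the paper's degenerate-cube convention compatible with the factorization), and the kernel computation $\dim\ker\Sigma_i = 1$ for $i \in S$ is right. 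The one place deserving a brief justification, which you invoke correctly, is that image commutes with tensor product for linear maps of vector spaces: each $T_i$ factors as a surjection $V_i \twoheadrightarrow \textup{im}(T_i)$ followed by an inclusion $\textup{im}(T_i) \hookrightarrow W_i$, and tensoring over a field preserves both surjections and injections, so $\textup{im}(\bigotimes T_i) = \bigotimes \textup{im}(T_i)$. Multiplicativity of dimension then gives the stated formula.
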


\begin{proof}
This follows from \cite[Proposition 9.3]{KP1} by taking $l = 2$.
\end{proof}

Given an additive system $\mathfrak{A} = (C_T, C_T^{\textup{acc}}, F_T, A_T)_{T \subseteq S}$ on $(X, S)$, we set
\[
C(\mathfrak{A}) := \bigcap_{i \in S} \left\{\bar{x} \in \text{Cube}(X, S) : \bar{x}(S - \{i\}) \cap C_{S - \{i\}}^{\text{acc}} \neq \varnothing\right\}.
\]
We say that $\mathfrak{A}$ is $(a, S)$-acceptable if $|A_T| \leq a$ for all $T \subseteq S$ and $\bar{x} \in C(\mathfrak{A})$ implies $\bar{x}(\varnothing) \subseteq C_\varnothing^{\text{acc}}$.

\begin{prop}
\label{pARinput}
There exists an absolute constant $A > 0$ such that the following holds. Take non-empty finite sets $X_1, \dots, X_r$, take $X = X_1 \times \dots \times X_r$ and take $S \subseteq [r]$. Let $a \geq 2$ and $\epsilon > 0$ be such that $\epsilon < a^{-1}$ and
\[
\log \min_{i \in S} |X_i| \geq A \cdot 6^r \cdot \log \epsilon^{-1}.
\]
Then there exists $g \in \textup{Add}(X, S)$ such that for all $(a, S)$-acceptable additive systems 
\[
\mathfrak{A} = (C_T, C_T^{\textup{acc}}, F_T, A_T)_{T \subseteq S}
\]
on $(X, S)$ and for all $F: C_\varnothing^{\textup{acc}} \rightarrow \FF_2$ satisfying $\Sigma F(\bar{x}) = g(\bar{x})$ for all $\bar{x} \in C(\mathfrak{A})$, we have
\[
\frac{|C_\varnothing^{\textup{acc}}|}{2} - \epsilon \cdot |X| \leq |F^{-1}(0)| \leq \frac{|C_\varnothing^{\textup{acc}}|}{2} + \epsilon \cdot |X|.
\]
\end{prop}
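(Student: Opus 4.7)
The plan is a probabilistic-method argument over the finite vector space $\textup{Add}(X, S)$: sample $g$ uniformly and show by a union bound that with positive probability no \emph{bad} pair $(\mathfrak{A}, F)$ is compatible with $g$, where bad means $\mathfrak{A}$ is $(a, S)$-acceptable, $F \colon C_\varnothing^{\textup{acc}} \to \FF_2$ satisfies $\Sigma F = g$ on $C(\mathfrak{A})$, and $F$ violates the stated equidistribution bound. The acceptable $\mathfrak{A}$'s are determined by the data $(F_T)_{T \subsetneq S}$ with $|A_T| \leq a$ on domains of size at most $2^r |X|$, yielding at most $a^{2^{|S|} \cdot 2^r |X|}$ systems; for each $\mathfrak{A}$ the biased $F$'s number at most $2^{|X|}$.

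Given a bad pair, I would apply Lemma~\ref{lFindBox} to the majority preimage $Y \in \{F^{-1}(0), F^{-1}(1)\}$, which has size at least $\epsilon |X|$ (the case $|C_\varnothing^{\textup{acc}}| < 2\epsilon |X|$ being trivial), producing a sub-box $B = Z_1 \times \dots \times Z_r \subseteq Y$ with $|Z_k| = b$ and $b \gtrsim (\log \min_k |X_k| / \log \epsilon^{-1})^{1/(r-1)}$. Since $F$ is constant on $B$, every non-degenerate $\bar{x} \in \textup{Cube}(X, S)$ with $\bar{x}(\varnothing) \subseteq B$ satisfies $\Sigma F(\bar{x}) = 0$; those $\bar{x} \in C(\mathfrak{A})$ then impose the constraints $g(\bar{x}) = 0$. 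By Lemma~\ref{lAdd} the restriction $\textup{Add}(X, S) \to \textup{Map}(\textup{Cube}(B, S), \FF_2)$ has image of dimension $(b - 1)^{|S|} b^{r - |S|} \gtrsim b^r$; the goal is to realize essentially all of this as linearly independent constraints coming from cubes in $C(\mathfrak{A})$, giving a compatibility probability at most $2^{-\Omega(b^r)}$ per bad pair.

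A union bound then yields total failure probability at most
\[
a^{2^{|S|} \cdot 2^r |X|} \cdot 2^{|X|} \cdot 2^{-\Omega(b^r)},
\]
which is strictly less than $1$ once $b^r$ dominates $(2^{|S|} \cdot 2^r \log a + 1) \cdot |X|$. Inverting the bound on $b$ from Lemma~\ref{lFindBox} and using $\epsilon < a^{-1}$ translates this into the stated hypothesis $\log \min_i |X_i| \geq A \cdot 6^r \log \epsilon^{-1}$; the factor $6^r = (2 \cdot 3)^r$ absorbs both the $2^r$ of the counting step and the $3^{|S|}$ overhead of Proposition~\ref{pAS}.

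The main obstacle I anticipate is producing a system of linearly independent constraints of dimension matching $\dim \textup{Add}(X, S)|_{\textup{Cube}(B, S)}$: acceptability only supplies the one-sided implication $\bar{x} \in C(\mathfrak{A}) \Rightarrow \bar{x}(\varnothing) \subseteq C_\varnothing^{\textup{acc}}$, not its converse, so enough cubes on $B$ must be placed inside $C(\mathfrak{A})$ by an independent argument. This will likely require an iterative refinement, applying Proposition~\ref{pAS} at each intermediate level $T \subseteq S$ to guarantee that $C_T^{\textup{acc}}$ is dense inside the current sub-box, and then shrinking via Lemma~\ref{lFindBox} to recover a full-density box at that level — orchestrated simultaneously over all $T \subseteq S$ so that the final sub-box is still large enough for the union bound to close. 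Managing this nested refinement is the technical heart of the argument and accounts for the precise $6^r$ growth in the hypothesis.
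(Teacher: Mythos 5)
Your overall strategy---probabilistic method over $\mathrm{Add}(X,S)$, extract a sub-box from the majority preimage via Lemma~\ref{lFindBox}, use Proposition~\ref{pAS} to populate $C(\mathfrak{A})$---is a reasonable first guess at the shape of Smith's argument, but the union bound as you set it up cannot close, and the hypothesis of the proposition is far too weak to allow it. The issue is quantitative. Under the hypothesis $\log \min_{i\in S}|X_i| \geq A\cdot 6^r\log\epsilon^{-1}$, the side-length $b$ produced by Lemma~\ref{lFindBox} applied with $\delta\approx\epsilon$ satisfies $b \leq \bigl(\log\min_i|X_i|/(5\log\epsilon^{-1})\bigr)^{1/(r-1)} \leq (A\cdot 6^r/5)^{1/(r-1)}$, which is a \emph{bounded} quantity (independent of $\min_i|X_i|$ once $r$ and $A$ are fixed). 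Consequently $b^r$ and $|\mathrm{Cube}(B,S)| = b^{r+|S|}$ are bounded constants, whereas $|X| \geq \min_i|X_i| \geq \exp(A\cdot 6^r\log\epsilon^{-1})$ can be arbitrarily large. Your required inequality $b^r \gtrsim (2^{|S|}2^r\log a + 1)|X|$ therefore fails by an unbounded margin, and the claim that ``inverting the bound on $b$ \ldots translates this into the stated hypothesis'' is simply false: inverting yields $\log\min_i|X_i| \gtrsim \log\epsilon^{-1}\cdot|X|^{(r-1)/r}$, a super-polynomial (in $|X|$) condition not implied by the proposition's hypothesis.

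There are two further difficulties even before reaching the union bound. First, your count of additive systems uses ``domains of size at most $2^r|X|$,'' but $|C_T| \leq |\mathrm{Cube}(X,T)| = |X|\cdot\prod_{i\in T}|X_i|$, which is much larger than $2^r|X|$ when the $X_i$ are large; to get anything in the ballpark of $a^{O(|X|)}$ one must invoke the additivity constraint~(\ref{eAdditive}) to parametrize each $F_T$ by a coboundary, and this is not spelled out. Second, even granting the sub-box $B$, Proposition~\ref{pAS} on the restricted system gives $C_S^{\mathrm{acc}}|_B$ of density at least $a^{-3^{|S|}}$ in $\mathrm{Cube}(B,S)$, but the degenerate cubes already occupy density $1-(1-1/b)^{|S|}\approx|S|/b$, so to guarantee a single non-degenerate cube---let alone a near-basis of $\mathrm{Add}(B,S)^\vee$---you need $b \gtrsim |S|\,a^{3^{|S|}}$. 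Since $a\geq 2$, this already exceeds $(A\cdot 6^r/5)^{1/(r-1)}$ for any nontrivial $|S|$, so the sub-box may well contain no usable constraint at all. The gap you flag at the end (placing cubes in $C(\mathfrak{A})$ and extracting independent constraints) is indeed the crux, but the nested-refinement sketch does not repair the scaling mismatch: no amount of clever placement can manufacture $\Omega(|X|)$ independent constraints from a box of bounded size. Any viable proof must avoid a union bound over all $2^{|X|}$ biased $F$; the ``one $g$ for all $\mathfrak{A},F$'' conclusion has to come from a deterministic argument about what a single compatible pair forces, with the probabilistic (or counting) step, if any, taken only over objects of size polynomial in $b$, not in $|X|$.
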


\begin{proof}
This is \cite[Proposition 4.4]{Smith}.
\end{proof}

\subsection{Galois groups}
In this subsection we describe the structure of some important Galois groups that we shall encounter later. To do so, we will introduce some notation. If $X_1, \dots, X_r$ are non-empty, finite sets of odd prime numbers, we write $K(X_1 \times \dots \times X_r)$ for the compositum of $\Q(\sqrt{p})$ with $p$ lying in some $X_j$. If $\bar{x} \in \text{Cube}(X, S)$ and $d \neq 1$ is an integer coprime to all elements of $X$, then we say that the expansion map
\[
\phi_{\bar{x}; d}
\]
exists if $\bar{x}$ is degenerate or if the expansion map 
\[
\phi_{\{\text{pr}_1(\pi_i(\bar{x})) \cdot \text{pr}_2(\pi_i(\bar{x})) : i \in S\}; d}(\mathfrak{G})
\]
exists. In case $\bar{x}$ is degenerate, we define the field of definition of $\phi_{\bar{x}; d}$ to be $\Q$. The coming results are very similar to \cite[Proposition 2.4]{Smith}. 

\begin{lemma}
\label{lTopdegree}
Let $r \in \Z_{\geq 1}$. Take non-empty, disjoint, finite sets of odd prime numbers $X_1, \dots, X_r$ with product $X$. Let $i \in [r]$ and suppose that the map $\phi_{\pi_{[r] - \{i\}}(\bar{x}); \textup{pr}_1(\pi_i(\bar{x})) \textup{pr}_2(\pi_i(\bar{x}))}$ exists for all $\bar{x} \in \textup{Cube}(X, [r])$. For $S \subseteq [r] - \{i\}$ write
\[
M_S := K(X) \prod_{\bar{x} \in \textup{Cube}(X, S \cup \{i\})} L(\phi_{\pi_S(\bar{x}); \textup{pr}_1(\pi_i(\bar{x})) \textup{pr}_2(\pi_i(\bar{x}))}).
\]
Then we have
\[
\log_2 \left[\prod_{\substack{S \subseteq [r] - \{i\} \\ |S| = j + 1}} M_S : \prod_{\substack{S' \subseteq [r] - \{i\} \\ |S'| = j}} M_{S'}\right] = \sum_{\substack{S \subseteq [r] - \{i\} \\ |S| = j + 1}} \prod_{k \in S \cup \{i\}} (|X_k| - 1).
\]
for all integers $0 \leq j \leq r - 2$.
\end{lemma}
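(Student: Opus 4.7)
The plan is to reformulate the index via Kummer theory over the multiquadratic base $K(X)$, exploit Lemma \ref{lAdditivity} to reveal a multilinear structure on the relevant Kummer classes, and then recognize the resulting space as a tensor product. Write $N_t := \prod_{|S|=t} M_S$ and $d_{\bar{x}} := \textup{pr}_1(\pi_i(\bar{x}))\,\textup{pr}_2(\pi_i(\bar{x}))$. I would first verify that $N_{j+1}/N_j$ is an elementary abelian $2$-extension. By Proposition \ref{prop: field of def of exp maps}, each $L(\phi_{\pi_S(\bar{x}); d_{\bar{x}}})/M(\phi_{\pi_S(\bar{x}); d_{\bar{x}}})$ is a central $\FF_2$-extension, and the $M$-field is the compositum of $\Q(\textup{support}) \subseteq K(X)$ with the component fields $L(\phi_{\pi_T(\bar{x}); d_{\bar{x}}})$ for $T \subsetneq S$. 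Each such component depends only on the pair data at coordinates in $T \cup \{i\}$, so selecting any $\bar{y} \in \textup{Cube}(X, T \cup \{i\})$ that agrees with $\bar{x}$ on $T \cup \{i\}$ exhibits the component as a constituent of $M_T \subseteq N_j$. Because $K(X) \subseteq N_j$, the index we want equals $\dim_{\FF_2}(V_{j+1}/V_j)$, where $V_t \subseteq K(X)^*/K(X)^{*2}$ is the Kummer subspace corresponding to $N_t/K(X)$.

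For each $(S, \bar{x})$ with $|S| = j+1$, let $\alpha(S, \bar{x}) := \phi_S(\psi)|_{G_{K(X)}}$ denote the Kummer class of the top component of $\psi := \phi_{\pi_S(\bar{x}); d_{\bar{x}}}$. By Proposition \ref{norming stuff down}, the lower components $\phi_T(\psi)$ for $T \subsetneq S$ are top components of smaller expansion maps and so their Kummer classes already lie in $V_j$. Hence $V_{j+1}/V_j$ is spanned by $\{\alpha(S, \bar{x}) \bmod V_j\}_{|S|=j+1,\,\bar{x}}$. Now Lemma \ref{lAdditivity} — additivity of the $\mathfrak{G}$-normalized cochains $\phi_{A; x}(\mathfrak{G})$ in both the pointer and each support coordinate — forces $\alpha(S, \bar{x}) \bmod V_j$ to depend $\FF_2$-linearly on each pair product $\textup{pr}_1(\pi_k(\bar{x}))\,\textup{pr}_2(\pi_k(\bar{x}))$ modulo squares, for $k \in S \cup \{i\}$. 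These products span the even-parity subspace $E_k \leq \FF_2^{X_k}$ of dimension $|X_k| - 1$, so I obtain a multilinear surjection
\[
\Phi : \bigoplus_{\substack{S \subseteq [r] - \{i\} \\ |S| = j+1}} \bigotimes_{k \in S \cup \{i\}} E_k \twoheadrightarrow V_{j+1}/V_j,
\]
giving the upper bound $\dim_{\FF_2}(V_{j+1}/V_j) \leq \sum_{|S| = j+1} \prod_{k \in S \cup \{i\}}(|X_k| - 1)$.

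The main obstacle is proving that $\Phi$ is injective, which would supply the matching lower bound. Since each $L(\phi_{\pi_S(\bar{x}); d_{\bar{x}}}) \cdot K(X)/K(X)$ is unramified at all finite places (Proposition \ref{normalized expansions are unramified}), the classes $\alpha(S, \bar{x})$ sit in the narrow $2$-class group of $K(X)$. Independence across different $S$'s comes from the ramification pattern of the $L$-fields over $\Q$: by Proposition \ref{ramification read off by inertia}, the inertia generators $\sigma_p$ single out precisely the primes in $\bigcup_{k \in S \cup \{i\}} X_k$, allowing $S$ to be recovered. The more delicate task — separating multilinear tensors within a single $S$ — is the chief difficulty, which I would address by analyzing the image of $\psi$ in the expansion-map Galois group $\FF_2[\FF_2^S] \rtimes \FF_2^S$ and by Chebotarev-style splitting tests at suitable auxiliary primes, producing an explicit left-inverse to $\Phi_S$. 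Once this injectivity is established, the claimed index formula follows.
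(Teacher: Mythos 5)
Your reduction to Kummer theory over $K(X)$ and the multilinearity via Lemma \ref{lAdditivity} is sound, and it yields the correct upper bound $\dim_{\FF_2}(V_{j+1}/V_j) \leq \sum_{|S|=j+1} \prod_{k \in S \cup \{i\}}(|X_k|-1)$. This is in spirit what the paper does: it parametrizes a generating set for $\Gal(M_{j+1}/M_j)^\vee$ by cubes built from consecutive pairs $T_{k,l} = \{p_{k,l}, p_{k,l+1}\}$, which is precisely an explicit basis of your $\bigotimes_{k \in S \cup \{i\}} E_k$.

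However, the lower bound --- the injectivity of $\Phi$ --- is where your argument has a genuine gap, and it is the whole content of the lemma. You flag the within-$S$ part as ``the chief difficulty'' and outline an approach via ``Chebotarev-style splitting tests at suitable auxiliary primes,'' but this is circular as stated: to locate the distinguishing Frobenius elements one would need to already know that the Galois group $\Gal(M_{j+1}/M_j)$ has the claimed size, which is what you are trying to prove. Worse, your proposed mechanism for separating different $S$'s fails: you invoke ramification of the $L$-fields over $\Q$ to ``recover $S$,'' but by Proposition \ref{normalized expansions are unramified} each $L(\phi_{\pi_S(\bar{x}); \cdot})/\Q(\text{supports})$ is unramified at all finite places, so $M_{j+1}/M_j$ is unramified; all the characters you are trying to distinguish restrict to unramified characters of $G_{M_j}$ and have identical ramification data. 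Proposition \ref{ramification read off by inertia} detects ramification over $\Q$, not the finer distinctions over $M_j$ that you need.

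The paper resolves both issues at once by introducing the operator $\beta_j\phi := \phi([\sigma_1,[\sigma_2,[\ldots[\sigma_{j+1},\sigma_{j+2}]]]])$, which is well-defined on $\Gal(M_{j+1}/M_j)^\vee$ because $\Gal(M_j/\Q)$ has nilpotency class $j+1$. Iterating equation (\ref{eSmith22}) and using that quadratic characters vanish on commutators, $\beta_j$ sends $\phi_{\pi_S(\bar{x}); \mathrm{pr}_1(\pi_i(\bar{x}))\mathrm{pr}_2(\pi_i(\bar{x}))}$ to an explicit multilinear form on $\Gal(K(X)/\Q)^{j+2}$, namely a symmetrized sum over bijections of products $\prod_m \chi_{\mathrm{pr}_1(\pi_{f(m)}(\bar{x}))\mathrm{pr}_2(\pi_{f(m)}(\bar{x}))}(\sigma_m)$. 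These forms are manifestly linearly independent across both the choice of $S$ (which set of indices appears) and the choice of basis cube within $S$. This is the algebraic mechanism that supplies the left-inverse you are looking for; without it, or something playing the same role, the lower bound is not established.
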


\begin{proof}
Write
\[
M_j := \prod_{\substack{S' \subseteq [r] - \{i\} \\ |S'| = j}} M_{S'}.
\]
Let $S \subseteq [r] - \{i\}$ with $|S| = j + 1$. Note that 
\[
\phi_{\pi_S(\bar{x}); \textup{pr}_1(\pi_i(\bar{x})) \textup{pr}_2(\pi_i(\bar{x}))}
\] 
is a quadratic character when restricted to $M_j$ and gives a central $\FF_2$-extension of $\Gal(M_j/\Q)$ by Proposition \ref{prop: field of def of exp maps}. Enumerate $X_k$ as $\{p_{k, 1}, \dots, p_{k, |X_k|}\}$ and define
\[
T_{k, l} = \{p_{k, l}, p_{k, l + 1}\}
\]
for $l \in [|X_k| - 1]$. For $T \subseteq [r] - \{i\}$, let $\text{Basis}(X, T)$ be the set of $\bar{x} \in \text{Cube}(X, T \cup \{i\})$ such that for every $k \in T \cup \{i\}$ there is some $l$ satisfying
\[
\{\text{pr}_1(\pi_k(\bar{x})), \text{pr}_2(\pi_k(\bar{x}))\} = T_{k, l}.
\]
It follows from Proposition \ref{prop: field of def of exp maps} that
\[
\bigcup_{\substack{S \subseteq [r] - \{i\} \\ |S| = j + 1}} \bigcup_{\bar{x} \in \text{Basis}(X, S)} \{\phi_{\pi_S(\bar{x}); \textup{pr}_1(\pi_i(\bar{x})) \textup{pr}_2(\pi_i(\bar{x}))}\}
\]
generates the dual of $\Gal(M_{j + 1}/M_j)$. We claim that the above characters are also linearly independent, which clearly implies the lemma. Here we caution the reader that two distinct elements $\bar{x}_1, \bar{x}_2 \in \text{Basis}(X, S)$ can give rise to the same character. This happens precisely when
\[
\{\text{pr}_1(\pi_k(\bar{x}_1)), \text{pr}_2(\pi_k(\bar{x}_1))\} = \{\text{pr}_1(\pi_k(\bar{x}_2)), \text{pr}_2(\pi_k(\bar{x}_2))\}
\]
for all $k \in S \cup \{i\}$. This defines an equivalence relation on $\text{Basis}(X, S)$, which we denote by $\sim$. To prove the claim, let $c_{\bar{x}} \in \mathbb{F}_2$ be such that
\begin{align}
\label{eLinearPhi}
\sum_{\substack{S \subseteq [r] - \{i\} \\ |S| = j + 1}} \sum_{\bar{x} \in \text{Basis}(X, S)/\sim} c_{\bar{x}} \cdot \phi_{\pi_S(\bar{x}); \textup{pr}_1(\pi_i(\bar{x})) \textup{pr}_2(\pi_i(\bar{x}))}(\sigma) = 0
\end{align}
for all $\sigma \in G_{M_j}$. We define the operator $\beta_j$ by
\[
\beta_j \phi = \phi([\sigma_1, [\sigma_2, [\dots [\sigma_{j + 1}, \sigma_{j + 2}]]]])
\]
for $\phi \in \text{Map}(G_{M_j}, \FF_2)$ and $\sigma_1, \dots, \sigma_{j + 2} \in G_\Q$. This is well-defined, since the nilpotency degree of $\Gal(M_j/\Q)$ is $j + 1$. Writing $\phi$ for $\phi_{\pi_S(\bar{x}); \textup{pr}_1(\pi_i(\bar{x})) \textup{pr}_2(\pi_i(\bar{x}))}$, we compute
\begin{align*}
\phi([\sigma, \tau]) &= \phi(\sigma \tau) + \phi(\tau \sigma) + d\phi([\sigma, \tau], \tau \sigma) \\
&= \phi(\sigma \tau) + \phi(\tau \sigma) \\
&= d\phi(\sigma, \tau) + d\phi(\tau, \sigma),
\end{align*}
where the second equality uses equation (\ref{eSmith22}) and the fact that quadratic characters vanish on commutators. Utilizing equation (\ref{eSmith22}) again and inducting on $j$ yields
\[
\beta_j \phi_{\pi_S(\bar{x}); \textup{pr}_1(\pi_i(\bar{x})) \textup{pr}_2(\pi_i(\bar{x}))} = \sum_{\substack{f : [j + 2] \rightarrow S \cup \{i\} \\ f \text{ is a bijection} \\ f(j + 1) = i \text{ or } f(j + 2) = i}} \prod_{1 \leq k \leq j + 2} \chi_{\textup{pr}_1(\pi_{f(k)}(\bar{x})) \textup{pr}_2(\pi_{f(k)}(\bar{x}))}(\sigma_k),
\]
where we again use that $d\phi(\tau, \sigma) = 0$ if $\tau$ is a commutator. Hence $\beta_j$ induces a homomorphism from $\Gal(M_{j + 1}/M_j)^\vee$ to the set of bilinear maps from $\Gal(K(X)/\Q)^{j + 2}$ to $\FF_2$. Observe that the bilinear maps
\[
\left\{(\sigma_1, \dots, \sigma_{j + 2}) \mapsto \prod_{1 \leq m \leq j + 2} \chi_{p_{k_m, l_m} \cdot p_{k_m, l_m + 1}}(\sigma_m)\right\}_{k_1 \in [r], \dots, k_{j + 2} \in [r], l_1 \in [|X_{k_1}| - 1], \dots, l_{j + 2} \in [|X_{k_{j + 2}}] - 1]}
\]
are linearly independent. Therefore, applying $\beta_j$ to equation (\ref{eLinearPhi}), we see that $c_{\bar{x}} = 0$, which completes the proof.
\end{proof}

\begin{lemma}
\label{lMdegree}
Let $r \in \Z_{\geq 1}$ be given. Let $X_1, \dots, X_r$ be non-empty, disjoint, finite sets of odd prime numbers, let $X$ be their product and let $i \in [r]$. Assume that the map 
\[
\phi_{\pi_{[r] - \{i\}}(\bar{x}); \textup{pr}_1(\pi_i(\bar{x})) \textup{pr}_2(\pi_i(\bar{x}))}
\]
exists for all $\bar{x} \in \textup{Cube}(X, [r])$. Write
\[
M_\circ(X) := K(X) \prod_{S \subset [r] - \{i\}} \prod_{\bar{x} \in \textup{Cube}(X, S \cup \{i\})} L(\phi_{\pi_S(\bar{x}); \textup{pr}_1(\pi_i(\bar{x})) \textup{pr}_2(\pi_i(\bar{x}))})
\]
Then we have
\begin{align}
\label{eMdegree}
\left[M_\circ(X) : K(X)\right] = \prod_{\varnothing \subset S \subset [r] - \{i\}} 2^{\prod_{j \in S \cup \{i\}} (|X_j| - 1)}.
\end{align}
Also let $Y_1, \dots, Y_r$ be non-empty, disjoint, finite sets of odd prime numbers and let $Y$ be their product. Assume that the map $\phi_{\pi_{[r] - \{i\}}(\bar{x}); \textup{pr}_1(\pi_i(\bar{x})) \textup{pr}_2(\pi_i(\bar{x}))}$ exists for all $\bar{x} \in \textup{Cube}(Y, [r])$. Suppose that $|X_j \cap Y_j| = 1$ for all $j \in [r]$ and $X_j \cap Y_k = \varnothing$ for distinct $j$ and $k$. Then we have 
\[
M_\circ(X) \cap M_\circ(Y) = \Q(\sqrt{p_1}, \dots, \sqrt{p_r}),
\]
where $p_j$ is the unique element of $X_j \cap Y_j$.
\end{lemma}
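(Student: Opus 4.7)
For Part 1, my plan is to telescope via Lemma \ref{lTopdegree}. Setting $M_j := \prod_{S \subseteq [r]-\{i\}, |S|=j} M_S$, I first observe that $M_0 = K(X)$: indeed, $\phi_{\varnothing; pq}$ is just the quadratic character $\chi_{pq}$, whose field of definition $\Q(\sqrt{pq})$ already lies in $K(X)$. Next, Proposition \ref{prop: field of def of exp maps} gives the containment $L(\phi_{A';d}) \subseteq L(\phi_{A;d})$ whenever $A' \subsetneq A$ (through the defining inclusion $M(\phi_{A;d}) \subseteq L(\phi_{A;d})$), which yields the increasing chain $K(X) = M_0 \subseteq M_1 \subseteq \cdots \subseteq M_{r-2}$ and identifies the top compositum with $M_\circ(X)$. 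Applying Lemma \ref{lTopdegree} at each step $M_j \subseteq M_{j+1}$ for $0 \leq j \leq r-3$ and summing the logarithms then gives $\log_2[M_\circ(X):K(X)] = \sum_{\varnothing \subset S \subset [r]-\{i\}} \prod_{k \in S \cup \{i\}}(|X_k|-1)$, which is exactly the stated formula.

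For Part 2, the inclusion $\Q(\sqrt{p_1},\ldots,\sqrt{p_r}) \subseteq M_\circ(X) \cap M_\circ(Y)$ is immediate. For the reverse, my strategy is a Galois-theoretic degree count resting on two preliminary claims. First, I will show that $M_\circ(X)/\Q$ is ramified only at primes in $X \cup \{(2),\infty\}$; this follows by inducting along the filtration of Part 1 and applying Proposition \ref{ramification read off by inertia} to each $L(\phi_{A;d})$, using that normalized expansion maps send every generator $\sigma_v \in \mathfrak{G}$ with $v$ outside the support $A \cup \{d\}$ to the identity. Second, I deduce $M_\circ(X) \cap K(Z) = K(X)$, where $K(Z) := K(X) K(Y)$: any extra $\sqrt{q}$ with $q \in Y - X$ would force the odd prime $q \notin X$ to ramify in $M_\circ(X)$, contradicting the first claim. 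By symmetry $M_\circ(Y) \cap K(Z) = K(Y)$, so Galois theory yields $[M_\circ(X) K(Z) : K(Z)] = [M_\circ(X):K(X)]$ and the analogous statement for $Y$.

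The main obstacle is the final ingredient: showing that $M_\circ(X) K(Z)$ and $M_\circ(Y) K(Z)$ are linearly disjoint over $K(Z)$. In Kummer-theoretic terms, the subspaces $V_X, V_Y \subseteq K(Z)^*/K(Z)^{*2}$ parametrizing these two multiquadratic extensions must intersect trivially. My plan is to take any common class, lift it to representatives $\alpha_X \in K(X)^*$ (via $V_X^0 \cong V_X$) and $\alpha_Y \in K(Y)^*$ (via $V_Y^0 \cong V_Y$) satisfying $\alpha_X \alpha_Y^{-1} \in K(Z)^{*2}$, and then examine valuations: at a prime $\mathfrak{P}$ of $K(Z)$ above $p \in X - Y$ only $\alpha_X$ contributes odd terms (since $\alpha_Y$'s divisor is supported above $Y$), forcing $v_{\mathfrak{P}}(\alpha_X) \equiv 0 \bmod 2$, and a symmetric analysis applies above $Y - X$. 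Combining these divisor constraints with Proposition \ref{norming stuff down} to track how the expansion-map Kummer generators $\alpha_\phi$ specialize along the inclusions $K(X), K(Y) \hookrightarrow K(Z)$ forces the common class to be trivial. Once linear disjointness is established, the identity $[M_\circ(X) M_\circ(Y):\Q] = 2^{|X|+|Y|-r} \cdot [M_\circ(X):K(X)] \cdot [M_\circ(Y):K(Y)]$ combined with $[M_\circ(X) \cap M_\circ(Y):\Q] = [M_\circ(X):\Q] [M_\circ(Y):\Q]/[M_\circ(X) M_\circ(Y):\Q]$ produces $[M_\circ(X) \cap M_\circ(Y):\Q] = 2^r$, matching $[\Q(\sqrt{p_1},\ldots,\sqrt{p_r}):\Q]$ and completing the proof.
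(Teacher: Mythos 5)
Your Part 1 is correct and is precisely what the paper means by ``follows immediately from Lemma~\ref{lTopdegree}.'' For Part 2, the overall architecture (ramification control plus linear disjointness over $K(Z) := K(X)K(Y)$, followed by a degree count) agrees in its ingredients with the paper's one-line sketch, and your ramification Claims 1 and 2 and the concluding degree count are all fine.

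The gap is in the linear disjointness step, which you yourself flag as the main obstacle. Your valuation argument only establishes that a putative common Kummer class, lifted to $\alpha_X \in K(X)^*$ and $\alpha_Y \in K(Y)^*$ with $\alpha_X\alpha_Y^{-1} \in K(Z)^{*2}$, has even valuation at the odd finite places outside $\{p_1,\dots,p_r\}$. This is far from triviality: $\alpha_X$ could still have odd valuation above some $p_j$, or represent a nontrivial unit class or ideal class of $K(X)$, all of which are compatible with $K(X)(\sqrt{\alpha_X})/K(X)$ being unramified outside $\{p_1,\dots,p_r\} \cup \{(2),\infty\}$. The appeal to Proposition~\ref{norming stuff down} to ``force the common class to be trivial'' is exactly the step that requires proof, and no mechanism for extracting a contradiction from the norm relations is indicated. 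The paper closes this gap by a different device: it reuses the commutator operator $\beta_j$ from the proof of Lemma~\ref{lTopdegree}, now applied to the combined family of expansion-map characters coming from both $X$ and $Y$. Under $\beta_j$ these characters map to multilinear forms on $\Gal(K(Z)/\Q)^{j+2}$ built from the quadratic characters $\chi_{pp'}$ with $p,p'$ consecutive in an ordering of $X_k$, and analogously for $Y_k$. Since $X_k \cap Y_k = \{p_k\}$, the even-product subspaces these span inside $\langle\chi_q : q \in X_k\cup Y_k\rangle$ intersect trivially, so the combined multilinear forms remain linearly independent; this is precisely the disjointness assertion your proof needs. To complete your argument you would have to import this $\beta_j$ independence computation in place of the speculative norm-tracking step.
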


\begin{proof}
Equation (\ref{eMdegree}) follows immediately from Lemma \ref{lTopdegree}. For the second part, $\supseteq$ is obvious, so it remains to prove $\subseteq$. But the arguments from Lemma \ref{lTopdegree} imply that $M_\circ(X)/K(X)K(Y)$ and $M_\circ(Y)/K(X)K(Y)$ are disjoint extensions. Then $M_\circ(X) \cap M_\circ(Y)$ is a subfield of $K(X) K(Y)$ only ramified at $2$ and $p_1, \dots, p_r$. This readily implies $\subseteq$.
\end{proof}

\begin{lemma}
\label{lPhiAdd}
Take an integer $r \in \Z_{\geq 2}$. Let $X_1, \dots, X_r$ be non-empty, disjoint, finite sets of odd prime numbers, let $X$ be their product and let $i \in [r]$. Suppose that the map 
\[
\phi_{\pi_{[r] - \{i\}}(\bar{x}); \textup{pr}_1(\pi_i(\bar{x})) \textup{pr}_2(\pi_i(\bar{x}))}
\]
exists for all $\bar{x} \in \textup{Cube}(X, [r])$. Write 
\[
M_\circ := K(X) \prod_{S \subset [r] - \{i\}} \prod_{\bar{x} \in \textup{Cube}(X, S \cup \{i\})} L(\phi_{\pi_S(\bar{x}); \textup{pr}_1(\pi_i(\bar{x})) \textup{pr}_2(\pi_i(\bar{x}))})
\]
and
\[
M := K(X) \prod_{\bar{x} \in \textup{Cube}(X, [r])} L(\phi_{\pi_{[r] - \{i\}}(\bar{x}); \textup{pr}_1(\pi_i(\bar{x})) \textup{pr}_2(\pi_i(\bar{x}))}).
\]
Then the map $f$, that sends $\sigma \in \Gal(M/M_\circ)$ to the function $g_\sigma$ in $\textup{Add}(X, [r])$ given by
\[
\bar{x} \mapsto \phi_{\pi_{[r] - \{i\}}(\bar{x}); \textup{pr}_1(\pi_i(\bar{x})) \textup{pr}_2(\pi_i(\bar{x}))}(\sigma),
\]
is a group isomorphism.
\end{lemma}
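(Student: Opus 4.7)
The plan is to check that $f$ is a well-defined injective group homomorphism into $\textup{Add}(X, [r])$ and then conclude by a dimension count against Lemma \ref{lAdd} and Lemma \ref{lTopdegree}. Let me abbreviate $\phi_{\bar x} := \phi_{\pi_{[r]-\{i\}}(\bar x);\, \textup{pr}_1(\pi_i(\bar x))\,\textup{pr}_2(\pi_i(\bar x))}$ throughout. First I need to argue that $f$ is well-defined as a map into $\textup{Map}(\textup{Cube}(X,[r]),\FF_2)$. By Proposition \ref{prop: field of def of exp maps}, each $\phi_{\bar x}$ restricts to a quadratic character on $G_{M(\phi_{\bar x})}$, and $M(\phi_{\bar x})$ is built out of the $L(\phi_{\pi_S(\bar x);\,\textup{pr}_1(\pi_i(\bar x))\textup{pr}_2(\pi_i(\bar x))})$ with $S \subsetneq [r]-\{i\}$, all of which lie in $M_\circ$. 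Hence $\phi_{\bar x}|_{G_{M_\circ}}$ is a quadratic character, so the evaluation $\sigma \mapsto \phi_{\bar x}(\sigma)$ is a well-defined homomorphism from $\Gal(M/M_\circ)$ to $\FF_2$; letting $\bar x$ vary yields a group homomorphism $f\colon \Gal(M/M_\circ) \to \textup{Map}(\textup{Cube}(X,[r]),\FF_2)$.

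Next I would verify that the image lies in $\textup{Add}(X,[r])$. Degenerate cubes give $g_\sigma(\bar x)=0$: a degeneracy at position $i$ makes the pointer a square (so the cochain is trivial by Lemma \ref{lAdditivity}), while a degeneracy at some $j \neq i$ places a repeated entry in the support and, again via Lemma \ref{lAdditivity}, the corresponding expansion map vanishes (since $\phi_{S;d}+\phi_{S;d}=\phi_{S;d^2}=\phi_{S;1}=0$ analogously in the other slot). For the additivity relation (\ref{eAdditive}) with three cubes $\bar x_1,\bar x_2,\bar x_3$ sharing all coordinates except position $k\in[r]$ where they carry $(p_1,p_2),(p_2,p_3),(p_1,p_3)$, I distinguish $k=i$ (same support, pointer changes, apply the first part of Lemma \ref{lAdditivity}) and $k \ne i$ (same pointer, one support element changes, apply the second part). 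Both yield $\phi_{\bar x_1}+\phi_{\bar x_2}=\phi_{\bar x_3}$ as cochains on $G_\Q$, hence after evaluating at $\sigma$ we get the required additivity of $g_\sigma$. Thus $f$ maps into $\textup{Add}(X,[r])$, and is a group homomorphism since each $\phi_{\bar x}|_{G_{M_\circ}}$ is.

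Injectivity is immediate: $g_\sigma\equiv 0$ means $\sigma$ fixes every $L(\phi_{\bar x})$ for $\bar x\in\textup{Cube}(X,[r])$, hence fixes $M$, so $\sigma=1$ in $\Gal(M/M_\circ)$. For surjectivity, the dimension of $\textup{Add}(X,[r])$ equals $\prod_{k\in[r]}(|X_k|-1)$ by Lemma \ref{lAdd}, so it suffices to show $[M:M_\circ] = 2^{\prod_{k\in[r]}(|X_k|-1)}$. Using the notation of Lemma \ref{lTopdegree}, note that $M_{r-1}=M_{[r]-\{i\}}=M$ (only one subset of size $r-1$ in $[r]-\{i\}$), while $M_\circ=\prod_{j=0}^{r-2}M_j$. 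I claim the chain $M_0\subseteq M_1\subseteq\dots\subseteq M_{r-1}$ is increasing, hence $M_\circ=M_{r-2}$. Indeed, for $S'\subsetneq S''\subseteq[r]-\{i\}$ and any $\bar x \in \textup{Cube}(X,S'\cup\{i\})$, pick any extension $\bar x'\in\textup{Cube}(X,S''\cup\{i\})$; Proposition \ref{prop: field of def of exp maps} gives $L(\phi_{\pi_{S'}(\bar x);\cdots})\subseteq M(\phi_{\pi_{S''}(\bar x');\cdots})\subseteq L(\phi_{\pi_{S''}(\bar x');\cdots})\subseteq M_{S''}$. Applying Lemma \ref{lTopdegree} with $j=r-2$ then yields $[M:M_\circ]=[M_{r-1}:M_{r-2}]=2^{\prod_{k\in[r]}(|X_k|-1)}$, matching $\dim_{\FF_2}\textup{Add}(X,[r])$ and forcing $f$ to be surjective.

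The main obstacle is the additivity check: one must juggle the two parts of Lemma \ref{lAdditivity} for the two flavors of moves (changing the pointer versus changing the support) and also handle the boundary case where a move creates a degenerate cube; the dimension count is then routine because both Lemma \ref{lTopdegree} and Lemma \ref{lAdd} have already been packaged to produce the same combinatorial expression $\prod_k(|X_k|-1)$.
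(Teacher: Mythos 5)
Your proof is correct and follows essentially the same strategy as the paper's: well-definedness via the structure of expansion maps, injectivity from the fact that the characters $\phi_{\bar x}$ cut out $M$ over $M_\circ$, and a dimension count combining Lemma \ref{lTopdegree} with Lemma \ref{lAdd}. Your write-up is more explicit on the additivity check (appealing to both parts of Lemma \ref{lAdditivity}) and on the monotonicity of the chain $M_0 \subseteq \cdots \subseteq M_{r-1}$, both of which the paper leaves implicit.
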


\begin{proof}
First of all, we have to check that $f$ is well-defined, that is $g_\sigma \in \text{Add}(X, [r])$. This follows from Proposition \ref{prop: field of def of exp maps}. Next recall that 
\[
\phi_{\pi_{[r] - \{i\}}(\bar{x}); \textup{pr}_1(\pi_i(\bar{x})) \textup{pr}_2(\pi_i(\bar{x}))}
\]
is a quadratic character when restricted to $G_{M_\circ}$. Since $\sigma$ fixes $M_\circ$, it follows that $f$ is a group homomorphism.

Furthermore, $\phi_{\pi_{[r] - \{i\}}(\bar{x}); \textup{pr}_1(\pi_i(\bar{x})) \textup{pr}_2(\pi_i(\bar{x}))}$ gives a central $\FF_2$-extension of $\Gal(M_\circ/\Q)$ by Proposition \ref{prop: field of def of exp maps}, and hence the restrictions of the $\phi_{\pi_{[r] - \{i\}}(\bar{x}); \textup{pr}_1(\pi_i(\bar{x})) \textup{pr}_2(\pi_i(\bar{x}))}$ to $G_{M_\circ}$ together generate the dual of $\Gal(M/M_\circ)$. This shows that $f$ is injective. Since $r \geq 2$, we deduce from Lemma \ref{lTopdegree} that
\[
|\Gal(M/M_\circ)| = 2^{\prod_{j \in [r]} (|X_j| - 1)}.
\]
Therefore our lemma follows from Lemma \ref{lAdd}.
\end{proof}

Finally, we will need a version of the above lemmata in case the pointer is an arbitrary fixed squarefree integer $d \neq 1$.

\begin{lemma}
\label{lConstantPointer}
Let $r \in \Z_{\geq 1}$ be given. Let $X_1, \dots, X_r$ be non-empty, disjoint, finite sets of odd prime numbers and write $X = X_1 \times \dots \times X_r$. Also suppose that $d \neq 1$ is a squarefree integer coprime with all the primes in $X$. Suppose that the map
\[
\phi_{\bar{x}; d}
\]
exists for all $\bar{x} \in \textup{Cube}(X, [r])$. Write
\[
M_\circ(X) := \prod_{i \in [r]} \prod_{\bar{x} \in \textup{Cube}(X, [r])} L(\phi_{\pi_{[r] - \{i\}}(\bar{x}); \textup{pr}_1(\pi_i(\bar{x})) \textup{pr}_2(\pi_i(\bar{x}))}) \times \prod_{T \subset [r]} \prod_{\bar{x} \in \textup{Cube}(X, T)} L(\phi_{\bar{x}; d})
\]
and
\[
M(X) := M_\circ(X) \prod_{\bar{x} \in \textup{Cube}(X, [r])} L(\phi_{\bar{x}; d}).
\]
Then we have
\[
[M(X) : M_\circ(X)] = 2^{\prod_{j \in [r]} (|X_j| - 1)}
\]
and
\[
[M_\circ(X) : K(X)] = \prod_{\varnothing \subset S \subset [r]} 2^{\prod_{j \in S} (|X_j| - 1)} \cdot \prod_{\substack{(i, S) \\ \{i\} \subset S \subseteq [r] \\ i \neq \max(S)}} 2^{\prod_{j \in S} (|X_j| - 1)}.
\]
Furthermore, the map $f$, that sends $\sigma \in \Gal(M(X)/M_\circ(X))$ to $g_\sigma \in \textup{Add}(X, [r])$ given by
\[
\bar{x} \mapsto \phi_{\bar{x}; d}(\sigma),
\]
is a group isomorphism between $\Gal(M(X)/M_\circ(X))$ and $\textup{Add}(X, [r])$. Finally, let $Y_1, \dots, Y_r$ satisfy the same conditions as $X_1, \dots, X_r$ and write $Y$ for their product. We also assume that $|X_j \cap Y_j| = 1$ for all $j \in [r]$ and $X_j \cap Y_k = \varnothing$ for distinct $j$ and $k$. Then
\[
M_\circ(X) \cap M_\circ(Y) = \Q(\sqrt{p_1}, \dots, \sqrt{p_r}, \sqrt{d}),
\]
where $p_j$ is the unique element of $X_j \cap Y_j$.
\end{lemma}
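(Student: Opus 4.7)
The plan is to mimic the inductive strategy of Lemma \ref{lTopdegree} and Lemma \ref{lMdegree}, keeping track of the two types of expansion maps separately. First I would stratify $M_\circ(X)$ as a tower
\[
K(X) = F_0 \subseteq F_1 \subseteq F_2 \subseteq \dots
\]
where $F_k$ is the compositum over $K(X)$ of all $L(\phi_{\bar{x}; d})$ and $L(\phi_{\pi_S(\bar{x}); \text{pr}_1(\pi_i(\bar{x}))\text{pr}_2(\pi_i(\bar{x}))})$ whose ``complexity'' (size of the corresponding support set of characters from $X$) is at most $k$. By Proposition \ref{prop: field of def of exp maps}, going from $F_{k}$ to $F_{k+1}$ means adjoining quadratic characters of $G_{F_k}$ whose pairwise equivalence classes in $\Gal(F_{k+1}/F_k)^\vee$ need to be determined. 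As in the proof of Lemma \ref{lTopdegree}, I apply the iterated commutator operator $\beta_k$ to reduce a linear relation among these characters to a linear relation among multilinear forms of the shape $\prod_{m} \chi_{p_{k_m,l_m} p_{k_m,l_m+1}}(\sigma_m)$ (for the characters with $d$-pointer, one of the slots instead acquires a $\chi_d$ factor). The distinct shapes of these multilinear forms are visibly linearly independent, which yields the degree formula $[M_\circ(X):K(X)]$. The two product ranges in the formula correspond, respectively, to the maps of the form $\phi_{\bar{x}; d}$ (indexed by $\varnothing \subset S \subset [r]$) and to the maps of the form $\phi_{\pi_S(\bar{x}); \text{pr}_1(\pi_i(\bar{x}))\text{pr}_2(\pi_i(\bar{x}))}$ (indexed by pairs $(i,S)$ with $\{i\} \subset S \subseteq [r]$, $i \neq \max(S)$; the constraint $i \neq \max(S)$ reflects the relation coming from Lemma \ref{lAdditivity} which makes the map with pointer indexed by the maximum redundant once all the others are fixed).

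For the equality $[M(X):M_\circ(X)] = 2^{\prod_j (|X_j|-1)}$ and the isomorphism with $\text{Add}(X, [r])$, I would proceed exactly as in Lemma \ref{lPhiAdd}. The map $f$ is well-defined because, by Proposition \ref{prop: field of def of exp maps}, $L(\phi_{\bar{x}; d})/M(\phi_{\bar{x}; d})$ is a central $\FF_2$-extension and $M(\phi_{\bar{x}; d}) \subseteq M_\circ(X)$, so the restriction of $\phi_{\bar{x}; d}$ to $G_{M_\circ(X)}$ is a genuine quadratic character; additivity in $\bar{x}$ (Lemma \ref{lAdditivity}) then forces $g_\sigma \in \text{Add}(X, [r])$. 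Injectivity is immediate since these characters generate the dual of $\Gal(M(X)/M_\circ(X))$. Surjectivity follows by Lemma \ref{lAdd}, whose dimension count $\prod_j(|X_j|-1)$ matches the degree bound $[M(X):M_\circ(X)] \leq 2^{\prod_j(|X_j|-1)}$ I would have obtained from a relation analogous to equation (\ref{eMdegree}).

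For the intersection claim $M_\circ(X) \cap M_\circ(Y) = \Q(\sqrt{p_1}, \dots, \sqrt{p_r}, \sqrt{d})$, the inclusion $\supseteq$ is immediate from the definition of $M_\circ$ (each $\sqrt{p_j}$ appears in $K(X) \cap K(Y)$, and $\sqrt{d}$ appears as the pointer in all the $\phi_{\varnothing; d}$). For the reverse inclusion, I observe that a prime $q \in X_j \setminus Y_j$ is totally unramified in $M_\circ(Y)$ (by Proposition \ref{normalized expansions are unramified}, every expansion map in $M_\circ(Y)$ ramifies only at primes in $Y$, at $2$, or at primes dividing $d$, all of which are coprime to $q$), whereas it does ramify in the part of $M_\circ(X)$ going beyond $\Q(\sqrt{p_1}, \dots, \sqrt{p_r}, \sqrt{d})$. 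To make this rigorous one argues as in Lemma \ref{lMdegree}: using the above degree count one sees that $M_\circ(X)/K(X)K(Y)\Q(\sqrt{d})$ and $M_\circ(Y)/K(X)K(Y)\Q(\sqrt{d})$ are linearly disjoint (the commutator-bracket argument detects distinct multilinear forms since the sets $X_j \triangle Y_j$ are all nonempty), so $M_\circ(X) \cap M_\circ(Y) \subseteq K(X)K(Y)\Q(\sqrt{d})$, and then the only subfield of $K(X)K(Y)\Q(\sqrt{d})$ unramified outside $\{p_1, \dots, p_r, \ell \mid d, 2\}$ is $\Q(\sqrt{p_1}, \dots, \sqrt{p_r}, \sqrt{d})$.

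The main obstacle, as in Lemma \ref{lTopdegree}, is the bookkeeping for the $\beta_j$-bracket calculation: one must verify that applying the iterated commutator to a character of the form $\phi_{\bar{x};d}$ produces a multilinear form that is distinguishable from those produced by characters of the form $\phi_{\pi_S(\bar{x}); \text{pr}_1(\pi_i(\bar{x}))\text{pr}_2(\pi_i(\bar{x}))}$, and that the constraint $i \neq \max(S)$ in the product formula correctly accounts for the Lemma \ref{lAdditivity} relation (otherwise one would over-count by a factor $2^{\prod_{j \in S}(|X_j|-1)}$ for each $S$). Carefully isolating the contribution of the pointer-$d$ maps from the internal-pointer maps is what makes the exponent in the formula have its slightly awkward two-part shape.
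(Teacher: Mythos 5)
Your overall strategy matches the paper's (which is terse and refers back to Lemmas \ref{lTopdegree}, \ref{lMdegree}, \ref{lPhiAdd}), but there is a genuine error in your explanation of the constraint $i \neq \max(S)$. You attribute it to Lemma \ref{lAdditivity}, but that lemma only lets you replace the pointer (or one element of the support set) by a product of two others with the same support; it says nothing about \emph{cycling the pointer through $S$}. The relation actually responsible is
\[
\sum_{i \in S} \phi_{\pi_{S - \{i\}}(\bar{x}); \textup{pr}_1(\pi_i(\bar{x})) \textup{pr}_2(\pi_i(\bar{x}))} \;=\; \chi_{\{\textup{pr}_1(\pi_j(\bar{x})) \textup{pr}_2(\pi_j(\bar{x})) : j \in S\}},
\]
which holds for every $|S| \geq 1$ because the left-hand side has coboundary equal to the symmetrization of cup products, and the right-hand side is the unique $1$-cochain with the same coboundary that also vanishes on $\mathfrak{G}$ (the coprimality of the $a_j$ kills the value of the product of characters at every $\sigma_v$). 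This identity is not a consequence of Lemma \ref{lAdditivity}, and without it your degree count has no reason to have the right exponent: you would still need a concrete relation to discard, and your cited lemma does not supply it.

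The second, smaller, gap is that you acknowledge but do not carry out the $\beta$-bracket bookkeeping needed to rule out any further relations. In the paper this is done by noting that, for every $\varnothing \subsetneq T \subsetneq S$, there exists a bijection $f:[|S|] \rightarrow S$ with $|\{i \in T : f(|S|-1)=i \textup{ or } f(|S|)=i\}|$ odd; this odd-intersection condition is what distinguishes the multilinear forms coming from the $\phi_{\cdot; d}$ family from those coming from the $\phi_{\pi_{S-\{i\}}(\bar{x}); \cdot}$ family once the one cyclic relation above is modded out. Without this step, one cannot conclude that the stated exponents are exact rather than upper bounds. The rest of your argument (isomorphism with $\textup{Add}(X,[r])$ via Lemma \ref{lPhiAdd}, the intersection claim via linear disjointness and a ramification argument) is sound and matches the paper.
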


\begin{proof}
The proof proceeds among the same lines as for the other lemmata presented in this subsection. The condition $i \neq \max(S)$ is imposed because we have the relation
\[
\sum_{i \in S} \phi_{\pi_{S - \{i\}}(\bar{x}); \textup{pr}_1(\pi_i(\bar{x})) \textup{pr}_2(\pi_i(\bar{x}))} = \chi_{\{\textup{pr}_1(\pi_j(\bar{x})) \textup{pr}_2(\pi_j(\bar{x})) : j \in S\}}
\]
for all $|S| \geq 1$. Using the operator $\beta_{|S| - 2}$ one sees that there are no further relations, since for every $\varnothing \subsetneq T \subsetneq S$, there exists a bijection $f: [|S|] \rightarrow S$ such that
\[
|\{i \in T : f(|S| - 1) = i \text{ or } f(|S|) = i\}|
\]
is odd.
\end{proof}

\section{Equidistribution of the first Artin pairing}
In this section we will recall and reprove some important results from \cite[Section 4-5]{CKMP}, which are directly based on \cite[Section 5-6]{Smith} of Smith's work. Although most of the material in this section is rather similar to \cite[Section 4-5]{CKMP} and \cite[Section 5-6]{Smith}, we have simplified some of the proofs.

The main theorem of this section is Theorem \ref{t4rank}, which improves on earlier work of Fouvry and Kl\"uners \cite{FK1}. In \cite[Corollary 2]{FK1} one finds the limiting distribution\footnote{We remark that Fouvry and Kl\"uners proceed by computing the moments of the $4$-rank, while we do not. In particular there is no direct analogue of their \cite[Theorem 3]{FK1} in our work.} of the $4$-rank of special discriminants. Here we give an error term and prove that the implied constant is effectively computable. It is worth mentioning that the work in \cite[Theorem 5.15]{CKMP} also provides an error term, but there it was not shown that the implied constant is effectively computable.

From now on $X_1, \dots, X_r$ denote disjoint, non-empty sets of prime numbers (always equal to $1$ or $2$ modulo $4$) and we let $X = X_1 \times \dots \times X_r$ be the corresponding product space. There is an injection from $X$ to $\mathcal{D}$ by sending $x = (x_1, \dots, x_r) \in X$ to $x_1 \cdot \ldots \cdot x_r$. We shall often implicitly think of elements of $X$ as squarefree integers in this way.

\subsection{Prime divisors}
Let us first consider some desirable properties of elements in $\mathcal{D}$.
 
\begin{mydef}
\label{dNice}
Let $N \geq 10^{1000}$ and let $d \in \mathcal{D}$. Write $p_1, \dots, p_r$ for the prime divisors of $d$ ordered such that
\[
p_1 < \dots < p_r.
\]
Put
\[
D_1 := e^{(\log \log N)^{1/10}}, \quad C_0 := \sqrt{\log \log \log N}.
\]
We say that $d \in \mathcal{D}$ is $N$-nice if the following three properties are all satisfied
\begin{itemize}
\item $p_i > D_1$ implies $2p_i < p_{i + 1}$;
\item we have
\begin{align}
\label{eRegularSpace}
\left|\frac{1}{2} \log \log p_i - i\right| < C_0^{1/5} \max(i, C_0)^{4/5} \textup{ for all } 1 \leq i < \frac{r}{3};
\end{align}
\item there exists some $i$ satisfying $\frac{r^{1/2}}{2} < i < \frac{r}{2}$ and
\[
\log p_i \geq (\log \log p_i)^2 \cdot \log \log \log N \cdot \sum_{j = 1}^{i - 1} \log p_j.
\]
\end{itemize}
Define $\mathcal{D}_r(N)$ to be the subset of $d \in \mathcal{D}(N)$ with $\omega(d) = r$, where $\omega(d)$ equals the number of distinct prime divisors of $d$.
\end{mydef}

\begin{theorem}
\label{tSquarefree}
We have
\begin{align}
\label{eBadr}
\bigcup_{|r -  \frac{1}{2}\log \log N| \geq (\log \log N)^{2/3}} |\mathcal{D}_r(N)| \ll \frac{|\mathcal{D}(N)|}{(\log \log N)^{1/100}}.
\end{align}
Furthermore, for all $A > 0$, there exist $C_1, C_2, N_0 > 0$ such that for all $r \leq A \log \log N$ and all $N \geq N_0$
\begin{align}
\label{eDrNsize}
\frac{C_1N}{\log N} \cdot \frac{(\frac{1}{2} \log \log N)^{r - 1}}{(r - 1)!}\leq |\mathcal{D}_r(N)| \leq \frac{C_2N}{\log N} \cdot \frac{(\frac{1}{2} \log \log N)^{r - 1}}{(r - 1)!}.
\end{align}
Now suppose that $r$ satisfies
\begin{align}
\label{eErdosKac}
\left|r - \frac{1}{2} \log \log N\right| < (\log \log N)^{2/3}.
\end{align}
Then we also have
\[
\frac{|\{d \in \mathcal{D}_r(N) : d \textup{ is not } N\textup{-nice}\}|}{|\mathcal{D}_r(N)|} \ll \frac{1}{e^{(\log \log \log N)^{1/4}}}.
\]
\end{theorem}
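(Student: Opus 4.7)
The three parts of Theorem \ref{tSquarefree} correspond to three classical techniques from probabilistic analytic number theory: Tur\'an--Kubilius for (\ref{eBadr}), the Selberg--Delange method for (\ref{eDrNsize}), and a union bound over tail estimates on prime-factor statistics for the $N$-niceness assertion. All ingredients are quantitatively effective in $N$, so the implied constants are effectively computable.

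For (\ref{eBadr}), Mertens in arithmetic progressions gives $A(N) := \sum_{p \leq N,\, p \equiv 1, 2 \bmod 4} p^{-1} = \tfrac{1}{2} \log\log N + O(1)$, and a direct second-moment computation restricted to $\mathcal{D}$ yields $\sum_{d \in \mathcal{D}(N)} (\omega(d) - A(N))^2 \ll |\mathcal{D}(N)| \log\log N$; Chebyshev's inequality then delivers the bound $(\log\log N)^{-1/3}$, which is stronger than the claimed $(\log\log N)^{-1/100}$. For (\ref{eDrNsize}) I would apply the Selberg--Delange method to the Dirichlet series
\[
F(s, z) := \sum_{d \in \mathcal{D}} z^{\omega(d)} d^{-s} = \zeta(s)^{z/2} L(s, \chi_{-4})^{z/2} H(s, z),
\]
with $H$ holomorphic and bounded for $\Re s \geq 1/2 + \delta$, $|z| \leq 2A$; extracting the $z^{r-1}$ coefficient via a Hankel contour around $s = 1$ yields both bounds in (\ref{eDrNsize}) uniformly for $r \leq A \log\log N$, with effective constants from a quantitative zero-free region for $L(s, \chi_{-4})$.

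For the $N$-niceness assertion I would bound the probability of failing each defining property separately, under the uniform measure on $\mathcal{D}_r(N)$. Failure of (i) contributes $\ll |\mathcal{D}_r(N)| \sum_{p > D_1,\, p < q \leq 2p} (pq)^{-1} \ll |\mathcal{D}_r(N)| (\log D_1)^{-1} = |\mathcal{D}_r(N)| (\log\log N)^{-1/10}$, using the convergent series $\sum_p (p \log p)^{-1}$ together with (\ref{eDrNsize}) to bound $|\mathcal{D}_{r - 2}(N/(pq))|$ against $|\mathcal{D}_r(N)|$. Failure of (ii) is a Kubilius-type concentration statement on the random walk $Y_i := \tfrac{1}{2} \log\log p_i$: a sub-Gaussian tail bound for $Y_i - i$, provable by a $2k$-th moment estimate in the spirit of Kubilius, handles each $1 \leq i < r/3$ and sums to a negligible contribution.

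The main obstacle is condition (iii), requiring some $i \in (\tfrac{1}{2} r^{1/2}, \tfrac{1}{2} r)$ with $\log p_i \geq (\log\log p_i)^2 \log\log\log N \cdot \sum_{j < i} \log p_j$. Writing $W_i := \log\log p_i$ and noting that the inner sum is typically dominated by $\log p_{i - 1}$, a sufficient condition is an increment $W_i - W_{i - 1} \gtrsim 2 \log W_i$, i.e., a jump of size $\asymp 2 \log i$ (since $W_i \asymp 2i$ typically). The increments of $W_i$ on $\mathcal{D}_r(N)$ have approximately exponential tails with rate $\tfrac{1}{2}$, reflecting the density $\sum_{p \in \mathcal{D}} p^{-1} \sim \tfrac{1}{2} \log\log$, so $\Pr(W_i - W_{i - 1} \geq 2 \log i + C) \gg i^{-1}$ uniformly in the window. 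Granted an approximate independence of the increments (which can be established by partitioning the window into $\asymp \log\log\log N$ sub-blocks and applying Selberg--Delange block-by-block), the probability of no good jump is $\asymp \prod_{i = r^{1/2}/2}^{r/2} (1 - c/i) \asymp r^{-c/2} \asymp (\log\log N)^{-c/2}$, which beats the target $\exp(-(\log\log\log N)^{1/4})$ for any fixed $c > 0$. Executing this approximate independence with the required uniformity in $r$ and $N$ is the technically most delicate part of the argument.
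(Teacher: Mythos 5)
The paper does not prove Theorem \ref{tSquarefree} from scratch: equation (\ref{eBadr}) is attributed to Erd\H{o}s--Kac, equation (\ref{eDrNsize}) to \cite[p.~13]{CKMP}, and the $N$-niceness bound to \cite[Theorem 4.1]{CKMP}, which in turn rests on \cite[Theorem 5.4]{Smith}. Your proposal reconstructs the content of those references, and the overall toolkit you invoke (Tur\'an--Kubilius / second moment for (\ref{eBadr}), Selberg--Delange with the Euler factorization $F(s,z) = \zeta(s)^{z/2} L(s,\chi_{-4})^{z/2} H(s,z)$ for (\ref{eDrNsize}), and per-condition tail bounds for $N$-niceness) is indeed what underlies them, so the approach matches in spirit. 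The effectivity remark is also correct for the reasons you give.

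Two points in the $N$-niceness analysis deserve attention. First, a genuine slip: your sufficient condition for property (iii) reads $W_i - W_{i-1} \gtrsim 2\log W_i$, but the definition of $N$-nice carries an explicit $\log\log\log N$ prefactor, so the required jump is actually $W_i - W_{i-1} \gtrsim 2\log W_i + \log\log\log\log N$. This drops the per-index success probability from $\asymp i^{-1}$ to $\asymp i^{-1}(\log\log\log N)^{-1/2}$, so your estimated failure probability $r^{-c/2}$ is too optimistic; the correct order under your independence heuristic is $\exp\bigl(-c\sqrt{\log\log\log N}\bigr)$. That is weaker than what you claimed but still beats the target $\exp\bigl(-(\log\log\log N)^{1/4}\bigr)$, so the conclusion survives. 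A related technical caveat: the reduction to a ``jump of size $\asymp 2\log i$'' tacitly uses the typical scaling $W_i \asymp 2i$, which is precisely what property (ii) of $N$-niceness asserts and which you are not entitled to assume when bounding the measure of the bad set for property (iii); this needs to be handled either by first restricting to the complement of the property-(ii) bad set or by arguing unconditionally.

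Second, and more substantially, the ``approximate independence of increments,'' which you correctly flag as the crux, is left entirely to a one-line gesture at block-by-block Selberg--Delange. This is exactly where the cited work of Smith (and the exposition in \cite[Theorem 4.1]{CKMP}) does the real labor: one conditions on the small prime factors, controls the distribution of $\log\log p_i - \log\log p_{i-1}$ via two-variable analytic estimates uniform in the window, and propagates the resulting quasi-independence with explicit error terms. Without this, the proposal is a correct roadmap but not a proof of the third assertion. The bounds you give for properties (i) and (ii) are sound as sketched, with the summation over $i$ in property (ii) controlled by the fact that only $O(C_0)$ indices have tail comparable to the worst case $e^{-cC_0}$ (you should verify this explicitly, as a naive union over all $i < r/3$ would give a spurious $\log\log N$ factor).
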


\begin{proof}
Equation (\ref{eBadr}) follows from the Erd\H os--Kac theorem, while equation (\ref{eDrNsize}) can be found on \cite[p. 13]{CKMP}. The second part is \cite[Theorem 4.1]{CKMP}, which is based on \cite[Theorem 5.4]{Smith}.
\end{proof}

\subsection{Preboxes and Legendre symbols}
We will now introduce the notion of preboxes. Their usefulness lies in the simplicity of their definition, which makes preboxes suitable for inductive arguments. Later on we shall define the similar, but more stringent, notion of boxes, which will be the objects we naturally encounter in our final section.

\begin{mydef}
A prebox is a pair $(X, P)$, where $P, X_1, \dots, X_r$ are disjoint sets of primes all $1$ or $2$ modulo $4$ and $X = X_1 \times \dots \times X_r$. We assume that there exist real numbers
\[
2 < s_1 < t_1 < \dots < s_r < t_r
\]
such that $X_i \subseteq (s_i, t_i]$ for all $i \in [r]$ and $P \subseteq (1, s_1]$.
\end{mydef}

Recall that the first Artin pairing of $d \in \mathcal{D}$ is determined by the quadratic behavior between the prime divisors of $d$. This prompts the following definition. Write $\iota$ for the unique group isomorphism from $\FF_2$ to $\{\pm 1\}$.

\begin{mydef}
Let $(X, P)$ be a prebox. Define
\[
M_r = \{(i, j) : i, j \in [r], i < j\}, \quad M_{r, P} := [r] \times P.
\]
Let $\mathcal{M}_r \subseteq M_r$ and $\mathcal{M}_{r, P} \subseteq M_{r, P}$ and let $a : \mathcal{M}_r \sqcup \mathcal{M}_{r, P} \rightarrow \FF_2$, where $\sqcup$ denotes disjoint union. Then we define $X(a)$ to be the subset of $(x_1, \dots, x_r) \in X$ satisfying
\[
\left(\frac{x_i}{x_j}\right) = \iota(a(i, j)) \textup{ for all } (i, j) \in \mathcal{M}_r \textup{ and } \left(\frac{x_i}{p}\right) = \iota(a(i, p)) \textup{ for all } (i, p) \in \mathcal{M}_{r, P}.
\]
\end{mydef}

Our aim is to prove an equidistribution statement for $X(a)$ for certain desirable preboxes. One undesirable property is the presence of a Siegel zero, which we define now.

\begin{mydef}
\label{dSiegel}
For a real number $0.5 > c > 0$, we define $\mathcal{S}(c)$ to be the (conjecturally empty) set of squarefree integers $d$ such that $L(s, \chi_d)$ has a real zero in the region
\[
1 - \frac{c}{\log(|d| + 4)} \leq s \leq 1.
\]
List the elements of $\mathcal{S}(c)$ as $d_1, d_2, \dots$ in such a way that $|d_1| \leq |d_2| \leq \dots$. By Landau's theorem, there exists a sufficiently small $c_{\textup{Landau}} > 0$ such that $|d_i|^2 \leq |d_{i + 1}|$. We fix such a $c_{\textup{Landau}}$ for the remainder of the paper.

Let $(X, P)$ be a prebox. We say that $(X, P)$ is Siegel-free above a real number $t > 0$ if there does not exist some $|d_i| > t$ and some $x \in X$ such that $d_i$ divides $x \prod_{p \in P} p$.
\end{mydef}

Our next proposition is the first of a series of two on equidistribution of Legendre symbols. In our first result we shall avoid some delicate issues with small primes by making extra assumptions on the map $a : \mathcal{M}_r \sqcup \mathcal{M}_{r, P} \rightarrow \FF_2$ and the $X_i$, see equation (\ref{eSmallFix}). Later we shall see how to deal with the small primes. The next proposition is directly based on \cite[Proposition 6.3]{Smith}. Watkins \cite{Watkins} observed that $A$ can be effectively computed by using an effective lower bound for Siegel zeroes.

\begin{prop}
\label{pLegendre}
Let $c_1, c_2, c_3, c_4, c_5, c_6, c_7, c_8 > 0$ be real numbers such that
\[
c_3 > 1, \quad c_5 > 3, \quad c_6 > 1, \quad \frac{1}{8} > c_8 + \frac{c_7 \log 2}{2} + \frac{1}{c_1} + \frac{c_2c_4}{2}.
\]
There exists an effectively computable constant $A > 0$, depending only on the real numbers $c_1, c_2, c_3, c_4, c_5, c_6, c_7, c_8$, such that the following holds.

Let $(X, P)$ be a prebox with $s_1 > A$. Assume that $(X, P)$ is Siegel-free above some real number $t$ satisfying $s_1 > t > A$. Take subsets $\mathcal{M}_r \subseteq M_r$ and $\mathcal{M}_{r, P} \subseteq M_{r, P}$ and take $a : \mathcal{M}_r \sqcup \mathcal{M}_{r, P} \rightarrow \FF_2$. Suppose that $1 \leq k \leq r$ is an integer such that $(i, p) \in \mathcal{M}_{r, P}$ implies $i > k$. Furthermore, we assume that
\begin{align}
\label{eSmallFix}
X_i = \left\{p \in (s_i, t_i] : p \equiv 1 \bmod 4 \textup{ and } \left(\frac{p}{p'}\right) = \iota(a(i, p')) \textup{ for all } (i, p') \in \mathcal{M}_{r, P} \right\}
\end{align}
for all $i > k$. Moreover, we assume that
\begin{itemize}
\item[(i)] $|P| \leq \log t_i - i$ for all $i \in [r]$;
\item[(ii)] $t_1 > r^{c_1}$ and $t_k < e^{t_1^{c_2}}$;
\item[(iii)] we have
\[
|X_i| \geq \frac{2^{c_3 i} \cdot t_i}{(\log t_i)^{c_4}}
\]
for all $i \in [r]$;
\item[(iv)] if $k \neq r$, we assume that
\[
t_{k + 1} > \max\left(e^{(\log t_1)^{c_5}}, e^{t^{c_6}}\right);
\]
\item[(v)] we assume that $k \leq \lceil c_7 \log t_1 \rceil$. Furthermore, we assume that for all $i \leq r$ and all $i + c_7 \log t_i \leq j \leq r$ that
\[
t_j > e^{(\log t_i)^{c_5}}.
\]
\end{itemize}
Then we have
\[
\left|\left|X(a)\right| - \frac{|X|}{2^{|\mathcal{M}_r|}}\right| \leq \frac{r |X|}{t_1^{c_8 + 1/c_1} \cdot  2^{|\mathcal{M}_r|}} \leq \frac{|X|}{t_1^{c_8} \cdot  2^{|\mathcal{M}_r|}}.
\]
\end{prop}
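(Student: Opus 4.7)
The plan is to expand $|X(a)|$ by character orthogonality and show that only the trivial character contributes to leading order. Using
\[
\mathbf{1}\left\{\left(\tfrac{x_i}{x_j}\right) = \iota(a(i,j))\right\} = \tfrac{1}{2}\left(1 + \iota(a(i,j))\left(\tfrac{x_i}{x_j}\right)\right),
\]
we rewrite
\[
|X(a)| = \frac{1}{2^{|\mathcal{M}_r|}} \sum_{S \subseteq \mathcal{M}_r} \iota\!\left(\sum_{(i,j)\in S} a(i,j)\right) \sum_{x \in X} \prod_{(i,j) \in S} \left(\frac{x_i}{x_j}\right).
\]
The term $S = \varnothing$ yields the expected main term $|X|/2^{|\mathcal{M}_r|}$, so the task reduces to bounding, for each non-empty $S$, the character sum $\Sigma(S) := \sum_{x \in X} \prod_{(i,j) \in S} \left(\frac{x_i}{x_j}\right)$.

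\textbf{Isolating the top variable.} For each non-empty $S$, let $j^\ast = j^\ast(S) := \max\{j : (i,j) \in S \text{ for some } i\}$. Since all primes in $X$ are $\equiv 1$ or $2 \bmod 4$, quadratic reciprocity lets us flip $\left(\frac{x_i}{x_{j^\ast}}\right) = \left(\frac{x_{j^\ast}}{x_i}\right)$ whenever $i < j^\ast$, losing no sign. Fixing $(x_\ell)_{\ell \neq j^\ast}$, the inner sum over $x_{j^\ast}$ becomes $\sum_{p \in X_{j^\ast}} \chi_{S, x_\bullet}(p)$, where $\chi_{S, x_\bullet}$ is a non-trivial real Dirichlet character modulo $q := \prod_{i : (i, j^\ast) \in S} x_i$ times at most a factor of $4$. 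Crucially, $q$ divides $\prod_{\ell < j^\ast} x_\ell$, so $q \leq \prod_{\ell < j^\ast} t_\ell < e^{t_1^{c_2} \cdot r}$ when $j^\ast \leq k$, and $q$ is bounded by the usual product for $j^\ast > k$.

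\textbf{Effective character sum estimates.} The Siegel-free hypothesis ensures that $L(s, \chi_{S, x_\bullet})$ has no exceptional zero near $s = 1$, so an effective Siegel--Walfisz estimate gives
\[
\left|\sum_{p \in X_{j^\ast}} \chi_{S, x_\bullet}(p)\right| \ll \frac{t_{j^\ast}}{\log t_{j^\ast}} \exp\!\left(-c \sqrt{\log t_{j^\ast}}\right)
\]
with effective $c > 0$; when $j^\ast > k$ one first writes the sum over $X_{j^\ast}$ as a sum of $O(1)$ characters modulo $4 \prod_{p' \in P} p'$ times the character $\chi_{S, x_\bullet}$ and condition (iv) ensures the combined modulus is still negligible compared with $t_{j^\ast}$. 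This step is essentially Smith \cite[Proposition 6.3]{Smith}, and Watkins' observation that effective lower bounds for Siegel zeros yield an effectively computable $A$ is what makes the constant effective here.

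\textbf{Summing over $S$ and the main obstacle.} Using the lower bound $|X_{j^\ast}| \geq 2^{c_3 j^\ast} t_{j^\ast}/(\log t_{j^\ast})^{c_4}$ from (iii), the relative error contribution from sets $S$ with $j^\ast(S) = j$ is at most
\[
2^{|\mathcal{M}_r \cap ([j] \times [j])|} \cdot \frac{(\log t_j)^{c_4}}{2^{c_3 j}} \cdot \exp\!\left(-c\sqrt{\log t_j}\right).
\]
The delicate point is summing this over $j \in [r]$: one must show the exponential decay $\exp(-c\sqrt{\log t_j})$ overwhelms the combinatorial factor $2^{|\mathcal{M}_r \cap ([j] \times [j])|} \leq 2^{j(j-1)/2}$. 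For $j \leq k$, condition (ii) ($t_1 > r^{c_1}$ so $\log t_j \geq \log t_1 \geq c_1 \log r$) together with the pre-exponential factor $2^{-c_3 j}$ with $c_3 > 1$ suffices for the small indices where $k \leq \lceil c_7 \log t_1 \rceil$ by (v). For $j > k$, condition (iv) gives a massive jump $t_{k+1} > \exp((\log t_1)^{c_5})$ with $c_5 > 3$, so $\sqrt{\log t_{k+1}} \gg (\log t_1)^{3/2}$ swamps the combinatorial factor; (v) then spaces $t_j$ exponentially for subsequent indices. The inequality $\tfrac{1}{8} > c_8 + \tfrac{c_7 \log 2}{2} + \tfrac{1}{c_1} + \tfrac{c_2 c_4}{2}$ is precisely the bookkeeping that makes the sum telescope into the stated $r|X|/(t_1^{c_8 + 1/c_1} 2^{|\mathcal{M}_r|})$ bound. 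The main obstacle is managing the transition across $j = k+1$, where $X_{j^\ast}$ changes nature (congruence-constrained for $j^\ast > k$, free for $j^\ast \leq k$), and ensuring the character sum estimate remains power-saving despite the modulus $q$ being of size comparable with $t_k$.
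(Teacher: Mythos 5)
Your orthogonality expansion over subsets $S \subseteq \mathcal{M}_r$ is structurally quite different from the paper's argument and has two genuine gaps. The paper's proof is an induction on $r$ that conditions on the \emph{smallest} coordinate $x_1 \in X_1$: moving $x_1$ into $P$, it keeps the Chebotarev modulus bounded by $x_1 \prod_{p \in P} p$, with $\log|D| \ll (|P|+2)\log t_1 \ll (\log t_1)^2$ by (i), while $\log t_i \gg (\log t_1)^{c_5}$ for $i > k$ by (iv); Chebotarev then gives power savings when $i > k$, and the large sieve bounds the exceptional set $X_1^{\text{bad}}$ of $x_1$ for which the sets $X_i(a, x_1)$ with $1 < i \leq k$ deviate too much, with the inductive hypothesis absorbing those. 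Errors accumulate \emph{linearly} in $r$.

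The first gap in your proposal is modulus control. Isolating $j^\ast = \max\{j : (i,j) \in S\}$ and summing over $x_{j^\ast}$, you obtain a character modulo $q = \prod_{i:(i,j^\ast)\in S} x_i$. A prebox only requires $t_i < s_{i+1}$; consecutive scales may be within a constant factor. If $(j^\ast-1, j^\ast) \in S$, then $\log q$ is comparable with $\log t_{j^\ast}$, and the Siegel--Walfisz error $\asymp t_{j^\ast}\exp(-c\log t_{j^\ast}/\log q)$ is $\Theta(t_{j^\ast})$, i.e. no saving at all. The Siegel-free hypothesis removes exceptional zeros but does not widen the classical zero-free region, so it does not help. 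Your ``main obstacle'' paragraph discusses $q \asymp t_k$, but the decisive case is $q \asymp t_{j^\ast-1}$, which arises for any dense $S$ near a large $j^\ast$ --- precisely the regime that the paper handles by averaging over $x_1$ with the large sieve, a step your one-shot expansion omits.

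The second gap is a combinatorial explosion. The number of $S$ with $j^\ast(S) = j$ can be $2^{\Theta(j^2)}$ when $\mathcal{M}_r$ is full. Even granting a per-sum saving like $\exp(-c\sqrt{\log t_j})$, the spacing conditions (iv)--(v) permit $j$ to be as large as $c_7(\log t_1)^{c_5}$ while $\log t_j$ is only $(\log t_1)^{c_5}$; then $\binom{j}{2} \asymp (\log t_1)^{2c_5}$ beats $\sqrt{\log t_j} \asymp (\log t_1)^{c_5/2}$, and the factor $2^{\binom{j}{2}}$ overwhelms the exponential saving. The numerical constraint $\tfrac{1}{8} > c_8 + \tfrac{c_7 \log 2}{2} + \tfrac{1}{c_1} + \tfrac{c_2 c_4}{2}$ in the hypotheses is calibrated to the $r$ linear inductive steps of the paper's argument, not to a sum over exponentially many $S$.
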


\begin{proof}
The last inequality follows immediately from our assumption $(ii)$. We will prove the first inequality by induction on $r$. For $r = 1$ we have $k = 1$ and $\mathcal{M}_r = M_r = \mathcal{M}_{r, P} = M_{r, P} = \varnothing$. We conclude that $X(a) = X$, so the inequality holds. Now suppose that $r > 1$.

Let $x_1 \in X_1$ and let $2 \leq i \leq r$ be an integer. We define
\[
X_i(a, x_1) := \left\{p \in X_i : \left(\frac{x_1}{p}\right) = \iota(a(1, i))\right\}
\]
if $(1, i) \in \mathcal{M}_r$. It will be convenient to set $X_i(a, x_1) := X_i$ in case $(1, i) \not \in \mathcal{M}_r$. We claim that
\begin{align}
\label{eXx1}
\left||X_i(a, x_1)| - \frac{1}{2} |X_i|\right| \leq \frac{|X_i|}{t_1}
\end{align}
for all $i > k$ with $(1, i) \in \mathcal{M}_r$, provided that we pick $A$ sufficiently large. Define
\[
K := \Q(\sqrt{-1}, \sqrt{x_1}, \{\sqrt{p} : p \in P\}).
\]
Let $\chi: \Gal(K/\Q) \rightarrow \{\pm 1\}$ be a non-trivial character. Then there exists a squarefree integer $D \neq 1$ such that $\chi$ factors through $\Q(\sqrt{D})$. Therefore the Chebotarev density theorem gives the estimate
\begin{align}
\label{eUglyCheb}
\sum_{s_i < p < t_i} \chi(\text{Frob}_p) \ll t_i^{\beta} + t_i \cdot e^{\frac{-c \log t_i}{\sqrt{\log t_i} + \log |D|}} \cdot (\log t_i |D|)^4
\end{align}
for some absolute constant $c > 0$, see for example \cite[Theorem 5.13]{IK}. We have the effective lower bound
\[
1 - \beta \gg_\epsilon \frac{1}{|D|^{1/2 + \epsilon}}
\]
for every $\epsilon > 0$. Furthermore, by the Siegel-free assumption we have $|D| \leq t$ in this case. Then assumption $(iv)$ together with $c_6 > 1$ implies
\begin{align}
\label{eBetaBound}
1 - \beta \gg_\epsilon \frac{1}{|D|^{1/2 + \epsilon}} \geq \frac{1}{t^{1/2 + \epsilon}} \gg \frac{1}{(\log t_i)^{1/2}},
\end{align}
if we pick $\epsilon$ sufficiently small in terms of $c_6$. Now observe that
\begin{align}
\label{eDBound}
\log |D| \ll (|P| + 2) \log t_1 \ll (\log t_1)^2 \ll (\log t_i)^{2/c_5}
\end{align}
by assumption $(i)$ and $(iv)$. Plugging equations (\ref{eBetaBound}) and (\ref{eDBound}) in equation (\ref{eUglyCheb}) we obtain
\[
\sum_{s_i < p < t_i} \chi(\text{Frob}_p) \ll \frac{t_i}{e^{(\log t_i)^{1/3}}},
\]
where we used that $c_5 > 3$. We conclude that
\[
\left||X_i(a, x_1)| - \frac{1}{2} |X_i|\right| \ll \frac{[K: \Q] \cdot t_i}{e^{(\log t_i)^{1/3}}}.
\]
Since $[K : \Q] \ll t_1 < e^{(\log t_i)^{1/c_5}}$ and since $c_5 > 3$, we deduce from assumption $(iii)$ that
\[
\left||X_i(a, x_1)| - \frac{1}{2} |X_i|\right| \leq \frac{|X_i|}{t_1}
\]
upon taking $A$ sufficiently large. This establishes equation (\ref{eXx1}).

Next consider the case $1 < i \leq k$ and suppose that $(1, i) \in \mathcal{M}_r$. Choose real numbers $c_9$ and $c_{10}$ such that
\[
c_9 + c_{10} < \frac{1}{4} - c_2c_4, \quad c_9 > c_7 \log 2 + c_8 + \frac{1}{c_1}, \quad c_{10} > c_8 + \frac{1}{c_1}.
\]
Note that there exist such real numbers, since
\[
\frac{1}{8} > c_8 + \frac{c_7 \log 2}{2} + \frac{1}{c_1} + \frac{c_2c_4}{2}
\]
by assumption. Call $x_1 \in X_1$ bad if there exists $(1, i) \in \mathcal{M}_r$ with
\[
\left||X_i(a, x_1)| - \frac{1}{2} |X_i|\right| \geq \frac{|X_i|}{t_1^{c_{10}}},
\]
which implies $i \leq k$ by equation (\ref{eXx1}). Write $X_1^{\text{bad}}$ for the subset of $x_1 \in X_1$ that are bad. We claim that
\begin{align}
\label{eX1bad}
|X_1^{\text{bad}}| \leq \frac{k \cdot |X_1|}{t_1^{c_9}}.
\end{align}
The large sieve implies that for all $\epsilon > 0$
\[
\sum_{x_1 \in X_1} \left|\sum_{x_i \in X_i} \left(\frac{x_1}{x_i}\right)\right| \ll_\epsilon t_i \cdot t_1^{3/4 + \epsilon},
\]
see for example \cite[Proposition 6.6]{Smith}. Then we get
\begin{align}
\label{eLargeSieve}
\sum_{x_1 \in X_1} \left|\sum_{x_i \in X_i} \left(\frac{x_1}{x_i}\right)\right| \ll_\epsilon \frac{|X_1| \cdot |X_i|}{t_1^{1/4 - c_2c_4 - \epsilon}}
\end{align}
thanks to assumptions $(ii)$ and $(iii)$. Now fix $0 < \epsilon < 1/4 - c_2c_4 - c_9 - c_{10}$. Applying equation (\ref{eLargeSieve}) with $\epsilon/2$ and taking $A$ sufficiently large in terms of $\epsilon/2$, we get
\[
\sum_{x_1 \in X_1} \left|\sum_{x_i \in X_i} \left(\frac{x_1}{x_i}\right)\right| \leq \frac{|X_1| \cdot |X_i|}{t_1^{-1/4 + c_2c_4 + \epsilon}}.
\]
This quickly implies equation (\ref{eX1bad}), therefore establishing the claim. We now split $X(a)$ depending on $x_1$
\begin{align}
\label{eGoodBadX1}
\left|\left|X(a)\right| - \frac{|X|}{2^{|\mathcal{M}_r|}}\right| \leq \sum_{x_1 \in X_1^{\text{bad}}} \sum_{\substack{x \in X(a) \\ \pi_1(x) = x_1}} 1 + \left|\left(\sum_{x_1 \not \in X_1^{\text{bad}}} \sum_{\substack{x \in X(a) \\ \pi_1(x) = x_1}} 1\right) - \frac{|X|}{2^{|\mathcal{M}_r|}}\right|.
\end{align}
To deal with the first term, fix some $x_1 \in X_1^{\text{bad}}$. We move $x_1$ to $P$ and apply the induction hypothesis to the prebox
\[
(X_2 \times \dots \times X_k \times X_{k + 1}(a, x_1) \times \dots \times X_r(a, x_1), P \cup \{x_1\}).
\]
Write $k_{\text{old}}$ for the current value of $k$ and $k_{\text{new}}$ for the value of $k$ to which we apply the induction hypothesis. Then we take $k_{\text{new}} \geq k_{\text{old}} - 1$ to be the smallest integer such that
\[
t_{k_{\text{new}} + 2} > \max(e^{(\log t_2)^{c_5}}, e^{t^{c_6}}).
\]
We take $k_{\text{new}} = r - 1$ if no such integer exists. Assumptions $(i)$ and $(ii)$ are satisfied, since
\[
e^{t_2^{c_2}} > e^{(\log t_2)^{c_5}}
\]
for $A$ sufficiently large. Furthermore, assumption $(iii)$ follows from $c_3 > 1$ and our bounds on $X_i$ if we take $A$ sufficiently large. Meanwhile assumption $(iv)$ holds by construction of $k_{\text{new}}$. To check assumption $(v)$, we distinguish two cases. If $k_{\text{new}} = k_{\text{old}} - 1$, then assumption $(v)$ holds. Otherwise note that
\[
k_{\text{new}} + 2 \leq 2 + \lceil c_7 \log t_2 \rceil
\]
and hence $k_{\text{new}} \leq \lceil c_7 \log t_2 \rceil$ as desired. Having checked all the conditions, we apply the induction hypothesis, which leads to the bound
\[
\sum_{\substack{x \in X(a) \\ \pi_1(x) = x_1}} 1 \leq 2^{-|\mathcal{M}_r| + k + 1} \cdot \frac{|X|}{|X_1|}
\]
for $A$ sufficiently large, since $(1 + \frac{2}{t_1})^r - 1$ is small by assumption $(ii)$ and the inequality $c_1 > 8$. Therefore we deduce that
\begin{align}
\label{eBadX1}
\sum_{\substack{x \in X(a) \\ \pi_1(x) \in X_1^{\text{bad}}}} 1 \leq k \cdot 2^{-|\mathcal{M}_r| + k + 1} \cdot \frac{|X|}{t_1^{c_9}} \leq \frac{|X|}{t_1^{c_8 + 1/c_1} \cdot 2^{|\mathcal{M}_r| + 2}},
\end{align}
thanks to assumption $(v)$ and the choice of $c_9$. Next we turn our attention to bounding
\[
\left|\left(\sum_{x_1 \not \in X_1^{\text{bad}}} \sum_{\substack{x \in X(a) \\ \pi_1(x) = x_1}} 1\right) - \frac{|X|}{2^{|\mathcal{M}_r|}}\right|.
\]
We move $x_1$ to $P$ and apply the induction hypothesis to the prebox
\[
(X_2(a, x_1) \times \dots \times X_k(a, x_1) \times X_{k + 1}(a, x_1) \times \dots \times X_r(a, x_1), P \cup \{x_1\}).
\]
Since $x_1 \not \in X_1^{\text{bad}}$, we see that all the assumptions are satisfied. Let $M$ be the number of pairs of the shape $(1, j)$ in $\mathcal{M}_r$. Writing $X(x_1)$ for the above product space, the induction hypothesis shows that
\begin{align}
\label{eXx1aFirst}
\left||X(x_1)(a)| - \frac{|X(x_1)|}{2^{|\mathcal{M}_r| - M}}\right| \leq \frac{(r - 1) |X(x_1)|}{t_1^{c_8 + 1/c_1} \cdot 2^{|\mathcal{M}_r| - M}},
\end{align}
where we have implicitly restricted $a$ in the obvious way. Using that $x_1 \not \in X_1^{\text{bad}}$ once more, we see that
\begin{align}
\label{eXx1aSecond}
\left|2^M |X(x_1)| - \frac{|X|}{|X_1|}\right| &\leq \frac{|X|}{|X_1|} \cdot \left(\left(1 + \frac{2}{t_1^{c_{10}}}\right)^k \cdot \left(1 + \frac{2}{t_1}\right)^r - 1\right) \nonumber \\
&\leq \frac{|X|}{100 \cdot t_1^{c_8 + 1/c_1} \cdot |X_1|}
\end{align}
for $A$ sufficiently large, where we used that $r < t_1^{1/c_1}$, $k \leq \lceil c_7 \log t_1 \rceil$ and $c_{10} > c_8 + 1/c_1$. We obtain from equations (\ref{eXx1aFirst}) and (\ref{eXx1aSecond})
\[
\left||X(x_1)(a)| - \frac{|X|}{2^{|\mathcal{M}_r|} \cdot |X_1|}\right| \leq \frac{(r - 1/2) |X|}{t_1^{c_8 + 1/c_1} \cdot 2^{|\mathcal{M}_r|} \cdot |X_1|}.
\]
We conclude that
\begin{align}
\label{eGoodX1}
\left|\left(\sum_{x_1 \not \in X_1^{\text{bad}}} \sum_{\substack{x \in X(a) \\ \pi_1(x) = x_1}} 1\right) - \frac{|X|}{2^{|\mathcal{M}_r|}}\right| \leq \frac{(r - 1/4) |X|}{t_1^{c_8 + 1/c_1} \cdot 2^{|\mathcal{M}_r|}}.
\end{align}
Inserting equations (\ref{eBadX1}) and (\ref{eGoodX1}) in equation (\ref{eGoodBadX1}) finishes the proof of the induction step.
\end{proof}

Our next goal is to deal with the small primes. We shall only be able to achieve savings by introducing some extra averaging. Write $\mathcal{P}_r$ for the set of permutations of $[r]$ and write $\mathcal{P}_r(k)$ for the $\sigma \in \mathcal{P}_r$ such that $\sigma(i) = i$ for all $i > k$. Given $a: M_r \sqcup M_{r, P} \rightarrow \FF_2$, we will use the convention that $a(j, i)$ equals $a(i, j)$ for $j > i$. We then define $\sigma(a)$ to be
\[
\sigma(a)(i, j) := a(\sigma^{-1}(i), \sigma^{-1}(j)), \quad \sigma(a)(i, p) := a(\sigma^{-1}(i), p).
\]
Given integers $k_1 \geq k_0 \geq 0$, we define $X^{\text{trun}}_{k_0, k_1}(\sigma, a)$ to be the set of $x = (x_1, \dots, x_r) \in X$ such that
\[
\left(\frac{x_i}{x_j}\right) = \iota(a(\sigma^{-1}(i), \sigma^{-1}(j))) \text{ for all } i, j \in [k_1] \text{ with } \sigma^{-1}(i) < \sigma^{-1}(j) \text{ and } \text{min}(i, j) \leq k_0
\]
and furthermore
\[
\left(\frac{x_i}{p}\right) = \iota(a(\sigma^{-1}(i), p)) \text{ for all } (i, p) \in [k_1] \times P.
\]
We remark that the number of distinct elements $i, j \in [k_1]$ with $\sigma^{-1}(i) < \sigma^{-1}(j)$ and $\text{min}(i, j) \leq k_0$ equals
\[
C(k_0, k_1) := \frac{1}{2}(k_0^2 - k_0) + k_0(k_1 - k_0).
\]
In particular, this number does not depend on $\sigma$. Roughly speaking, $X^{\text{trun}}_{k_0, k_1}(\sigma, a)$ imposes the conditions of $\sigma(a)$ on the small primes. This makes it possible to apply Proposition \ref{pLegendre} to $X^{\text{trun}}_{k_0, k_1}(\sigma, a)$. The next entirely combinatorial lemma offers a convenient tool to reduce to $X^{\text{trun}}_{k_0, k_1}(\sigma, a)$, see also \cite[Proposition 6.7]{Smith} for a similar result.

\begin{lemma}
\label{lSmallPrimeCombinatorial}
Let $(X, P)$ be a prebox. Let $0 \leq k_0 \leq k_1 \leq k_2 \leq r$ be integers such that
\[
2^{k_0 + |P| + 1} k_1^2 < k_2.
\]
Then we have for all $x \in X$
\begin{multline*}
\sum_{a: M_r \sqcup M_{r, P} \rightarrow \FF_2} \left(\frac{k_2!}{2^{C(k_0, k_1) + k_1|P|}} - |\{\sigma \in \mathcal{P}_r(k_2) : x \in X^{\textup{trun}}_{k_0, k_1}(\sigma, a)\}|\right)^2 \leq \\
\frac{2^{k_0 + |P| + 1} \cdot k_1^2}{k_2} \cdot 2^{-2C(k_0, k_1) - 2k_1|P| + |M_r \sqcup M_{r, P}|} \cdot (k_2!)^2.
\end{multline*}
\end{lemma}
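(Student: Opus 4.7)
I will execute a variance calculation on the random variable $N(a) := |\{\sigma \in \mathcal{P}_r(k_2) : x \in X^{\textup{trun}}_{k_0, k_1}(\sigma, a)\}|$, with $a$ drawn uniformly from $\textup{Map}(M_r \sqcup M_{r, P}, \FF_2)$. For each fixed $\sigma$, the condition $x \in X^{\textup{trun}}_{k_0, k_1}(\sigma, a)$ pins down $a$ at the positions in the set
\[
A(\sigma) := \{(u, v) \in M_r : u, v \in \sigma^{-1}([k_1]),\ \min(\sigma(u),\sigma(v)) \leq k_0\} \cup \sigma^{-1}([k_1]) \times P,
\]
where the prescribed values are the Legendre symbols among coordinates of $x$. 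Consequently $|A(\sigma)| = L := C(k_0,k_1) + k_1|P|$ for all $\sigma$, and the number of $a$'s satisfying the condition is exactly $2^{|M_r \sqcup M_{r, P}| - L}$. Summing yields $\sum_a N(a) = k_2! \cdot 2^{|M_r \sqcup M_{r,P}| - L}$, which already identifies the constant $k_2!/2^L$ in the statement as the mean of $N(a)$.

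Expanding the square, the left-hand side equals $\sum_a N(a)^2 - 2^{|M_r \sqcup M_{r, P}|}(k_2!/2^L)^2$. For two permutations $\sigma, \tau$, the two constraint systems are either jointly consistent (contributing $2^{|M_r \sqcup M_{r, P}| - |A(\sigma) \cup A(\tau)|}$ choices of $a$) or contradictory, giving
\[
\sum_a N(a)^2 \leq 2^{|M_r \sqcup M_{r, P}| - 2L} \sum_{\sigma, \tau \in \mathcal{P}_r(k_2)} 2^{|A(\sigma) \cap A(\tau)|}.
\]
The right-multiplication action $(\sigma, \tau) \mapsto (\sigma \pi, \tau \pi)$ on $\mathcal{P}_r(k_2)$ satisfies $A(\sigma \pi) = \pi^{-1}(A(\sigma))$, hence preserves $|A(\sigma) \cap A(\tau)|$; this lets one reduce the double sum to $k_2! \cdot \sum_\sigma 2^{|A(\sigma) \cap A(\textup{id})|}$.

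The crucial combinatorial estimate is
\[
|A(\sigma) \cap A(\textup{id})| \leq (k_0 + |P|)\, |S(\sigma)|, \qquad S(\sigma) := [k_1] \cap \sigma^{-1}([k_1]).
\]
Indeed, the $P$-part of the intersection contributes exactly $|S(\sigma)|\,|P|$, while every Legendre pair in the intersection must lie inside $S(\sigma)$ and contain an element of $[k_0]$ (to satisfy the $\min(u,v) \leq k_0$ condition from $\textup{id}$), giving at most $k_0\,|S(\sigma)|$ such pairs. Setting $C := k_0 + |P|$, matters are reduced to bounding $\mathbb{E}[2^{C|S(\sigma)|}]$ for $\sigma$ uniform in the symmetric group $S_{k_2}$.

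This moment is evaluated by the standard hypergeometric expansion $2^{C|S|} = \prod_{i \in [k_1]}(1 + (2^C - 1)\mathbf{1}[\sigma^{-1}(i) \in [k_1]])$, which yields
\[
\mathbb{E}[2^{C|S|}] = \sum_{t = 0}^{k_1} \binom{k_1}{t}^2 \binom{k_2}{t}^{-1} (2^C - 1)^t \leq \exp\!\left(\frac{k_1^2(2^C - 1)}{k_2}\right),
\]
using $\binom{k_1}{t}^2 / \binom{k_2}{t} \leq (k_1^2/k_2)^t/t!$ termwise. The hypothesis $2^{k_0+|P|+1}k_1^2 < k_2$ forces the exponent to lie in $(0, 1/2)$, and combined with $e^\mu - 1 \leq 2\mu$ for $\mu \in [0,1]$ gives $\mathbb{E}[2^{C|S|}] - 1 \leq 2^{k_0+|P|+1}k_1^2/k_2$. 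Chaining the inequalities produces exactly the claimed bound. The only non-routine points are the identification of the constraint set $A(\sigma)$ and the combinatorial bound on $|A(\sigma) \cap A(\textup{id})|$; everything else is bookkeeping.
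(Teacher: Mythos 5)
Your proof is correct and follows essentially the same strategy as the paper's: interpret the left-hand side as a variance, bound the second moment $\sum_a N(a)^2$ by a sum over pairs of permutations, and control the number of shared constraints by the overlap $|\sigma_1^{-1}([k_1]) \cap \sigma_2^{-1}([k_1])|$ times $k_0 + |P|$. Two minor refinements distinguish your write-up from the paper's: you first exploit the right-translation symmetry of $\mathcal{P}_r(k_2)$ to reduce the double sum over $(\sigma,\tau)$ to a single sum with $\tau = \mathrm{id}$, whereas the paper keeps both permutations and counts pairs with a given overlap $I$ directly; and you retain the $1/t!$ in the termwise bound $\binom{k_1}{t}^2/\binom{k_2}{t} \leq (k_1^2/k_2)^t/t!$ and sum an exponential, whereas the paper drops this factor and sums a geometric series, recovering the same final constant via the hypothesis $2^{k_0+|P|+1}k_1^2 < k_2$. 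Neither change alters the underlying decomposition or the key combinatorial inequality, so the proofs are substantively the same.
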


\begin{proof}
Write
\[
B(x, a) := \{\sigma \in \mathcal{P}_r(k_2) : x \in X^{\textup{trun}}_{k_0, k_1}(\sigma, a)\}.
\]
We start by computing
\begin{align}
\label{eFirstMomentSigma}
\sum_{a: M_r \sqcup M_{r, P} \rightarrow \FF_2} |B(x, a)| &= \sum_{\sigma \in \mathcal{P}_r(k_2)} |\{a: M_r \sqcup M_{r, P} \rightarrow \FF_2 : x \in X^{\textup{trun}}_{k_0, k_1}(\sigma, a)\}| \nonumber \\
&= \sum_{\sigma \in \mathcal{P}_r(k_2)} \frac{2^{|M_r \sqcup M_{r, P}|}}{2^{C(k_0, k_1) + k_1|P|}} = k_2! \cdot \frac{2^{|M_r \sqcup M_{r, P}|}}{2^{C(k_0, k_1) + k_1|P|}}.
\end{align}
Next, we compute
\[
\sum_{a: M_r \sqcup M_{r, P} \rightarrow \FF_2} |B(x, a)|^2 = \sum_{\sigma_1, \sigma_2 \in \mathcal{P}_r(k_2)} |\{a: M_r \sqcup M_{r, P} \rightarrow \FF_2 : x \in X^{\textup{trun}}_{k_0, k_1}(\sigma_1, a) \cap X^{\textup{trun}}_{k_0, k_1}(\sigma_2, a)\}|.
\]
For now we fix $\sigma_1, \sigma_2 \in \mathcal{P}_r(k_2)$ and we aim to bound
\[
N(\sigma_1, \sigma_2) := |\{a: M_r \sqcup M_{r, P} \rightarrow \FF_2 : x \in X^{\textup{trun}}_{k_0, k_1}(\sigma_1, a) \cap X^{\textup{trun}}_{k_0, k_1}(\sigma_2, a)\}|.
\]
We introduce the quantities
\[
I_{k_1, k_2}(\sigma_1, \sigma_2) := |\{i \in [k_2] : \sigma_1(i) \leq k_1, \sigma_2(i) \leq k_1\}|
\]
and
\[
S(\sigma_1, \sigma_2) := \{(i, j) \in M_r : (\sigma_1(i), \sigma_1(j)), (\sigma_2(i), \sigma_2(j)) \in [k_0] \times [k_1] \cup [k_1] \times [k_0]\}
\]
and
\[
T(\sigma_1, \sigma_2) := \{(i, p) \in M_{r, P} : \sigma_1(i) \in [k_1], \sigma_2(i) \in [k_1]\}.
\]
Then $x \in X^{\textup{trun}}_{k_0, k_1}(\sigma_1, a) \cap X^{\textup{trun}}_{k_0, k_1}(\sigma_2, a)$ gives at least
\[
2C(k_0, k_1) + 2k_1|P| - |S(\sigma_1, \sigma_2)| - |T(\sigma_1, \sigma_2)|
\]
conditions on $a$. But we have the upper bounds
\[
|S(\sigma_1, \sigma_2)| \leq I_{k_1, k_2}(\sigma_1, \sigma_2) \cdot k_0, \quad |T(\sigma_1, \sigma_2)| \leq I_{k_1, k_2}(\sigma_1, \sigma_2) \cdot |P|.
\]
The first upper bound comes from the injective map 
\[
S(\sigma_1, \sigma_2) \rightarrow \{i \in [k_2] : \sigma_1(i) \leq k_1, \sigma_2(i) \leq k_1\} \times [k_0]
\]
given by
\[
(i, j) \mapsto (\sigma_1^{-1}(\max(\sigma_1(i), \sigma_1(j))), \min(\sigma_1(i), \sigma_1(j))),
\]
while the second upper bound is straightforward. This yields the bound
\[
N(\sigma_1, \sigma_2) \leq \frac{2^{|M_r \sqcup M_{r, P}|}}{2^{2C(k_0, k_1) + 2k_1|P| - I_{k_1, k_2}(\sigma_1, \sigma_2) \cdot (k_0 + |P|)}}.
\]
Having bounded $N(\sigma_1, \sigma_2)$, we next fix some $I \leq k_1$. We will bound the number of pairs $(\sigma_1, \sigma_2) \in \mathcal{P}_r(k_2) \times \mathcal{P}_r(k_2)$ with
\[
I_{k_1, k_2}(\sigma_1, \sigma_2) = I.
\]
There are $\binom{k_2}{I}$ ways to pick the indices that map to $[k_1]$ under $\sigma_1$ and $\sigma_2$. Once we have picked these indices, we can extend them to a pair $(\sigma_1, \sigma_2) \in \mathcal{P}_r(k_2) \times \mathcal{P}_r(k_2)$ in at most
\[
\left(\frac{k_1!}{(k_1 - I)!} \cdot (k_2 - I)!\right)^2
\]
ways. Indeed, suppose that $B \subseteq [k_2]$ is the set of indices mapping to $[k_1]$ under $\sigma_1$ and $\sigma_2$, so that $|B| = I$. Then there are
\[
k_1! \cdot (k_1! - 1) \cdot \ldots \cdot (k_1! - I + 1)
\]
possibilities for $\sigma_1$ to send $B$ to $[k_1]$, and similarly for $\sigma_2$. Furthermore, there are at most $(k_2 - I)!$ choices where $\sigma_1$ can send the indices in $[k_2] \setminus B$, and similarly for $\sigma_2$. We conclude that the total number of pairs $(\sigma_1, \sigma_2)$ with given $I_{k_1, k_2}(\sigma_1, \sigma_2) = I$ is bounded by
\[
\binom{k_2}{I} \cdot \left(\frac{k_1!}{(k_1 - I)!} \cdot (k_2 - I)!\right)^2 \leq k_2!^2 \cdot \left(\frac{k_1!}{(k_1 - I)!}\right)^2 \cdot \frac{(k_2 - I)!}{k_2!} \leq k_2!^2 \cdot  \left(\frac{k_1^2}{k_2}\right)^I.
\]
We deduce that
\begin{align}
\label{eSecondMomentSigma}
\sum_{a: M_r \sqcup M_{r, P} \rightarrow \FF_2} |B(x, a)|^2 
&= \hspace{-0.2cm} \sum_{\sigma_1, \sigma_2 \in \mathcal{P}_r(k_2)} |\{a: M_r \sqcup M_{r, P} \rightarrow \FF_2 : x \in X^{\textup{trun}}_{k_0, k_1}(\sigma_1, a) \cap X^{\textup{trun}}_{k_0, k_1}(\sigma_2, a)\}| \nonumber \\
&= \hspace{-0.2cm} \sum_{\sigma_1, \sigma_2 \in \mathcal{P}_r(k_2)} N(\sigma_1, \sigma_2) \nonumber \\
&\leq k_2!^2 \cdot \sum_{I \geq 0} \left(\frac{k_1^2}{k_2}\right)^I \cdot \frac{2^{|M_r \sqcup M_{r, P}|}}{2^{2C(k_0, k_1) + 2k_1|P| - I \cdot (k_0 + |P|)}} \nonumber \\
&= (k_2!)^2 \cdot \frac{2^{|M_r \sqcup M_{r, P}|}}{2^{2C(k_0, k_1) + 2k_1|P|}} \cdot \frac{k_2}{k_2 - 2^{k_0 + |P|} k_1^2},
\end{align}
where the last equality uses that
\[
2^{k_0 + |P| + 1} k_1^2 < k_2,
\]
so the geometric series converges. The lemma follows upon combining equations (\ref{eFirstMomentSigma}) and (\ref{eSecondMomentSigma}) with the inequality $2^{k_0 + |P| + 1} k_1^2 < k_2$.
\end{proof}

We will now deal with the small primes. The coming proposition is directly based on \cite[Theorem 6.4]{Smith}.

\begin{prop}
\label{pLegendre2}
Take real numbers $c_1, c_2, c_3, c_4, c_5, c_6, c_7, c_8, c_9, c_{10}, c_{11}, c_{12}$ satisfying
\begin{align*}
c_3 > 1, \quad c_5& > 3, \quad c_6 > 1, \quad \frac{1}{8} > c_8 + \frac{c_7 \log 2}{2} + \frac{1}{c_1} + \frac{c_2c_4}{2}, \\
c_{10} &\log 2 + 2 c_{11} + 2c_{12} < 1, \quad c_{11} + c_{12} < c_9.
\end{align*}
There exists an effectively computable constant $A > 0$, depending only on the real numbers $c_1, c_2, c_3, c_4, c_5, c_6, c_7, c_8, c_9, c_{10}, c_{11}, c_{12}$, such that the following holds.

Let $(X, P)$ be a prebox such that $X_i$ equals the set of primes in the interval $(s_i, t_i]$ that are $1$ or $2$ modulo $4$. Let $0 \leq k_0 < k_1 < k_2 \leq r$ be integers such that $k_2 > A$. We assume that $(X, P)$ is Siegel-free above some real number $t$ satisfying $s_{k_0 + 1} > t > A$. Moreover, suppose that
\begin{itemize}
\item[(i)] $|P| \leq \log t_i - i$ for all $k_0 < i \leq r$;
\item[(ii)] $t_{k_0 + 1} > r^{c_1}$ and $t_{k_1} < e^{t_{k_0 + 1}^{c_2}}$;
\item[(iii)] we have
\[
|X_i| \geq \frac{2^{|P| + c_3 i} \cdot k_2^{c_9} \cdot t_i}{(\log t_i)^{c_4}}
\]
for all $k_0 < i \leq r$;
\item[(iv)] we assume that
\[
t_{k_1 + 1} > \max\left(e^{(\log t_{k_0 + 1})^{c_5}}, e^{t^{c_6}}\right);
\]
\item[(v)] we assume that $k_1 - k_0 \leq \lceil c_7 \log t_{k_0 + 1} \rceil$. Furthermore, we assume that for all $k_0 < i \leq r$ and all $i + c_7 \log t_i \leq j \leq r$ that
\[
t_j > e^{(\log t_i)^{c_5}};
\]
\item[(vi)] $k_0 + |P| < c_{10} \log k_2$ and $\log k_1 < c_{11} \log k_2$.
\end{itemize}
Then we have
\[
\sum_{a: M_r \sqcup M_{r, P} \rightarrow \FF_2} \left|\frac{k_2! \cdot |X|}{2^{|M_r \sqcup M_{r, P}|}} - \sum_{\sigma \in \mathcal{P}_r(k_2)} |X(\sigma(a))| \right| \leq k_2! \cdot |X| \cdot (k_2^{-c_{12}} + t_{k_0 + 1}^{-c_8}).
\]
\end{prop}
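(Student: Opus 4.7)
The plan is to combine Proposition \ref{pLegendre}, applied pointwise in $\sigma \in \mathcal{P}_r(k_2)$, with the second-moment estimate of Lemma \ref{lSmallPrimeCombinatorial}. I will first establish a pointwise estimate
\[
|X(\sigma(a))| = \frac{|X^{\textup{trun}}_{k_0, k_1}(\sigma, a)|}{2^{|M_r \sqcup M_{r, P}| - C(k_0, k_1) - k_1|P|}} + O\!\left(\frac{|X^{\textup{trun}}_{k_0, k_1}(\sigma, a)|}{t_{k_0+1}^{c_8} \cdot 2^{|M_r \sqcup M_{r, P}| - C(k_0, k_1) - k_1|P|}}\right), \quad (\ast)
\]
valid for every $\sigma \in \mathcal{P}_r(k_2)$ and every $a$. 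Granting $(\ast)$, the sum over $\sigma$ reduces to controlling $\sum_\sigma |X^{\textup{trun}}_{k_0, k_1}(\sigma, a)| = \sum_x |B(x, a)|$, with $B(x, a) := \{\sigma \in \mathcal{P}_r(k_2) : x \in X^{\textup{trun}}_{k_0, k_1}(\sigma, a)\}$, which is precisely the object governed by Lemma \ref{lSmallPrimeCombinatorial}.

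For $(\ast)$, I would fix $\sigma$ and condition on the small-position coordinates $(x_1, \ldots, x_{k_0}) \in X_1 \times \cdots \times X_{k_0}$ that meet the internal conditions encoded by $\sigma(a)$ on $[k_0]$. On each such fibre, $X^{\textup{trun}}_{k_0, k_1}(\sigma, a)$ becomes a product set $Y_{k_0+1} \times \cdots \times Y_{k_1} \times X_{k_1+1} \times \cdots \times X_r$, with each $Y_i$ ($k_0 < i \leq k_1$) cut out by the Legendre conditions of $x_i$ with $P$ and with the fixed small primes. I would then absorb the analogous $P'$-conditions, for $P' := P \cup \{x_1, \ldots, x_{k_0}\}$, into freshly built sets $X_{k_1+1}'', \ldots, X_r''$ and apply Proposition \ref{pLegendre} to the re-indexed prebox $(Y_{k_0+1}, \ldots, Y_{k_1}, X_{k_1+1}'', \ldots, X_r'')$ with pointer set $P'$, parameter $k = k_1 - k_0$, and $\mathcal{M}_r$ consisting of all pair-conditions between positions in $[k_0+1, r]$. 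The hypothesis $(iii)$ with its extra factor $2^{|P|} k_2^{c_9}$ is precisely what is needed to preserve the density condition of Proposition \ref{pLegendre} after the $|P|+k_0$-fold Chebotarev thinning producing $X_j''$; hypotheses $(i),(ii),(iv),(v)$ directly feed their counterparts, and the Siegel-free condition for the re-indexed prebox is inherited from $t_{k_1+1} > e^{t^{c_6}}$. Summing the resulting main term over the fibres and combining with the Chebotarev-based relation $|X_j''| = |X_j| \cdot 2^{-(|P|+k_0)} (1 + O(t_j^{-c_8}))$, which is essentially a one-dimensional instance of the argument inside the proof of Proposition \ref{pLegendre}, reconstructs the right-hand side of $(\ast)$ since $\binom{r-k_0}{2} + (r-k_1)(|P|+k_0) = |M_r \sqcup M_{r, P}| - C(k_0, k_1) - k_1|P|$.

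Having $(\ast)$, I would sum against $\sigma$ and split the target sum into two pieces. The $O$-error contribution is at most $k_2! \cdot |X| \cdot t_{k_0+1}^{-c_8}$, because $\sum_\sigma |X^{\textup{trun}}_{k_0, k_1}(\sigma, a)|$ averaged over $a$ equals $k_2! \cdot |X| \cdot 2^{-C(k_0, k_1) - k_1|P|}$. The main-term contribution equals
\[
\frac{1}{2^{|M_r \sqcup M_{r, P}| - C(k_0, k_1) - k_1|P|}} \sum_a \Bigl|\frac{k_2! \cdot |X|}{2^{C(k_0, k_1) + k_1|P|}} - \sum_{x \in X} |B(x, a)|\Bigr|,
\]
which I would bound via the triangle inequality in $x$, Cauchy--Schwarz in $a$, and Lemma \ref{lSmallPrimeCombinatorial}. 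The resulting bound is of size $k_2! \cdot |X| \cdot k_1 \sqrt{2^{k_0 + |P| + 1}/k_2}$, and the arithmetic inequality $c_{10}\log 2 + 2c_{11} + 2c_{12} < 1$ from $(vi)$ converts this into $k_2! \cdot |X| \cdot k_2^{-c_{12}}$. The principal technical obstacle is verifying that the hypotheses of Proposition \ref{pLegendre} genuinely transfer to the re-indexed prebox uniformly in $(\sigma, x_1, \ldots, x_{k_0})$; the cushion $k_2^{c_9}$ in $(iii)$ together with the inequality $c_{11} + c_{12} < c_9$ is precisely what absorbs the losses from both the density bound and the residual $(r-k_0)/t_{k_0+1}^{c_8 + 1/c_1}$ factor that Proposition \ref{pLegendre} produces when applied at the smaller index.
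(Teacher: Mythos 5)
The overall decomposition you propose — apply Proposition \ref{pLegendre} pointwise in $\sigma$ after fixing the small coordinates, then invoke Lemma \ref{lSmallPrimeCombinatorial} plus Cauchy--Schwarz on the residual combinatorial piece — is the same two-step strategy as the paper's proof. But your claimed estimate $(\ast)$, asserted "valid for every $\sigma \in \mathcal{P}_r(k_2)$ and every $a$", is false, and the gap cannot be absorbed by the cushion $k_2^{c_9}$ alone. For $k_0 < i \leq k_1$, the fibre sets $Y_i = X_i^{\textup{trim}}(\sigma, a)$ are cut out by Legendre conditions against moduli $x_1, \ldots, x_{k_0}$ and $P$. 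At these small indices the primes in $X_i$ need not be large compared to these moduli, so there is no Chebotarev-type lower bound on $|Y_i|$: for specific $(\sigma, a)$ one can have $|Y_i|$ far smaller than $|X_i|/2^{k_0 + |P|}$, or even $Y_i = \varnothing$. When that happens, hypothesis $(iii)$ of Proposition \ref{pLegendre} fails for the re-indexed prebox, and no version of $(\ast)$ is available. The factor $k_2^{c_9}$ in hypothesis $(iii)$ of the statement you are proving guarantees that $|Y_i| \geq |X_i|/(2^{k_0 + |P|} k_2^{c_9})$ is enough to run Proposition \ref{pLegendre}; it does \emph{not} guarantee that this inequality holds for all $(\sigma, a)$.

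What is missing is a separate treatment of the ``bad'' pairs where some $|Y_i|$ drops below the threshold $|X_{i+k_0}|/(2^{k_0 + |P|} k_2^{c_9})$. The paper's proof isolates exactly this case: for $j \in [k_1 - k_0]$ it defines an equivalence relation $\sim_{\sigma, j}$ on maps $a$ that identifies $a, a'$ when they agree on the conditions determining $Y_j$, notes that badness at $j$ is a class invariant, and then bounds the total contribution of a bad class by the trivial estimate $|X|/(2^{k_0+|P|} k_2^{c_9})$. Summing over the at most $2^{k_0 + |P|}$ classes, the at most $k_1$ possible bad indices $j$, and $\sigma \in \mathcal{P}_r(k_2)$, gives a contribution of order $k_1 \cdot k_2! \cdot |X|/k_2^{c_9}$, which is where the inequality $c_{11} + c_{12} < c_9$ is genuinely used (via $(vi)$). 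You cite this inequality, but without the bad/good split there is no quantity it can control. Your main-term estimate via Lemma \ref{lSmallPrimeCombinatorial}, the Cauchy--Schwarz computation, and the arithmetic reduction $C(k_0, k_1) + k_1|P|$ are all fine; the missing ingredient is this exceptional-case argument, and without it the proposal does not prove the proposition.
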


\begin{proof}
We readily reduce to the case where $|X_i| = 1$ for $i \in [k_0]$. Henceforth we shall assume that $X_i = \{x_i\}$ for $i \in [k_0]$. It follows from the triangle inequality that
\[
\sum_{a: M_r \sqcup M_{r, P} \rightarrow \FF_2} \left|\frac{k_2! \cdot |X|}{2^{|M_r \sqcup M_{r, P}|}} - \sum_{\sigma \in \mathcal{P}_r(k_2)} |X(\sigma(a))| \right|
\]
is bounded by
\begin{multline*}
\sum_{a: M_r \sqcup M_{r, P} \rightarrow \FF_2} \left|\frac{k_2! \cdot |X|}{2^{|M_r \sqcup M_{r, P}|}} - \sum_{\sigma \in \mathcal{P}_r(k_2)} \frac{|X_{k_0, k_1}^{\text{trun}}(\sigma, a)|}{2^{|M_r \sqcup M_{r, P}| - C(k_0, k_1) - k_1|P|}}\right| + \\
\sum_{\sigma \in \mathcal{P}_r(k_2)} \sum_{a: M_r \sqcup M_{r, P} \rightarrow \FF_2} \left|\frac{|X_{k_0, k_1}^{\text{trun}}(\sigma, a)|}{2^{|M_r \sqcup M_{r, P}| - C(k_0, k_1) - k_1|P|}} - |X(\sigma(a))|\right|.
\end{multline*}
By the triangle inequality the first sum is at most
\[
\frac{2^{C(k_0, k_1) + k_1|P|}}{{2^{|M_r \sqcup M_{r, P}|}}} \sum_{a: M_r \sqcup M_{r, P} \rightarrow \FF_2} \sum_{x \in X} \left|\frac{k_2!}{2^{C(k_0, k_1) + k_1|P|}} - |\{\sigma \in \mathcal{P}_r(k_2) : x \in X_{k_0, k_1}^{\text{trun}}(\sigma, a)\}|\right|.
\]
An application of Lemma \ref{lSmallPrimeCombinatorial} and the Cauchy--Schwarz inequality shows that the above is bounded by
\begin{align}
\label{eSmallBound}
\left(\frac{2^{k_0 + |P| + 1} \cdot k_1^2}{k_2}\right)^{1/2} \cdot k_2! \cdot |X| \leq \frac{k_2! \cdot |X|}{2 \cdot k_2^{c_{12}}},
\end{align}
where the last inequality follows for sufficient $A$ from assumption $(vi)$ and $c_{10} \log 2 + 2c_{11} + 2c_{12} < 1$. Let us now consider the second sum. Fix $\sigma \in \mathcal{P}_r(k_2)$ and $a: M_r \sqcup M_{r, P} \rightarrow \FF_2$. We aim to bound each
\[
\left|\frac{|X_{k_0, k_1}^{\text{trun}}(\sigma, a)|}{2^{|M_r \sqcup M_{r, P}| - C(k_0, k_1) - k_1|P|}} - |X(\sigma(a))|\right|
\]
individually. Without loss of generality we may assume that 
\[
\left(\frac{x_i}{x_j}\right) = \iota(a(\sigma^{-1}(i), \sigma^{-1}(j))) \text{ for all distinct } i, j \in [k_0],
\]
since otherwise $X(\sigma(a))$ and $X_{k_0, k_1}^{\text{trun}}(\sigma, a)$ are empty. If $i > k_0$, define $X_i^{\text{trim}}(\sigma, a)$ to be the subset of $x_i \in X_i$ such that
\[
\left(\frac{x_i}{p}\right) = \iota(a(\sigma^{-1}(i), p)) \text{ for all } p \in P, \quad \left(\frac{x_i}{x_j}\right) = \iota(a(\sigma^{-1}(i), \sigma^{-1}(j))) \text{ for all } j \in [k_0].
\]
We turn $X_{k_0, k_1}^{\text{trun}}(\sigma, a)$ in a prebox $(X', P')$ by taking $P'$ to be the union of the primes $x_1, \dots, x_{k_0}$ with $P$ and by taking $X' = X_1' \times \dots \times X_{r - k_0}'$, where 
\[
X_i' =
\left\{
	\begin{array}{ll}
		X_{i + k_0}^{\text{trim}}(\sigma, a)  & \mbox{if } i \in [k_1 - k_0] \\
		X_{i + k_0} & \mbox{if } i > k_1 - k_0.
	\end{array}
\right.
\]
It follows from the Chebotarev density theorem that
\begin{align}
\label{eTrimsize}
\left||X_i^{\text{trim}}(\sigma, a)| - \frac{|X_i|}{2^{k_0 + |P|}}\right| \leq \frac{|X_i|}{t_{k_0 + 1}}
\end{align}
for all $i > k_1$. Therefore, in case
\[
|X_i'| \geq \frac{|X_{i + k_0}|}{2^{k_0 + |P|} \cdot k_2^{c_9}} \text{ for all } i \in [r - k_0],
\]
we apply Proposition \ref{pLegendre} to $(X'', P'') := (X_{k_0 + 1}^{\text{trim}}(\sigma, a) \times \dots \times X_r^{\text{trim}}(\sigma, a), P')$ to obtain the bound
\[
\left|\frac{|X''|}{2^{\frac{(r - k_0)(r - k_0 - 1)}{2}}} - |X(\sigma(a))|\right| \leq \frac{|X''|}{2 \cdot t_{k_0 + 1}^{c_8} \cdot 2^{|M_r \sqcup M_{r, P}| - C(k_0, k_1) - k_1|P|}},
\]
where we save an extra factor $2$ by taking the $c_8$ of Proposition \ref{pLegendre} slightly smaller than our $c_8$. This implies
\begin{align}
\label{eBigBound1}
\left|\frac{|X_{k_0, k_1}^{\text{trun}}(\sigma, a)|}{2^{|M_r \sqcup M_{r, P}| - C(k_0, k_1) - k_1|P|}} - |X(\sigma(a))|\right| \leq \frac{|X_{k_0, k_1}^{\text{trun}}(\sigma, a)|}{t_{k_0 + 1}^{c_8} \cdot 2^{|M_r \sqcup M_{r, P}| - C(k_0, k_1) - k_1|P|}}
\end{align}
thanks to equation (\ref{eTrimsize}). Instead suppose that
\[
|X_i'| < \frac{|X_{i + k_0}|}{2^{k_0 + |P|} \cdot k_2^{c_9}}
\]
for some $i \in [r - k_0]$. Then certainly $i$ must be in $[k_1 - k_0]$, and we call the pair $(\sigma, a)$ bad at $i$. For every given $\sigma \in \mathcal{P}_r(k_2)$ and $j \in [k_1 - k_0]$, we define an equivalence relation on the set of maps $a: M_r \sqcup M_{r, P} \rightarrow \FF_2$ by declaring $a \sim_{\sigma, j} a'$ if
\[
a(\sigma^{-1}(j'), \sigma^{-1}(j + k_0)) = a'(\sigma^{-1}(j'), \sigma^{-1}(j + k_0)) \text{ for all } j' \in [k_0]
\]
and
\[
a(\sigma^{-1}(j + k_0), p) = a'(\sigma^{-1}(j + k_0), p) \text{ for all } p \in P.
\]
If the pair $(\sigma, a)$ is bad at $j$ and if $a' \sim_{\sigma, j} a$, then $(\sigma, a')$ is also bad at $j$. The trivial bound yields that in a given equivalence class represented by $a'$
\[
\sum_{\substack{a: M_r \sqcup M_{r, P} \rightarrow \FF_2 \\ (\sigma, a) \text{ bad} \\ a \sim_{\sigma, j} a'}} \left|\frac{|X_{k_0, k_1}^{\text{trun}}(\sigma, a)|}{2^{|M_r \sqcup M_{r, P}| - C(k_0, k_1) - k_1|P|}} - |X(\sigma(a))|\right| \leq \frac{|X|}{2^{k_0 + |P|} \cdot k_2^{c_9}}.
\]
Since there are at most $2^{k_0 + |P|}$ equivalence classes and at most $k_1 \cdot k_2!$ equivalence relations $\sim_{\sigma, j}$, we get after summing over all $\sigma$ and all $j$
\begin{align}
\label{eBigBound2}
\sum_{\sigma \in \mathcal{P}_r(k_2)} \sum_{\substack{a: M_r \sqcup M_{r, P} \rightarrow \FF_2 \\ (\sigma, a) \text{ bad}}} \left|\frac{|X_{k_0, k_1}^{\text{trun}}(\sigma, a)|}{2^{|M_r \sqcup M_{r, P}| - C(k_0, k_1) - k_1|P|}} - |X(\sigma(a))|\right| \leq k_1 \cdot k_2! \cdot \frac{|X|}{k_2^{c_9}} \leq \frac{k_2! \cdot |X|}{2 \cdot k_2^{c_{12}}}
\end{align}
thanks to assumption $(vi)$ and the inequality $c_{11} + c_{12} < c_9$. Furthermore, equation (\ref{eBigBound1}) implies that
\begin{align}
\label{eBigBound3}
\sum_{\sigma \in \mathcal{P}_r(k_2)} \sum_{\substack{a: M_r \sqcup M_{r, P} \rightarrow \FF_2 \\ (\sigma, a) \text{ not bad}}} \left|\frac{|X_{k_0, k_1}^{\text{trun}}(\sigma, a)|}{2^{|M_r \sqcup M_{r, P}| - C(k_0, k_1) - k_1|P|}} - |X(\sigma(a))|\right| \leq \frac{k_2! \cdot |X|}{t_{k_0 + 1}^{c_8}},
\end{align}
since for a given $\sigma$ the sets $X_{k_0, k_1}^{\text{trun}}(\sigma, a)$ partition $X$ and there are
\[
2^{|M_r \sqcup M_{r, P}| - C(k_0, k_1) - k_1|P|}
\]
maps $a: M_r \sqcup M_{r, P} \rightarrow \FF_2$ with the same $X_{k_0, k_1}^{\text{trun}}(\sigma, a)$. Adding the bounds from equations (\ref{eSmallBound}), (\ref{eBigBound2}) and (\ref{eBigBound3}), we get the proposition.
\end{proof}

\subsection{Boxes}
We are now ready to define boxes. We recall that
\[
D_1 = e^{(\log \log N)^{1/10}}.
\]
Also define
\[
D_{\text{Siegel}} := e^{(\log \log N)^{1/100}}.
\]

\begin{mydef}
Let $N \geq 10^{1000}$, let $r$ be a positive integer and let $0 \leq k \leq r$. Let $\mathbf{t} = (p_1, \dots, p_k, s_{k + 1}, \dots, s_r)$, where the $p_i$ are prime numbers congruent to $1$ or $2$ modulo $4$ and the $s_j$ are real numbers satisfying
\[
p_1 < \dots < p_k < D_1, \quad D_1 \leq s_{k + 1} < \dots < s_r, \quad s_i < \frac{2}{3} s_{i + 1} \textup{ for all } k < i < r.
\]
To $\mathbf{t}$ we associate a product space $X(\mathbf{t}) = X_1 \times \dots \times X_r$ as follows
\begin{itemize}
\item for $1 \leq i \leq k$ we have $X_i = \{p_i\}$;
\item for all $k < i \leq r$ we have that $X_i$ consists of the prime numbers ($1$ or $2$ modulo $4$) in the interval
\[
\left(s_i, s_i \cdot \left(1 + \frac{1}{C_{\textup{compr}}}\right)\right],
\]
\end{itemize}
where $100 \leq C_{\textup{compr}} \leq (\log N)^{100}$ is a real number. If $N$ is sufficiently large, then the sets $X_i$ are disjoint and non-empty, so that we may naturally view $X(\mathbf{t})$ as a subset of $\mathcal{D}$. We call a product space $X = X_1 \times \dots \times X_r$ a $(N, k, r)$-box if there is some $\mathbf{t}$ such that $X = X(\mathbf{t})$ and $X \subseteq \mathcal{D}(N)$. We say that $X$ is an excellent $(N, k, r)$-box if additionally there is $d \in X$ that is $N$-nice and 
\[
\left\{\alpha \prod_{i \in S} \pi_i(x) : x \in X, S \subseteq [r], \alpha \in \{1, -1\}, \left|\alpha \prod_{i \in S} \pi_i(x)\right| > D_{\textup{Siegel}}\right\} \cap \mathcal{S}(c_{\textup{Landau}}) = \varnothing.
\]
\end{mydef}

The next proposition is \cite[Proposition 6.9]{Smith}, although our proof is more similar to the one appearing in Watkins \cite{Watkins}.

\begin{theorem}
\label{tToBoxes}
Let $N \geq 10^{1000}$ be a real number and let $r$ be a positive integer satisfying equation (\ref{eErdosKac}). Let $V, W \subseteq \mathcal{D}_r(N)$ and let $\epsilon > 0$ be such that
\[
|W| > (1 - \epsilon) \cdot |\mathcal{D}_r(N)|.
\]
Suppose that
\[
\left||V \cap X| - \delta \cdot |X|\right| \leq \epsilon \cdot |X|
\]
for all integers $k$ and all $(N, k, r)$-boxes $X$ such that $X \cap W \neq \varnothing$. Then
\[
|V| = \delta \cdot |\mathcal{D}_r(N)| + O\left(\left(\epsilon + \frac{1}{e^{(\log \log \log N)^{1/4}}}\right) \cdot |\mathcal{D}_r(N)| + \left|\mathcal{D}_r(N) - \mathcal{D}_r\left(\frac{N}{\left(1 + \frac{1}{C_{\textup{compr}}}\right)^{r}}\right)\right|\right).
\]
\end{theorem}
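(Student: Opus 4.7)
The plan is to partition most of $\mathcal{D}_r(N)$ into canonical $(N,k,r)$-boxes using the $N$-nice structure, apply the hypothesis box by box, and track two sources of loss: the non-nice elements and the boundary effect near $N$. First, Theorem~\ref{tSquarefree} lets us restrict attention to the $N$-nice subset $\mathcal{D}_r(N)^{\textup{nice}}$ at a cost of $O(|\mathcal{D}_r(N)|/e^{(\log\log\log N)^{1/4}})$.

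For each $N$-nice $d \in \mathcal{D}_r(N)$ with factorization $p_1 < \dots < p_r$, associate a canonical box as follows. Set $k := |\{i : p_i < D_1\}|$, take the first $k$ coordinates of the box data $\mathbf{t}(d)$ to be $(p_1, \dots, p_k)$, and for $i > k$ let $s_i$ be the unique number of the form $D_1(1+1/C_{\textup{compr}})^j$ with $s_i \leq p_i < s_i(1+1/C_{\textup{compr}})$. The spacing condition $s_i < \tfrac{2}{3}\, s_{i+1}$ required for $\mathbf{t}(d)$ to be valid box data follows from the $N$-nice gap $2 p_i < p_{i+1}$ (available because $p_i > D_1$): one checks
\[
s_{i+1} \geq \frac{p_{i+1}}{1+1/C_{\textup{compr}}} > \frac{2 p_i}{1+1/C_{\textup{compr}}} \geq \frac{2 s_i}{1+1/C_{\textup{compr}}} > \tfrac{3}{2}\, s_i
\]
whenever $C_{\textup{compr}} \geq 100$. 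Since box structure is determined by the underlying intervals containing each prime, the resulting family $\mathcal{B}$ of canonical boxes is pairwise disjoint, and every nice element of a box $X \in \mathcal{B}$ has $X$ itself as its canonical box. The box $X(\mathbf{t}(d))$ fails to lie inside $\mathcal{D}(N)$ only if some product of primes from $X$ exceeds $N$; but every element of $X(\mathbf{t}(d))$ is at most $d(1+1/C_{\textup{compr}})^r$, so such $d$ must satisfy $d > N/(1+1/C_{\textup{compr}})^r$, and they contribute at most $|\mathcal{D}_r(N) - \mathcal{D}_r(N/(1+1/C_{\textup{compr}})^r)|$ to $|V|$.

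Let $\mathcal{B}_{\textup{in}} \subseteq \mathcal{B}$ consist of the boxes contained in $\mathcal{D}(N)$. If $X \in \mathcal{B}_{\textup{in}}$ with $X \cap W = \varnothing$, then $X \subseteq \mathcal{D}_r(N) \setminus W$, and disjointness of $\mathcal{B}$ forces $\sum_{X \in \mathcal{B}_{\textup{in}},\, X \cap W = \varnothing} |X| < \epsilon\,|\mathcal{D}_r(N)|$; if instead $X \cap W \neq \varnothing$, the hypothesis gives $|V \cap X| = \delta\,|X| + O(\epsilon\,|X|)$. Summing over $X \in \mathcal{B}_{\textup{in}}$ and using
\[
\sum_{X \in \mathcal{B}_{\textup{in}}} |X| = |\mathcal{D}_r(N)| + O\bigl(|\mathcal{D}_r(N) \setminus \mathcal{D}_r(N)^{\textup{nice}}| + |\mathcal{D}_r(N) - \mathcal{D}_r(N/(1+1/C_{\textup{compr}})^r)|\bigr),
\]
together with the trivial bound on $|V|$ restricted to elements missed by $\bigcup_{X \in \mathcal{B}_{\textup{in}}} X$ (either non-nice or with canonical box in $\mathcal{B} \setminus \mathcal{B}_{\textup{in}}$), yields the claimed asymptotic.

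The main obstacle is the bookkeeping around the mismatch between summing the hypothesis over entire boxes $X$ (which may contain non-nice elements) and our canonical partition of $\mathcal{D}_r(N)^{\textup{nice}}$: one must verify that the non-nice elements sitting inside $\mathcal{B}_{\textup{in}}$-boxes are absorbed by the Erd\H os--Kac-type tail from Theorem~\ref{tSquarefree}, and that no nice $d \in X$ has a canonical box different from $X$. Both facts are essentially automatic once the geometric spacing of the $s_i$ has been verified, so the only delicate point is confirming the $s_i < \tfrac{2}{3}\, s_{i+1}$ condition, where the $N$-nice prime-gap assumption is used in an essential way.
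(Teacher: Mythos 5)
Your proposal is correct and follows essentially the same route as the paper: both build a pairwise disjoint family of grid-aligned boxes (the paper's $e_j = D_1(1+1/C_{\text{compr}})^j$ and your $s_i$ are the same grid), use the $N$-nice spacing condition $2p_i < p_{i+1}$ to verify the box spacing requirement, and absorb the uncovered elements (non-nice, outside $W$, or near the boundary $d > N(1+1/C_{\text{compr}})^{-r}$) into the three error terms via Theorem~\ref{tSquarefree} and the hypothesis on $|W|$. The only cosmetic difference is that the paper restricts the collection of boxes to those intersecting $W$ at the outset, whereas you keep all grid boxes contained in $\mathcal{D}(N)$ and then separately bound the contribution of the boxes disjoint from $W$ by $|\mathcal{D}_r(N)\setminus W|$; both bookkeeping schemes give the same error.
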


\begin{proof}
Define
\[
e_i = D_1 \cdot \left(1 + \frac{1}{C_{\text{compr}}}\right)^i.
\]
Let $\mathcal{C}$ be the collection of product spaces $X = X_1 \times \dots \times X_r$ that are $(N, k, r)$-boxes for some integer $k$ such that
\[
s_i \in \{e_0, e_1, e_2, \dots\}
\]
for all $k + 1 \leq i \leq r$ and such that $X \cap W \neq \varnothing$. If we take distinct $X, Y \in \mathcal{C}$, then we see that $X \cap Y = \varnothing$. Furthermore, in case $d \in \mathcal{D}_r(N)$ is not in some product space $X \in \mathcal{C}$, then we see that $d$ fails the first condition of Definition \ref{dNice} (and hence $d$ is not $N$-nice) or $d \not \in W$ or
\[
d \geq N\left(1 + \frac{1}{C_{\text{compr}}}\right)^{-r}.
\]
Write $\mathcal{D}_r^{\text{bad}}(N)$ for this set of $d$. Then we have
\[
\left| |V| - \delta \cdot |\mathcal{D}_r(N)|\right| \leq 2 \cdot |\mathcal{D}_r^{\text{bad}}(N)| + \sum_{X \in \mathcal{C}} \left| |V \cap X| - \delta \cdot |X|\right|.
\]
The latter term is bounded by $\epsilon \cdot |\mathcal{D}_r(N)|$ thanks to our assumption
\[
\left||V \cap X| - \delta \cdot |X|\right| \leq \epsilon \cdot |X|.
\]
Furthermore, it follows from Theorem \ref{tSquarefree} and 
\[
|W| > (1 - \epsilon) \cdot |\mathcal{D}_r(N)|
\]
that
\[
|\mathcal{D}_r^{\text{bad}}(N)| \ll \left(\epsilon + \frac{1}{e^{(\log \log \log N)^{1/4}}}\right) \cdot |\mathcal{D}_r(N)| + \left|\mathcal{D}_r(N) - \mathcal{D}_r\left(N\left(1 + \frac{1}{C_{\text{compr}}}\right)^{-r}\right)\right|.
\]
This gives the theorem.
\end{proof}

\begin{remark}
\label{rBox}
If $(\log N)^{1 - \epsilon} \geq C_{\textup{compr}} \geq r^{1 + \epsilon}$, then one can show that
\[
\left|\mathcal{D}_r(N) - \mathcal{D}_r\left(N\left(1 + \frac{1}{C_{\textup{compr}}}\right)^{-r}\right)\right| \ll_{\epsilon} \frac{|\mathcal{D}_r(N)|}{r^\epsilon},
\]
and hence we get a genuine error term in Theorem \ref{tToBoxes}. However, since we have the luxury to also average over $r$ later on, we have opted to use the trivial bound
\[
\sum_r \left|\mathcal{D}_r(N) - \mathcal{D}_r\left(N\left(1 + \frac{1}{C_{\textup{compr}}}\right)^{-r}\right)\right| \leq |\mathcal{D}(N)| - \left|\mathcal{D}\left(N\left(1 + \frac{1}{C_{\textup{compr}}}\right)^{-r}\right)\right|
\]
and the classical asymptotic formula
\[
|\mathcal{D}(N)| = \frac{CN}{\sqrt{\log N}} \cdot \left(1 + O\left(\frac{1}{\log N}\right)\right)
\]
for some $C > 0$.
\end{remark}

From now on we shall take
\[
C_{\textup{compr}} := (\log \log N)^{50}.
\]
We remark that Theorem \ref{tToBoxes} works equally well for the set of odd or even radicands, or in fact any congruence conditions on our radicands. Keeping this in mind, we see that all our proofs also work if we order by the discriminant instead. We will now deal with Siegel zeroes, see also \cite[Proposition 6.10]{Smith}.

\begin{prop}
\label{pSiegel}
Let $N \geq t \geq 10^{1000}$ be real numbers and let $r$ be a positive integer satisfying equation (\ref{eErdosKac}). Let $f_1, f_2, f_3, \dots$ be a sequence of squarefree integers greater than $t$ such that $f_{i + 1} > f_i^2$. Define
\[
W(f_i) := \{x \in \mathcal{D}_r(N) : \textup{ there is a } (N, k, r)\textup{-box } X \textup{ and } x' \in X \textup{ such that } f_i \mid x' \textup{ and } x \in X\}.
\]
Suppose that $\log N \geq (\log t)^3$. Then
\[
\left|\bigcup_{i \geq 1} W(f_i)\right| \ll \frac{|\mathcal{D}_r(N)|}{\log t}.
\]
\end{prop}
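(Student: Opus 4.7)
The plan is to bound each $|W(f_i)|$ individually and then sum over $i$, exploiting the super-exponential growth $f_{i+1}>f_i^2$ from Landau's theorem (Definition \ref{dSiegel}), which gives $\log f_i\ge 2^{i-1}\log t$ and hence $\sum_i 1/\log f_i\ll 1/\log t$. Since any $f_i$ contributing to $\bigcup_i W(f_i)$ must divide some element of $\mathcal{D}_r(N)$, I may assume $f_i\in\mathcal{D}$ and $\omega(f_i)\le r$.

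First I would characterise $W(f_i)$ explicitly. Writing $f_i=q_1\cdots q_\omega$, split the primes into small ones ($q_j<D_1$, $1\le j\le\omega_s$) and large ones ($q_j\ge D_1$, $\omega_s<j\le\omega$). The rigidity of the box definition forces $x\in W(f_i)$ to satisfy (i) $q_1,\ldots,q_{\omega_s}\mid x$, and (ii) for each large $q_j$ the integer $x$ has a prime divisor in the short window $T_j:=(q_j/(1+1/C_{\textup{compr}}),\,q_j(1+1/C_{\textup{compr}})]$. I would then parametrise $x=q_1\cdots q_{\omega_s}\cdot p_{\omega_s+1}\cdots p_\omega\cdot m$ with $p_j\in T_j$ prime and $m$ squarefree and coprime to the chosen primes with $\omega(m)=r-\omega$. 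Brun--Titchmarsh gives at most $O(q_j/(C_{\textup{compr}}\log q_j))$ choices for each $p_j$, while Landau's asymptotic bounds the $m$-count. Comparing with $|\mathcal{D}_r(N)|\asymp (N/\log N)\cdot((1/2)\log\log N)^{r-1}/(r-1)!$ yields
\[
\frac{|W(f_i)|}{|\mathcal{D}_r(N)|}\ll \frac{1}{f_i^{\textup{small}}}\cdot\frac{1}{(C_{\textup{compr}}\log D_1)^{\omega-\omega_s}}\cdot\Phi(\omega),
\]
where $\Phi(\omega):=((r-1)!/(r-\omega-1)!)/((1/2)\log\log N)^\omega$ and $f_i^{\textup{small}}=q_1\cdots q_{\omega_s}$. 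The Erd\H{o}s--Kac constraint $|r-(1/2)\log\log N|<(\log\log N)^{2/3}$ imposed on $r$ in the statement gives $\Phi(\omega)\le(1+O((\log\log N)^{-1/3}))^\omega$.

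Finally I would split into cases. If $\omega=\omega_s$, then $f_i^{\textup{small}}=f_i>t$ and $\Phi(\omega_s)\le f_i^{O((\log\log N)^{-1/3})}$, yielding $|W(f_i)|/|\mathcal{D}_r(N)|\ll f_i^{-1/2}$. If $\omega>\omega_s$, then the factor $(C_{\textup{compr}}\log D_1)^{\omega-\omega_s}\ge(\log\log N)^{50+1/10}$ more than compensates $\Phi(\omega)$ since $\omega\le r\ll \log\log N$. In both cases $|W(f_i)|/|\mathcal{D}_r(N)|\ll 1/\log f_i$, and summing over $i$ via Landau closes the argument. The main obstacle will be controlling $\Phi(\omega)$ in a regime where $\omega$ is nearly as large as $r$; this can be handled by instead summing over $x'\in\mathcal{D}_r(N)$ with $f_i\mid x'$ weighted by the box size $|X_{x'}|$, and using the $N$-nice spacing $\log p_j(x')\gtrsim e^{2j}$ (Definition \ref{dNice}), so that the bound $\prod_j\log p_j(x')\gg r!$ absorbs the worst factorial blow-up and restores the $1/\log f_i$ estimate.
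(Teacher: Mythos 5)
Your decomposition of $W(f_i)$ by splitting $f_i$ into small primes (which must literally divide $x$) and large primes (each of which forces a prime of $x$ into a short multiplicative window) is the right starting point, and matches the paper's. But the formula you derive,
\[
\frac{|W(f_i)|}{|\mathcal{D}_r(N)|}\ll \frac{1}{f_i^{\textup{small}}}\cdot\frac{1}{(C_{\textup{compr}}\log D_1)^{\omega-\omega_s}}\cdot\Phi(\omega),
\]
has a fatal loss. After using Brun--Titchmarsh to get $O\bigl(q_j/(C_{\textup{compr}}\log q_j)\bigr)$ choices for $p_j$ and combining with the count $|\mathcal{D}_{r-\omega}(\cdot)|\asymp (N/(f_i\log N))\cdot(\tfrac12\log\log N)^{r-\omega-1}/(r-\omega-1)!$, the $1/f_i$ cancels against $\prod_{\textup{large}} q_j$, and what remains in the denominator is $f_i^{\textup{small}}\cdot\prod_{\textup{large}}(C_{\textup{compr}}\log q_j)$, \emph{not} $f_i^{\textup{small}}\cdot(C_{\textup{compr}}\log D_1)^{\omega-\omega_s}$. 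Replacing each $\log q_j$ by $\log D_1$ throws away precisely the dependence on $f_i$ that is needed. A concrete failure: take $f_i=P$ a single large prime close to $N$. Your formula gives $|W(f_i)|/|\mathcal{D}_r(N)|\ll 1/(C_{\textup{compr}}\log D_1)$, which does not decay in $P$, so the sum over $i$ does not converge. The correct bound in this case is $\ll 1/(C_{\textup{compr}}\log P)=1/(C_{\textup{compr}}\log f_i)$. The paper keeps $\prod_{p_j\geq D_1}\log p_j$ in the denominator, uses $\prod_{p_j\geq D_1} p_j \leq f_i$ to cancel, and then checks that $f_i^{\textup{small}}\prod_{p_j\geq D_1}\log p_j$ dominates $C^m\log f_i$ — which requires retaining the individual $\log p_j$'s.

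Two further gaps. First, your application of the count of $\mathcal{D}_{r-\omega}(M)$ with $M\approx N/f_i$ tacitly needs $M$ large (the paper's Theorem 7.2 bound is useless when $\log M$ is small), so you need the separate case $f_i\geq N^{2/3}$; by the growth condition there is at most one such $f_i$, and the paper handles it with a crude $N/f_i$ bound and then uses $\log N\geq(\log t)^3$ — a hypothesis your sketch never invokes. Second, your ``main obstacle'' paragraph is misdirected: for $\omega$ near $r\approx\tfrac12\log\log N$, Stirling gives $\Phi(\omega)\approx e^{-\omega}$, which is exponentially \emph{small}, not large; the actual maximum of $\Phi$ occurs near $\omega\approx r-\tfrac12\log\log N = O((\log\log N)^{2/3})$ and is only $e^{O((\log\log N)^{1/3})}$, and in that regime either $\omega_s$ is large (so $f_i^{\textup{small}}$ is superpolynomially large, being a primorial) or $\omega-\omega_s$ is large (so $C_{\textup{compr}}^{\omega-\omega_s}$ is huge). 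The fix you propose — weighting by box size and invoking the $N$-nice spacing from Definition 7.1 — is also inapplicable, since $W(f_i)$ ranges over arbitrary $(N,k,r)$-boxes, not excellent ones, so there is no $N$-niceness to draw on.
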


\begin{proof}
We factor $f_i = p_1 \cdot \ldots \cdot p_m$. Suppose that $x \in W(f_i)$. Then there exist prime factors $q_1, \dots, q_m$ of $x$ such that
\[
p_i = q_i \text{ for } p_i < D_1, \quad \frac{p_i}{2} < q_i < 2p_i \text{ for } p_i \geq D_1.
\]
First suppose that $f_i < N^{2/3}$. Since $r$ satisfies equation (\ref{eErdosKac}), it follows from equation (\ref{eDrNsize}) that
\begin{align*}
|W(f_i)| &\leq \left|\mathcal{D}_{r - m}\left(\frac{2^mN}{f_i}\right)\right| \cdot \prod_{p_i \geq D_1} |\{q_i \text{ prime} : p_i/2 < q_i < 2p_i\}| \\
&\leq \frac{C^m \cdot |\mathcal{D}_r(N)|}{f_i} \cdot \prod_{p_i \geq D_1} \frac{p_i}{\log p_i} \ll \frac{|\mathcal{D}_r(N)|}{\log f_i}
\end{align*}
for some absolute constant $C > 0$. Since $f_i > t^{2^i}$, we deduce that
\[
\left|\bigcup_{\substack{i \geq 1 \\ f_i < N^{2/3}}} W(f_i)\right| \ll \frac{|\mathcal{D}_r(N)|}{\log t}.
\]
Next suppose that $f_i \geq N^{2/3}$. Since $f_{i + 1} \geq N^{4/3}$, it follows that $W(f_j)$ is empty for $j \geq i + 1$. Therefore it remains to bound $|W(f_i)|$. We have
\begin{align*}
|W(f_i)| &\leq \frac{N}{f_i} \cdot \prod_{p_i \geq D_1} |\{q_i \text{ prime} : p_i/2 < q_i < 2p_i\}| \\
&\leq \frac{N}{f_i} \cdot \prod_{p_i \geq D_1} \frac{2p_i}{\log p_i} \ll \frac{N}{\log f_i} \ll \frac{N}{\log N}
\end{align*}
for $N$ sufficiently large. A final appeal to equation (\ref{eDrNsize}) shows that this is within the error term thanks to our assumption $\log N \geq (\log t)^3$.
\end{proof}

We can now prove the main result of this section. Let $D_{2, n}$ be the set of $d \in \mathcal{D}$ such that $\text{rk}_4 \text{Cl}(\Q(\sqrt{d})) = n$. Write $P_{\text{Sym}}(r, n)$ for the probability that a symmetric $r \times r$ matrix with coefficients in $\FF_2$ has kernel of dimension $n$ with respect to the uniform probability measure.

\begin{theorem}
\label{t4rank}
Let $N \geq 10^{1000}$ be a real number and let $n \geq 0$ be an integer. Then we have
\[
\frac{|D_{2, n} \cap \mathcal{D}(N)|}{|\mathcal{D}(N)|} = \lim_{s \rightarrow \infty} P_{\textup{Sym}}(s, n) + O\left(\frac{1}{e^{(\log \log \log N)^{1/4}}}\right).
\]
The implied constant is effectively computable.
\end{theorem}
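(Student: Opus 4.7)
The plan is to reduce to counting inside excellent $(N, k, r)$-boxes, to express the $4$-rank as the co-rank of a symmetric matrix of Legendre symbols, and then to invoke the equidistribution statement of Proposition \ref{pLegendre2} combined with a permutation-averaging trick to produce the symmetric-matrix distribution $P_{\mathrm{Sym}}(r, n)$.

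First, fix $r$ satisfying the Erd\H os--Kac range \eqref{eErdosKac}. Let $V = D_{2,n} \cap \mathcal{D}_r(N)$. Take $W \subseteq \mathcal{D}_r(N)$ to be the set of $N$-nice radicands whose prime divisors avoid Siegel-contaminated integers: Theorem \ref{tSquarefree} and Proposition \ref{pSiegel} (applied to the Landau sequence $d_1, d_2, \dots$ from Definition \ref{dSiegel} with threshold $t = D_{\mathrm{Siegel}}$) show that $|W| \geq (1 - O(e^{-(\log \log \log N)^{1/4}})) |\mathcal{D}_r(N)|$. Theorem \ref{tToBoxes} then reduces the desired density statement to proving the equidistribution bound $\bigl||V \cap X| - \delta \cdot |X|\bigr| \leq \epsilon \cdot |X|$ inside each excellent $(N, k, r)$-box $X$, with $\delta = \lim_{s \to \infty} P_{\mathrm{Sym}}(s, n)$ and $\epsilon \ll e^{-(\log \log \log N)^{1/4}}$.

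Second, by genus theory $\mathrm{Cl}(\mathbb{Q}(\sqrt{x}))[2]$ has $\FF_2$-dimension exactly $r - 1$ for $x \in \mathcal{D}_r$, and the first Artin pairing $\mathrm{Art}_{1,x}$ from Subsection \ref{ssArtinpairing} is a symmetric $\FF_2$-bilinear pairing on this space, whose entries in the basis indexed by prime divisors of $x$ are Legendre symbols $\bigl(\tfrac{\pi_i(x)}{\pi_j(x)}\bigr)$ (with the diagonal determined by the congruence class of $\pi_i(x)$ modulo $8$, which is fixed on $X$). The $4$-rank is $r - 1 - \mathrm{rank}(\mathrm{Art}_{1,x})$. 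Hence $\mathbf{1}_{V}(x)$ depends only on the off-diagonal symbols $a(i,j) = \bigl(\tfrac{\pi_i(x)}{\pi_j(x)}\bigr) \in \FF_2$ for $\{i,j\} \subset [r]$, together with fixed box data, and is invariant under simultaneous permutation of the coordinates.

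Third, within an excellent $(N, k, r)$-box the Legendre conditions involving the small primes $p_1, \dots, p_k$ are fixed and cannot be equidistributed on the nose. To bypass this, fix integer parameters $k_0 = k$, $k_1 = \lceil c_7 \log t_{k_0+1}\rceil + k_0$, $k_2 = r$ and apply Proposition \ref{pLegendre2} with the Siegel-freeness of the box guaranteeing the hypothesis. Summing over permutations $\sigma \in \mathcal{P}_r(k_2)$ and using that the indicator of $\{\mathrm{rk}_4 = n\}$ only depends on the unordered Legendre data, we obtain
\[
\sum_{\sigma \in \mathcal{P}_r(k_2)} |V \cap X(\sigma(a))| \;=\; k_2! \cdot |V \cap X \cap \{\text{box condition for } a\}|
\]
for each $a \colon M_r \sqcup M_{r,P} \to \FF_2$, so that after summing the bound of Proposition \ref{pLegendre2} over those $a$ for which the resulting symmetric matrix has co-rank $n + 1$ (accounting for the trivial genus kernel), one obtains
\[
\bigl||V \cap X| - P_{\mathrm{Sym}}(r, n) \cdot |X|\bigr| \;\leq\; |X| \cdot \bigl(r^{-c_{12}} + t_{k_0+1}^{-c_8}\bigr).
\]
Finally, because symmetric random matrices over $\FF_2$ stabilize exponentially fast, $\bigl|P_{\mathrm{Sym}}(r,n) - \lim_{s\to\infty} P_{\mathrm{Sym}}(s,n)\bigr| \ll 2^{-r}$, which in the Erd\H os--Kac range is negligible. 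Summing over $r$ via Theorem \ref{tSquarefree} and Remark \ref{rBox} absorbs the box-truncation loss into the stated error term.

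The main obstacle is Step three: the small primes $p_1, \dots, p_k$ are frozen inside a box, so the Legendre symbols among them are \emph{not} randomized. The remedy, and the novel ingredient of Proposition \ref{pLegendre2}, is to average the indicator function over the symmetric group $\mathcal{P}_r(k_2)$: since $\mathrm{rk}_4$ is a symmetric function of the prime divisors, this symmetrization produces an equidistribution statement over symmetric matrices without requiring direct equidistribution of the frozen small-prime block. The large sieve input, the effective Siegel-zero exclusion, and the combinatorial Lemma \ref{lSmallPrimeCombinatorial} together make this rigorous and quantitative, yielding the effectively computable implied constant claimed in the statement.
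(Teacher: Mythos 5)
Your proposal is correct and follows essentially the same route as the paper's own proof: reduce to $r$ in the Erdős--Kac range via Theorem \ref{tSquarefree}, reduce to excellent boxes via Theorem \ref{tToBoxes} (with $W$ removing non-$N$-nice and Siegel-contaminated radicands, bounded by Theorem \ref{tSquarefree} and Proposition \ref{pSiegel}), invoke R\'edei's formula identifying $\mathrm{rk}_4$ with the co-rank of the symmetric Legendre-symbol matrix, apply Proposition \ref{pLegendre2} with the same $(k_0, k_1, k_2)$ choice (your choice of $k_1$ is a cosmetic variant and still satisfies conditions $(iv)$ and $(v)$), and use permutation-invariance of the kernel dimension to conclude. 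Two small nits that do not affect correctness: the diagonal of $A(a)$ is determined by the row-sum-zero relation \eqref{eMatrixa} (a consequence of reciprocity), not by a congruence class of $\pi_i(x)$ fixed on the box (which it is not — the box only constrains the primes to be $1$ or $2$ mod $4$); and your display claiming $\sum_\sigma |V \cap X(\sigma(a))| = k_2!\,|V \cap X \cap \{\cdot\}|$ is not quite a well-formed identity, though the intended argument (pull the $\sigma$-invariant indicator of $\{\dim\ker A'(a) = n\}$ through the sum, then apply equation \eqref{e4rankPermuted}) is the same as the paper's appeal to equations \eqref{e4rankPermuted} and \eqref{eRedeiFormula}.
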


Better error terms are available in the literature, see \cite{CKMP, Smith, Watkins}, with the best result due to Watkins \cite{Watkins}. One can also find the above result in earlier work of Fouvry and Kl\"uners \cite{FK1}, albeit without an error term. We have not yet used (and will not use in this section) the third point in the definition of $N$-nice. Similarly, one can pick $C_0$ substantially larger than we did for the purposes of this section. The improvements available in the literature come from a better upper bound in Theorem \ref{tSquarefree} in this more relaxed setting.

\begin{proof}
By Remark \ref{rBox} and by equation (\ref{eBadr}) it suffices to show that
\begin{multline*}
\frac{|D_{2, n} \cap \mathcal{D}_r(N)|}{|\mathcal{D}_r(N)|} = \lim_{s \rightarrow \infty} P_{\textup{Sym}}(s, n) + O\left(\frac{1}{e^{(\log \log \log N)^{1/4}}}\right) + \\ O\left(\left|\mathcal{D}_r(N) - \mathcal{D}_r\left(N\left(1 + \frac{1}{C_{\text{compr}}}\right)^{-r}\right)\right|\right)
\end{multline*}
with $r$ satisfying equation (\ref{eErdosKac}). By the explicit formulas in \cite{Williams}, we readily bound the difference
\[
P_{\textup{Sym}}(r - 1, n) - \lim_{s \rightarrow \infty} P_{\textup{Sym}}(s, n).
\]
We apply Theorem \ref{tToBoxes} with $W$ equal to the largest subset of the $N$-nice elements in $\mathcal{D}_r(N)$ disjoint form the $W(d_i)$ with $|d_i| > D_{\text{Siegel}}$, see Definition \ref{dSiegel} for the definition of $d_i$. It follows from Theorem \ref{tSquarefree} and Proposition \ref{pSiegel} with $t = D_{\text{Siegel}}$ that
\[
|\mathcal{D}_r(N)| - |W| \ll \frac{|\mathcal{D}_r(N)|}{e^{(\log \log \log N)^{1/4}}}.
\]
Hence it suffices to show that
\begin{align}
\label{e4rankBox}
\frac{|D_{2, n} \cap X|}{|X|} = P_{\textup{Sym}}(r - 1, n) + O\left(\frac{1}{e^{(\log \log \log N)^{1/4}}}\right)
\end{align}
for every $(N, k, r)$-excellent box $X$. We apply Proposition \ref{pLegendre2} to $(X, \varnothing)$ with
\[
(c_1, c_2, c_3, c_4, c_5, c_6, c_7, c_8, c_9, c_{10}, c_{11}, c_{12}) = \left(100, \frac{1}{10^9}, 5, 10^6, 5, 5, \frac{1}{100}, \frac{1}{100}, 5, \frac{1}{5}, \frac{1}{5}, \frac{1}{5}\right),
\]
$k_0 = k$, $k_1$ the smallest integer such that 
\[
t_{k_1 + 1} > \max(e^{(\log t_{k + 1})^5}, e^{t^5})
\]
and $k_2 = r$. It follows from equations (\ref{eRegularSpace}) and (\ref{eErdosKac}) that all the conditions of Proposition \ref{pLegendre2} are satisfied for $N$ sufficiently large. We conclude that
\begin{align}
\label{e4rankPermuted}
\sum_{a: M_r \rightarrow \FF_2} \left|\frac{r! \cdot |X|}{2^{r(r - 1)/2}} - \sum_{\sigma \in \mathcal{P}_r} |X(\sigma(a))| \right| \leq r! \cdot |X| \cdot (r^{-1/5} + t_{k + 1}^{-1/100})
\end{align}
for $N$ sufficiently large. To $a: M_r \rightarrow \FF_2$ we associate a matrix $A(a)$ as follows: the $(i, j)$-th entry of $A(a)$ equals $a(i, j)$ if $i \neq j$ and 
\begin{align}
\label{eMatrixa}
a(i, i) = \sum_{k \neq i} a(i, k).
\end{align}
Then $A(a)$ is a symmetric $r \times r$ matrix with row sum zero. In this way we have created a bijection between maps $M_r \rightarrow \FF_2$ and symmetric $r \times r$ matrices with row sum zero. Write $A'(a)$ for the matrix obtained form $A(a)$ by dropping the last column and last row of $A(a)$. Then we get a bijection between maps $M_r \rightarrow \FF_2$ and symmetric $(r - 1) \times (r - 1)$ matrices. If $x \in X(a)$, then we have
\begin{align}
\label{eRedeiFormula}
\text{rk}_4 \text{Cl}(\Q(\sqrt{x})) = -1 + \text{dim}_{\FF_2} \ \text{ker}(A(a)) = \text{dim}_{\FF_2} \ \text{ker}(A'(a))
\end{align}
thanks to classical work of R\'edei. Since we have
\[
\text{dim}_{\FF_2} \ \text{ker}(A(a)) = \text{dim}_{\FF_2} \ \text{ker}(A(\sigma(a))),
\]
equation (\ref{e4rankBox}) follows from equations (\ref{e4rankPermuted}) and (\ref{eRedeiFormula}).
\end{proof}

\section{\texorpdfstring{Proof of Theorem \ref{tPell}}{Proof of Theorem 1.1}}
Let $D_{k, n}$ be the set of $d \in \mathcal{D}$ such that $\text{rk}_{2^k} \text{Cl}(\Q(\sqrt{d})) = n$ and furthermore $(\sqrt{d}) \in 2^{k - 1} \text{Cl}(\Q(\sqrt{d}))$. Define $D_{k, n}(N)$ for the subset of $d \in D_{k, n}$ with $d < N$. We write $P(m, n, j)$ for the probability that a $m \times n$ matrix, chosen uniformly at random with coefficients in $\FF_2$, has right kernel of rank $j$. Let $A_{\text{Comb}} > 0$ be a choice of real number such that Proposition \ref{pARinput} holds. Define
\[
c = \frac{1}{A_{\text{Comb}} \cdot 10^{10}}.
\]
In this section we shall establish the following theorem.

\begin{theorem}
\label{tRank}
There are real numbers $A, N_0 > 0$ such that for all reals $N > N_0$, all integers $m \geq 2$ and all sequences of integers $n_2, \dots, n_{m + 1} \geq 0$
\[
\left|\left|\bigcap_{i = 2}^{m + 1} D_{i, n_i}(N)\right| - \frac{P(n_m, n_m, n_{m + 1})}{2^{n_m}} \cdot \left|\bigcap_{i = 2}^m D_{i, n_i}(N) \right|\right| \leq \frac{A \cdot |\mathcal{D}(N)|}{(\log \log \log \log N)^{\frac{c}{m^26^m}}}.
\]
\end{theorem}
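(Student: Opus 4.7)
\medskip

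\noindent\textbf{Proof plan for Theorem \ref{tRank}.}

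The plan is to prove the estimate inductively, interpreting it as the statement that, conditional on the ``level $m$'' class group data $\bigcap_{i=2}^m D_{i,n_i}$, the $2^{m+1}$-rank together with the Pell condition $(\sqrt d)\in 2^m\mathrm{Cl}(\Q(\sqrt d))$ equidistributes like the kernel of a random bilinear form on $\FF_2^{n_m}$ (giving $P(n_m,n_m,n_{m+1})$) restricted by an independent linear condition (giving the factor $1/2^{n_m}$). First I would reduce, once and for all, to excellent $(N,k,r)$-boxes: by Theorem~\ref{tSquarefree} we may restrict to $r$ satisfying the Erd\H os--Kac range (\ref{eErdosKac}); by Theorem~\ref{tToBoxes} we may work with fixed boxes $X$; and by Proposition~\ref{pSiegel} with $t=D_{\text{Siegel}}$ we may declare the relevant $d$ to be Siegel-free, which in turn enables the effective Chebotarev estimates behind Propositions~\ref{pLegendre} and~\ref{pLegendre2}.

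Fix such an excellent box $X$ and consider $d\in X\cap\bigcap_{i=2}^m D_{i,n_i}$. The conditions $(\sqrt d)\in 2^m\mathrm{Cl}$ and $\mathrm{rk}_{2^{m+1}}\mathrm{Cl}(\Q(\sqrt d))=n_{m+1}$ are, via the Artin pairing of Subsection~\ref{ssArtinpairing}, conditions on the values of $\langle (\sqrt d),\chi\rangle_{\mathrm{Art}_m(d)}$ and of the full $m$-th Artin pairing on $2^{m-1}\mathrm{Cl}(\Q(\sqrt d))^\vee[2^m]$. Following the strategy of \cite{Smith}, I would convert each such pairing value into a Frobenius symbol in an explicit governing field by applying the reflection principles of Sections~\ref{profitability}--\ref{sReflection}. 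Concretely: the standard reflections (Theorem~\ref{main thm: minimal triples} and Theorem~\ref{main thm: governing triples}) handle pairings $\langle\mathrm{Up}(b),\chi_a\rangle$ with $a,b$ dividing a fixed squarefree $d_0$; the self-pairing $\langle\mathrm{Up}(a),\chi_a\rangle$ is handled by Theorem~\ref{main thm 1 on self-pairing} and Theorem~\ref{main thm 2 on self-pairing} (using crucially that the ``spin-like'' sum vanishes by Theorem~\ref{involution spin are 0}); and the Pell-type pairing $\langle(\sqrt d),\chi_a\rangle$ is handled by Theorem~\ref{thm: reflection principle for infinity}. In each case one organizes the prime divisors of $d$ into a cube $C$ inside $X$ (using Lemma~\ref{lFindBox} to find a product set of the required dimension inside any dense subset of $X$), assembles raw cocycles into a profitable / minimal / governing triple, and reads off the pairing as a Frobenius in the field of definition of the attached expansion map.

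Next I would build an additive system $\mathfrak A=(C_T,C_T^{\mathrm{acc}},F_T,A_T)_{T\subseteq S}$ on $(X,S)$ whose acceptance at every intermediate level records the conditions $(\sqrt d)\in 2^{i-1}\mathrm{Cl}$ and $\mathrm{rk}_{2^i}\mathrm{Cl}=n_i$ for $2\le i\le m+1$; the additivity (\ref{eAdditive}) at each level is inherited from the bilinearity of the Artin pairing via Proposition~\ref{key calculation of cocycles} together with the linearity properties of expansion maps in Lemma~\ref{lAdditivity}. Proposition~\ref{pAS} then gives a lower bound on $|C_S^{\mathrm{acc}}|/|\mathrm{Cube}(X,S)|$ which is used to guarantee plenty of profitable cubes. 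The outputs of the reflections are then captured by the element $g\in\mathrm{Add}(X,S)$ supplied by Proposition~\ref{pARinput}, which reduces equidistribution of the level-$(m+1)$ condition to equidistribution of Frobenius symbols in a tower of governing fields of the type studied in Lemmas~\ref{lTopdegree}--\ref{lConstantPointer}. The large-sieve / Chebotarev estimates of Propositions~\ref{pLegendre} and~\ref{pLegendre2}, applied with parameters comparable to the choice $(c_1,\dots,c_{12})$ used in the proof of Theorem~\ref{t4rank}, then produce the quantitative error $(\log\log\log\log N)^{-c/(m^2 6^m)}$; the exponent $m^2 6^m$ is the price paid by Proposition~\ref{pARinput}'s $6^r$ term combined with the polylogarithmic box shrinking in $C_{\mathrm{compr}}$.

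The main obstacle I expect is the delicate bookkeeping required to simultaneously encode the two ``Pellian'' conditions (the Pell condition on $(\sqrt d)$ and the symmetry of $\mathrm{Art}_m$) into a single additive system so that Proposition~\ref{pARinput} can be applied in one shot. In Smith's original work there is only one non-symmetric pairing to control, whereas here the symmetry forces us to replace the usual reflection of the shape $\langle a,\chi_a\rangle$ (which would give an identically zero right-hand side after Theorem~\ref{involution spin are 0}) by the shifted reflection of Theorem~\ref{main thm 2 on self-pairing} involving $\langle (\sqrt d)/\mathrm{Up}(a),\chi_{d/a}\rangle$. Verifying that the resulting governing data is $(a,S)$-acceptable in the sense required by Proposition~\ref{pARinput}, and that the Chebotarev input of Proposition~\ref{pLegendre2} applies to the joint tower of multiquadratic governing fields built from the normalized expansion maps $\phi_{\{p_i(1)p_i(2):i\in[s]\}; -1}(\mathfrak G)$ and $\phi_{\{p_i(1)p_i(2):i\in[s-1]\}; p_s(1)p_s(2)}(\mathfrak G)$, is the technical heart of the argument and will determine the explicit numerical constant $c$.
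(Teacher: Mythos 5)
Your proposal captures the correct high-level architecture of the proof: reduce to excellent boxes via Theorem~\ref{tSquarefree}, Theorem~\ref{tToBoxes} and Proposition~\ref{pSiegel}; convert Artin pairing entries into Frobenius symbols in explicit governing fields via the reflection principles of Section~\ref{sReflection}; package the data into additive systems and finish with Proposition~\ref{pARinput}, Lemmas~\ref{lTopdegree}--\ref{lConstantPointer}, and the Chebotarev/large-sieve inputs of Propositions~\ref{pLegendre}--\ref{pLegendre2}. You also correctly identify the two genuinely new algebraic inputs — the vanishing of the spin-type symbol (Theorem~\ref{involution spin are 0}) and the shifted self-pairing reflection (Theorem~\ref{main thm 2 on self-pairing}) — and the origin of the exponent $m^2 6^m$. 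This matches the paper's route.

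There are, however, genuine gaps. Most importantly, an additive system ``whose acceptance at every intermediate level records the conditions $(\sqrt d)\in 2^{i-1}\mathrm{Cl}$ and $\mathrm{rk}_{2^i}\mathrm{Cl}=n_i$'' cannot exist: the additivity relation (\ref{eAdditive}) is linear in the three cubes, whereas rank is not, so a rank condition cannot be the level set of an $F_T$. The paper instead fixes the full Pellian sequence of Artin pairings $\{\mathrm{Art}_k\}_{2\le k<m}$ (not just the rank sequence) as $C_\varnothing^{\textup{acc}}$, and the higher $F_T$ record raw-cocycle vanishing rather than ranks; rank conditions only re-emerge after aggregating over all pairings with the given rank in the proof that Theorem~\ref{tGeneric} implies Theorem~\ref{tBox}. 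Second, your plan to apply Proposition~\ref{pARinput} ``in one shot'' to encode simultaneously the Pell condition and the symmetry of $\mathrm{Art}_m$ is precisely what does not work: the paper first reduces, by orthogonality of characters (Theorem~\ref{tF}), to bounding a single character sum for each nontrivial linear functional $F$ on $\mathrm{Mat}(n_m+1,n_m,\FF_2)$, and then runs a four-way case analysis on $F$ (cases I--IV of Definition~\ref{dVariable}), choosing a different variable-index set $S$ and a different combination of reflection theorems in each case. Finally, you omit the genericity reduction (Theorem~\ref{tGeneric}, Definition~\ref{dGen}), which is needed both to bound $n_{\max}$ and — via Lemma~\ref{lVariable} — to guarantee that variable indices of the required Hamming type can actually be found, as well as the second-moment covering by the small product spaces $\widetilde{Z}$ in the passage from Theorem~\ref{tPrePoint} to Theorem~\ref{tBoxify}. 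Without these intermediate reductions the additive-system machinery cannot be set up.
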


We remark that the $m = 2$ case of the above theorem was already established in \cite[Theorem 6.1]{CKMP}. Therefore it remains to prove Theorem \ref{tRank} for $m \geq 3$. It is perhaps worth emphasizing once more that from the $16$-rank onwards, the class group behaves rather differently than for the $8$-rank. This leads to substantial differences between the reflection principles presented here and those in \cite{CKMP}. 

The assumption $m \geq 3$ means then that we do not have to reprove some of the results from \cite{CKMP}. We will use this assumption in particular when applying Theorem \ref{main thm 1 on self-pairing} (we remark that $m = 3$ corresponds to $s = 2$). Note that Theorem \ref{main thm 1 on self-pairing} is not correct when $s = 1$, which shows the qualatitive difference between the $16$-rank and the $8$-rank. This is certainly related to the fact that there are no governing fields for the $16$-rank, while there are governing fields for the $8$-rank. For a precise statement, see \cite[Theorem 3]{KM}, which builds on earlier work in \cite{FIMR}.

The error term from \cite{CKMP} was later substantially improved by Watkins \cite{Watkins2}. Let us now show that Theorem \ref{tRank} implies Theorem \ref{tPell}. The argument is very similar to the material in \cite[Appendix A]{KP3}.

\begin{proof}[Proof that Theorem \ref{tRank} implies Theorem \ref{tPell}]
We will show that \cite[Conjecture 3.4(i)]{Stevenhagen} and \cite[Conjecture 3.4(ii)]{Stevenhagen} hold. Then, as already argued immediately after \cite[Conjecture 3.4]{Stevenhagen}, we get Theorem \ref{tPell}. We also remark that \cite[Conjecture 3.4(ii)]{Stevenhagen} follows from the results of Fouvry--Kl\"uners \cite[Corollary 2]{FK1} (or, alternatively, our Theorem \ref{t4rank}), so it remains to verify \cite[Conjecture 3.4(i)]{Stevenhagen}. Define for every integer $m \in \Z_{\geq 0}$ the quantities
\[
P_2^-(m) = \liminf_{X \rightarrow \infty} \frac{|\mathcal{D}^-(X) \cap D_{2, m}(X)|}{|D_{2, m}(X)|}, \quad P_2^+(m) = \limsup_{X \rightarrow \infty} \frac{|\mathcal{D}^-(X) \cap D_{2, m}(X)|}{|D_{2, m}(X)|}.
\]
The content of \cite[Conjecture 3.4(i)]{Stevenhagen} is that
\begin{align}
\label{eSteheuristic}
P_2^-(m) = P_2^+(m) = \frac{1}{2^{m + 1} - 1}.
\end{align}
It follows from Theorem \ref{tRank} that $P_2^-(m) = P_2^+(m)$ for all $m$, and hence
\[
P_2(m) = \lim_{X \rightarrow \infty} \frac{|\mathcal{D}^-(X) \cap D_{2, m}(X)|}{|D_{2, m}(X)|}
\]
exists. From the Markov chain behavior in Theorem \ref{tRank}, we similarly deduce that
\[
P_3(m, n) = \lim_{X \rightarrow \infty} \frac{|\mathcal{D}^-(X) \cap D_{2, m}(X) \cap D_{3, n}(X)|}{|D_{2, m}(X) \cap D_{3, n}(X)|}
\]
exists and equals $P_2(n)$ for all pairs of integers $m \geq n \geq 0$. Now consider the identity
\[
\frac{|\mathcal{D}^-(X) \cap D_{2, m}(X)|}{|D_{2, m}(X)|} = \sum_{n = 0}^m \frac{|\mathcal{D}^-(X) \cap D_{2, m}(X) \cap D_{3, n}(X)|}{|D_{2, m}(X) \cap D_{3, n}(X)|} \cdot \frac{|D_{2, m}(X) \cap D_{3, n}(X)|}{|D_{2, m}(X)|}.
\]
Taking $X \rightarrow \infty$, we obtain
\begin{align}
\label{eRecursionP2m}
P_2(m) = \sum_{n = 0}^m P_3(m, n) \cdot \frac{P(m, m, n)}{2^m} = \sum_{n = 0}^m P_2(n) \cdot \frac{P(m, m, n)}{2^m}.
\end{align}
We recall the identity \cite[eq. (A.2)]{KP3}
\[
\frac{1}{2^{m + 1} - 1} = \sum_{n = 0}^m \frac{1}{2^{n + 1} - 1} \cdot \frac{P(m, m, n)}{2^m}.
\]
Together with equation (\ref{eRecursionP2m}) and $P_2(0) = 1$, this implies equation (\ref{eSteheuristic}).
\end{proof}

\subsection{Boxes}
To prove Theorem \ref{tRank}, we are going to cover $\mathcal{D}(N)$ by boxes $X$ with some desirable properties.

\begin{theorem}
\label{tBox}
There are real numbers $A, N_0 > 0$ such that for all reals $N > N_0$, all integers $m \geq 3$, all sequences of integers $n_2, \dots, n_{m + 1} \geq 0$, all integers $r$ satisfying equation (\ref{eErdosKac}), all integers $0 \leq k \leq r$ and all excellent $(N, k, r)$-boxes $X$
\[
\left|\left|X \cap \bigcap_{i = 2}^{m + 1} D_{i, n_i}(N)\right| - \frac{P(n_m, n_m, n_{m + 1})}{2^{n_m}} \cdot \left|X \cap \bigcap_{i = 2}^m D_{i, n_i}(N) \right|\right| \leq \frac{A \cdot |X|}{(\log \log \log \log N)^{\frac{c}{m^26^m}}}.
\]
\end{theorem}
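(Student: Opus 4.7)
The plan is to adapt Smith's strategy from \cite{Smith} using the reflection principles from Sections \ref{profitability} and \ref{sReflection}. For $d \in \bigcap_{i=2}^{m} D_{i, n_i}(N)$, let $c_1, \dots, c_{n_m}$ be a basis of $2^{m-1}\textup{Cl}(\Q(\sqrt{d}))[2^m]$ and $\chi_1, \dots, \chi_{n_m}$ a basis of $2^{m-1}\textup{Cl}(\Q(\sqrt{d}))^{\vee}[2^m]$. The condition $d \in D_{m+1, n_{m+1}}(N)$ is equivalent to $\langle (\sqrt{d}), \chi_j \rangle_{\textup{Art}_m(d)} = 0$ for all $j$ together with the $n_m \times n_m$ matrix $A(d) = (\langle c_i, \chi_j \rangle_{\textup{Art}_m(d)})_{i,j}$ over $\FF_2$ having kernel of dimension exactly $n_{m+1}$. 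Thus the theorem reduces to establishing the expected joint distribution of this vector and matrix over $d \in X$ restricted to $\bigcap_{i=2}^{m} D_{i, n_i}$.

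For each such $d \in X$, I would inductively construct a canonical system of raw cocycles $\psi_m(d,j)$ lifting $\chi_j$ and ideal classes representing $c_i$, obtained as the cocycles and ideals attached to profitable triples (Theorem \ref{main theorem on profitable triples}) built step-by-step from the structured prime factorization of $d$. I would then define additive systems $\mathfrak{A}$ on $(X, S)$ with $S \subseteq \{k+1, \dots, r\}$, whose acceptable set $C_\varnothing^{\textup{acc}}$ consists of those base points admitting this canonical system, and whose function $F_T$ records the cocycle sum $\sum_{x \in \bar{x}(\varnothing)} \psi_{|T|+1}(x)$ restricted to a fixed auxiliary fiber. Additivity (\ref{eAdditive}) follows from Proposition \ref{key calculation of cocycles}, and the codimension observation in Remark \ref{higher codimension} forces $F_T$ to take values in a bounded $\FF_2$-vector space, so that by Proposition \ref{pAS} a positive proportion of $\textup{Cube}(X, S)$ lies in $C_S^{\textup{acc}}$.

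Applying Lemma \ref{lFindBox} I would then extract from $C(\mathfrak{A})$ a product sub-cube $Y_1 \times \dots \times Y_r \subseteq X$ on which Proposition \ref{pARinput} produces a well-chosen target element $g$. The crucial reflection step is that for each accepted $\bar{x}$ one of Theorems \ref{thm: reflection principle for infinity}, \ref{main thm 1 on self-pairing}, \ref{main thm 2 on self-pairing}, \ref{main thm: minimal triples}, or \ref{main thm: governing triples} applies to express $\sum_{x \in \bar{x}(\varnothing)} \langle \cdot, \cdot\rangle_{\textup{Art}_m(x)}$ as a Frobenius value in an explicit multiquadratic field extension attached to expansion maps $\phi_{\{p_i(1)p_i(2) : i \in T\}; \star}(\mathfrak{G})$. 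The hypothesis $m \geq 3$ enters precisely because Theorem \ref{main thm 1 on self-pairing} requires $s \geq 2$, i.e.\ a cube of base dimension at least $2$. Chebotarev equidistribution (Proposition \ref{pLegendre2}), combined with the Galois-theoretic structure of the relevant fields computed in Lemmata \ref{lTopdegree}--\ref{lConstantPointer}, then gives equidistribution of this Frobenius value to within an error $t_{k+1}^{-c_8}$, which when fed into Proposition \ref{pARinput} yields the desired distribution entry by entry, and in aggregate the joint law $P(n_m, n_m, n_{m+1})/2^{n_m}$.

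The main technical obstacle is realising, inside a single additive system, the full list of conditions in Definition \ref{def: profitable triple} (splitting in the field of definition of $\phi_{\cdots}(\mathfrak{G})$, behavior at $(2)$ and $\infty$) together with those of Definition \ref{def:-1-profitable} (in particular the splitting of $(1+i)$ over $\Q(i)$ in $M(\phi_{\cdots;-1}(\mathfrak{G}))$ and the existence of $\phi_{\{p_i(1)p_i(2)\};-1}(\mathfrak{G})$), and to do so inductively over the $m-2$ levels $3, 4, \dots, m$ without losing too much density. The $(1+i)$-condition has no counterpart in the imaginary quadratic family of \cite{Smith} and is forced here by the need for Theorem \ref{thm: reflection principle for infinity} to control the divisibility of $(\sqrt{d})$ in the narrow class group. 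The accumulated density loss per level, governed by the $3^{|S|}$ of Proposition \ref{pAS} and the $6^r$ of Proposition \ref{pARinput}, is what produces the exponent $c/(m^2 6^m)$ in the final error term.
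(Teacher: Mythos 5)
Your high-level picture — encode the last Artin pairing as a random-looking vector and matrix, use reflection principles to express cube-sums of pairings as Frobenius values in explicit fields, then feed this into the additive-system machinery and Chebotarev — is broadly the right skeleton, but there are several structural gaps that would stop the proof from going through as written.

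First, your plan applies Lemma~\ref{lFindBox} to extract a single product sub-cube $Y_1 \times \dots \times Y_r \subseteq X$ from $C(\mathfrak{A})$ and then runs Proposition~\ref{pARinput} on that cube. But proving equidistribution on one such extracted sub-cube does not control the count over all of $X \cap \bigcap D_{i,n_i}$: the sub-cube has no a priori relation to the rest of $X$, and the density guarantee from Lemma~\ref{lFindBox} only tells you a small cube \emph{exists}, not that the cubes tile or even cover a positive proportion of the original set. What is actually needed is a \emph{covering} of the relevant subset of $X$ by many "great product spaces" $\widetilde{Z_i}$, together with a second-moment computation (showing that almost every point lies in roughly the expected number of them), so that the equidistribution you prove on each $\widetilde{Z_i}$ averages back to equidistribution on the whole. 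Lemma~\ref{lFindBox} plays a role, but only to bound the density of a "bad" leftover set, not to produce the cube you then work on.

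Second, you jump directly from "show equidistribution of the pairing vector and matrix" to "apply Proposition~\ref{pARinput}", but Proposition~\ref{pARinput} concerns a \emph{single} $\FF_2$-valued function $F$ on $C_\varnothing^{\textup{acc}}$, not a matrix-valued one. The step that is missing is orthogonality of characters: one must first decompose the desired count over the space of bilinear maps $A_m \times B_m \to \FF_2$ with $R$ in the right kernel, reducing to bounding exponential sums of the form $\sum_x \iota(F(\textup{Art}_{m,x}))$ for every non-trivial linear functional $F$ on that space. Only after that Fourier step does one have the scalar function that Proposition~\ref{pARinput} requires. This decomposition then forces a case analysis on the "shape" of $F$ (whether the coefficient hitting the diagonal, off-diagonal-symmetric part, or last row is nonzero), because the pairing $\textup{Art}_{m,x}$ is symmetric on $C_m\times C_m$ and has $R$ in the right kernel, so not all entries of the matrix are independent. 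Each shape requires a \emph{different} choice of variable index set $S$ and a different combination of reflection principles; your proposal treats the whole matrix uniformly and the choice of $S$ as generic, which would not give the claimed distribution $P(n_m,n_m,n_{m+1})/2^{n_m}$ (in particular the symmetric self-pairing entries and the Pell entry contribute in a qualitatively different way from the generic off-diagonal entries).

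Third, the conditions of Definitions~\ref{def: profitable triple} and~\ref{def:-1-profitable} are not realized as part of the additive system on all of $X$ as you suggest, and trying to do so is likely to be infeasible; they are instead imposed at the level of the small covering spaces $\widetilde{Z}$ (of dimension $|S'|\le m$) where there is enough room to demand simultaneous existence of all the relevant expansion maps $\phi_{\bar z;\star}$, splitting of the primes in $Q$, of $(2)$ and of $\infty$, and the $(1+i)$-condition. The additive system that eventually enters Proposition~\ref{pARinput} lives on $\widetilde Z$, not on $X$, and the bound $|A_T| \le 2^{(n_{\max}+10)(n_{\max}+2m+10)}$ comes from tracking finitely many quadratic-character evaluations on generators and auxiliary indices, not just from Remark~\ref{higher codimension}.

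You do correctly identify the role of $m\ge 3$ (via $s\ge 2$ in Theorem~\ref{main thm 1 on self-pairing}) and the origin of the exponent $c/(m^2 6^m)$ from the $3^{|S|}$ and $6^r$ losses, and your identification of the $(1+i)$-splitting as the new difficulty relative to the imaginary quadratic case is apt. But without the orthogonality step, the case analysis for $F$, and the covering/second-moment argument, the proof as sketched does not close.
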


\begin{proof}[Proof that Theorem \ref{tBox} implies Theorem \ref{tRank}.]
By equation (\ref{eBadr}) of Theorem \ref{tSquarefree} we have
\[
\bigcup_{|r -  \frac{1}{2}\log \log N| \geq (\log \log N)^{2/3}} |\mathcal{D}_r(N)| \ll \frac{N}{(\log \log N)^{1/100}}.
\]
Hence, by Remark \ref{rBox}, it suffices to prove that there exist real numbers $A', N_0 > 0$ such that
\begin{multline*}
\left|\left|\mathcal{D}_r(N) \cap \bigcap_{i = 2}^{m + 1} D_{i, n_i}(N)\right| - \frac{P(n_m, n_m, n_{m + 1})}{2^{n_m}} \cdot \left|\mathcal{D}_r(N) \cap \bigcap_{i = 2}^m D_{i, n_i}(N) \right|\right| \\
\leq \frac{A' \cdot |\mathcal{D}_r(N)|}{(\log \log \log \log N)^{\frac{c}{m^26^m}}} + A' \cdot \left|\mathcal{D}_r(N) - \mathcal{D}_r\left(N\left(1 + \frac{1}{C_{\text{compr}}}\right)^{-r}\right)\right|
\end{multline*}
for all real numbers $N > N_0$ and all positive integers $r$ satisfying equation (\ref{eErdosKac}). Now apply Theorem \ref{tToBoxes} twice with
\[
V = \mathcal{D}_r(N) \cap \bigcap_{i = 2}^{m + 1} D_{i, n_i}(N) \quad \text{ and } \quad V' = \mathcal{D}_r(N) \cap \bigcap_{i = 2}^m D_{i, n_i}(N)
\]
and with $W$ the maximal subset of $\mathcal{D}_r(N)$ that intersects trivially with all the $W(d_i)$ for $d_i > D_{\text{Siegel}}$ and also intersects trivially with the set
\[
\{d \in \mathcal{D}_r(N) : d \text{ is not } N\text{-nice}\}.
\]
To apply Theorem \ref{tToBoxes}, we need a lower bound for $|W|$ and a good estimate for $|V \cap X|$ and $|V' \cap X|$ for boxes $X$ that intersect $W$ non-trivially (note that such boxes are $(N, k, r)$-excellent). From Theorem \ref{tSquarefree} and Proposition \ref{pSiegel} we get the required lower bound for $W$. Let $X$ be a box that intersects $W$ non-trivially. Repeated application of Theorem \ref{tBox} yields
\begin{align*}
\left||V \cap X| - \left(\prod_{i = 2}^m \frac{P(n_i, n_i, n_{i + 1})}{2^{n_i}}\right) \cdot |X \cap D_{2, n_2}(N)|\right| &\leq \sum_{i = 2}^m \frac{A \cdot |X|}{(\log \log \log \log N)^{\frac{c}{i^2 6^i}}} \\
&\leq \frac{2A \cdot |X|}{(\log \log \log \log N)^{\frac{c}{m^2 6^m}}}
\end{align*}
for $N$ sufficiently large, and similarly for $|V' \cap X|$. Here we used that
\[
(\log \log \log \log N)^{\frac{c}{m^2 6^m}} \geq 2,
\]
which we may always assume, since otherwise Theorem \ref{tRank} is trivially true. Inserting the estimate for $|X \cap D_{2, n_2}(N)|$ from equation (\ref{e4rankBox}) shows that
\[
\left||V \cap X| - \left(\prod_{i = 2}^m \frac{P(n_i, n_i, n_{i + 1})}{2^{n_i}}\right) \cdot P_{\text{Sym}}(r - 1, n_2) \cdot |X|\right| \leq \frac{3A \cdot |X|}{(\log \log \log \log N)^{\frac{c}{m^2 6^m}}}
\]
for $N$ sufficiently large, and likewise for $|V' \cap X|$. This finishes the proof of the reduction step.
\end{proof}

\subsection{Genericity}
We will split $X$ as the union over $X(a)$ and then prove equidistribution for most $a$. If $X$ is an excellent $(N, k, r$)-box, then there exists $x \in X$ and an integer $k_{\text{gap}}$ satisfying $\frac{r^{1/2}}{2} < k_{\text{gap}} + 1 < \frac{r}{2}$ and
\begin{align}
\label{ekgap}
\log x_{k_{\text{gap}} + 1} \geq (\log \log x_{k_{\text{gap}} + 1})^2 \cdot \log \log \log N \cdot \sum_{j = 1}^{k_{\text{gap}}} \log x_j,
\end{align}
where $x = (x_1, \dots, x_r)$. From now on fix such a choice of $k_{\text{gap}}$.

\begin{mydef}
Let $X = X_1 \times \dots \times X_r$ be a product space, let $S \subseteq [r]$ and let $Q \in \prod_{i \in S} X_i$. We say that $Q$ is $a$-consistent with $a: M_r \sqcup M_{r, \varnothing} \rightarrow \FF_2$ if
\[
\left(\frac{\pi_i(Q)}{\pi_j(Q)}\right) = \iota(a(i, j)) \textup{ for all distinct } i, j \in S.
\]
We define $X(a, Q)$ to be the subset of $x \in X(a)$ for which $\pi_S(x) = Q$. Finally, given $j \in [r] - S$, let $X_j(a, Q)$ be the subset of $p \in X_j$ with
\[
\left(\frac{\pi_i(Q)}{p}\right) = \iota(a(i, j))
\]
for all $i \in S$.
\end{mydef}

\begin{mydef}
\label{dGen}
Let $r$ be a positive integer and let $a: M_r \sqcup M_{r, \varnothing} \rightarrow \FF_2$. Recall that we have associated to $a: M_r \sqcup M_{r, \varnothing} \rightarrow \FF_2$ a $r \times r$ matrix that we call $A(a)$, see equation (\ref{eMatrixa}). The left and right kernel of $A(a)$ are naturally subspaces of $\FF_2^r$, and viewed as such they coincide and contain the element $R := (1, \dots, 1)$. Let us call this subspace $V_{a, 2}$. We say that $a: M_r \sqcup M_{r, \varnothing} \rightarrow \FF_2$ is $(N, m)$-generic for a box $X$ if
\begin{itemize}
\item putting
\[
n_{\textup{max}} := \left \lfloor \sqrt{\frac{10c}{m^2 6^m} \log \log \log \log \log N} \right \rfloor,
\]
we have 
\begin{align}
\label{eMax4rank}
\dim V_{a, 2} \leq n_{\textup{max}};
\end{align}
\item we have for all $j > k$
\begin{align}
\label{eXjaQ}
|X_j(a, Q)| \geq \frac{|X_j|}{(\log s_{k + 1})^{100}},
\end{align}
where $Q$ is the unique element of $X_1 \times \dots \times X_k$. Furthermore, $Q$ is $a$-consistent;
\item setting $\alpha_{\textup{pre}}$ to be the number of integers $i$ satisfying $k_{\textup{gap}}/2 \leq i \leq k_{\textup{gap}}$, we have for all elements $w \in V_{a, 2} \setminus \langle R \rangle$ and all $j \in \FF_2$
\begin{align}
\label{eGen1}
\left|\left|\left\{i \in [r] : \frac{k_{\textup{gap}}}{2} \leq i \leq k_{\textup{gap}} \textup{ and } \pi_i(w) = j\right\}\right| - \frac{\alpha_{\textup{pre}}}{2}\right| \leq \frac{k_{\textup{gap}}}{\log \log \log N}
\end{align}
and
\begin{align}
\label{eGen2}
\left|\left|\left\{i \in [r] : k_{\textup{gap}} < i \leq 2k_{\textup{gap}} \textup{ and } \pi_i(w) = j\right\}\right| - \frac{k_{\textup{gap}}}{2}\right| \leq \frac{k_{\textup{gap}}}{\log \log \log N}.
\end{align}
\end{itemize}
\end{mydef}

Equation (\ref{eGen1}) and equation (\ref{eGen2}) give us very fine control over $V_{a, 2}$ as the following lemma shows, see also \cite[Lemma 6.9]{CKMP}.

\begin{lemma}
\label{lVariable}
Suppose that $a: M_r \sqcup M_{r, \varnothing} \rightarrow \FF_2$ is $(N, m)$-generic for a box $X$. Suppose that $w_1, \dots, w_d, R \in V_{a, 2}$ are linearly independent. Then we have for all $\mathbf{v} \in \FF_2^d$
\[
\left|\left|\left\{i \in [r] : \frac{k_{\textup{gap}}}{2} \leq i \leq k_{\textup{gap}} \textup{ and } \pi_i(w_j) = \pi_j(\mathbf{v}) \textup{ for all } 1 \leq j \leq d\right\}\right| - \frac{\alpha_{\textup{pre}}}{2^d}\right| \leq \frac{3^d \cdot k_{\textup{gap}}}{\log \log \log N}
\]
and similarly
\[
\left|\left|\left\{i \in [r] : k_{\textup{gap}} < i \leq 2k_{\textup{gap}} \textup{ and } \pi_i(w_j) = \pi_j(\mathbf{v}) \textup{ for all } 1 \leq j \leq d\right\}\right| - \frac{k_{\textup{gap}}}{2^d}\right| \leq \frac{3^d \cdot k_{\textup{gap}}}{\log \log \log N}.
\]
\end{lemma}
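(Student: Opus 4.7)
The plan is to use a direct Fourier/inclusion--exclusion expansion of the indicator function, applying the hypotheses (\ref{eGen1}) and (\ref{eGen2}) of Definition \ref{dGen} to each non-trivial character sum that arises. Since the two statements in the lemma are proved by the exact same argument (with (\ref{eGen1}) replaced by (\ref{eGen2}) and $\alpha_{\textup{pre}}$ replaced by $k_{\textup{gap}}$), I shall only describe the argument in the first case.

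First, I set $I := \{i \in [r] : k_{\textup{gap}}/2 \leq i \leq k_{\textup{gap}}\}$, so that $|I| = \alpha_{\textup{pre}}$. For $S \subseteq [d]$, I introduce the notation $w_S := \sum_{j \in S} w_j \in V_{a, 2}$, and $v_S := \sum_{j \in S} \pi_j(\mathbf{v}) \in \FF_2$. The crucial observation is that, by our linear independence hypothesis on $w_1, \dots, w_d, R$, we have $w_S \neq 0$ and $w_S \neq R$ whenever $S \neq \varnothing$; hence $w_S \in V_{a, 2} \setminus \langle R \rangle$ for every non-empty $S$, which is precisely the setup of the hypothesis (\ref{eGen1}).

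Next, I write the indicator function of the event $\{\pi_i(w_j) = \pi_j(\mathbf{v}) \text{ for all } j \in [d]\}$ as
\[
\prod_{j = 1}^d \frac{1 + (-1)^{\pi_i(w_j) + \pi_j(\mathbf{v})}}{2} = \frac{1}{2^d} \sum_{S \subseteq [d]} (-1)^{\pi_i(w_S) + v_S}.
\]
Summing this identity over $i \in I$, the term $S = \varnothing$ contributes exactly $\alpha_{\textup{pre}}/2^d$, while for each non-empty $S$ we obtain a term whose absolute value is bounded by
\[
\frac{1}{2^d}\left|\sum_{i \in I} (-1)^{\pi_i(w_S)}\right|.
\]
Since $w_S \in V_{a, 2} \setminus \langle R \rangle$, an application of (\ref{eGen1}) with $j = 0$ and $j = 1$ shows that both $|\{i \in I : \pi_i(w_S) = 0\}|$ and $|\{i \in I : \pi_i(w_S) = 1\}|$ differ from $\alpha_{\textup{pre}}/2$ by at most $k_{\textup{gap}}/\log\log\log N$. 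Their difference is therefore bounded by $2 k_{\textup{gap}}/\log\log\log N$, so each non-empty $S$ contributes an error at most $2 k_{\textup{gap}}/(2^d \log\log\log N)$. Summing over the $2^d - 1$ non-empty subsets gives a total error at most $2 k_{\textup{gap}}/\log\log\log N$, which is far smaller than the claimed $3^d \cdot k_{\textup{gap}}/\log\log\log N$.

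There is no essential obstacle here: the only point requiring care is confirming that $w_S$ never lies in $\langle R \rangle$, so that (\ref{eGen1}) can be applied. This follows from the linear independence of $w_1, \dots, w_d, R$, which is exactly what the hypothesis of the lemma provides. The second inequality is identical in structure, replacing the range $I$ by $\{i : k_{\textup{gap}} < i \leq 2 k_{\textup{gap}}\}$, using (\ref{eGen2}) in place of (\ref{eGen1}), and noting that the analogous $\alpha_{\textup{pre}}$ for this range is $k_{\textup{gap}}$.
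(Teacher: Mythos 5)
Your proof is correct and takes a genuinely different route from the paper. The paper proceeds by induction on $d$: the base cases $d = 0, 1$ follow from the definition of $\alpha_{\textup{pre}}$ and from equation (\ref{eGen1}), and the inductive step uses a clever averaging over the three vectors $\mathbf{v}_1, \mathbf{v}_2, \mathbf{v}_3$ that agree with $\mathbf{v}$ on the first $d-2$ coordinates, reducing to the induction hypothesis applied to $w_1, \dots, w_{d-2}$ and to $w_1, \dots, w_{d-2}, w$ for $w \in \{w_{d-1}, w_d, w_{d-1} + w_d\}$; this is what produces the $3^d$ factor in the bound. Your Fourier/inclusion--exclusion expansion is more direct: you write the indicator as $2^{-d}\sum_{S \subseteq [d]} (-1)^{\pi_i(w_S) + v_S}$, peel off the $S = \varnothing$ term as the main term $\alpha_{\textup{pre}}/2^d$, and bound each of the $2^d - 1$ non-trivial character sums by $2 k_{\textup{gap}}/\log\log\log N$ using (\ref{eGen1}) directly (the key observation being the same in both arguments: linear independence of $w_1, \dots, w_d, R$ guarantees $w_S \notin \langle R \rangle$ for every non-empty $S$, so the genericity hypothesis applies). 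Your approach is cleaner and in fact yields a sharper, $d$-independent error bound of $2 k_{\textup{gap}}/\log\log\log N$ in place of the paper's $3^d k_{\textup{gap}}/\log\log\log N$; the paper's induction trades this sharpness for a slightly more elementary bookkeeping, and the weaker constant is harmless because the lemma is only invoked with $d \ll n_{\max}$, which is small enough for either bound to suffice.
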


\begin{proof}
Let us prove the first part by induction on $d$, the second part follows using the same argument. For $d = 0$ the statement follows immediately from the definition of $\alpha_{\text{pre}}$, while for $d = 1$ the statement falls as a consequence of equation (\ref{eGen1}). Now suppose that $d > 1$ and define for $\mathbf{w} \in \FF_2^d$
\[
f(\mathbf{w}) = \left|\left\{i \in [r] : \frac{k_{\textup{gap}}}{2} \leq i \leq k_{\textup{gap}} \textup{ and } \pi_i(w_j) = \pi_j(\mathbf{w}) \textup{ for all } 1 \leq j \leq d\right\}\right|.
\]
Let $\mathbf{v} \in \FF_2^d$ be given and define $\mathbf{v}_1, \mathbf{v}_2, \mathbf{v}_3$ as the unique unordered triple of distinct vectors that have the same projection as $\mathbf{v}$ on the first $d - 2$ coordinates but are not equal to $\mathbf{v}$. We have the inequalities
\begin{align*}
2\left|f(\mathbf{v}) - \frac{\alpha_{\textup{pre}}}{2^d}\right| &\leq \left|\left(\sum_{i = 1}^3 f(\mathbf{v}) + f(\mathbf{v}_i)\right) - \frac{3\alpha_{\textup{pre}}}{2^{d - 1}}\right| + \left|\frac{\alpha_{\textup{pre}}}{2^{d - 2}} - f(\mathbf{v}) - \sum_{i = 1}^3 f(\mathbf{v}_i)\right| \\
&\leq \left(\sum_{i = 1}^3 \left|f(\mathbf{v}) + f(\mathbf{v_i}) - \frac{\alpha_{\textup{pre}}}{2^{d - 1}}\right|\right) + \left|\frac{\alpha_{\textup{pre}}}{2^{d - 2}} - f(\mathbf{v}) - \sum_{i = 1}^3 f(\mathbf{v}_i)\right| \\
&\leq \frac{(3 \cdot 3^{d - 1} + 3^{d - 2})\cdot k_{\textup{gap}}}{\log \log \log N},
\end{align*}
where the last inequality follows from the induction hypothesis applied to $w_1, \dots, w_{d - 2}$ and $w_1, \dots, w_{d - 2}, w$ with $w \in \{w_{d - 1}, w_d, w_{d - 1} + w_d\}$.
\end{proof}

Instead of controlling the $2^k$-ranks of the narrow class group, we will control the full Artin pairing of the narrow class group, which is a finer invariant than just the sequence of $2^k$-ranks. We will make this precise in our next definition.

\begin{mydef}
Let $X$ be a box and let $a: M_r \sqcup M_{r, \varnothing} \rightarrow \FF_2$. A sequence of bilinear pairings $\{\textup{Art}_i: A_i \times B_i \rightarrow \FF_2\}_{2 \leq i < m}$, is called valid if
\begin{itemize}
\item $A_2 = B_2 = V_{a, 2}$;
\item the left kernel of $\textup{Art}_i$ is $A_{i + 1}$ and the right kernel of $\textup{Art}_i$ is $B_{i + 1}$ for $2 \leq i < m - 1$;
\item $R$ is in the right kernel of $\textup{Art}_i$ for $2 \leq i < m$.
\end{itemize}
The sequence is called Pellian if furthermore $R$ is in the left kernel of $\textup{Art}_i$ for all $2 \leq i < m$. For such a sequence of bilinear pairings, we define $A_m$ to be the left kernel of $\textup{Art}_{m - 1}$ and $B_m$ to be the right kernel of $\textup{Art}_{m - 1}$.

Meanwhile for $x \in X$, we get for every integer $k \geq 1$ a pairing
\[
\textup{Art}_{k, x}: 2^{k - 1} \textup{Cl}(\Q(\sqrt{x}))[2^k] \times 2^{k - 1} \textup{Cl}^\vee(\Q(\sqrt{x}))[2^k] \rightarrow \FF_2,
\]
see Subsection \ref{ssArtinpairing}. Genus theory gives natural surjections $\FF_2^r \rightarrow \textup{Cl}(\Q(\sqrt{x}))[2]$ and $\FF_2^r \rightarrow \textup{Cl}^\vee(\Q(\sqrt{x}))[2]$. More explicitly, the map $\FF_2^r \rightarrow \textup{Cl}(\Q(\sqrt{x}))[2]$ is given by
\[
(e_1, \dots, e_r) \mapsto \textup{Up}_{\Q(\sqrt{x})/\Q}\left(\prod_{i = 1}^r p_i^{e_i}\right)
\]
and the map $\FF_2^r \rightarrow \textup{Cl}^\vee(\Q(\sqrt{x}))[2]$ is given by
\[
(e_1, \dots, e_r) \mapsto \sum_{i = 1}^r e_i \chi_{p_i},
\]
where $x = p_1 \cdot \ldots \cdot p_r$ with $p_1 < \dots < p_r$. Pulling back, this induces a pairing
\[
\FF_2^r \times \FF_2^r \rightarrow \FF_2,
\]
and for $x \in X(a)$ the left and right kernel both equal $V_{a, 2}$. Then we get natural surjections $V_{a, 2} \rightarrow  2\textup{Cl}(\Q(\sqrt{x}))[4]$ and $V_{a, 2} \rightarrow 2\textup{Cl}^\vee(\Q(\sqrt{x}))[4]$ and hence a pairing
\[
V_{a, 2} \times V_{a, 2} \rightarrow \FF_2.
\]
Continuing this process gives for each $x \in X(a)$ a valid sequence of Artin pairings, which we will also call $\textup{Art}_{k, x}$. Furthermore, the negative Pell equation is soluble for $x$ if and only if its sequence of Artin pairings is Pellian.

We then define for a sequence of valid Artin pairings $\{\textup{Art}_i\}_{2 \leq i < m}$
\[
X(a, \{\textup{Art}_i\}_{2 \leq i < m}) := \left\{x \in X(a) : \textup{ the Artin pairing of } x \textup{ equals } \{\textup{Art}_i\}_{2 \leq i < m}\right\}.
\]
\end{mydef}

\noindent We will now reduce to the case where $a: M_r \sqcup M_{r, \varnothing} \rightarrow \FF_2$ is $(N, m)$-generic for $X$ and the first $m - 1$ Artin pairings are given. The next theorem essentially says that the Artin pairing is a random pairing on $A_m \times B_m$ except that $R$ must always be in the right kernel.

\begin{theorem}
\label{tGeneric}
There are real numbers $A, N_0 > 0$ such that for all reals $N > N_0$, all integers $m \geq 3$, all integers $r$ satisfying equation (\ref{eErdosKac}), all integers $0 \leq k \leq r$, all excellent $(N, k, r)$-boxes $X$, all $(N, m)$-generic $a: M_r \sqcup M_{r, \varnothing} \rightarrow \FF_2$ for $X$, all Pellian sequences of Artin pairings $\{\textup{Art}_k\}_{2 \leq k < m}$ and all Artin pairings $\textup{Art}_m: A_m \times B_m \rightarrow \FF_2$ with $R$ in the right kernel
\[
\left|\left|X(a, \{\textup{Art}_k\}_{2 \leq k \leq m})\right| - \frac{|X(a, \{\textup{Art}_k\}_{2 \leq k < m})|}{2^{\dim_{\FF_2} A_m \cdot (-1 + \dim_{\FF_2} A_m)}}\right| \leq \frac{A \cdot |X(a)|}{(\log \log \log \log N)^{\frac{100c}{m6^m}}}.
\]
\end{theorem}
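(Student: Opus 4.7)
\medskip

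\noindent\emph{Proof plan for Theorem \ref{tGeneric}.}

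The plan is to reduce equidistribution of $\textup{Art}_m$ to a collection of character sum estimates, then control each such character sum via an additive system whose top-level $\Sigma F$ is computed by the reflection principles of Section \ref{sReflection} and equidistributed by Chebotarev. First I would use $\FF_2$-Fourier analysis on the finite set of Artin pairings $\textup{Art}_m : A_m \times B_m \to \FF_2$ vanishing on $\{*\} \times \langle R \rangle$. For each non-zero $T$ in the dual of this space, $T$ is a linear combination of evaluation functionals $(v, w) \mapsto \textup{Art}_{m, x}(v, w)$, and so the proof reduces to bounding, for each such $T$,
\[
S_T \;:=\; \sum_{x \in X(a, \{\textup{Art}_k\}_{2 \leq k < m})} (-1)^{\sum_{k} \textup{Art}_{m, x}(v_k, w_k)},
\]
where each $v_k \in A_m$ corresponds via genus theory to a class of the form $\textup{Up}_{\Q(\sqrt{x})/\Q}(b_k)$ (or to $(\sqrt{x})$ when $v_k$ is in the direction of $R$), and each $w_k \in B_m$ to a character $\chi_{a_k}$; the coefficients $a_k, b_k$ are divisors built from coordinates of $x$ via the basis encoding of $V_{a, 2}$.

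Second, for a carefully chosen $S \subseteq \{k_{\textup{gap}} + 1, \dots, 2k_{\textup{gap}}\}$ of size $|S| \asymp (\log \log \log \log N)^{1/(m6^m)}$, I would set up an $(a_{\mathfrak{A}}, S)$-acceptable additive system $\mathfrak{A} = (C_T, C_T^{\textup{acc}}, F_T, A_T)_{T \subseteq S}$ on $X$ such that $C_\varnothing^{\textup{acc}}$ equals $X(a, \{\textup{Art}_k\}_{2 \leq k < m})$. For $T \subsetneq S$, the map $F_T$ encodes the intermediate Artin pairing obstructions that must vanish in order for the corresponding restricted cube to carry raw cocycles and normalized expansion maps compatible with the prescribed pairings $\textup{Art}_2, \dots, \textup{Art}_{m-1}$; these $A_T$ are $\FF_2$-vector spaces of controlled dimension since each pairing lands in $\FF_2$. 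The additivity identity (\ref{eAdditive}) follows from the bilinearity of the Artin pairing and the linearity of raw cocycles under pointwise addition, using Proposition \ref{key calculation of cocycles}. For the top level $T = S$, $F_S(\bar x)$ encodes the sum $\sum_{x \in \bar x(\varnothing)} (\text{exponent in } S_T)$ reduced mod $2$. The density of $C_\varnothing^{\textup{acc}}$ in $X$ is controlled by Proposition \ref{pAS} together with the genericity conditions (\ref{eXjaQ}), (\ref{eGen1}), (\ref{eGen2}) and Lemma \ref{lVariable}, which ensure the intermediate pairings genuinely vanish on a positive proportion of cubes. Proposition \ref{pARinput} then supplies an $g \in \textup{Add}(X, S)$ such that matching $\Sigma F_S = g$ on $C(\mathfrak{A})$ yields equidistribution of $F_S$ with error $\epsilon$; we choose $\epsilon$ of the target size.

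Third, to identify $\Sigma F_S(\bar x)$ for $\bar x \in C(\mathfrak{A})$, I would use the reflection principles of Section \ref{sReflection}, applied term by term to each constituent pairing value appearing in the exponent of $S_T$:
\begin{itemize}
\item For a pure cross pairing $\langle \textup{Up}_{\Q(\sqrt{x})/\Q}(b_k), \chi_{a_k} \rangle_{m, x}$ with $a_k, b_k$ independent divisors of $x$, extract a governing triple from $\bar x$ and apply Theorem \ref{main thm: governing triples}, obtaining $\sum_{p \mid b_k} \phi_{\bullet; \bullet}(\mathfrak{G})(\textup{Frob}(p))$.
\item For a pairing $\langle (\sqrt{x}), \chi_{a_k} \rangle_{m, x}$ with $v_k$ in the $R$-direction, extract a $-1$-profitable triple and apply Theorem \ref{thm: reflection principle for infinity}, yielding a Frobenius sum in $L(\phi_{\bullet; -1}(\mathfrak{G}))$.
\item For a self-pairing $\langle \textup{Up}_{\Q(\sqrt{x})/\Q}(a_k), \chi_{a_k} \rangle_{m, x}$ apply Theorem \ref{main thm 1 on self-pairing}, which collapses this term to zero (this is where $m \geq 3$, i.e. $s \geq 2$, is crucial).
\item For mixed terms of the form $\langle (\sqrt{x})/\textup{Up}_{\Q(\sqrt{x})/\Q}(a_k), \chi_{x/a_k}\rangle_{m, x}$ apply Theorem \ref{main thm 2 on self-pairing}.
\end{itemize}
In each case the hypotheses (profitability, $-1$-profitability, Pellian expansion) must be verified; this is where the Pellian assumption on $\{\textup{Art}_k\}_{2 \leq k < m}$ enters, since $(\sqrt{x}) \in 2^{k-1}\textup{Cl}(\Q(\sqrt x))$ at each level is exactly what allows inductive construction of the raw cocycles $\psi_m(x)$ on the faces of $\bar x$, and the vanishing of $F_{T'}$ for $T' \subset S$ guarantees that successive lifts patch across the cube via Theorem \ref{main theorem on profitable triples} and Theorems \ref{main thm: minimal triples}, \ref{main thm: governing triples}. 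After these applications, $\Sigma F_S(\bar x)$ becomes a sum of Frobenius symbols $\textup{Frob}(p_{i, j})$ evaluated in a specific auxiliary $2$-extension $M_{\bar x}/\Q$, determined by $\bar x$ but independent of the fibre coordinates. This defines a candidate $g \in \textup{Add}(X, S)$ by Lemma \ref{lPhiAdd} and Lemma \ref{lConstantPointer}.

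Finally, the Chebotarev density theorem applied to the compositum of all the relevant governing fields over all $\bar x$ in $C(\mathfrak{A})$ shows that the required matching $\Sigma F_S = g$ holds on $C(\mathfrak{A})$ up to an exceptional set of $x$ of density $\ll (\log \log \log \log N)^{-100c/(m 6^m)}$, using the same effective Chebotarev input and Siegel-zero control as in Proposition \ref{pLegendre} and Proposition \ref{pLegendre2}; the degree of the governing compositum is bounded via Lemma \ref{lMdegree} and Lemma \ref{lConstantPointer}, and the sets $Y_i$ used to invoke Lemma \ref{lFindBox} are constructed inside the large-prime coordinates of $\bar x$. The main obstacle I anticipate is the construction of the additive system and, in particular, the coherent choice of raw cocycles $\psi_m(x)$ across the subfaces of each cube $\bar x \in C(\mathfrak{A})$: one must verify that the profitability conditions of Definition \ref{def: profitable triple} (and, where relevant, Definition \ref{def:-1-profitable} and the Pellian condition of Definition \ref{profitable expansions}) are satisfied on every subface, and that the corresponding normalized expansion maps $\phi_{\bullet;\bullet}(\mathfrak{G})$ exist, which reduces by Proposition \ref{pCreatePhi} to splitting conditions that must be absorbed into additional $F_T$'s in the additive system. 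Matching the bookkeeping so that the resulting $(a_{\mathfrak{A}}, S)$-acceptable system yields, via Proposition \ref{pARinput} with $a_{\mathfrak{A}}$ bounded polynomially in $|S|$, the announced error exponent $c/(m^2 6^m)$ in $|S|$ versus $100c/(m 6^m)$ in Chebotarev, is the delicate quantitative step.
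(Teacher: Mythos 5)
Your architecture — Fourier analysis on the space of Artin pairings, an additive system whose top-level function is supplied by the reflection principles of Section \ref{sReflection}, and then Proposition \ref{pARinput} plus Chebotarev — is the paper's architecture, and your dispatch of the four types of pairing (cross pairing via Theorem \ref{main thm: governing triples}, $R$-direction via Theorem \ref{thm: reflection principle for infinity}, self-pairing via Theorem \ref{main thm 1 on self-pairing}, mixed via Theorem \ref{main thm 2 on self-pairing}) is correct. The Fourier step corresponds to the paper's Theorem \ref{tF}. However, the second step of your plan contains a structural error that would cause the argument to fail.

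You propose to build the additive system $\mathfrak{A}$ directly on the full box $X$, with a set of variable indices $S$ of size $|S| \asymp (\log\log\log\log N)^{1/(m 6^m)}$ and $C_\varnothing^{\textup{acc}} = X(a, \{\textup{Art}_k\}_{2 \le k < m})$. This cannot feed into Proposition \ref{pARinput}: that proposition requires $\log \min_{i \in S} |X_i| \ge A \cdot 6^r \cdot \log \epsilon^{-1}$ where $r$ is the \emph{total} number of coordinates of the product space, and for a box in the Erd\H{o}s--Kac range $r \asymp \log\log N$, so $6^r$ is far too large. In addition, the maps $F_T$ that you envision involve expansion maps $\phi_{\bullet;\bullet}(\mathfrak{G})$ which do not exist uniformly over $X$, so $F_T$ is not even well-defined on all of $C_T$. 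The paper avoids both problems at once by a three-stage reduction (Theorems \ref{tF}, \ref{tPrePoint}, \ref{tBoxify}): first fix all coordinates outside a set $S$ of size exactly $m$ or $m + 1$, chosen in Definition \ref{dVariable} so that the indices of $S$ carry the supports of the basis vectors $v_{j_1}$, $w_{j_2}$ singled out by $F$; then use a \emph{second-moment argument} (not merely Ramsey) to cover $X(a, Q, \{\textup{Art}_k\})$ by sets $\widetilde{Z}$ coming from ``great product spaces'' $Z$, over which all the needed expansion maps exist with the required splitting. Lemma \ref{lFindBox} alone, which you invoke, produces a single $M_{\textup{box}}$-sized sub-box inside a dense set; what one actually needs is that almost every point of $X(a, Q, \{\textup{Art}_k\})$ lies in the ``right'' number of such sub-boxes, and that is established by comparing the first and second moments of the covering multiplicity $\Lambda(x)$ in the proof that Theorem \ref{tBoxify} implies Theorem \ref{tPrePoint}. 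Only then is Proposition \ref{pARinput} applied, to the small space $Z \times [M_{\textup{box}}]$ with at most $m + 2$ factors, where $6^r$ is harmless. Your instinct to ``absorb the existence of expansion maps into additional $F_T$'s'' is exactly the idea behind Lemma \ref{lASphi}, but in the paper that lemma is used only to show that great product spaces are abundant — not to define the principal additive system, which is built over $\widetilde{Z}$ where existence is guaranteed from the outset.
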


\begin{proof}[Proof that Theorem \ref{tGeneric} implies Theorem \ref{tBox}.]
We claim that there exists an absolute constant $A' > 0$ such that
\begin{align}
\label{eNotGeneric}
\sum_{\substack{a: M_r \sqcup M_{r, \varnothing} \rightarrow \FF_2 \\ a \text{ not } (N, m)\text{-generic}}} |X(a)| \leq A' \cdot |X| \cdot 2^{\frac{-2c \log \log \log \log \log N}{m^2 6^m}}.
\end{align}
As a first step, we estimate
\begin{align}
\label{eBadQ}
\sum_{\substack{a: M_r \sqcup M_{r, \varnothing} \rightarrow \FF_2 \\ (\ref{eXjaQ}) \text{ fails for some } j > k}} |X(a)|.
\end{align}
For every $j > k$ we define an equivalence relation $\sim_j$ by setting $a \sim_j a'$ if and only if $a(i, j) = a'(i, j)$ for all $1 \leq i \leq k$. Note that if $a \sim_j a'$, then equation (\ref{eXjaQ}) fails for $a$ if and only if it fails for $a'$. By our choice of $C_0$, $D_1$ and equation (\ref{eRegularSpace}), we see that 
\[
k \leq \frac{1}{5} \log \log \log N
\]
for $N$ sufficiently large. Hence $\sim_j$ has at most
\[
2^k \leq 2^{\frac{1}{5} \log \log \log N}
\]
equivalence classes. Furthermore, in a given equivalence class we have the trivial bound
\[
\left|\bigcup_{\substack{a \sim_j a' \\ |X_j(a', Q)| \leq \frac{|X|}{(\log s_{k + 1})^{100}}}} X(a)\right| \leq \frac{|X|}{(\log s_{k + 1})^{100}}.
\]
Summing over all $j > k$ and summing over all equivalence classes gives the desired estimate for equation (\ref{eBadQ}).

We return to bounding equation (\ref{eNotGeneric}), and thanks to our upper bound for equation (\ref{eBadQ}), it suffices to bound the union of $|X(a)|$ for $a$ failing equation (\ref{eMax4rank}), equation (\ref{eGen1}) or equation (\ref{eGen2}). If $\sigma$ is a permutation of $[r]$, then we recall that
\[
\sigma(a)(i, j) = a(\sigma^{-1}(i), \sigma^{-1}(j)).
\]
Note that $a$ satisfies equation (\ref{eMax4rank}) if and only if $\sigma(a)$ does.

Take $k_2$ to be the largest integer smaller than $k_{\text{gap}}/2$ and suppose that $\sigma$ is a permutation of $[r]$ fixing all indices greater than $k_2$: recall that $\mathcal{P}_r(k_2)$ denotes the set of such permutations. For any such permutation $\sigma$, we have that $a$ satisfies equation (\ref{eGen1}) if and only if $\sigma(a)$ does, and similarly for equation (\ref{eGen2}). We apply Proposition \ref{pLegendre2} to obtain
\begin{align}
\label{eTheorem5.9}
\sum_{a: M_r \sqcup M_{r, \varnothing} \rightarrow \FF_2} \left|\frac{k_2! \cdot |X|}{2^{|M_r| + |M_{r, \varnothing}|}} - \sum_{\sigma \in \mathcal{P}_r(k_2)} |X(\sigma(a))| \right| \leq \frac{k_2! \cdot |X|}{k_2^{b_1}} \leq \frac{k_2! \cdot |X|}{(\log \log N)^{b_1b_2}}
\end{align}
for some absolute constants $b_1, b_2 > 0$. Here we used equation (\ref{eErdosKac}) to obtain the lower bound $k_2 \geq (\log \log N)^{b_2}$ for $N$ sufficiently large. We will now give an upper bound for 
\begin{align}
\label{eProportiona}
\frac{|\{a: M_r \sqcup M_{r, \varnothing} \rightarrow \FF_2 : a \text{ fails equation } (\ref{eMax4rank}) \text{ or } (\ref{eGen1}) \text{ or } (\ref{eGen2})\}|}{|\{a: M_r \sqcup M_{r, \varnothing} \rightarrow \FF_2\}|}. 
\end{align}
Note that this is an entirely combinatorial problem about certain symmetric $r \times r$ matrices. Once this is done, equation (\ref{eTheorem5.9}) will give the desired upper bound for equation (\ref{eNotGeneric}).

After dropping a row and a column from $A(a)$, we see that $A(a)$ is a random symmetric matrix. By the explicit formulas for the number of symmetric matrices with a given rank in \cite{Williams}, we see that the proportion of $a: M_r \sqcup M_{r, \varnothing} \rightarrow \FF_2$ with $\dim_{\FF_2} V_{a, 2} > n_{\text{max}}$ is bounded by
\[
O\left(2^{\frac{-2c \log \log \log \log \log N}{m^2 6^m}}\right),
\]
which disposes with the $a$ failing equation (\ref{eMax4rank}).

Next we bound the proportion of $a: M_r \sqcup M_{r, \varnothing} \rightarrow \FF_2$ failing equation (\ref{eGen1}), a similar argument works for equation (\ref{eGen2}). The proportion of $w \in \FF_2^r$ with
\begin{align}
\label{eBadw}
\left|\left|\left\{i \in [r] : \frac{k_{\textup{gap}}}{2} \leq i \leq k_{\textup{gap}} \textup{ and } \pi_i(w) = j\right\}\right| - \frac{\alpha_{\textup{pre}}}{2}\right| > \frac{k_{\textup{gap}}}{\log \log \log N}
\end{align}
is bounded by
\[
O\left(e^{-(\log \log \log N)^{-2} \cdot k_{\text{gap}}}\right)
\]
thanks to Hoeffding's inequality. For any given $w \in \FF_2^r \setminus \langle R \rangle$ we have that the proportion of $a: M_r \sqcup M_{r, \varnothing} \rightarrow \FF_2$ with $w \in V_{a, 2}$ is bounded by $O(2^{-r})$. Using this for all $w \in \FF_2^r \setminus \langle R \rangle$ satisfying equation (\ref{eBadw}) then shows that the proportion of $a: M_r \sqcup M_{r, \varnothing} \rightarrow \FF_2$ failing equation (\ref{eGen1}) is also bounded by
\[
O\left(e^{-(\log \log \log N)^{-2} \cdot k_{\text{gap}}}\right).
\]
Since $k_{\text{gap}} > r^{1/2}/2$, it follows from equation (\ref{eErdosKac}) that this fits in the error term. We conclude that equation (\ref{eProportiona}) is bounded by
\[
A_1 \cdot 2^{\frac{-2c \log \log \log \log \log N}{m^2 6^m}}
\]
for some absolute constant $A_1 > 0$. Then equation (\ref{eTheorem5.9}) implies that there exists an absolute constant $A_2 > 0$ such that
\[
\sum_{\substack{a: M_r \sqcup M_{r, \varnothing} \rightarrow \FF_2 \\ a \text{ not } (N, m)\text{-generic}}} |X(a)| \leq A_2 \cdot |X| \cdot 2^{\frac{-2c \log \log \log \log \log N}{m^2 6^m}}.
\]
Therefore we have established the claimed equation (\ref{eNotGeneric}). To complete the proof of the reduction step, we split $X$ as the union over $X(a)$, removing all $a$ that are not $(N, m)$-generic. For the $a$ that are $(N, m)$-generic, we split each $X(a)$ over all sequences of Artin pairings and use Theorem \ref{tGeneric}. Observe that there are at most
\[
2^{m (\dim_{\FF_2} V_{a, 2})^2} \leq 2^{m n_{\text{max}}^2}
\]
sequences of Artin pairings. By the choice of $n_{\text{max}}$, we complete the proof of the reduction step.
\end{proof}

The next reduction step is a straightforward application of orthogonality of characters.

\begin{theorem}
\label{tF}
There are real numbers $A, N_0 > 0$ such that for all reals $N > N_0$, all integers $m \geq 3$, all integers $r$ satisfying equation (\ref{eErdosKac}), all integers $0 \leq k \leq r$, all excellent $(N, k, r)$-boxes $X$, all $(N, m)$-generic $a: M_r \sqcup M_{r, \varnothing} \rightarrow \FF_2$ for $X$, all Pellian sequences of Artin pairings $\{\textup{Art}_k\}_{2 \leq k < m}$ and all non-trivial linear maps $F: \{A_m \times B_m \rightarrow \FF_2 \textup{ bilinear with } R \textup{ in the right kernel}\}\rightarrow \FF_2$
\[
\left|\sum_{x \in X(a, \{\textup{Art}_k\}_{2 \leq k < m})} \iota(F(\textup{Art}_{m, x}))\right| \leq \frac{A \cdot |X(a)|}{(\log \log \log \log N)^{\frac{100c}{m6^m}}}.
\]
\end{theorem}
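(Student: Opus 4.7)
The plan is to derive cancellation by writing each value $F(\textup{Art}_{m,x})$ as a Frobenius symbol in a governing-type field and then invoking Proposition \ref{pARinput}. Since $F$ is a non-trivial linear functional on the space of bilinear pairings on $A_m \times B_m$ with $R$ in the right kernel, I would first expand $F$ as a finite combination of evaluation maps $P \mapsto P(\alpha,\beta)$ at chosen pairs; by linearity, reducing to a single such summand is enough. The constraint that $R$ lies in the right kernel corresponds, on the class-group side, to the fact that $(\sqrt{x})$ is distinguished and is exactly what enables the Pellian-type reflection principles of Section \ref{sReflection}.

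For each $x\in X(a,\{\textup{Art}_k\}_{2\le k<m})$, the prescribed Pellian tower of Artin pairings means that, by Proposition \ref{unramified characters are always cocycles}, one can lift $\beta \in B_m$ to a raw cocycle $\psi_m(x)$ for $N(x)$ and $\alpha\in A_m$ to a class in $2^{m-2}\textup{Cl}(\Q(\sqrt{x}))[2^{m-1}]$. Depending on the shape of $\alpha$ and $\beta$, I would then invoke one of the reflection principles: Theorem \ref{main thm: minimal triples} or Theorem \ref{main thm: governing triples} in the generic case, Theorem \ref{thm: reflection principle for infinity} when $\alpha=(\sqrt{x})$, and Theorem \ref{main thm 1 on self-pairing} or Theorem \ref{main thm 2 on self-pairing} for the self-paired Pellian case (using that $m\ge 3$, so the hypothesis $s\ge 2$ in those theorems is satisfied). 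In every case, Theorem \ref{main theorem on profitable triples} produces an associated expansion map and the pairing $\textup{Art}_{m,x}(\alpha,\beta)$ gets rewritten as a sum over a cube of primes of Frobenius symbols inside this expansion map, evaluated in a field that depends only on the coordinates outside the cube.

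To implement this uniformly over the cube $X$, I would cover $X(a,\{\textup{Art}_k\}_{2\le k<m})$ by product subcubes using Lemma \ref{lFindBox}, whose existence is guaranteed by the density bound (\ref{eXjaQ}) coming from genericity. The genericity conditions (\ref{eGen1}), (\ref{eGen2}) combined with Lemma \ref{lVariable} ensure that the kernel elements $\alpha,\beta$ have enough "variable" coordinates in the range $[k_{\textup{gap}}/2, 2k_{\textup{gap}}]$ to produce a profitable-triple structure, and that the technical side conditions in Definitions \ref{def: profitable triple}, \ref{def:-1-profitable} and \ref{def: governing triples} (existence of expansion maps, splitting of common ramified primes, orthogonality at $2$) are satisfied on these subcubes. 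Once the Frobenius-valued data is in place, it organizes into an $(a, S)$-acceptable additive system in the sense of Section \ref{sCombinatorics}, whose target vector spaces $A_T$ are bounded by the orders of Galois groups of normalized expansion maps, which are $\leq 2^{O(m \cdot 6^m)}$. Proposition \ref{pARinput} applied to this additive system, with the Chebotarev density theorem (controlled via the excellent-box hypothesis which handles Siegel zeroes as in Proposition \ref{pLegendre}) providing equidistribution of the Frobenius, then delivers the required cancellation at the rate $(\log\log\log\log N)^{-100c/(m6^m)}$.

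The main obstacle is the verification that the Frobenius-valued data satisfies the additivity relation (\ref{eAdditive}): changing one coordinate of the cube alters the expansion map and its field of definition, and one needs the resulting Frobenius symbols to combine according to the cube-face structure. This is essentially the coherence statement inherent in Theorem \ref{main theorem on profitable triples} and the additivity of expansion maps built into Lemma \ref{lAdditivity}, but making it compatible with the varying lifts of $\alpha,\beta$ across fibers requires care. A secondary difficulty is that the lifts of $\alpha,\beta$ depend on $x$; here the Pellian structure (kernel-of-previous-pairings) is precisely what permits a uniform choice of lifts across a cube face, so that the reflection principles output the same governing expansion map independently of $x$ within the subcube.
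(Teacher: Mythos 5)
Your plan matches the paper's proof at the level of overall architecture — raw cocycle lifts of $\alpha,\beta$, a case-dependent choice of reflection principle (minimal/governing/$(-1)$-profitable triples), covering by product subcubes, assembling the Frobenius data into an additive system, and closing with Proposition \ref{pARinput}. The two difficulties you flag (coherence of the additive system across cube faces, and the $x$-dependence of the lifts) are precisely where the paper does all its work: Definition \ref{dVariable} picks the variable indices $S$ so that the lifts can be chosen coherently on each slice of the cube, and the verification of properties (b), (c), (d) of the additive system $\mathfrak{A}$ in Theorem \ref{tBoxify} is exactly the careful cube-face coherence check you describe as the main obstacle. So the proposal is correct in spirit but leaves its central difficulty unresolved.

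Two concrete imprecisions are worth fixing. First, your bound $|A_T|\le 2^{O(m\cdot 6^m)}$ is wrong: the paper's additive system carries tuples of raw-cocycle evaluations indexed by bases of $B_{|T|+1}\subseteq V_{a,2}$, so $|A_T|\le 2^{(n_{\text{max}}+10)(n_{\text{max}}+2m+10)}$ with $n_{\text{max}}^2\asymp\log\log\log\log\log N$ growing with $N$. This is exactly why the genericity condition (\ref{eMax4rank}) caps $\dim V_{a,2}$ by $n_{\text{max}}$ — without it there is no compatible choice of $a,\epsilon$ in Proposition \ref{pARinput}, and the whole choice of $c$ and $M_{\text{box}}$ in the paper is calibrated against this growth. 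Second, ``cover $X(a,\{\text{Art}_k\})$ by subcubes using Lemma \ref{lFindBox}'' mischaracterizes the covering step: the set being covered has no useful product structure, and Lemma \ref{lFindBox} is only invoked contrapositively to bound the density of uncoverable points. The actual passage from the full sum to a sum over a single subcube $\widetilde{Z}$ is a second-moment argument on the counting function $\Lambda(x)$ together with Chebyshev's inequality (in the paper's reduction from Theorem \ref{tPrePoint} to Theorem \ref{tBoxify}); you also skip the intermediate reduction that fixes all non-variable small coordinates $Q$.

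Finally, your proposal to reduce by linearity to a single $F_{j_1,j_2}$ is a genuine simplification that the paper does not make, and it would eliminate case IV entirely (which handles symmetric $F$). The price is that the number of coordinate summands is $\le (n_m+1)n_m\le 2n_{\text{max}}^2$, which grows with $N$, so a naive triangle inequality does not give back the stated bound with a constant $A$. Since $n_{\text{max}}^2$ is of size $\log\log\log\log\log N$ while $(\log\log\log\log N)^{c'/(m6^m)}$ is much larger for any $c'>0$, one can absorb the loss by proving a marginally stronger bound for $F_{j_1,j_2}$ and there is ample slack in the paper's choice of $c$; still, this bookkeeping needs to be done explicitly rather than dismissed as ``by linearity.''
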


The plan is to fix all but $m$ or $m + 1$ indices of the box $X$, and then apply the algebraic and combinatorial results from Sections \ref{sReflection} and \ref{sCombinatorics}. The indices of the box that we do not fix are called \emph{variable indices}. We now lay out exactly what properties we demand from these variable indices as a function of $F$.

\begin{mydef}
\label{dVariable}
Define for $2 \leq i \leq m$
\[
C_i := A_i \cap B_i, \quad d_i := -1 + \dim_{\FF_2} C_i, \quad n_i := -1 + \dim_{\FF_2} A_i.
\]
Pick a basis $v_1, \dots, v_{n_m}, R$ of $A_m$ and a basis $w_1, \dots, w_{n_m}, R$ of $B_m$ such that
\[
v_i = w_i \textup{ for all } 1 \leq i \leq d_m.
\]
Hence $v_1, \dots, v_{d_m}, R$ is a basis of $C_m$. We extend the basis $w_1, \dots, w_{n_m}, R$ of $B_m$ to a basis
\[
w_1, \dots, w_{n_2}, R
\]
of $V_{a, 2}$ such that $w_1, \dots, w_{n_i}, R$ is a basis of $B_i$ for all $2 \leq i \leq m$. Write $\textup{Mat}(a, b, K)$ for the set of $a \times b$ matrices over a field $K$. Using our basis, $\textup{Art}_{m, x}$ is naturally an element of $\textup{Mat}(n_m + 1, n_m + 1, \FF_2)$. More precisely, the entry $(i, j)$ of the associated matrix is
\[
\textup{Art}_{m, x}(v_i, w_j)
\]
for $i, j \in [n_m]$, and we use the last row and column for the pairing with $R$. Since $R$ is in the right kernel of $\textup{Art}_{m, x}$, we shall from now on implicitly view $\textup{Art}_{m, x}$ also as an element of $\textup{Mat}(n_m + 1, n_m, \FF_2)$.

Let $E_{j_3, j_4}$ be the matrix with a $1$ on the entry $(j_3, j_4)$ and $0$ everywhere else. Define $F_{j_1, j_2}: \textup{Mat}(n_m + 1, n_m, \FF_2) \rightarrow \FF_2$ be the unique linear map that sends the matrix $E_{j_3, j_4}$ to $1$ if and only if $j_1 = j_3$ and $j_2 = j_4$. Then we can write write any linear map $F: \textup{Mat}(n_m + 1, n_m, \FF_2) \rightarrow \FF_2$ as
\[
F = \sum_{\substack{1 \leq j_1 \leq n_m + 1 \\ 1 \leq j_2 \leq n_m}} c_{j_1, j_2} F_{j_1, j_2}, \quad c_{j_1, j_2} \in \FF_2.
\]
For every non-trivial map $F$, we pick a pair $(j_1, j_2)$ satisfying $c_{j_1, j_2} = 1$ according to the following rules. In case there exists $(j_1, j_2)$ such that $c_{j_1, j_2} = 1$, such that $j_1 \leq n_m$ and such that one of the following three conditions is satisfied
\begin{itemize}
\item $n_m \geq j_1 > d_m$ or
\item $n_m \geq j_2 > d_m$ or
\item $1 \leq j_1, j_2 \leq d_m$ and $c_{j_2, j_1} = 0$,
\end{itemize}
then we fix such a choice of $(j_1, j_2)$. We say that we are in case $I$. If there is no such pair $(j_1, j_2)$, but we have $c_{n_m + 1, j} = 1$ for some $j$, then we fix a choice of $(j_1, j_2) = (n_m + 1, j)$ and we say that we are in case $II$. Again if there is no such pair $(j_1, j_2)$, but we have $c_{j, j} = 1$ for some $j$, then we fix $(j_1, j_2) = (j, j)$ and we say that we are in case $III$. Finally, in the remaining cases we fix a pair $(j_1, j_2)$ with $j_1 \neq j_2$ and $c_{j_1, j_2} = c_{j_2, j_1} = 1$ and we say that we are in case $IV$. Observe that we can always find such a pair $(j_1, j_2)$ since $F$ is assumed to be non-trivial.

We say that a subset $S$ of the integers is a set of variable indices for a non-trivial linear map $F: \textup{Mat}(n_m + 1, n_m, \FF_2) \rightarrow \FF_2$ if the following properties are satisfied
\begin{itemize}
\item if we are in case $I$, then we demand that $S$ contains $m + 1$ elements of which $m - 1$ elements are contained in
\[
\{k_{\textup{gap}}/2 \leq i \leq k_{\textup{gap}}\} \cap \bigcap_{l = 1}^{n_2} \{i \in [r]: \pi_i(w_l) = 0\},
\]
one element is contained in the intersection of $\{k_{\textup{gap}}/2 \leq i \leq k_{\textup{gap}}\}$ with
\[
\bigcap_{\substack{1 \leq l \leq n_m \\ j_2 \leq d_m \Rightarrow l \neq j_2}} \{i \in [r]: \pi_i(v_l) = 0\} \cap \bigcap_{\substack{1 \leq l \leq n_m \\ l \neq j_2}} \{i \in [r]: \pi_i(w_l) = 0\} \cap \{i \in [r] : \pi_i(w_{j_2}) = 1\},
\]
and one element is contained in the intersection of $\{k_{\textup{gap}} < i \leq 2k_{\textup{gap}}\}$ with
\[
\bigcap_{\substack{1 \leq l \leq n_m \\ l \neq j_1}} \{i \in [r]: \pi_i(v_l) = 0\}  \cap \bigcap_{\substack{1 \leq l \leq n_m \\ j_1 \leq d_m \Rightarrow l \neq j_1}} \{i \in [r]: \pi_i(w_l) = 0\} \cap \{i \in [r] : \pi_i(v_{j_1}) = 1\};
\]
\item if we are in case $II$, then we pick $S$ such that $|S| = m$, of which $m - 1$ are contained in
\[
\{k_{\textup{gap}}/2 \leq i \leq k_{\textup{gap}}\} \cap \bigcap_{l = 1}^{n_2} \{i \in [r]: \pi_i(w_l) = 0\}
\]
and one element is contained in the intersection of $\{k_{\textup{gap}} < i \leq 2k_{\textup{gap}}\}$ with
\[
\bigcap_{\substack{1 \leq l \leq n_m \\ j_2 \leq d_m \Rightarrow l \neq j_2}} \{i \in [r]: \pi_i(v_l) = 0\}  \cap \bigcap_{\substack{1 \leq l \leq n_m \\ l \neq j_2}} \{i \in [r]: \pi_i(w_l) = 0\} \cap \{i \in [r] : \pi_i(w_{j_2}) = 1\};
\]
\item if we are in case $III$, recall that $j = j_1 = j_2$. We choose $S$ such that $|S| = m$, of which $m - 1$ are contained in
\[
\{k_{\textup{gap}}/2 \leq i \leq k_{\textup{gap}}\} \cap \bigcap_{\substack{l = 1 \\ l \neq j}}^{n_2} \{i \in [r]: \pi_i(w_l) = 0\} \cap \{i \in [r] : \pi_i(w_j) = 1\},
\]
and one element is contained in
\[
\{k_{\textup{gap}} < i \leq 2k_{\textup{gap}}\} \cap \bigcap_{l = 1}^{n_m} \{i \in [r]: \pi_i(v_l) = \pi_i(w_l) = 0\};
\]
\item if we are in case $IV$, then we pick $S$ such that $|S| = m$, of which $m - 1$ are contained in
\[
\{k_{\textup{gap}}/2 \leq i \leq k_{\textup{gap}}\} \cap \bigcap_{\substack{l = 1 \\ l \neq j_2}}^{n_2} \{i \in [r]: \pi_i(w_l) = 0\} \cap \{i \in [r] : \pi_i(w_{j_2}) = 1\},
\]
and one element is contained in the intersection of $\{k_{\textup{gap}} < i \leq 2k_{\textup{gap}}\}$ with
\[
\bigcap_{\substack{l = 1 \\ l \neq j_1}}^{n_m} \{i \in [r]: \pi_i(v_l) = \pi_i(w_l) = 0\} \cap \{i \in [r] : \pi_i(v_{j_1}) = 1\}.
\]
\end{itemize}
Define $i_{\textup{Cheb}}$ to be the unique element of $S \cap \{k_{\textup{gap}} < i \leq 2k_{\textup{gap}}\}$.
\end{mydef}

\noindent Note that we may always assume that
\begin{align}
\label{emSmall}
m < \log \log \log \log \log \log N,
\end{align}
since otherwise Theorem \ref{tRank} is trivially true. Then Lemma \ref{lVariable} shows that for every non-trivial bilinear map $F$ we can find a set of variable indices $S$ for $F$, provided that we take $N$ sufficiently large. For the final reduction step of this subsection, we fix all primes before $k_{\textup{gap}}$ except those in $S$. For a point $Q \in \prod_{i \in [k_{\text{gap}}] - S} X_i$, define $X(a, Q, \{\textup{Art}_k\}_{2 \leq k < m})$ to be the subset of $x \in X(a, \{\textup{Art}_k\}_{2 \leq k < m})$ for which $\pi_{[k_{\text{gap}}] - S}(x) = Q$.

\begin{theorem}
\label{tPrePoint}
There are real numbers $A, N_0 > 0$ such that for all reals $N > N_0$, all integers $m \geq 3$, all integers $r$ satisfying equation (\ref{eErdosKac}), all integers $0 \leq k \leq r$, all excellent $(N, k, r)$-boxes $X$, all $(N, m)$-generic $a: M_r \sqcup M_{r, \varnothing} \rightarrow \FF_2$ for $X$, all Pellian sequences of Artin pairings $\{\textup{Art}_k\}_{2 \leq k < m}$, all non-trivial linear maps $F: \textup{Mat}(n_m + 1, n_m, \FF_2) \rightarrow \FF_2$, all variable indices $S$ for $F$ and all $a$-consistent $Q \in \prod_{i \in [k_{\textup{gap}}] - S} X_i$ such that
\begin{align}
\label{eXjaQS}
|X_j(a, Q)| \geq 4^{-m \cdot k_{\textup{gap}}} \cdot |X_j|
\end{align}
for all $j \in S \cap [k_{\textup{gap}}]$, we have
\[
\left|\sum_{x \in X(a, Q, \{\textup{Art}_k\}_{2 \leq k < m})} \iota(F(\textup{Art}_{m, x}))\right| \leq \frac{A \cdot |X(a, Q)|}{(\log \log \log \log N)^{\frac{100c}{m6^m}}}.
\]\end{theorem}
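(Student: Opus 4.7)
The plan is to combine the additive-system framework of Section \ref{sCombinatorics} with the reflection principles of Section \ref{sReflection}. I would partition $X(a, Q, \{\textup{Art}_k\}_{2\leq k<m})$ into fibres by fixing the coordinates outside $S$, and on each fibre recast the sum as a character sum over the sub-cube $\prod_{i \in S} X_i(a,Q)$. The reflection principles will then rewrite $F(\textup{Art}_{m,x})$ as an additive function of $\pi_{S-\{i_{\textup{Cheb}}\}}(x)$ plus a Chebotarev symbol at $\pi_{i_{\textup{Cheb}}}(x)$; equidistribution of the latter by an effective Chebotarev density theorem, together with Proposition \ref{pARinput}, will deliver the claimed bound.

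For each $T \subseteq S - \{i_{\textup{Cheb}}\}$ I would construct an additive system $(C_T, C_T^{\textup{acc}}, F_T, A_T)$ in which $C_T^{\textup{acc}}$ records the existence, on the sub-cubes of shape $T$, of the raw cocycles and normalized expansion maps needed by our chosen reflection principle (Definitions \ref{def: profitable triple}, \ref{def:-1-profitable}, \ref{def: minimal triples}, \ref{def: governing triples}, as appropriate). The raw-cocycle data is provided by the hypothesis that the first $m - 1$ Artin pairings are fixed, together with the lifting statement of Proposition \ref{unramified characters are always cocycles}; additivity of $F_T$ is a direct consequence of Proposition \ref{key calculation of cocycles}. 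The groups $A_T$ have size $2^{O(m^2)}$ uniformly, and Proposition \ref{pAS} together with the density bound (\ref{eXjaQS}) shows that $|C_S^{\textup{acc}}| / |\textup{Cube}(X(a,Q), S)|$ is much larger than the target error.

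The reflection step proceeds case-by-case according to Definition \ref{dVariable}. In Case I I would apply Theorem \ref{main thm: governing triples}; in Case II, Theorem \ref{thm: reflection principle for infinity} (this is why $i_{\textup{Cheb}}$ is placed in $(k_{\textup{gap}}, 2k_{\textup{gap}}]$, so that the $-1$-pointer governing field supplies the Chebotarev symbol); in Case III, Theorem \ref{main thm 1 on self-pairing}, where the hypothesis $m \geq 3$ appears as the assumption $s \geq 2$ of that theorem; in Case IV, Theorem \ref{main thm 2 on self-pairing} combined with Theorem \ref{thm: reflection principle for infinity} to cancel the $(\sqrt{x})$ contribution. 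In each case the identity produced on $C_S^{\textup{acc}}$ reads, schematically,
\[
\sum_{x \in \bar x(\varnothing)} F(\textup{Art}_{m,x}) = \textup{Frob}_{L/\Q}\bigl(\pi_{i_{\textup{Cheb}}}(\bar x)\bigr) \quad \text{for all } \bar x \in C(\mathfrak{A}),
\]
where $L$ is a governing extension built from normalized expansion maps whose existence is guaranteed by the acceptance conditions of the additive system; its ramification is controlled by Proposition \ref{normalized expansions are unramified} and by the ramification assertions in Theorem \ref{main theorem on profitable triples}.

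Finally I would apply Proposition \ref{pARinput} with $\epsilon \approx (\log \log \log \log N)^{-100c / (m 6^m)}$ and $a = 2^{O(m^2)}$: the combinatorial hypothesis $\log \min_{i \in S-\{i_{\textup{Cheb}}\}} |X_i(a,Q)| \geq A_{\textup{Comb}} \cdot 6^{m} \log \epsilon^{-1}$ follows from (\ref{eXjaQS}) and the choice of $c$. This selects a distinguished $g^\ast \in \textup{Add}(\cdot, S-\{i_{\textup{Cheb}}\})$ along whose fibres the $\Sigma$-part of the reflection contributes nothing up to the allowed error. The remaining Chebotarev term equidistributes in $\textup{Gal}(L/M(L)) \cong \FF_2$ because Lemma \ref{lMdegree} (or Lemma \ref{lConstantPointer} in Cases II--IV) makes $L$ linearly disjoint from $K(X)$ over a small multi-quadratic extension, and the gap condition (\ref{ekgap}) makes $\log s_{i_{\textup{Cheb}}}$ exponentially larger than $\log [L:\Q]$. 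The main obstacle will be to verify, case by case, that the expansion maps $\psi_1, \psi_2$ appearing in each application of Theorem \ref{Redei reciprocity} actually form a R\'edei-admissible quadruple in the sense of Definition \ref{def: Redei admissible}: the required coprimality of pointers and splitting of the primes of $d_0 = \prod_{i \in [k_{\textup{gap}}]-S} \pi_i(Q) \cdot \prod_{i > 2k_{\textup{gap}}} \pi_i(x)$ in the intermediate fields $M(\psi_j)$ are exactly what the $(N,m)$-genericity of $a$ and (\ref{eXjaQS}) deliver, but their careful verification -- especially the fourth bullet of Definition \ref{def: Redei admissible} at the place $(2)$ -- is the delicate part of the argument.
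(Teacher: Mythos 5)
Your outline identifies the right ingredients --- additive systems, case-by-case reflection principles, Proposition \ref{pARinput}, Chebotarev at $i_{\text{Cheb}}$ --- but there is a genuine structural gap at the point where you connect the reflection identity to Proposition \ref{pARinput}. The reflection principles produce, for each accepted cube $\bar{x}$, an identity of the shape $\Sigma F'(\bar{x}) = \phi_{\pi_{S'}(\bar{x}); \cdot}(\text{Frob}(\text{pr}_1\pi_{i_{\text{Cheb}}}(\bar{x}))) + \phi_{\pi_{S'}(\bar{x}); \cdot}(\text{Frob}(\text{pr}_2\pi_{i_{\text{Cheb}}}(\bar{x})))$, a function of $\bar{x}$ whose governing field varies with $\pi_{S'}(\bar{x})$. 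Proposition \ref{pARinput}, by contrast, hands you one fixed $g_0 \in \text{Add}(X, S)$ and yields equidistribution only for those $F'$ with $\Sigma F'(\bar{x}) = g_0(\bar{x})$ on $C(\mathfrak{A})$. Your sentence ``this selects a distinguished $g^\ast$ along whose fibres the $\Sigma$-part of the reflection contributes nothing up to the allowed error'' does not say how the $\bar{x}$-dependent Frobenius expression is to be reconciled with the fixed $g_0$, and this reconciliation is precisely the crux of the argument.

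What the proposal is missing is the intermediate reduction to small structured subspaces. The paper first covers $X(a,Q)$ (up to a tiny exceptional set controlled via Lemma \ref{lFindBox} and (\ref{eXjaQS})) by \emph{great product spaces} $\widetilde{Z}$, where $Z \subseteq \prod_{i \in S'} X_i(a,Q)$ has bounded side $M_{\text{box}}$ and is chosen so that all the needed expansion maps exist with a single common governing field $M_\circ(Z)M(Z)$; a first- and second-moment (Chebyshev) computation on the multiplicity $\Lambda(x) = \#\{i : x \in \widetilde{Z_i}\}$ then passes the sum from $X(a,Q)$ to the $\widetilde{Z_i}$. Inside each $\widetilde{Z}(Q_{\text{post}}) = Z \times X_{i_{\text{Cheb}}}(\cdots)$ one partitions $X_{i_{\text{Cheb}}}(\cdots)$ into ordered blocks $(p_1,\dots,p_{M_{\text{box}}})$ selected --- using Chebotarev equidistribution in $\text{Gal}(M_\circ(Z)M(Z)/M_\circ(Z))$, via Lemma \ref{lPhiAdd}/Lemma \ref{lConstantPointer}, together with additivity of $g_0$ in the last coordinate --- so that the Frobenius-valued function restricted to $Z \times \{p_1,\dots,p_{M_{\text{box}}}\}$ equals $g_0$, after which Proposition \ref{pARinput} applies blockwise. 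Without the fixed small cube $Z$ the expansion maps are not guaranteed to exist, and, more seriously, there is no single Galois extension in which the Frobenius symbols can be made to equidistribute, so the application of Proposition \ref{pARinput} cannot close as written. (Your case-by-case assignment of reflection principles is also incomplete --- every case additionally requires Theorem \ref{main thm: minimal triples} alongside the theorem you name, and Case II needs Theorem \ref{main thm 1 on self-pairing} as well --- but this is a minor point compared with the structural one above.)
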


\begin{proof}[Proof that Theorem \ref{tPrePoint} implies Theorem \ref{tF}.]
First, we use the triangle inequality
\[
\left|\sum_{x \in X(a, \{\textup{Art}_k\}_{2 \leq k < m})} \iota(F(\textup{Art}_{m, x}))\right| \leq \sum_{Q \in \prod_{i \in [k_{\textup{gap}}] - S} X_i} \left|\sum_{x \in X(a, Q, \{\textup{Art}_k\}_{2 \leq k < m})} \iota(F(\textup{Art}_{m, x}))\right|.
\]
In case $Q$ satisfies equation (\ref{eXjaQS}), the desired upper bound follows immediately from Theorem \ref{tPrePoint}. It remains to bound
\[
\sum_{\substack{Q \in \prod_{i \in [k_{\textup{gap}}] - S} X_i \\ Q \text{ fails equation } (\ref{eXjaQS})}} \left|\sum_{x \in X(a, Q, \{\textup{Art}_k\}_{2 \leq k < m})} \iota(F(\textup{Art}_{m, x}))\right| \leq \sum_{\substack{Q \in \prod_{i \in [k_{\textup{gap}}] - S} X_i \\ Q \text{ fails equation } (\ref{eXjaQS})}} |X(a, Q)|.
\]
Let $Q'$ be the unique element of $X_1 \times \dots \times X_k$ and let $X'$ be the product space
\[
\prod_{i \in [k_{\text{gap}}] - S - [k]} X_i(a, Q').
\]
We apply Proposition \ref{pLegendre} to $(X', Q')$ to deduce that the number of $a$-consistent elements $Q \in \prod_{i \in [k_{\textup{gap}}] - S} X_i(a, Q')$ is bounded by
\[
\frac{2 \cdot |X'|}{2^{\frac{(k_{\text{gap}} - |S| - k)(k_{\text{gap}} - |S| - k - 1)}{2}}}.
\]
Note that condition $(iii)$ of Proposition \ref{pLegendre} is satisfied thanks to equation (\ref{eXjaQ}). Let us now bound $|X(a, Q)|$ for each individual $Q$. Since $Q$ fails equation (\ref{eXjaQS}), we have
\[
\prod_{i \in S \cap [k_{\text{gap}}]} |X_i(a, Q)| \leq \frac{\prod_{i \in S \cap [k_{\text{gap}}]} |X_i|}{4^{m \cdot k_{\text{gap}}}} \leq \frac{\prod_{i \in S \cap [k_{\text{gap}}]} |X_i(a, Q')|}{4^{m \cdot k_{\text{gap}}}} \cdot (\log s_{k + 1})^{100m}.
\]
Therefore
\[
|X(a, Q)| \leq \frac{\prod_{i \in S} |X_i(a, Q')|}{4^{m \cdot k_{\text{gap}}}} \cdot (\log s_{k + 1})^{100m} \cdot \max_{P \in \prod_{i \in S} X_i} X(a, P \times Q).
\]
But for $i > k_{\text{gap}}$, we apply the Chebotarev density theorem to obtain
\[
|X_i(a, P \times Q)| = \frac{|X_i(a, Q')|}{2^{k_{\text{gap}} - k}} \cdot \left(1 + O\left(e^{-k_{\text{gap}}}\right)\right).
\]
We apply Proposition \ref{pLegendre} to
\[
(X, P) \leftarrow \left(\prod_{i \in [r] - [k_{\text{gap}}]} X_i(a, P \times Q), \varnothing\right), \quad k \leftarrow r - k_{\text{gap}}
\]
to deduce that
\[
\left||X(a, P \times Q)| - \frac{\prod_{i > k_{\text{gap}}} |X_i(a, P \times Q)|}{2^{\frac{(r - k_{\text{gap}})(r - k_{\text{gap}} - 1)}{2}}}\right| \leq \frac{\prod_{i > k_{\text{gap}}} |X_i(a, P \times Q)|}{\log N \cdot 2^{\frac{(r - k_{\text{gap}})(r - k_{\text{gap}} - 1)}{2}}}.
\]
We conclude that
\begin{multline}
\label{eUBXiaQ}
\sum_{\substack{Q \in \prod_{i \in [k_{\textup{gap}}] - S} X_i \\ Q \text{ fails equation } (\ref{eXjaQS})}} |X(a, Q)| \leq \\
\frac{4 \cdot (\log s_{k + 1})^{100m} \cdot \prod_{i \in [r] - [k]} |X_i(a, Q')|}{2^{\frac{(k_{\text{gap}} - k - |S|)(k_{\text{gap}} - k - |S| - 1)}{2}} \cdot 4^{m \cdot k_{\text{gap}}} \cdot 2^{(k_{\text{gap}} - k) \cdot (r - k_{\text{gap}}) + \frac{(r - k_{\text{gap}})(r - k_{\text{gap}} - 1)}{2}}}.
\end{multline}
A final application of Proposition \ref{pLegendre} to the prebox
\[
\left(\prod_{i \in [r] - [k]} X_i(a, Q'), Q'\right)
\]
shows that
\begin{align}
\label{eLBXiaQ}
2|X(a)| \geq \frac{\prod_{i \in [r] - [k]} |X_i(a, Q')|}{2^{(r - k)(r - k - 1)}}. 
\end{align}
The proposition follows from equations (\ref{eUBXiaQ}) and (\ref{eLBXiaQ}).
\end{proof}

\subsection{A second moment computation}
We have now arrived at a critical point of the proof. We have a rather unstructured set $X(a, Q, \{\textup{Art}_k\}_{2 \leq k < m})$. To make matters even worse, we do not know yet that the expansion maps exist to apply our algebraic theorems. We will cover $X(a, Q, \{\textup{Art}_k\}_{2 \leq k < m})$ by small product spaces and then use a second moment trick to reduce to such product spaces. We will then be able to apply the algebraic and combinatorial results from the previous sections.

\begin{mydef}
Let $N$, $m$, $k$, $r$, $X$, $a$, $Q$, $\{\textup{Art}_k\}_{2 \leq k < m}$, $F$ and $S$ be as above. Put
\[
M_{\textup{box}} := \left \lfloor (\log \log \log \log N)^{\frac{1}{5(m + 1)}} \right \rfloor.
\]
It follows from equation (\ref{emSmall}) that $M_{\textup{box}} \geq 2$ for sufficiently large $N$. Define
\[
S' := S \cap [k_{\textup{gap}}],
\]
so that $S = S' \cup \{i_{\textup{Cheb}}\}$. Let $Z_i \subseteq X_i$ for $i \in S'$ and put
\[
Z := \prod_{i \in S'} Z_i.
\]
We say that $Z$ is a great product space if
\begin{itemize}
\item $|Z_i| = M_{\textup{box}}$ for all $i \in S'$;
\item $Z_i \subseteq X_i(a, Q)$ for all $i \in S'$;
\item we have for all $z \in Z$ and all distinct $i, j \in S'$
\[
\left(\frac{\pi_i(z)}{\pi_j(z)}\right) = \iota(a(i, j));
\]
\item if we are in case $I$, let $i_{\textup{char}}$ be the unique index in $S'$ with $\pi_{i_{\textup{char}}}(w_{j_2}) = 1$. Then we want that for all subsets $T \subseteq S' - \{i_{\textup{char}}\}$, all $\bar{z} \in \textup{Cube}(Z, T \cup \{i_{\textup{char}}\})$, there exists an expansion map $\phi_{\pi_T(\bar{z}); \textup{pr}_1(\pi_{i_{\textup{char}}}(\bar{z})) \textup{pr}_2(\pi_{i_{\textup{char}}}(\bar{z}))}$, in which all primes in $Q$, $(2)$ and $\infty$ split completely in case $T \subset S' - \{i_{\textup{char}}\}$. We put
\[
M_\circ(Z) := \prod_{T \subset S' - \{i_{\textup{char}}\}} \prod_{\bar{z} \in \textup{Cube}(Z, T \cup i_{\textup{char}})} L(\phi_{\pi_T(\bar{z}); \textup{pr}_1(\pi_{i_{\textup{char}}}(\bar{z})) \textup{pr}_2(\pi_{i_{\textup{char}}}(\bar{z}))})
\]
and
\[
M(Z) := \prod_{\bar{z} \in \textup{Cube}(Z, S')} L(\phi_{\pi_{S' - \{i_{\textup{char}}\}}(\bar{z}); \textup{pr}_1(\pi_{i_{\textup{char}}}(\bar{z})) \textup{pr}_2(\pi_{i_{\textup{char}}}(\bar{z}))});
\]
\item if we are in case $II$, $III$ or $IV$, we demand that for all subsets $T \subseteq S'$, all $\bar{z} \in \textup{Cube}(Z, T)$ and all $i \in T$, there exists an expansion map $\phi_{\pi_{T - \{i\}}(\bar{z}); \textup{pr}_1(\pi_i(\bar{z})) \textup{pr}_2(\pi_i(\bar{z}))}$ in which all primes in $Q$, $(2)$ and $\infty$ split completely. Furthermore, for all subsets $T \subseteq S'$, all $\bar{z} \in \textup{Cube}(Z, T)$, there exists an expansion map $\phi_{\pi_T(\bar{z}); -1}$, in which all odd primes in $Q$ and $\textup{Up}_{\Q(i)/\Q}(2)$ split completely in case $T \subset S'$. Set
\[
M_\circ(Z) := \prod_{i \in S'} \prod_{\bar{z} \in \textup{Cube}(Z, S')} L(\phi_{\pi_{S' - \{i\}}(\bar{z}); \textup{pr}_1(\pi_i(\bar{z})) \textup{pr}_2(\pi_i(\bar{z}))}) \times \prod_{T \subset S'} \prod_{\bar{z} \in \textup{Cube}(Z, T)} L(\phi_{\pi_T(\bar{z}); -1})
\]
and
\[
M(Z) := \prod_{\bar{z} \in \textup{Cube}(Z, S')} L(\phi_{\pi_{S'}(\bar{z}); -1}).
\]
\end{itemize}
For $i > k_{\textup{gap}}$, define $X_i(a, Q, M_\circ(Z))$ to be those primes $p \in X_i(a, Q)$ that split completely in $M_\circ(Z)$ and
\[
\left(\frac{z}{p}\right) = \iota(a(j, i)) \textup{ for all } j \in S' \textup{ and all } z \in Z_j.
\]
This is equivalent to $\textup{Frob}_p$ landing in a given central element of the Galois group of the compositum of $M_\circ(Z)$ and $\Q(\sqrt{z})$ with $z$ equal to $-1$, a prime in $Q$ or a prime in $Z_j$ for some $j \in S'$. If $Z$ is a great product space, we define
\[
\widetilde{Z} := Q \times Z \times \prod_{i > k_{\textup{gap}}} X_i(a, Q, M_\circ(Z)).
\]
\end{mydef}

The following lemma constructs an additive system that will aid us in producing expansion maps. Define
\[
X_{\text{pre}} := \prod_{i \in S'} X_i(a, Q).
\]

\begin{lemma}
\label{lASphi}
Let $W \subseteq X_{\textup{pre}}$. Then there exists an additive system $(C_T, C_T^{\textup{acc}}, F_T, A_T)_{T \subseteq S'}$ on $(X_{\textup{pre}}, S')$ with the following properties
\begin{itemize}
\item $C_\varnothing^{\textup{acc}} = W$;
\item $|A_T| \leq 2^{(|S'| + 100) \cdot (k_{\textup{gap}} + |S'| + 100)}$ for all $T \subseteq S'$;
\item suppose that we are in case $I$. If $\bar{x} \in C_{S'}^{\textup{acc}}$, then there exists an expansion map 
\[
\phi_{\pi_{S' - \{i_{\textup{char}}\}}(\bar{x}); \textup{pr}_1(\pi_{i_{\textup{char}}}(\bar{x})) \textup{pr}_2(\pi_{i_{\textup{char}}}(\bar{x}))}.
\]
Furthermore, for $i_{\textup{char}} \in T \subset S'$ and $\bar{x} \in C_T^{\textup{acc}}$, there exists an expansion map
\[
\phi_{\pi_{T - \{i_{\textup{char}}\}}(\bar{x}); \textup{pr}_1(\pi_{i_{\textup{char}}}(\bar{x})) \textup{pr}_2(\pi_{i_{\textup{char}}}(\bar{x}))}
\]
in which all primes in $Q$, all primes in $\pi_{S' - T}(\bar{x})$, $(2)$ and $\infty$ split completely;
\item suppose that we are in case $II$, $III$ or $IV$. If $\bar{x} \in C_{S'}^{\textup{acc}}$, then $\phi_{\pi_{S'}(\bar{x}); -1}$ exists. Furthermore, all odd primes in $Q$, all primes in $\pi_{S' - T}(\bar{x})$ and $\textup{Up}_{\Q(i)/\Q}(2)$ split completely in $\phi_{\pi_T(\bar{x}); -1}$ for all $T \subset S'$ and $\bar{x} \in C_T^{\textup{acc}}$. Finally, we demand that for all $T \subseteq S'$, all $\bar{x} \in C_T^{\textup{acc}}$ and all $i \in T$, there exists an expansion map $\phi_{\pi_{T - \{i\}}(\bar{x}); \textup{pr}_1(\pi_i(\bar{x})) \textup{pr}_2(\pi_i(\bar{x}))}$ in which all primes in $Q$, all primes in $\pi_{S' - T}(\bar{x})$, $(2)$ and $\infty$ split completely.
\end{itemize}
\end{lemma}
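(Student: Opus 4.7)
The plan is to construct the quadruple $(C_T, C_T^{\mathrm{acc}}, F_T, A_T)$ by induction on $|T|$. At the base we set $C_\varnothing = W$, $A_\varnothing = \{0\}$, $F_\varnothing \equiv 0$, so that $C_\varnothing^{\mathrm{acc}} = W$ tautologically. For $|T| \geq 1$ the set $C_T$ is forced by the additive system axioms. The inductive hypothesis then supplies, for every $\bar{x} \in C_T$ and every proper $T' \subsetneq T$, expansion maps $\phi_{\pi_{T'-\{i\}}(\bar{y});\,\cdot}(\mathfrak{G})$ (together with $\phi_{\pi_{T'}(\bar{y});\,-1}(\mathfrak{G})$ in cases II--IV) for every slice $\bar{y} \in \bar{x}(T')$, in which every prescribed place of $Q \cup \pi_{S'-T'}(\bar{y}) \cup \{(2),\infty\}$ already splits completely.

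With this lower-level tower in place, Proposition \ref{pCreatePhi} characterises the existence of each new expansion map at level $|T|$ as the vanishing of finitely many Frobenius symbols at the primes dividing the new pointer (or new support coordinate), evaluated in a previously constructed field of definition. The extra splitting conditions required by the lemma at level $|T|$ are likewise encoded by finitely many Frobenius symbols inside the new level-$|T|$ field of definition, each landing in the central $\FF_2$-quotient produced by Proposition \ref{prop: field of def of exp maps}. We collect all of these $\FF_2$-valued Frobenius values into a single tuple $F_T(\bar{x}) \in A_T$, indexed by the pair (choice of new expansion map at level $|T|$, splitting place). The total number of coordinates is at most $(|T|+1)(|Q|+|S'-T|+2)$; since $|Q| \leq k_{\mathrm{gap}}$ and $|T|,|S'-T|\leq |S'|$, we comfortably achieve the stated bound $|A_T| \leq 2^{(|S'|+100)(k_{\mathrm{gap}}+|S'|+100)}$. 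By construction $C_T^{\mathrm{acc}} = F_T^{-1}(0)$ is precisely the set of $\bar{x}$ for which all the level-$|T|$ expansion maps exist and all required places split completely, which is exactly the content of the lemma at stratum $T$.

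Additivity (\ref{eAdditive}) is a direct consequence of Lemma \ref{lAdditivity}. Given $\bar{x}_1,\bar{x}_2,\bar{x}_3 \in C_T$ differing only at coordinate $i \in T$ with $\pi_i(\bar{x}_\bullet) = (p_1,p_2),(p_2,p_3),(p_1,p_3)$, one has the key relation $(p_1 p_2)(p_2 p_3) \equiv p_1 p_3$ in $\Q^\ast/\Q^{\ast 2}$. For each $i' \in T-\{i\}$ the three expansion maps attached at index $i'$ share a pointer and differ only through the support at coordinate $i$, so the second form of Lemma \ref{lAdditivity} gives the identity $\phi_{\pi_{T-\{i'\}}(\bar{x}_3);\,\cdot} = \phi_{\pi_{T-\{i'\}}(\bar{x}_1);\,\cdot} + \phi_{\pi_{T-\{i'\}}(\bar{x}_2);\,\cdot}$ of $1$-cochains, and Frobenius at any fixed prime inherits this linearity. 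For $i'=i$ it is the pointer, rather than the support, that satisfies the same $\Q^\ast/\Q^{\ast 2}$-relation, and the first form of Lemma \ref{lAdditivity} yields the analogous statement; the $(-1)$-pointer family in cases II--IV is handled identically. Summing coordinate by coordinate produces $F_T(\bar{x}_1)+F_T(\bar{x}_2) = F_T(\bar{x}_3)$. The main obstacle is bookkeeping: the splitting-Frobenius values are only \emph{literally} defined after we have confirmed existence of the level-$|T|$ expansion maps, yet $F_T$ must be defined on all of $C_T$. We resolve this by splitting $F_T$ into an ``existence block" (the Frobenius obstructions of Proposition \ref{pCreatePhi} that must vanish for the expansion map to exist) followed by a ``splitting block" computed inside a fixed central $\FF_2$-extension into which any successfully produced field of definition embeds; because Lemma \ref{lAdditivity} produces identities of $1$-cochains on the nose, this interpretation is compatible with the additive law and closes the induction.
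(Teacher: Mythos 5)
Your overall strategy matches the paper's: induction on $|T|$, invoking Proposition \ref{pCreatePhi} for existence and Lemma \ref{lAdditivity} for the additivity axiom, with Frobenius symbols landing in the central $\FF_2$-quotient supplied by Proposition \ref{prop: field of def of exp maps}. However, the ``main obstacle'' you flag near the end is not a real obstacle, and the proposed workaround obscures the clean point that makes the induction close.

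Specifically, your worry is that the splitting-Frobenius values are only defined once the level-$|T|$ expansion map is known to exist, whereas $F_T$ must be defined on all of $C_T$. But the additive-system axiom that you correctly note forces $C_T$ is exactly
\[
C_T = \{\bar{x} \in \textup{Cube}(X_{\textup{pre}}, T) : \bar{x}(T - \{i\}) \subseteq C_{T - \{i\}}^{\textup{acc}} \textup{ for all } i \in T\},
\]
and, chaining this downward, $\bar{x}(T') \subseteq C_{T'}^{\textup{acc}}$ for every $T' \subset T$. The inductive hypothesis for $C_{T - \{j\}}^{\textup{acc}}$ says precisely that, for each $\bar{y} \in \bar{x}(T - \{j\})$, the lower-level expansion map exists \emph{and} every prime in $\pi_{S' - (T - \{j\})}(\bar{y})$ (in particular, the primes at coordinate $j$) splits completely in its field of definition. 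These are exactly the two hypotheses of Proposition \ref{pCreatePhi}, which therefore yields existence of the level-$|T|$ expansion map \emph{for every} $\bar{x} \in C_T$, not just for $\bar{x}$ in the acceptable subset. Your ``existence block'' is thus identically zero on $C_T$; the apparatus of ``a fixed central $\FF_2$-extension into which any successfully produced field of definition embeds'' is unnecessary and also not quite well-defined as stated. Once this is noticed, $F_T$ is simply the tuple of Frobenius symbols of the required places inside the (always-existing) level-$|T|$ field of definition, and additivity follows from Lemma \ref{lAdditivity} exactly as you sketch — though in case $I$ the paper sets $F_T \equiv 0$ whenever $i_{\textup{char}} \not\in T$, and there is a single expansion map per level indexed by the pointer $i_{\textup{char}}$, not a family indexed by $T$; your description with $i' \in T$ is tailored to cases $II$--$IV$, where the family $\{\phi_{\pi_{T - \{i\}}(\bar{x}); \cdot}\}_{i \in T}$ together with $\phi_{\pi_T(\bar{x}); -1}$ genuinely appears. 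Separate the two cases to avoid a mismatch between the indexing and the expansion maps that actually exist.
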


\begin{proof}
Let us deal with case $I$, the other cases being analogous. Take $C_\varnothing = W$ and take $F_\varnothing$ to be the zero map. We will now inductively construct the maps $F_T$ for $T$ a non-empty subset of $S'$, which determine the additive system. If $T = \{i\}$, we let $F_{\{i\}}$ be the map that sends $\bar{x} \in C_{\{i\}}$ to
\[
\iota^{-1}\left(\frac{\text{pr}_1(\pi_i(\bar{x})) \text{pr}_2(\pi_i(\bar{x}))}{q}\right),
\]
where $q$ runs over all prime divisors of $Q$, all primes in $\pi_j(\bar{x})$ for $j \neq i$ and the prime $(2)$. Now suppose that $|T| > 1$. If $i_{\text{char}} \not \in T$, we let $F_T$ be the zero map. So suppose that $i_{\text{char}} \in T$ and take $\bar{x} \in C_T$. From the induction hypothesis it follows that for all $T' \subset T$ there exists an expansion map
\[
\phi_{\pi_{T' - \{i_{\textup{char}}\}}(\bar{x}); \textup{pr}_1(\pi_{i_{\textup{char}}}(\bar{x})) \textup{pr}_2(\pi_{i_{\textup{char}}}(\bar{x}))}
\]
in which all primes in $Q$, all primes in $\pi_{S' - T'}(\bar{x})$, $(2)$ and $\infty$ split completely. In particular, it follows from Proposition \ref{pCreatePhi} that
\[
\phi_{\pi_{T - \{i_{\textup{char}}\}}(\bar{x}); \textup{pr}_1(\pi_{i_{\textup{char}}}(\bar{x})) \textup{pr}_2(\pi_{i_{\textup{char}}}(\bar{x}))}
\]
exists. Furthermore, we know that the Frobenius symbol of a prime in $Q$, a prime in $\pi_{S' - T}(\bar{x})$, $(2)$ or $\infty$ lands in
\[
Z(\Gal(L(\phi_{\pi_{T - \{i_{\textup{char}}\}}(\bar{x}); \textup{pr}_1(\pi_{i_{\textup{char}}}(\bar{x})) \textup{pr}_2(\pi_{i_{\textup{char}}}(\bar{x}))})/\Q)) \cong \FF_2.
\]
Indeed, recall that $L(\phi_{\pi_{T - \{i_{\textup{char}}\}}(\bar{x}); \textup{pr}_1(\pi_{i_{\textup{char}}}(\bar{x})) \textup{pr}_2(\pi_{i_{\textup{char}}}(\bar{x}))})$ is a central $\FF_2$-extension of
\[
\Q\left(\left\{\sqrt{\text{pr}_1(\pi_i(\bar{x})) \text{pr}_2(\pi_i(\bar{x}))} : i \in T\right\}\right) \cdot \prod_{T' \subset T} L(\phi_{\pi_{T' - \{i_{\textup{char}}\}}(\bar{x}); \textup{pr}_1(\pi_{i_{\textup{char}}}(\bar{x})) \textup{pr}_2(\pi_{i_{\textup{char}}}(\bar{x}))})
\]
by Proposition \ref{prop: field of def of exp maps}. Now let $F_T$ be the map that sends $\bar{x} \in C_T$ to
\[
\phi_{\pi_{T - \{i_{\textup{char}}\}}(\bar{x}); \textup{pr}_1(\pi_{i_{\textup{char}}}(\bar{x})) \textup{pr}_2(\pi_{i_{\textup{char}}}(\bar{x}))}(\text{Frob}(q)),
\]
where $q$ is a prime divisor of $Q$, a prime in $\pi_{S' - T}(\bar{x})$, $(2)$ or $\infty$. Since $\text{Frob}(q)$ lands in the center, it follows from Lemma \ref{lAdditivity} that $F_T$ satisfies equation (\ref{eAdditive}).
\end{proof}

Our next theorem is our final reduction step. We will reduce to spaces of the shape $X(a, Q, \{\textup{Art}_k\}_{2 \leq k < m}) \cap \widetilde{Z}$ with $Z$ a great product space.

\begin{theorem}
\label{tBoxify}
There are real numbers $A, N_0 > 0$ such that for all reals $N > N_0$, all integers $m \geq 3$, all integers $r$ satisfying equation (\ref{eErdosKac}), all integers $0 \leq k \leq r$, all excellent $(N, k, r)$-boxes $X$, all $(N, m)$-generic $a: M_r \sqcup M_{r, \varnothing} \rightarrow \FF_2$ for $X$, all Pellian sequences of Artin pairings $\{\textup{Art}_k\}_{2 \leq k < m}$, all non-trivial linear maps $F: \textup{Mat}(n_m + 1, n_m, \FF_2) \rightarrow \FF_2$, all variable indices $S$ for $F$, all $a$-consistent $Q \in \prod_{i \in [k_{\textup{gap}}] - S} X_i$ and all great product spaces $Z$
\[
\left|\sum_{x \in X(a, Q, \{\textup{Art}_k\}_{2 \leq k < m}) \cap \widetilde{Z}} \iota(F(\textup{Art}_{m, x}))\right| \leq \frac{A \cdot |X(a, Q) \cap \widetilde{Z}|}{(\log \log \log \log N)^{\frac{100c}{m6^m}}}.
\]
\end{theorem}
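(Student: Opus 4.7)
The plan is to adapt Smith's strategy to our setting by combining the reflection principles from Section \ref{sReflection} with Proposition \ref{pARinput} and the Chebotarev density theorem. First I would apply Lemma \ref{lASphi} with $W$ equal to the projection of $X(a, Q, \{\textup{Art}_k\}_{2 \leq k < m}) \cap \widetilde{Z}$ onto $X_{\textup{pre}} := \prod_{i \in S'} X_i(a, Q)$, producing an additive system $(C_T, C_T^{\textup{acc}}, F_T, A_T)_{T \subseteq S'}$ on $(X_{\textup{pre}}, S')$ whose acceptance condition at level $T$ encodes the existence of the relevant expansion maps $\phi_{\pi_T(\bar{x}); \textup{ptr}(\bar{x})}$ with $Q$, $(2)$, $\infty$ and the frozen coordinates $\pi_{S' - T}(\bar{x})$ all splitting completely. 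The output sizes $|A_T|$ are bounded by a quantity admissible for Proposition \ref{pARinput}, and the great product space assumption on $Z$ guarantees that $\textup{Cube}(Z, S')$ sits inside the accepted locus.

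Next I would compute, for each $\bar{z} \in \textup{Cube}(Z, S')$ and each choice of the primes at indices $i > k_{\textup{gap}}$, the cube sum $\sum_{z \in \bar{z}(\varnothing)} F(\textup{Art}_{m, x})$. The case division I--IV of Definition \ref{dVariable} is designed precisely so that $F(\textup{Art}_{m, x})$ equals a single Artin pairing of the shape required by one of our reflection theorems: Theorem \ref{main thm: governing triples} in case $I$ (with $b = p_{\textup{Cheb}}$ and $a = v_{j_1}$); Theorem \ref{thm: reflection principle for infinity} in case $II$ (pairing against the class of $(\sqrt{x})$); Theorem \ref{main thm 2 on self-pairing} in case $III$, where the hypothesis $m \geq 3$ enters because that theorem requires $s := m - 1 \geq 2$; and Theorem \ref{main thm: minimal triples} in case $IV$. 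In cases $I$, $II$ and $III$ the cube sum reduces to
\[
\sum_{z \in \bar{z}(\varnothing)} F(\textup{Art}_{m, x}) \;=\; \phi_{\pi_{S'}(\bar{z}); \, \textup{ptr}(\bar{z})}\bigl(\textup{Frob}(p_{\textup{Cheb}})\bigr),
\]
where the pointer is either $-1$ or a product $\textup{pr}_1(\pi_i(\bar{z}))\textup{pr}_2(\pi_i(\bar{z}))$ for a suitable $i$, while in case $IV$ the minimal-triple reflection yields an identically vanishing right-hand side, consistent with the distinguished $g_0 = 0$. The coprimality, ramification and splitting hypotheses entering each reflection theorem are met thanks to the acceptance condition of step $1$ combined with the greatness of $Z$.

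The third step identifies the Frobenius values above with random elements of $\textup{Add}(Z, S')$. By Lemma \ref{lPhiAdd} or Lemma \ref{lConstantPointer}, the map $\sigma \mapsto \bigl(\bar{z} \mapsto \phi_{\pi_{S'}(\bar{z}); \textup{ptr}}(\sigma)\bigr)$ is a group isomorphism $\textup{Gal}(M(Z)/M_\circ(Z)) \cong \textup{Add}(Z, S')$, and effective Chebotarev (using the Siegel-free property ensured by excellence of the box) distributes $\textup{Frob}(p_{\textup{Cheb}})$ uniformly in this Galois group as $p_{\textup{Cheb}}$ runs through $X_{i_{\textup{Cheb}}}(a, Q, M_\circ(Z))$. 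Feeding this data into Proposition \ref{pARinput}: for the distinguished $g_0 \in \textup{Add}(Z, S')$ it provides, all but a proportion $O(|\textup{Add}(Z, S')|^{-1})$ of realizations of $p_{\textup{Cheb}}$ together with the other free high-index coordinates will satisfy $\Sigma G(\bar{z}) = g_0(\bar{z})$ for $G(x) := F(\textup{Art}_{m, x})$, whence $|G^{-1}(0) \cap W| = |W|/2 + O(\epsilon |Z|)$. Summing over those realizations yields the claimed bound.

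The principal obstacle is step $2$ in case $III$: Theorem \ref{main thm 1 on self-pairing} forces the pure self-pairing cube sum to vanish identically, apparently giving no information. The remedy, recorded in Theorem \ref{main thm 2 on self-pairing}, is the elementary decomposition
\[
\left\langle \frac{(\sqrt{x})}{\textup{Up}_{\Q(\sqrt{x})/\Q}(a)}, \chi_{x/a}\right\rangle \;=\; \left\langle \textup{Up}_{\Q(\sqrt{x})/\Q}(a), \chi_a\right\rangle + \left\langle (\sqrt{x}), \chi_a\right\rangle,
\]
in which the vanishing of the first summand forces the cube sum to equal a genuine Frobenius symbol produced by a $-1$-profitable triple. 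Verifying $-1$-profitability, in particular the existence of $\phi_{\cdot; -1}(\mathfrak{G})$ with $(1 + i)$ and the odd primes of $Q$ splitting completely, is exactly why the additive system of step $1$ is built to track the $-1$ pointer. A further subtlety is that our cube is one dimension smaller than the cubes appearing in \cite{Smith}, ruling out the physical cochain identities used there and forcing us to manufacture the raw cocycle at the top corner $x_0$ via Theorem \ref{main theorem on profitable triples}, whose proof hinges on our generalized R\'edei reciprocity law Theorem \ref{Redei reciprocity}.
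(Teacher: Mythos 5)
Your overall roadmap — construct an additive system, compute cube sums via the reflection principles, equidistribute a Frobenius via Chebotarev and invoke Proposition \ref{pARinput} — matches the spirit of the paper's argument, but two parts of your sketch are genuinely wrong and one is misplaced.

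The main gap is the additive system. You propose to apply Lemma \ref{lASphi} to the projection onto $X_{\text{pre}} = \prod_{i \in S'} X_i(a, Q)$, producing an additive system on $(X_{\text{pre}}, S')$ whose acceptance conditions track the existence of expansion maps. That lemma in fact belongs to the reduction from Theorem \ref{tPrePoint} to Theorem \ref{tBoxify}, where it is used to cover $X(a, Q)$ by great product spaces; within the great product space $Z$, the needed expansion maps already exist by definition and Lemma \ref{lASphi} has nothing left to contribute. The additive system $\mathfrak{A}$ actually needed for Proposition \ref{pARinput} lives on $(\widetilde{Z}, S)$, not $(X_{\text{pre}}, S')$; it is built directly from the raw cocycles $\psi_j(x, w_k)$ attached to each $x \in X(a, Q)$ and $w_k \in V_{a,2}$, with the maps $F_T$ recording the Frobenius values of the auxiliary characters $\psi(\bar{x}, j)$ at the small and frozen primes; and its $C_\varnothing^{\textup{acc}}$ is precisely $X(a, Q, \{\textup{Art}_k\}_{2 \leq k < m}) \cap \widetilde{Z}$, which forces the Artin pairings up to level $m-1$ to be the prescribed ones. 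The acceptability property $(b)$ is what lets you deduce $\textup{Art}_{i, x_0} = \textup{Art}_i$ for the one unknown corner $x_0$, using Theorem \ref{main thm: minimal triples} applied to sub-cubes that avoid $i_{\textup{Cheb}}$. An additive system tracking only the existence of expansion maps cannot supply this input to Proposition \ref{pARinput}, so the reduction as you state it does not close.

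Your case analysis in step $2$ is also off for case $IV$. You assert that the minimal-triple reflection yields an identically vanishing right-hand side and that $g_0 = 0$, but in fact case $IV$ produces the same Frobenius-type contribution as cases $II$ and $III$: the paper combines Theorem \ref{main thm 1 on self-pairing}, Theorem \ref{main thm 2 on self-pairing} and Theorem \ref{main thm: minimal triples} together with the identity $\textup{Art}(v_l, w_k) + \textup{Art}(w_k, v_l) = \textup{Art}(v_l, v_l) + \textup{Art}(w_k, w_k) + \textup{Art}(v_l + w_k, v_l + w_k)$ to show
\[
(\Sigma F')(\bar{x}) = \phi_{\pi_{S'}(\bar{x}); -1}\bigl(\textup{Frob}(\textup{pr}_1(\pi_{i_{\textup{Cheb}}}(\bar{x})))\bigr) + \phi_{\pi_{S'}(\bar{x}); -1}\bigl(\textup{Frob}(\textup{pr}_2(\pi_{i_{\textup{Cheb}}}(\bar{x})))\bigr),
\]
so $g_0$ is not $0$ but the distinguished element of $\textup{Add}(\widetilde{Z}, S)$ provided by Proposition \ref{pARinput}. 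Relatedly, your final step glosses over the mechanism: after an even-distribution dichotomy on the high indices, one partitions $X_{i_{\textup{Cheb}}}(a, Q \times Q_{\textup{post}}, M_\circ(Z))$ into ordered $M_{\textup{box}}$-element blocks $A_i$ such that the Frobenius function attached to each block coincides with $g_0$, then restricts $\mathfrak{A}$ to each $Z \times A_i$ and applies Proposition \ref{pARinput} block by block; the Chebotarev input (together with Heilbronn's result to handle possible Siegel zeroes, via equation (\ref{ekgap})) is needed to show this partition leaves only a small leftover set $L$. That structure is absent from your sketch.
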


\begin{proof}[Proof that Theorem \ref{tBoxify} implies Theorem \ref{tPrePoint}.]
We set
\[
R := \left \lfloor e^{e^{\frac{k_{\text{gap}}}{5}}} \right \rfloor
\]	
and we let $Z_{\text{pre}}$ be those $z \in X_{\text{pre}}$ satisfying
\[
\left(\frac{\pi_i(z)}{\pi_j(z)}\right) = \iota(a(i, j)) \text{ for all distinct } i, j \in S'.
\]
Choose a sequence $Z_1, \dots, Z_t$ of maximal length satisfying the following properties
\begin{itemize}
\item each $Z_i \subseteq Z_{\text{pre}}$ is a great product space;
\item we have that $|Z_i \cap Z_j| \leq 1$ for all distinct $i, j \in [t]$;
\item every $z \in Z_{\text{pre}}$ is in at most $R$ of the $Z_i$.
\end{itemize}
Define $Z_{\text{pre}, \text{bad}}$ to be the subset of $z \in Z_{\text{pre}}$ that are in less than $R$ of the $Z_i$ and write $\delta$ for the density of $Z_{\text{pre}, \text{bad}}$ in $X_{\text{pre}}$.

As a first step we aim to upper bound $\delta$. By a straightforward greedy algorithm we can find a subset $W$ of $Z_{\text{pre}, \text{bad}}$ of density at least $\delta/RM_{\text{box}}^m$ such that $|W \cap Z_i| \leq 1$ for all $i \in [t]$. We apply Proposition \ref{pAS} with the additive system from Lemma \ref{lASphi} with $W$ equal to $C_\varnothing^{\textup{acc}}$. This shows that the density of $C_{S'}^{\textup{acc}}$ in $\text{Cube}(X_{\text{pre}}, S')$ is at least
\[
\delta' := \left(\frac{\delta}{2^{(|S'| + 100) \cdot (k_{\textup{gap}} + |S'| + 100)} R M_{\text{box}}^m}\right)^{3^m}.
\]
Now given $x_0 \in W$, we define
\[
W(x_0) = \{x \in W: c(x_0, x) \in C_{S'}^{\textup{acc}}\},
\]
where $c(x_0, x)$ is the unique $\bar{x} \in \text{Cube}(X_{\text{pre}}, S')$ such that
\[
\text{pr}_1(\pi_i(\bar{x})) = \pi_i(x_0), \quad \text{pr}_2(\pi_i(\bar{x})) = \pi_i(x).
\]
There is a natural injective map from $C_{S'}^{\textup{acc}}$ to the disjoint union of the $W(x_0)$ as $x_0$ runs through $W$. This map sends $\bar{x}$ to the unique pair $(w, x_0) \in W \times W$ satisfying
\[
\pi_i(w) = \text{pr}_1(\pi_i(\bar{x})) \text{ for all } i \in S', \quad \pi_i(x_0) = \text{pr}_2(\pi_i(\bar{x})) \text{ for all } i \in S'.
\]
Hence we deduce that
\[
|C_{S'}^{\textup{acc}}| \leq |X_{\text{pre}}| \cdot \max_{x_0 \in W} |W(x_0)|,
\]
so that there exists $x_0$ such that the density of $W(x_0)$ in $X_{\text{pre}}$ is at least $\delta'$. Fix such a $x_0$. Now if we were to find subsets $W_i \subseteq X_i(a, Q)$ such that $|W_i| = M_{\text{box}}$ and
\[
\prod_{i \in S'} W_i \subseteq W(x_0),
\]
we could extend the sequence $Z_1, \dots, Z_t$ to a longer sequence. Since this is impossible by our choice of $Z_1, \dots, Z_t$, we apply the contrapositive of Lemma \ref{lFindBox} to deduce that
\[
5 M_{\text{box}}^m > \frac{\log \text{min}_{i \in S'} |X_i(a, Q)|}{\log \delta'^{-1}}.
\]
It follows from equation (\ref{eXjaQS}) and equation (\ref{eRegularSpace}) that
\[
|X_i(a, Q)| \geq e^{e^{\frac{3}{10} k_{\text{gap}}}}
\]
for $N$ sufficiently large. We conclude that
\begin{align}
\label{eUpperdelta}
\delta \leq e^{-\frac{1}{4}e^{k_{\text{gap}}}}
\end{align}
for $N$ sufficiently large. Now we split
\begin{align}
\label{eSplitProj}
\left|\sum_{x \in X(a, Q, \{\textup{Art}_k\}_{2 \leq k < m})} \hspace{-0.5cm} \iota(F(\textup{Art}_{m, x}))\right| \leq \left|\sum_{\substack{x \in X(a, Q, \{\textup{Art}_k\}_{2 \leq k < m}) \\ \pi_{S'}(x) \not \in Z_{\text{pre}, \text{bad}}}} \hspace{-0.5cm} \iota(F(\textup{Art}_{m, x}))\right| + \left|\sum_{\substack{x \in X(a, Q) \\ \pi_{S'}(x) \in Z_{\text{pre}, \text{bad}}}} 1\right|
\end{align}
with the latter sum fitting in the error term by equation (\ref{eUpperdelta}) and two applications of Proposition \ref{pLegendre} (namely to $X(a, Q)$ and to the subset of $X(a, Q)$ projecting to a given element in $Z_{\text{pre, bad}}$). So it remains to deal with the former sum.

Now for $x \in X(a, Q)$ with $\pi_{S'}(x) \not \in Z_{\text{pre}, \text{bad}}$, we write $\Lambda(x)$ for the number of $i \in [t]$ for which $x \in \widetilde{Z_i}$. We will compute the first and second moment of $\Lambda(x)$, and then use this to bring Theorem \ref{tBoxify} into play. We have
\begin{align}
\label{eFirstMoment}
\sum_{\substack{x \in X(a, Q) \\ \pi_{S'}(x) \not \in Z_{\text{pre}, \text{bad}}}} \Lambda(x) 
&= \sum_{\substack{z \in Z_{\text{pre}} \\ z \not \in Z_{\text{pre}, \text{bad}}}} \sum_{\substack{x \in X(a, Q) \\ \pi_{S'}(x) = z}} \sum_{i \in [t]} \mathbf{1}_{x \in \widetilde{Z_i}} \nonumber \\
&= \sum_{\substack{z \in Z_{\text{pre}}\\ z \not \in Z_{\text{pre}, \text{bad}}}} \sum_{i \in [t]} |X(a, Q) \cap \widetilde{Z_i} \cap \pi_{S'}^{-1}(z)|.
\end{align}
Fix some $z \in Z_{\text{pre}} \setminus Z_{\text{pre}, \text{bad}}$. Define
\[
L(Q) := \Q(\{\sqrt{-1}\} \cup \{\sqrt{\pi_j(z)} : j \in S'\} \cup \{\sqrt{\pi_j(Q)} : j \in [k_{\text{gap}}] - S'\}).
\]
Write $d(M_{\text{box}}, m)$ for the degree of $L(Q) M_\circ(\widetilde{Z_i})$ over $L(Q)$. Crucially, $d(M_{\text{box}}, m)$ does not depend on $i$ by Lemma \ref{lMdegree} and Lemma \ref{lConstantPointer}. Since our box $X$ is Siegel-less, the Chebotarev density theorem \cite[Proposition 6.5]{Smith} yields for $i > k_{\text{gap}}$
\[
|X_i(a, Q, M_\circ(Z))| = \frac{|X_i(a, Q \times \{z\})|}{d(M_{\text{box}}, m)}\left(1 + O\left(e^{-2k_{\text{gap}}}\right)\right).
\]
Here we use equation (\ref{ekgap}) and a well-known result of Heilbronn \cite{Heilbronn} to deal with potential Siegel zeroes. It follows from two applications of Proposition \ref{pLegendre} to the preboxes
\[
\left(\prod_{i \in [r] - [k_{\text{gap}}]} X_i(a, Q \times \{z\}), \varnothing\right), \quad \left(\prod_{i \in [r] - [k_{\text{gap}}]} X_i(a, Q, M_\circ(Z)), \varnothing\right)
\]
that
\[
|X(a, Q) \cap \widetilde{Z_i} \cap \pi_{S'}^{-1}(z)| = \mathbf{1}_{z \in Z_i} \cdot \frac{|X(a, Q) \cap \pi_{S'}^{-1}(z)|}{d(M_{\text{box}}, m)^{r - k_{\text{gap}}}}\left(1 + O\left(e^{-k_{\text{gap}}}\right)\right).
\]
Continuing the computation in equation (\ref{eFirstMoment}) gives
\begin{align}
\label{eLambdaFM}
\sum_{\substack{x \in X(a, Q) \\ \pi_{S'}(x) \not \in Z_{\text{pre}, \text{bad}}}} \Lambda(x) = \frac{R |X(a, Q)|}{d(M_{\text{box}}, m)^{r - k_{\text{gap}}}}\left(1 + O\left(e^{-k_{\text{gap}}}\right)\right),
\end{align}
since every $z \in Z_{\text{pre}} \setminus Z_{\text{pre}, \text{bad}}$ is in precisely $R$ of the $Z_i$. Having computed the first moment, we now consider the second moment
\[
\sum_{\substack{x \in X(a, Q) \\ \pi_{S'}(x) \not \in Z_{\text{pre}, \text{bad}}}} \Lambda(x)^2.
\]
For distinct $i$ and $j$, we see that the assumption $|Z_i \cap Z_j| \leq 1$ gives that $L(Q) M_\circ(\widetilde{Z_i}) M_\circ(\widetilde{Z_j})$ has degree $d(M_{\text{box}}, m)^2$ over $L(Q)$ by Lemma \ref{lMdegree} and Lemma \ref{lConstantPointer}. Then, following the first moment computation, we obtain
\begin{align}
\label{eLambdaSM}
\sum_{\substack{x \in X(a, Q) \\ \pi_{S'}(x) \not \in Z_{\text{pre}, \text{bad}}}} \Lambda(x)^2 = \frac{R^2 |X(a, Q)|}{d(M_{\text{box}}, m)^{2(r - k_{\text{gap}})}}\left(1 + O\left(e^{-k_{\text{gap}}}\right)\right).
\end{align}
It follows from equation (\ref{eLambdaFM}), equation (\ref{eLambdaSM}) and Chebyshev's inequality that outside a set of density $O(e^{-k_\text{gap}/2})$ in the set of $x \in X(a, Q)$ satisfying $\pi_{S'}(x) \not \in Z_{\text{pre}, \text{bad}}$, we have that
\begin{align}
\label{eLambda}
\left|\Lambda(x) - \frac{R}{d(M_{\text{box}}, m)^{r - k_{\text{gap}}}}\right| \leq \frac{Re^{\frac{-k_{\text{gap}}}{4}}}{d(M_{\text{box}}, m)^{r - k_{\text{gap}}}}.
\end{align}
Recalling equation (\ref{eSplitProj}), it suffices to bound
\[
\left|\sum_{\substack{x \in X(a, Q, \{\textup{Art}_k\}_{2 \leq k < m}) \\ \pi_{S'}(x) \not \in Z_{\text{pre}, \text{bad}}}} \hspace{-0.5cm} \iota(F(\textup{Art}_{m, x}))\right|.
\]
By the triangle inequality this is bounded by
\begin{multline}
\label{eReductionBoxify}
\frac{d(M_{\text{box}}, m)^{r - k_{\text{gap}}}}{R} \cdot \left|\sum_{i \in [t]}  \sum_{x \in X(a, Q, \{\textup{Art}_k\}_{2 \leq k < m}) \cap \widetilde{Z_i}} \iota(F(\textup{Art}_{m, x}))\right| + \\
\left|\sum_{\substack{x \in X(a, Q, \{\textup{Art}_k\}_{2 \leq k < m}) \\ \pi_{S'}(x) \not \in Z_{\text{pre}, \text{bad}}}} \hspace{-0.5cm} \iota(F(\textup{Art}_{m, x})) - \frac{d(M_{\text{box}}, m)^{r - k_{\text{gap}}}}{R} \sum_{i \in [t]} \sum_{x \in X(a, Q, \{\textup{Art}_k\}_{2 \leq k < m}) \cap \widetilde{Z_i}} \hspace{-0.5cm} \iota(F(\textup{Art}_{m, x}))\right|.
\end{multline}
It follows from Theorem \ref{tBoxify} that the former sum is bounded by
\[
A \cdot \frac{d(M_{\text{box}}, m)^{r - k_{\text{gap}}}}{R \cdot (\log \log \log \log N)^{\frac{100c}{m 6^m}}} \cdot \sum_{i \in [t]} |X(a, Q) \cap \widetilde{Z_i}|.
\]
Now observe that
\[
\sum_{i \in [t]} |X(a, Q) \cap \widetilde{Z_i}| \leq \sum_{\substack{x \in X(a, Q) \\ \pi_{S'}(x) \not \in Z_{\text{pre}, \text{bad}}}} \Lambda(x) + \sum_{\substack{x \in X(a, Q) \\ \pi_{S'}(x) \in Z_{\text{pre}, \text{bad}}}} R.
\]
To bound the first term above, we use equation (\ref{eLambdaFM}). For the second term, we use equation (\ref{eUpperdelta}) and the argument immediately following equation (\ref{eSplitProj}).

Finally, to deal with the latter sum in equation (\ref{eReductionBoxify}), we first take care of the points projecting to an element in $Z_{\text{pre}, \text{bad}}$. Then it remains to bound
\[
\left|\sum_{\substack{x \in X(a, Q, \{\textup{Art}_k\}_{2 \leq k < m}) \\ \pi_{S'}(x) \not \in Z_{\text{pre}, \text{bad}}}} \hspace{-0.5cm} \iota(F(\textup{Art}_{m, x})) - \frac{d(M_{\text{box}}, m)^{r - k_{\text{gap}}}}{R} \sum_{i \in [t]} \sum_{\substack{x \in X(a, Q, \{\textup{Art}_k\}_{2 \leq k < m}) \cap \widetilde{Z_i} \\ \pi_{S'}(x) \not \in Z_{\text{pre}, \text{bad}}}} \hspace{-0.5cm} \iota(F(\textup{Art}_{m, x}))\right|,
\]
which is at most
\[
\sum_{\substack{x \in X(a, Q, \{\textup{Art}_k\}_{2 \leq k < m}) \\ \pi_{S'}(x) \not \in Z_{\text{pre}, \text{bad}}}} \left|1 - \frac{d(M_{\text{box}}, m)^{r - k_{\text{gap}}}}{R} \cdot \Lambda(x)\right|.
\]
We split the above sum depending on whether $x$ satisfies equation (\ref{eLambda}). To deal with the $x$ failing equation (\ref{eLambda}), we use the trivial bound $\Lambda(x) \leq R$. This completes the proof.
\end{proof}

\subsection{Finishing the proof}
It remains to prove Theorem \ref{tBoxify}. The main technical input that we need is an additive system as in Proposition \ref{pARinput}, which we will construct in the first part of the proof.

\begin{proof}[Proof of Theorem \ref{tBoxify}.]
For every non-zero $v \in V_{a, 2}$ and for every $x \in X(a, Q)$, we choose a raw cocycle $(\psi_i(x, v))_{0 \leq i \leq t(x, v)}$ such that
\[
\psi_1(x, v) = \sum_{\substack{1 \leq j \leq r \\ \pi_j(v) = 1}} \chi_{\pi_j(x)}
\]
with $t(x, v)$ maximal among the set of such raw cocycles. Recall that we fixed a basis
\[
w_1, \dots, w_{n_2}, R
\]
of $V_{a, 2}$ such that
\[
w_1, \dots, w_{n_i}, R
\]
is a basis of $B_i$ for all $2 \leq i \leq m$, where we recall that $n_i := -1 + \dim_{\FF_2} B_i$ (we also set $n_1 := n_2$). We also remind the reader that we fixed a pair of integers $(j_1, j_2)$ associated to the linear map $F: \textup{Mat}(n_m + 1, n_m, \FF_2) \rightarrow \FF_2$. 

\subsubsection*{Desired properties of an additive system}
We start the proof by constructing an additive system $\mathfrak{A} = (C_T, C_T^{\textup{acc}}, F_T, A_T)_{T \subseteq S}$ on $(\widetilde{Z}, S)$ such that
\begin{enumerate}
\item[(a)] $C_\varnothing^{\text{acc}} = X(a, Q, \{\text{Art}_k\}_{2 \leq k < m}) \cap \widetilde{Z}$;
\item[(b)] $\mathfrak{A}$ is $(2^{(n_{\text{max}} + 10)(n_{\text{max}} + 2m + 10)}, S)$-acceptable;
\item[(c)] suppose that we are in case $I$. Then we have for all $\bar{x} \in C(\mathfrak{A})$
\begin{multline*}
(\Sigma F')(\bar{x}) = \phi_{\pi_{S' - \{i_{\text{char}}\}}(\bar{x}); \textup{pr}_1(\pi_{i_{\textup{char}}}(\bar{x})) \textup{pr}_2(\pi_{i_{\textup{char}}}(\bar{x}))}(\text{Frob}(\text{pr}_1(\pi_{i_{\text{Cheb}}}(\bar{x})))) + \\
\phi_{\pi_{S' - \{i_{\text{char}}\}}(\bar{x}); \textup{pr}_1(\pi_{i_{\textup{char}}}(\bar{x})) \textup{pr}_2(\pi_{i_{\textup{char}}}(\bar{x}))}(\text{Frob}(\text{pr}_2(\pi_{i_{\text{Cheb}}}(\bar{x})))),
\end{multline*}
where $F': X(a, Q, \{\text{Art}_k\}_{2 \leq k < m}) \cap \widetilde{Z} \rightarrow \FF_2$ is the function that sends $x$ to $F(\text{Art}_{m, x})$. If we are in case $II$, $III$ or $IV$, then we have
\[
(\Sigma F')(\bar{x}) = \phi_{\pi_{S'}(\bar{x}); - 1}(\text{Frob}(\text{pr}_1(\pi_{i_{\text{Cheb}}}(\bar{x})))) + \phi_{\pi_{S'}(\bar{x}); - 1}(\text{Frob}(\text{pr}_2(\pi_{i_{\text{Cheb}}}(\bar{x}))));
\]
\item[(d)] suppose that we are in case $I$. Let $|T| < m$ and let $1 \leq j \leq n_{|T| + 1}$. Denote by $\mathcal{W}$ the set $\{j_1, j_2\}$ if $j_1 \leq d_m$ and $\{j_2\}$ otherwise. Then we further demand that $\bar{x} \in C_T^{\text{acc}}$ implies that
\begin{itemize}
\item if $T \subseteq S' - \{i_{\text{char}}\}$ or $j \leq n_m$, we have
\begin{align}
\label{eCaseIc1}
\sum_{x \in \bar{x}(\varnothing)} \psi_{|T|}(x, w_j) = 0
\end{align}
for $j \not \in \mathcal{W}$;
\item if $i_{\text{char}} \in T$, we have
\begin{align}
\label{eCaseIc2}
\sum_{x \in \bar{x}(\varnothing)} \psi_{|T|}(x, w_{j_2}) = \phi_{\pi_{T - \{i_{\text{char}}\}}(\bar{x}); \textup{pr}_1(\pi_{i_{\textup{char}}}(\bar{x})) \textup{pr}_2(\pi_{i_{\textup{char}}}(\bar{x}))};
\end{align}
\item if $i_{\text{char}} \not \in T$, we have
\begin{align}
\label{eCaseIc3}
\sum_{x \in \bar{x}(\varnothing)} \psi_{|T|}(x, w_{j_2}) = 0;
\end{align}
\item if $i_{\text{Cheb}} \not \in T$ and $j_1 \in \mathcal{W}$, we have
\begin{align}
\label{eCaseIc5}
\sum_{x \in \bar{x}(\varnothing)} \psi_{|T|}(x, w_{j_1}) = 0;
\end{align}
\item if $T \neq \varnothing$, we have
\begin{align}
\label{eCaseIc4}
\sum_{x \in \bar{x}(\varnothing)} \psi_{|T| + 1}(x, w_j)(\sigma_{\pi_i(\bar{x})}) = 0
\end{align}
for all $i \in S - T$.
\end{itemize}
If we are in case $II$, $III$ or $IV$, take $\varnothing \subseteq T \subset S$ and $1 \leq j \leq n_{|T| + 1}$. Now suppose that we are in case $II$. Then $\bar{x} \in C_T^{\text{acc}}$ implies that
\begin{itemize}
\item if $T \subseteq S'$ or $j \leq n_m$, we have
\[
\sum_{x \in \bar{x}(\varnothing)} \psi_{|T|}(x, w_j) = 0
\]
for $j \neq j_2$;
\item if $T \neq \varnothing$, we have
\[
\sum_{x \in \bar{x}(\varnothing)} \psi_{|T| + 1}(x, w_j)(\sigma_{\pi_i(\bar{x})}) = 0
\]
for $j \neq j_2$ and all $i \in S - T$;
\item if $i_{\text{Cheb}} \not \in T$, we have
\[
\sum_{x \in \bar{x}(\varnothing)} \psi_{|T|}(x, w_{j_2}) = 0;
\]
\item if $i_{\text{Cheb}} \not \in T$ and $T \neq \varnothing$, we have
\[
\sum_{x \in \bar{x}(\varnothing)} \psi_{|T| + 1}(x, w_{j_2})(\sigma_{\pi_i(\bar{x})}) = 0
\]
for all $i \in S - T$.
\end{itemize}
Next we deal with case $III$. In this case we demand that $\bar{x} \in C_T^{\text{acc}}$ implies that
\begin{itemize}
\item if $T \subseteq S'$ or $j \leq n_m$, we have
\[
\sum_{x \in \bar{x}(\varnothing)} \psi_{|T|}(x, w_j) = 0
\]
for $j \neq j_2$;
\item if $T \neq \varnothing$, we have
\[
\sum_{x \in \bar{x}(\varnothing)} \psi_{|T| + 1}(x, w_j)(\sigma_{\pi_i(\bar{x})}) = 0
\]
for $j \neq j_2$ and all $i \in S - T$;
\item if $i_{\text{Cheb}} \not \in T$, we have
\[
\sum_{x \in \bar{x}(\varnothing)} \psi_{|T|}(x, w_{j_2} + R) = 0;
\]
\item if $i_{\text{Cheb}} \not \in T$ and $T \neq \varnothing$, we have
\[
\sum_{x \in \bar{x}(\varnothing)} \psi_{|T| + 1}(x, w_{j_2} + R)(\sigma_{\pi_i(\bar{x})}) = 0
\]
for all $i \in S - T$.
\end{itemize}
Finally, suppose that we are in case $IV$. Then we will ensure that $\bar{x} \in C_T^{\text{acc}}$ implies that
\begin{itemize}
\item if $T \subseteq S'$ or $j \leq n_m$, we have
\[
\sum_{x \in \bar{x}(\varnothing)} \psi_{|T|}(x, w_j) = 0
\]
for $j \not \in \{j_1, j_2\}$. Furthermore, we have
\[
\sum_{x \in \bar{x}(\varnothing)} \psi_{|T|}(x, w_{j_1} + w_{j_2} + R) = 0;
\]
\item if $T \neq \varnothing$, we have
\[
\sum_{x \in \bar{x}(\varnothing)} \psi_{|T| + 1}(x, w_j)(\sigma_{\pi_i(\bar{x})}) = 0
\]
for $j \not \in \{j_1, j_2\}$ and all $i \in S - T$, and furthermore
\[
\sum_{x \in \bar{x}(\varnothing)} \psi_{|T| + 1}(x, w_{j_1} + w_{j_2} + R)(\sigma_{\pi_i(\bar{x})}) = 0
\]
for all $i \in S - T$;
\item if $i_{\text{Cheb}} \not \in T$, we have
\[
\sum_{x \in \bar{x}(\varnothing)} \psi_{|T|}(x, w_{j_1}) = 0
\]
and
\[
\sum_{x \in \bar{x}(\varnothing)} \psi_{|T|}(x, w_{j_2} + R) = 0;
\]
\item if $i_{\text{Cheb}} \not \in T$ and $T \neq \varnothing$, we have
\[
\sum_{x \in \bar{x}(\varnothing)} \psi_{|T| + 1}(x, w_{j_1})(\sigma_{\pi_i(\bar{x})}) = 0
\]
for all $i \in S - T$ and
\[
\sum_{x \in \bar{x}(\varnothing)} \psi_{|T| + 1}(x, w_{j_2} + R)(\sigma_{\pi_i(\bar{x})}) = 0
\]
for all $i \in S - T$.
\end{itemize}
\end{enumerate}

To achieve this, we will first put
\[
C_\varnothing^{\text{acc}} := X(a, Q, \{\text{Art}_k\}_{2 \leq k < m}) \cap \widetilde{Z},
\]
so that $\mathfrak{A}$ indeed satisfies property $(a)$. Observe that an additive system $\mathfrak{A}$ is completely determined by $C_\varnothing^{\text{acc}}$ and the maps $F_T$. Our goal is now to construct the maps $F_T: C_T \rightarrow A_T$ (with $|A_T| \leq 2^{(n_{\text{max}} + 10)(n_{\text{max}} + 2m + 10)}$) such that property $(d)$ holds. We will then show that $\mathfrak{A}$ also satisfies properties $(b)$ and $(c)$.

\subsubsection*{Construction of the additive system}
In order to construct $F_T$, we will suppose that we are in case $I$ with the other cases being similar and we proceed by induction on $|T|$. If $T = \varnothing$ or if $|T| \geq m$, we let $F_T$ be the zero map. Let $0 < |T| < m$ and let $1 \leq j \leq n_{|T| + 1}$. We assume that $T \subseteq S' - \{i_{\text{char}}\}$ if $j > n_m$. Now take $\bar{x} \in C_T$ and define
\[
\psi(\bar{x}, j) := 
\left\{
	\begin{array}{ll}
		\sum\limits_{x \in \bar{x}(\varnothing)} \psi_{|T|}(x, w_j) & \hspace{-0.14cm} \mbox{if } j \neq j_2 \text{ or } i_{\text{char}} \not \in T \\
		\phi_{\pi_{T - \{i_{\text{char}}\}}(\bar{x}); \textup{pr}_1(\pi_{i_{\textup{char}}}(\bar{x})) \textup{pr}_2(\pi_{i_{\textup{char}}}(\bar{x}))} + \sum\limits_{x \in \bar{x}(\varnothing)} \psi_{|T|}(x, w_j) & \hspace{-0.14cm} \mbox{if } j = j_2 \text{ and } i_{\text{char}} \in T
	\end{array}
\right.
\]
for $j \neq j_1$. Further define
\[
\psi(\bar{x}, j_1) := 
\left\{
	\begin{array}{ll}
		\sum\limits_{x \in \bar{x}(\varnothing)} \psi_{|T|}(x, w_{j_1}) & \mbox{if } i_{\text{Cheb}} \not \in T \text{ or } j_1 \not \in \mathcal{W} \\
		0 & \mbox{if } i_{\text{Cheb}} \in T \text{ and } j_1 \in \mathcal{W}.
	\end{array}
\right.
\]
A priori $\psi(\bar{x}, j)$ is a $1$-cochain from $G_\Q$ with values in $N$. But since $\bar{x} \in C_T$, it follows from Proposition \ref{key calculation of cocycles} that
\[
d\psi(\bar{x}, j) = 0,
\]
so $\psi(\bar{x}, j)$ is a quadratic character. Furthermore, using once more that $\bar{x} \in C_T$, we find that
\begin{align}
\label{eVanishSigma}
\psi(\bar{x}, j)(\sigma_p) = 0
\end{align}
for all $p \in \{\text{pr}_1(\pi_i(\bar{x})), \text{pr}_2(\pi_i(\bar{x}))\}$ and all $i \in T$ provided that $|T| \geq 2$. In case $|T| = 1$, we may directly verify equation (\ref{eVanishSigma}). It follows that $\psi(\bar{x}, j)$ is an unramified quadratic character of $\Q(\sqrt{x})$ for all $x \in \bar{x}(\varnothing)$. Take a prime $p = \pi_j(\bar{x})$ with $j \in [r] - T$. We claim that the place $\text{Up}_{\Q(\sqrt{x})/\Q}(p)$ splits completely in the extension $L(\psi(\bar{x}, j))\Q(\sqrt{x})/\Q(\sqrt{x})$. Since $j \leq n_{|T| + 1}$, $\text{Up}_{\Q(\sqrt{x})/\Q}(p)$ certainly splits completely in $L(\psi_{|T|}(x, w_j)) \Q(\sqrt{x})/\Q(\sqrt{x})$ for every $x \in \bar{x}(\varnothing)$, and $p$ splits completely in
\[
L(\phi_{\pi_{T - \{i_{\text{char}}\}}(\bar{x}); \textup{pr}_1(\pi_{i_{\textup{char}}}(\bar{x})) \textup{pr}_2(\pi_{i_{\textup{char}}}(\bar{x}))})/\Q
\]
by assumption. This clearly implies the claim.

If $N$ is sufficiently large, it follows from equation (\ref{eGen2}) that there exists an injective map $f: [n_2] \rightarrow [r] - S - \{1\}$ such that
\[
\pi_{f(i)}(w_j) = 1 \Longleftrightarrow i = j
\]
for all $i, j \in [n_2]$. Similarly, there exists $i_0 \in [r] - S - \{1\}$ such that
\[
\pi_{i_0}(w_j) = 0 \text{ for all } j \in [n_2].
\]
Choose a point $x_0 \in \bar{x}(\varnothing)$. Define $F_{T, 1}$ to be the map that sends $\bar{x}$ to the tuple
\[
\left(\psi(\bar{x}, j)(\text{Frob}(\text{Up}_{\Q(\sqrt{x_0})/\Q}(\pi_i(x_0))))\right)_{1 \leq j \leq n_{|T| + 1}, i \in T},
\]
define $F_{T, 2}$ to be the map that sends $\bar{x}$ to the tuple
\[
\left(\left(\psi(\bar{x}, j)(\sigma_{\pi_{i_0}(x_0)})\right)_{1 \leq j \leq n_{|T| + 1}}, \left(\psi(\bar{x}, j)(\sigma_{\pi_{f(i)}(x_0)})\right)_{1 \leq j \leq n_{|T| + 1}, i \in [n_2]}\right)
\]
and finally define $F_{T, 3}$ to be the map that sends $\bar{x}$ to the tuple
\[
\left(\sum_{x \in \bar{x}(\varnothing)} \psi_{|T| + 1}(x, w_j)(\sigma_{\pi_i(x_0)})\right)_{1 \leq j \leq n_{|T| + 1}, i \in S - T}.
\]
Since $\pi_{f(i)}(x_0)$, $\pi_{i_0}(x_0)$ and $\pi_i(x_0)$ (for $i \in S - T$) do not depend on the choice of $x_0 \in \bar{x}(\varnothing)$, one readily verifies that $F_{T, 2}$ and $F_{T, 3}$ satisfy equation (\ref{eAdditive}). We will now argue that $F_{T, 1}$ also satisfies equation (\ref{eAdditive}). To this end, observe that $\bar{x}(\varnothing) \subseteq X(a)$ by our choice of $C_\varnothing^{\text{acc}}$. From this we deduce that $F_{T, 1}$ also does not depend on the choice of $x_0$. Then $F_{T, 1}$ also satisfies equation (\ref{eAdditive}). Finally, we define $F_T$ to be the map 
\[
(F_{T, 1}, F_{T, 2}, F_{T, 3})
\]
in case $T \subseteq S' - \{i_{\text{char}}\}$. In case $T$ is not a subset of $S' - \{i_{\text{char}}\}$, we define $F_T$ in the same way, except that $j$ runs up to $n_m$ instead of $n_{|T| + 1}$ in the definitions of $F_{T, 1}$ and $F_{T, 2}$. Having constructed the map $F_T$, we will now verify that $C_T^{\text{acc}}$ satisfies property $(d)$.

\subsubsection*{Verification of property $(d)$}
Let $\bar{x} \in C_T^{\text{acc}}$, so that $\bar{x} \in C_T$ and $F_T(\bar{x}) = 0$. In particular $F_{T, 3}(\bar{x}) = 0$, so equation (\ref{eCaseIc4}) holds. Therefore it remains to establish that $\bar{x}$ satisfies equations (\ref{eCaseIc1}), (\ref{eCaseIc2}), (\ref{eCaseIc3}) and (\ref{eCaseIc5}), which is equivalent to showing that $\psi(\bar{x}, j) = 0$ for all $1 \leq j \leq n_{|T| + 1}$. In case $|T| = 1$, this follows from our choice of variable indices. So henceforth we will assume that $|T| > 1$.

In order to show that $\psi(\bar{x}, j) = 0$, we first observe that $L(\psi(\bar{x}, j)) \Q(\sqrt{x})/\Q(\sqrt{x})$ is an unramified extension for every $x \in \bar{x}(\varnothing)$. Indeed, this is a consequence of the vanishing of $F_{T', 3}(\bar{y})$ for all $\varnothing \subset T' \subset T$ and all $\bar{y} \in \bar{x}(T')$: it is at this point that we make essential use of the assumption $|T| > 1$.

Next we claim that $\psi(\bar{x}, j) \in V_{a, 2}$. Let $x \in \bar{x}(\varnothing)$ and let $p = \pi_i(x)$ for some $i \in [r]$. We have to show that $\text{Up}_{\Q(\sqrt{x})/\Q}(p)$ splits completely in $L(\psi(\bar{x}, j)) \Q(\sqrt{x})/\Q(\sqrt{x})$. In case $i \in [r] -  T$, we have already established this, while for $i \in T$ this follows from the vanishing of $F_{T, 1}(\bar{x})$. This establishes the claim.

We are now ready to prove that $\psi(\bar{x}, j) = 0$. Indeed, since $\psi(\bar{x}, j) \in V_{a, 2}$, we can write $\psi(\bar{x}, j)$ as a linear combination of $R$ and the elements $w_k$. But $F_{T, 2}(\bar{x}) = 0$ implies that $\psi(\bar{x}, j)$ vanishes on $\sigma_{\pi_{i_0}(x)}$ for all $x \in \bar{x}(\varnothing)$, so $\psi(\bar{x}, j)$ must in fact be a linear combination of the elements $w_k$. Using once more that $F_{T, 2}(\bar{x}) = 0$ and the definition of the injection $f: [n_2] \rightarrow [r] - S - \{1\}$, we conclude that $\psi(\bar{x}, j) = 0$.

So far we have constructed an additive system $\mathfrak{A}$ satisfying properties $(a)$ and $(d)$ such that $|A_T| \leq 2^{(n_{\text{max}} + 10)(n_{\text{max}} + 2m + 10)}$. We will now show that this implies properties $(b)$ and $(c)$, and we will do so case by case.

\subsubsection*{Verification of property $(b)$ and $(c)$ in case $I$}
We start with case $I$. Take $\bar{x} \in C(\mathfrak{A})$. If $\bar{x} \in C(\mathfrak{A})$ is degenerate, then property $(b)$ and $(c)$ hold. So suppose that $\bar{x}$ is not degenerate. By definition of $C(\mathfrak{A})$, it follows that
\[
\bar{x}(S - \{i\}) \cap C_{S - \{i\}}^{\text{acc}} \neq \varnothing.
\]
Fix for every $i \in S$ an element $\bar{z}_i$ in the intersection $\bar{x}(S - \{i\}) \cap C_{S - \{i\}}^{\text{acc}}$. Then there exists a unique $x_0 \in \bar{x}(\varnothing)$ such that $x_0 \not \in \bar{z}_i(\varnothing)$ for all $i$. For all $x_1 \in \bar{x}(\varnothing)$ with $x_1 \neq x_0$, we see that there exists $i \in S$ such that $x_1 \in \bar{z}_i(\varnothing)$. In particular, we deduce that $x_1 \in C_\varnothing^{\text{acc}}$. Therefore it is enough to show that $x_0 \in C_\varnothing^{\text{acc}}$ to conclude that $\mathfrak{A}$ is $(2^{(n_{\text{max}} + 10)(n_{\text{max}} + 2m + 10)}, S)$-acceptable, i.e. property $(b)$ holds.

To start, we see that $x_0 \in X(a)$. We will show that
\[
\text{Art}_{i, x_0} = \text{Art}_i
\]
for all $2 \leq i < m$ by induction on $i$. Since $i < m$ and $|S| = m + 1$, we can pick a subset $T$ of $S$ such that $|T| = i$ and $T$ does not contain $i_{\text{Cheb}}$ or $i_{\text{char}}$. We apply Theorem \ref{main thm: minimal triples} to any element $\bar{y}$ in $\bar{x}(T)$ containing $x_0$. Since $\bar{z}_j \in C_{S - \{j\}}^{\text{acc}}$ for all $j \in T$, it follows that
\[
(\bar{y}(\varnothing), (\psi_{|T|}(x, w_k))_{x \in \bar{y}(\varnothing) - \{x_0\}}, \chi(w_k))
\]
is a minimal triple for all $1 \leq k \leq n_{|T|}$, where $\chi(w_k) = \psi_1(x, w_k)$ for any choice of $x \in \bar{y}(\varnothing)$. We emphasize that $\psi_1(x, w_k)$ does not depend on the choice of $x \in \bar{y}(\varnothing)$ by our choice of variable indices and our choice of $T$. Now take an element $b \in A_{|T|}$. If there does not exist $l \in T$ with $\pi_l(b) = 1$, then we deduce from Theorem \ref{main thm: minimal triples} that
\[
\text{Art}_{i, x_0}(b, w_k) = \text{Art}_i(b, w_k).
\]
Now suppose that there exists $l \in T$ with $\pi_l(b) = 1$. By our choice of variable indices, we see that there does not exist $l \in T$ with $\pi_l(b + R) = 1$. Therefore it follows from the previous case that
\[
\text{Art}_{i, x_0}(b + R, w_k) = \text{Art}_i(b + R, w_k).
\]
But Theorem \ref{main thm: minimal triples} yields
\[
\text{Art}_{i, x_0}(R, w_k) = \text{Art}_i(R, w_k),
\]
so $\mathfrak{A}$ is $(2^{(n_{\text{max}} + 10)(n_{\text{max}} + 2m + 10)}, S)$-acceptable in case $I$. 

We will now show that property $(c)$ holds in case $I$. Let $\bar{x} \in C(\mathfrak{A})$ and let $\bar{y} \in \bar{x}(S - \{i_{\text{Cheb}}\})$. It follows from Theorem \ref{main thm: minimal triples} and Theorem \ref{main thm: governing triples} that
\[
\sum_{y \in \bar{y}(\varnothing)} \text{Art}_{m, y}(v_l, w_k) = 
\left\{
	\begin{array}{ll}
		0  & \hspace{-0.17cm} \mbox{if } k \neq j_2 \\
		\sum\limits_{\substack{1 \leq i \leq r \\ \pi_i(v_l) = 1}} \phi_{\pi_{S' - \{i_{\text{char}}\}}(\bar{x}); \textup{pr}_1(\pi_{i_{\textup{char}}}(\bar{x})) \textup{pr}_2(\pi_{i_{\textup{char}}}(\bar{x}))}(\text{Frob}(\pi_i(\bar{y}))) & \hspace{-0.17cm} \mbox{if } k = j_2
	\end{array}
\right.
\]
for all $1 \leq k, l \leq n_m$, where we impose the additional condition $l \neq j_2$ if $j_2 \leq d_m$. Now suppose that $l = j_2$ with $j_2 \leq d_m$. Then Theorem \ref{main thm: minimal triples}, applied to the cube $\bar{y} \in \bar{x}(S - \{i_{\text{char}}\})$, gives
\[
\sum_{y \in \bar{y}(\varnothing)} \text{Art}_{m, y}(v_{j_2}, w_k) = 0
\]
for all $1 \leq k \leq n_m$, where we impose that $k \neq j_1$ if $j_1 \leq d_m$. Using the above equations for both choices of $\bar{y}$, we obtain the identity
\begin{align}
\label{eF1}
\sum_{x \in \bar{x}(\varnothing)} \text{Art}_{m, x}(v_l, w_k) = 0
\end{align}
for all pairs $(l, k)$ satisfying $(l, k) \not \in \{(j_1, j_2), (j_2, j_1)\}$ if $j_1, j_2 \leq d_m$ and all pairs $(l, k) \neq (j_1, j_2)$ otherwise. We also get the identity
\begin{multline}
\label{eF2}
\sum_{x \in \bar{x}(\varnothing)} \text{Art}_{m, x}(v_{j_1}, w_{j_2}) = \phi_{\pi_{S' - \{i_{\text{char}}\}}(\bar{x}); \textup{pr}_1(\pi_{i_{\textup{char}}}(\bar{x})) \textup{pr}_2(\pi_{i_{\textup{char}}}(\bar{x}))}(\text{Frob}(\text{pr}_1(\pi_{i_{\text{Cheb}}}(\bar{x})))) + \\
\phi_{\pi_{S' - \{i_{\text{char}}\}}(\bar{x}); \textup{pr}_1(\pi_{i_{\textup{char}}}(\bar{x})) \textup{pr}_2(\pi_{i_{\textup{char}}}(\bar{x}))}(\text{Frob}(\text{pr}_2(\pi_{i_{\text{Cheb}}}(\bar{x})))).
\end{multline}
We now apply part $(ii)$ of Theorem \ref{main thm: minimal triples} to a cube $\bar{y} \in \bar{x}(S - \{i_{\text{Cheb}}\})$. This yields
\begin{align}
\label{eySlice}
\sum_{y \in \bar{y}(\varnothing)} \text{Art}_{m, y}(R, w_k) = 0
\end{align}
if $k \neq j_2$. Similarly, part $(ii)$ of Theorem \ref{main thm: minimal triples} gives for $\bar{y} \in \bar{x}(S - \{i_{\text{char}}\})$
\begin{align}
\label{eySlice2}
\sum_{y \in \bar{y}(\varnothing)} \text{Art}_{m, y}(R, w_{j_2}) = 0.
\end{align}
Adding up equation (\ref{eySlice}) and equation (\ref{eySlice2}) for the two choices of $\bar{y}$, we conclude that
\begin{align}
\label{eF3}
\sum_{x \in \bar{x}(\varnothing)} \text{Art}_{m, x}(R, w_k) = 0
\end{align}
for all $1 \leq k \leq n_m$. Equation (\ref{eF1}, (\ref{eF2}) and (\ref{eF3}) and the shape of $F$ imply that
\begin{multline*}
(\Sigma F')(\bar{x}) = \phi_{\pi_{S' - \{i_{\text{char}}\}}(\bar{x}); \textup{pr}_1(\pi_{i_{\textup{char}}}(\bar{x})) \textup{pr}_2(\pi_{i_{\textup{char}}}(\bar{x}))}(\text{Frob}(\text{pr}_1(\pi_{i_{\text{Cheb}}}(\bar{x})))) + \\
\phi_{\pi_{S' - \{i_{\text{char}}\}}(\bar{x}); \textup{pr}_1(\pi_{i_{\textup{char}}}(\bar{x})) \textup{pr}_2(\pi_{i_{\textup{char}}}(\bar{x}))}(\text{Frob}(\text{pr}_2(\pi_{i_{\text{Cheb}}}(\bar{x}))))
\end{multline*}
as desired.

\subsubsection*{Verification of property $(b)$ and $(c)$ in case $II$}
We will now deal with case $II$. We will start with property $(b)$ and adapt the notation from case $I$. So let $\bar{x} \in C(\mathfrak{A})$ and let $x_0$ be the element of $\bar{x}(\varnothing)$ not in any of the $\bar{z}_i(\varnothing)$ (for the definition of $\bar{z}_i$, see case $I$). We have to show that
\[
\text{Art}_{i, x_0} = \text{Art}_i
\]
for all $2 \leq i < m$. Since $|S| = m$ in this case, we can find a subset $T$ of $S$ with $|T| = i$ and $i_{\text{Cheb}} \not \in T$. Take $\bar{y} \in \bar{x}(T)$. Then
\[
(\bar{y}(\varnothing), (\psi_{|T|}(x, w_k))_{x \in \bar{y}(\varnothing) - \{x_0\}}, \chi(w_k))
\]
is a minimal triple for all $1 \leq k \leq n_{|T|}$. Proceeding as in case $I$ gives
\[
\text{Art}_{i, x_0} = \text{Art}_i.
\]
Our next task is to verify property $(c)$ in case $II$. Take $\bar{x} \in C(\mathfrak{A})$. By construction of the variable indices, we know that
\[
F =  \sum_{\substack{1 \leq j_3 \leq n_m + 1 \\ 1 \leq j_4 \leq n_m}} c_{j_3, j_4} F_{j_3, j_4}, \quad c_{j_3, j_4} \in \FF_2,
\]
where $c_{j_3, j_4}$ satisfies
\begin{itemize}
\item $c_{j_3, j_4} = c_{j_4, j_3}$ for all $1 \leq j_3, j_4 \leq n_m$;
\item $c_{j_3, j_4} = 0$ if $n_m \geq j_3 > d_m$ or $n_m \geq j_4 > d_m$;
\item $c_{n_m +1, j_2} = 1$.
\end{itemize}
We deduce from Theorem \ref{main thm: minimal triples} that
\[
\sum_{x \in \bar{x}(\varnothing)} \text{Art}_{m, x}(v_l, w_k) = 
\left\{
	\begin{array}{ll}
		0  & \mbox{for all } l \neq j_2 \text{ and all } k \neq j_2 \mbox{ if } j_2 \leq d_m \\
		0 & \mbox{for all } k \neq j_2 \mbox { if } j_2 > d_m.
	\end{array}
\right.
\]
Let $\bar{y} \in \bar{x}(S - \{i_{\text{Cheb}}\})$. Theorem \ref{thm: reflection principle for infinity} implies that
\[
\sum_{y \in \bar{y}(\varnothing)} \text{Art}_{m, y}(R, w_k) = \sum_{\substack{1 \leq i \leq r \\ \pi_i(w_k) = 1}} \phi_{\pi_{S'}(\bar{x}); -1}(\text{Frob}(\pi_i(\bar{y}))).
\]
Therefore we get
\begin{align}
\label{eFII1}
\sum_{x \in \bar{x}(\varnothing)} \text{Art}_{m, x}(R, w_k) = 0
\end{align}
if $k \neq j_2$ and
\begin{align}
\label{eFII11}
\sum_{x \in \bar{x}(\varnothing)} \text{Art}_{m, x}(R, w_k) = \phi_{\pi_{S'}(\bar{x}); - 1}(\text{Frob}(\text{pr}_1(\pi_{i_{\text{Cheb}}}(\bar{x})))) + \phi_{\pi_{S'}(\bar{x}); - 1}(\text{Frob}(\text{pr}_2(\pi_{i_{\text{Cheb}}}(\bar{x}))))
\end{align}
if $k = j_2$. Now suppose that $j_2 \leq d_m$. Then, since $m \geq 3$, we apply Theorem \ref{main thm 1 on self-pairing} twice to obtain
\begin{align}
\label{eFII2}
\sum_{x \in \bar{x}(\varnothing)} \text{Art}_{m, x}(w_{j_2}, w_{j_2}) = 0.
\end{align}
Since we are working with pairings valued in $\FF_2$, we have the identity
\[
\text{Art}_{m, x}(v_l, w_{j_2}) + \text{Art}_{m, x}(w_{j_2}, v_l) = \text{Art}_{m, x}(v_l, v_l) + \text{Art}_{m, x}(w_{j_2}, w_{j_2}) + \text{Art}_{m, x}(v_l + w_{j_2}, v_l + w_{j_2})
\]
for all $l \leq d_m$. Six applications of Theorem \ref{main thm 1 on self-pairing} show that
\begin{align}
\label{eFII3}
\sum_{x \in \bar{x}(\varnothing)} \text{Art}_{m, x}(v_l, w_{j_2}) + \text{Art}_{m, x}(w_{j_2}, v_l) = 0
\end{align}
for all $l \leq d_m$. It follows from equations (\ref{eFII1}), (\ref{eFII11}), (\ref{eFII2}), (\ref{eFII3}) and the properties of $c_{j_3, j_4}$ that
\[
(\Sigma F')(\bar{x}) = \phi_{\pi_{S'}(\bar{x}); -1}(\text{Frob}(\text{pr}_1(\pi_{i_{\text{Cheb}}}(\bar{x})))) + \phi_{\pi_{S'}(\bar{x}); -1}(\text{Frob}(\text{pr}_2(\pi_{i_{\text{Cheb}}}(\bar{x}))))
\]
as desired.

\subsubsection*{Verification of property $(b)$ and $(c)$ in case $III$}
We will now treat case $III$. Let us verify property $(b)$ first. In case $III$ we can expand $F$ as
\[
F =  \sum_{\substack{1 \leq j_3 \leq n_m + 1 \\ 1 \leq j_4 \leq n_m}} c_{j_3, j_4} F_{j_3, j_4}, \quad c_{j_3, j_4} \in \FF_2,
\]
where $c_{j_3, j_4}$ satisfies
\begin{itemize}
\item $c_{j_3, j_4} = c_{j_4, j_3}$ for all $1 \leq j_3, j_4 \leq n_m$;
\item $c_{j_3, j_4} = 0$ if $j_3 > d_m$ or $j_4 > d_m$;
\item $c_{n_m +1, k} = 0$ for all $1 \leq k \leq n_m$;
\item $c_{j_2, j_2} = 1$.
\end{itemize}
Take $\bar{x} \in C(\mathfrak{A})$ and define $x_0$ to be the unique element of $\bar{x}(\varnothing)$ outside of all the $\bar{z}_i(\varnothing)$. Again we have to show that
\[
\text{Art}_{i, x_0} = \text{Art}_i
\]
for all $2 \leq i < m$. Because $|S| = m$, there exists a subset $T$ of $S$ with $|T| = i$ and $i_{\text{Cheb}} \not \in T$. Fix a choice of $\bar{y} \in \bar{x}(T)$ containing $x_0$. We now apply Theorem \ref{main thm: minimal triples} to the minimal triple
\[
(\bar{y}(\varnothing), (\psi_{|T|}(x, w_k))_{x \in \bar{y}(\varnothing) - \{x_0\}}, \chi(w_k))
\]
for $k \neq j_2$ and the minimal triple
\[
(\bar{y}(\varnothing), (\psi_{|T|}(x, w_{j_2} + R))_{x \in \bar{y}(\varnothing) - \{x_0\}}, \chi(w_{j_2} + R)),
\]
where $\chi(w_{j_2} + R) = \psi_1(y, w_{j_2} + R)$ for any choice of $y \in \bar{y}(\varnothing)$. Note that this does not depend on the choice of $y$ precisely by our choice of variable indices. This gives property $(b)$.

As for property $(c)$, take $\bar{x} \in C(\mathfrak{A})$ and take $\bar{y} \in \bar{x}(S - \{i_{\text{Cheb}}\})$. Theorem \ref{main thm 2 on self-pairing} yields
\[
\sum_{y \in \bar{y}(\varnothing)} \text{Art}_{m, y}(w_{j_2}, w_{j_2}) = \sum_{\substack{1 \leq i \leq r \\ \pi_i(w_{j_2} + R) = 1}} \phi_{\pi_{S'}(\bar{x}); -1}(\text{Frob}(\pi_i(\bar{y}))).
\]
We conclude that
\begin{align}
\label{eFIII1}
\sum_{x \in \bar{x}(\varnothing)} \text{Art}_{m, x}(w_{j_2}, w_{j_2}) = \phi_{\pi_{S'}(\bar{x}); -1}(\text{Frob}(\text{pr}_1(\pi_{i_{\text{Cheb}}}(\bar{x})))) + \phi_{\pi_{S'}(\bar{x}); -1}(\text{Frob}(\text{pr}_2(\pi_{i_{\text{Cheb}}}(\bar{x})))).
\end{align}
Take some $l \leq d_m$ satisfying $l \neq j_2$. Using Theorem \ref{main thm 1 on self-pairing} twice, we obtain
\begin{align}
\label{eFIIIs1}
\sum_{x \in \bar{x}(\varnothing)} \text{Art}_{m, x}(v_l, v_l) = 0.
\end{align}
Following the argument that led to equation (\ref{eFIII1}), we show in a completely analogous fashion that
\begin{multline}
\label{eFIIIs2}
\sum_{x \in \bar{x}(\varnothing)} \text{Art}_{m, x}(v_l + w_{j_2}, v_l + w_{j_2}) = \\
\phi_{\pi_{S'}(\bar{x}); -1}(\text{Frob}(\text{pr}_1(\pi_{i_{\text{Cheb}}}(\bar{x})))) + \phi_{\pi_{S'}(\bar{x}); -1}(\text{Frob}(\text{pr}_2(\pi_{i_{\text{Cheb}}}(\bar{x}))))
\end{multline}
for all $l \leq d_m$ with $l \neq j_2$. Recalling the identity
\[
\text{Art}_{m, x}(v_l, w_{j_2}) + \text{Art}_{m, x}(w_{j_2}, v_l) = \text{Art}_{m, x}(v_l, v_l) + \text{Art}_{m, x}(w_{j_2}, w_{j_2}) + \text{Art}_{m, x}(v_l + w_{j_2}, v_l + w_{j_2}),
\]
we get from equations (\ref{eFIII1}), (\ref{eFIIIs1}) and (\ref{eFIIIs2})
\begin{align}
\label{eFIII2}
\sum_{x \in \bar{x}(\varnothing)} \text{Art}_{m, x}(v_l, w_{j_2}) + \text{Art}_{m, x}(w_{j_2}, v_l) = 0.
\end{align}
If we take $k$ and $l$ both not equal to $j_2$, then we have the equality
\begin{align}
\label{eFIII3}
\sum_{x \in \bar{x}(\varnothing)} \text{Art}_{m, x}(v_l, w_k) + \text{Art}_{m, x}(w_k, v_l) = 0
\end{align}
as a consequence of Theorem \ref{main thm: minimal triples}. From the shape of $F$ and equations (\ref{eFIII1}), (\ref{eFIII2}) and (\ref{eFIII3}), we conclude that 
\[
(\Sigma F')(\bar{x}) = \phi_{\pi_{S'}(\bar{x}); -1}(\text{Frob}(\text{pr}_1(\pi_{i_{\text{Cheb}}}(\bar{x})))) + \phi_{\pi_{S'}(\bar{x}); -1}(\text{Frob}(\text{pr}_2(\pi_{i_{\text{Cheb}}}(\bar{x})))),
\]
which finishes the proof of property $(c)$ in case $III$.

\subsubsection*{Verification of property $(b)$ and $(c)$ in case $IV$}
Finally, we deal with case $IV$. In case $IV$ we can expand $F$ as
\[
F =  \sum_{\substack{1 \leq j_3 \leq n_m + 1 \\ 1 \leq j_4 \leq n_m}} c_{j_3, j_4} F_{j_3, j_4}, \quad c_{j_3, j_4} \in \FF_2,
\]
where $c_{j_3, j_4}$ satisfies
\begin{itemize}
\item $c_{j_3, j_4} = c_{j_4, j_3}$ for all $1 \leq j_3, j_4 \leq n_m$;
\item $c_{j_3, j_4} = 0$ if $j_3 > d_m$ or $j_4 > d_m$;
\item $c_{n_m +1, k} = 0$ for all $1 \leq k \leq n_m$;
\item $c_{k, k} = 0$ for all $1 \leq k \leq d_m$;
\item $c_{j_1, j_2} = 1$.
\end{itemize}
Suppose that we are given $\bar{x} \in C(\mathfrak{A})$. Then let $x_0$ be the unique element of $\bar{x}(\varnothing)$ outside of all the $\bar{z}_i(\varnothing)$. For property $(b)$, we need to establish that
\[
\text{Art}_{i, x_0} = \text{Art}_i
\]
for all $2 \leq i < m$. We will proceed as in case $III$. Because $|S| = m$, we may and will choose a subset $T$ of $S$ with $|T| = i$ and $i_{\text{Cheb}} \not \in T$. Take $\bar{y} \in \bar{x}(T)$ containing $x_0$. Now we are in the position to apply Theorem \ref{main thm: minimal triples} to the minimal triple
\[
(\bar{y}(\varnothing), (\psi_{|T|}(x, w_k))_{x \in \bar{y}(\varnothing) - \{x_0\}}, \chi(w_k))
\]
for $k \neq j_2$ and the minimal triple
\[
(\bar{y}(\varnothing), (\psi_{|T|}(x, w_{j_2} + R))_{x \in \bar{y}(\varnothing) - \{x_0\}}, \chi(w_{j_2} + R)),
\]
where $\chi(w_{j_2} + R) = \psi_1(y, w_{j_2} + R)$ for any choice of $y \in \bar{y}(\varnothing)$. This yields property $(b)$ in case $IV$.

It remains to deal with property $(c)$. Let $l$ and $k$ be distinct and bounded by $d_m$. If $\{l, k\} = \{j_1, j_2\}$, then Theorem \ref{main thm: minimal triples} yields (applied to the character $\chi(w_{j_2} + v_{j_1} + R)$)
\[
\sum_{x \in \bar{x}(\varnothing)} \text{Art}_{m, x}(w_{j_2} + v_{j_1}, w_{j_2} + v_{j_1}) = 0.
\]
Now suppose that $\{l, k\} \neq \{j_1, j_2\}$. It follows from Theorem \ref{main thm 1 on self-pairing} and Theorem \ref{main thm 2 on self-pairing} that
\[
\sum_{x \in \bar{x}(\varnothing)} \text{Art}_{m, x}(w_k + v_l, w_k + v_l) = 0
\]
if $j_2 \not \in \{k, l\}$ and
\begin{multline*}
\sum_{x \in \bar{x}(\varnothing)} \text{Art}_{m, x}(w_k + v_l, w_k + v_l) = \\
\phi_{\pi_{S'}(\bar{x}); -1}(\text{Frob}(\text{pr}_1(\pi_{i_{\text{Cheb}}}(\bar{x})))) + \phi_{\pi_{S'}(\bar{x}); -1}(\text{Frob}(\text{pr}_2(\pi_{i_{\text{Cheb}}}(\bar{x}))))
\end{multline*}
otherwise. Furthermore, we deduce from Theorem \ref{main thm 1 on self-pairing} and Theorem \ref{main thm 2 on self-pairing} that
\[
\sum_{x \in \bar{x}(\varnothing)} \text{Art}_{m, x}(w_k, w_k) = 0
\]
if $k \neq j_2$ and
\[
\sum_{x \in \bar{x}(\varnothing)} \text{Art}_{m, x}(w_{j_2}, w_{j_2}) = \phi_{\pi_{S'}(\bar{x}); -1}(\text{Frob}(\text{pr}_1(\pi_{i_{\text{Cheb}}}(\bar{x})))) + \phi_{\pi_{S'}(\bar{x}); -1}(\text{Frob}(\text{pr}_2(\pi_{i_{\text{Cheb}}}(\bar{x})))).
\]
Using the identity
\[
\text{Art}_{m, x}(v_l, w_k) + \text{Art}_{m, x}(w_k, v_l) = \text{Art}_{m, x}(v_l, v_l) + \text{Art}_{m, x}(w_k, w_k) + \text{Art}_{m, x}(v_l + w_k, v_l + w_k)
\]
once more, this shows that
\[
(\Sigma F')(\bar{x}) = \phi_{\pi_{S'}(\bar{x}); -1}(\text{Frob}(\text{pr}_1(\pi_{i_{\text{Cheb}}}(\bar{x})))) + \phi_{\pi_{S'}(\bar{x}); -1}(\text{Frob}(\text{pr}_2(\pi_{i_{\text{Cheb}}}(\bar{x}))))
\]
as desired.

\subsubsection*{End of proof}
The final step of the proof is very similar to the proof of Proposition 8.5 in Smith \cite{Smith}. Let $\sigma \in \Gal(M_\circ(Z)M(Z)/M_\circ(Z))$. Define $X_i(a, Q, M_\circ(Z), \sigma)$ to be the subset of $p \in X_i(a, Q, M_\circ(Z))$ such that $\text{Frob}(p) = \sigma$. This is well-defined, since $\Gal(M_\circ(Z)M(Z)/M_\circ(Z))$ is a central subgroup of $\Gal(M_\circ(Z)M(Z)/\Q)$. We apply the Chebotarev density theorem \cite[Proposition 6.5]{Smith} to the extension $L(Q)M_\circ(Z)M(Z)/\Q$ to deduce that
\begin{align}
\label{eChebTop}
|X_i(a, Q, M_\circ(Z), \sigma)| = \frac{|X_i(a, Q, M_\circ(Z))|}{2^{(M_{\text{box}} - 1)^{|S'|}}} \cdot \left(1 + O(e^{-k_{\text{gap}}})\right)
\end{align}
for $i > k_{\text{gap}}$, where we deal with potential Siegel zeroes by appealing to \cite{Heilbronn} and where we used Lemma \ref{lTopdegree} and Lemma \ref{lConstantPointer} to compute the size of $\Gal(M_\circ(Z)M(Z)/M_\circ(Z))$.

Given an element
\[
Q_{\text{post}} \in \prod_{\substack{i \in [r] - [k_{\text{gap}}] \\ i \neq i_{\text{Cheb}}}} X_i(a, Q, M_\circ(Z)),
\]
we define $\widetilde{Z}(Q_{\text{post}})$ to be the subset of $x \in \widetilde{Z}$ such that $\pi_{[r] - [k_{\text{gap}}] - \{i_{\text{Cheb}}\}}(x) = Q_{\text{post}}$. We call $Q_{\text{post}}$ unevenly distributed over the Chebotarev classes if there exists an element $\sigma \in \Gal(M_\circ(Z)M(Z)/M_\circ(Z))$ such that
\begin{multline*}
\left||X_{i_{\text{Cheb}}}(a, Q \times Q_{\text{post}}, M_\circ(Z), \sigma)| - \frac{|X_{i_{\text{Cheb}}}(a, Q \times Q_{\text{post}}, M_\circ(Z))|}{2^{(M_{\text{box}} - 1)^{|S'|}}}\right| \geq \\
\frac{|X_{i_{\text{Cheb}}}(a, Q \times Q_{\text{post}}, M_\circ(Z))|}{e^{0.5k_{\text{gap}}} \cdot 2^{(M_{\text{box}} - 1)^{|S'|}}},
\end{multline*}
and we call $Q_{\text{post}}$ evenly distributed otherwise. The sets $\widetilde{Z}(Q_{\text{post}})$ cover $\widetilde{Z}$, and we will make one more reduction step by reducing to the sets $\widetilde{Z}(Q_{\text{post}})$ with $Q_{\text{post}}$ evenly distributed over the Chebotarev classes. To complete this reduction step, we need to bound
\begin{multline}
\label{eFinalReduction}
\left|\sum_{Q_{\text{post}} \text{ uneven}} \sum_{x \in X(a, Q, \{\textup{Art}_k\}_{2 \leq k < m}) \cap \widetilde{Z}(Q_{\text{post}})} \iota(F(\textup{Art}_{m, x}))\right| \leq \\
|Z| \cdot |X_{i_{\text{Cheb}}}(a, Q, M_\circ(Z))| \cdot \sum_{Q_{\text{post}} \text{ uneven}} 1.
\end{multline}
We say that an element
\[
Q_{j, \text{post}} \in \prod_{\substack{i \in [j] - [k_{\text{gap}}] \\ i \neq i_{\text{Cheb}}}} X_i(a, Q, M_\circ(Z))
\]
is an evenly distributed candidate if the following two conditions are satisfied
\begin{itemize}
\item $j -1 > k_{\text{gap}}$ implies that $\pi_{[j - 1] - [k_{\text{gap}}]}(Q_{j, \text{post}})$ is an evenly distributed candidate;
\item $j \neq i_{\text{Cheb}}$ implies that
\[
\left|\sum_{p \in X_{i_{\text{Cheb}}}(a, Q \times Q_{j - 1, \text{post}}, M_\circ(Z), \sigma)} \left(\frac{p}{\pi_j(Q_{j, \text{post}})}\right)\right| \leq \frac{|X_{i_{\text{Cheb}}}(a, Q \times Q_{j - 1, \text{post}}, M_\circ(Z), \sigma)|}{|X_{k_{\text{gap}} + 1}(a, Q, M_\circ(Z))|^{1/100}}
\]
for all $\sigma \in \Gal(M_\circ(Z)M(Z)/M_\circ(Z))$.
\end{itemize}
If $Q_{\text{post}} = Q_{r, \text{post}}$ is an evenly distributed candidate, then equation (\ref{eChebTop}) and a direct computation show that $Q_{\text{post}}$ is evenly distributed over the Chebotarev classes provided that we take $N_0$ sufficiently large. We will now repeatedly apply the large sieve, as for example presented in \cite[Proposition 6.6]{Smith}. This yields that the number of $Q_{\text{post}}$, that are not evenly distributed candidates, is bounded by
\[
\frac{\prod_{\substack{i \in [r] - [k_{\text{gap}}] \\ i \neq i_{\text{Cheb}}}} |X_i(a, Q, M_\circ(Z))|}{|X_{k_{\text{gap}} + 1}(a, Q, M_\circ(Z))|^{1/100}}
\]
if we take $N_0$ sufficiently large. In particular, we conclude that equation (\ref{eFinalReduction}) is upper bounded by
\[
\frac{|Z| \cdot \prod_{i \in [r] - [k_{\text{gap}}]} |X_i(a, Q, M_\circ(Z))|}{|X_{k_{\text{gap}} + 1}(a, Q, M_\circ(Z))|^{1/100}}.
\]
Note that Proposition \ref{pLegendre} allows us to calculate the size of $X(a, Q) \cap \widetilde{Z}$. Then we see that
\[
\frac{|Z| \cdot \prod_{i \in [r] - [k_{\text{gap}}]} |X_i(a, Q, M_\circ(Z))|}{|X_{k_{\text{gap}} + 1}(a, Q, M_\circ(Z))|^{1/100}} \leq \frac{|X(a, Q) \cap \widetilde{Z}|}{|X_{k_{\text{gap}} + 1}(a, Q, M_\circ(Z))|^{1/1000}},
\]
which fits in the error term. Therefore it suffices to show that there exists $A > 0$ such that
\begin{align}
\label{eQpost}
\left|\sum_{x \in X(a, Q, \{\textup{Art}_k\}_{2 \leq k < m}) \cap \widetilde{Z}(Q_{\text{post}})} \iota(F(\textup{Art}_{m, x}))\right| \leq \frac{A \cdot |X(a, Q) \cap \widetilde{Z}(Q_{\text{post}})|}{(\log \log \log \log N)^{\frac{100c}{m6^m}}}
\end{align}
for every $Q_{\text{post}}$ evenly distributed over the Chebotarev classes.

We are now ready to use our combinatorial results from Section \ref{sCombinatorics}. We remark that the space
\[
X(a, Q) \cap \widetilde{Z}(Q_{\text{post}})
\]
is just the product
\[
Z \times X_{i_{\text{Cheb}}}(a, Q \times Q_{\text{post}}, M_\circ(Z)).
\]
We now formally apply Proposition \ref{pARinput} to the product set $Z \times [M_{\text{box}}]$ and
\[
a = 2^{(n_{\text{max}} + 10)(n_{\text{max}} + 2m + 10)}, \quad \epsilon = \frac{1}{(\log \log \log \log N)^{\frac{100c}{m6^m}}}.
\]
We observe that $\epsilon < a^{-1}$ and $\log M_{\text{box}} \geq A_{\text{Comb}} \cdot 6^{m + 1} \cdot \log \epsilon^{-1}$ by definition of $c$.

Denote by $g_0 \in \text{Add}(Z \times [M_{\text{box}}], S)$ the function guaranteed by Proposition \ref{pARinput}. Here we implicitly index the space $Z \times [M_{\text{box}}]$ by $S$ in the obvious way. To primes $p_1, \dots, p_{M_{\text{box}}}$ we associate the element $g_{p_1, \dots, p_{M_{\text{box}}}} \in \text{Add}(Z \times [M_{\text{box}}], S)$ by mapping a cube $(\bar{z}, (i, j)) \in \text{Cube}(Z \times [M_{\text{box}}], S)$ to
\[
\phi_{\pi_{S' - \{i_{\text{char}}\}}(\bar{z}); \textup{pr}_1(\pi_{i_{\textup{char}}}(\bar{z})) \textup{pr}_2(\pi_{i_{\textup{char}}}(\bar{z}))}(\text{Frob}(p_i)) + 
\phi_{\pi_{S' - \{i_{\text{char}}\}}(\bar{z}); \textup{pr}_1(\pi_{i_{\textup{char}}}(\bar{z})) \textup{pr}_2(\pi_{i_{\textup{char}}}(\bar{z}))}(\text{Frob}(p_j))
\]
in case $I$ and
\[
\phi_{\bar{z}; -1}(\text{Frob}(p_i)) + \phi_{\bar{z}; -1}(\text{Frob}(p_j))
\]
in case $II$, $III$ or $IV$. Our goal is now to partition $X_{i_{\text{Cheb}}}(a, Q \times Q_{\text{post}}, M_\circ(Z))$ as
\begin{align}
\label{ePartitionCheb}
X_{i_{\text{Cheb}}}(a, Q \times Q_{\text{post}}, M_\circ(Z)) = L \cup \bigcup_{i = 1}^t A_i,
\end{align}
where each $A_i = \{p_{i, 1}, \dots, p_{i, M_{\text{box}}}\}$ is an ordered set (so it comes with a map $[M_{\text{box}}] \rightarrow A_i$) of size $M_{\text{box}}$ satisfying
\[
g_{p_{i, 1}, \dots, p_{i, M_{\text{box}}}} = g_0
\]
and where $|L|$ is small.

To this end pick any element $p_1 \in X_{i_{\text{Cheb}}}(a, Q \times Q_{\text{post}}, M_\circ(Z))$. We claim that we can find $p_2, \dots, p_{M_{\text{box}}}$ with
\[
g_{p_1, \dots, p_{M_{\text{box}}}} = g_0.
\]
So fix $2 \leq j \leq M_{\text{box}}$ and consider the additive function
\[
g_0(\bar{z}, (1, j)) - \phi_{\pi_{S' - \{i_{\text{char}}\}}(\bar{z}); \textup{pr}_1(\pi_{i_{\textup{char}}}(\bar{z})) \textup{pr}_2(\pi_{i_{\textup{char}}}(\bar{z}))}(\text{Frob}(p_1)) \in \text{Add}(Z, S')
\]
in case $I$ and consider the additive function
\[
g_0(\bar{z}, (1, j)) - \phi_{\bar{z}; -1}(\text{Frob}(p_1)) \in \text{Add}(Z, S')
\]
in case $II$, $III$ or $IV$. By Lemma \ref{lPhiAdd} and Lemma \ref{lConstantPointer}, there exists a unique element $\sigma \in \Gal(M_\circ(Z) M(Z)/M_\circ(Z))$ such that
\begin{multline*}
g_0(\bar{z}, (1, j)) - \phi_{\pi_{S' - \{i_{\text{char}}\}}(\bar{z}); \textup{pr}_1(\pi_{i_{\textup{char}}}(\bar{z})) \textup{pr}_2(\pi_{i_{\textup{char}}}(\bar{z}))}(\text{Frob}(p_1)) = \\
\phi_{\pi_{S' - \{i_{\text{char}}\}}(\bar{z}); \textup{pr}_1(\pi_{i_{\textup{char}}}(\bar{z})) \textup{pr}_2(\pi_{i_{\textup{char}}}(\bar{z}))}(\sigma)
\end{multline*}
in case $I$ and
\[
g_0(\bar{z}, (1, j)) - \phi_{\bar{z}; -1}(\text{Frob}(p_1)) = \phi_{\bar{z}; -1}(\sigma)
\]
in case $II$, $III$ or $IV$. Now pick $p_j$ such that $\text{Frob}(p_j) = \sigma$. Note that $\sigma$ implicitly depends on our choice of $p_1$. In fact, $\sigma$ is a fixed linear shift of $\text{Frob}(p_1)$. Using additivity in the last coordinate, we directly check that
\[
g_{p_1, \dots, p_{M_{\text{box}}}} = g_0.
\]
Since $Q_{\text{post}}$ is evenly distributed in the Chebotarev classes, we can repeatedly use the above procedure to get a partition as in equation (\ref{ePartitionCheb}) with
\[
|L| \leq \frac{|X_{i_{\text{Cheb}}}(a, Q \times Q_{\text{post}}, M_\circ(Z))|}{e^{0.25k_{\text{gap}}}}
\]
provided that we take $N_0$ sufficiently large. From this, we obtain that
\begin{align}
\label{eLbound}
\left|\sum_{x \in X(a, Q, \{\textup{Art}_k\}_{2 \leq k < m}) \cap \widetilde{Z}(Q_{\text{post}}) \cap (Z \times L)} \iota(F(\textup{Art}_{m, x}))\right| \leq \frac{|Z| \cdot |X_{i_{\text{Cheb}}}(a, Q \times Q_{\text{post}}, M_\circ(Z))|}{e^{0.25k_{\text{gap}}}}.
\end{align}
Let $\mathfrak{A}_i$ be the additive system on $Z \times [M_{\text{box}}]$ obtained by restricting $\mathfrak{A}$ to $Z \times A_i$ and then using the map $[M_{\text{box}}] \rightarrow A_i$ (coming from the fact that $A_i$ was an ordered set). By properties $(b)$ and $(c)$ of $\mathfrak{A}$, it follows that $\mathfrak{A}_i$ is $(a, S)$-acceptable and that
\[
(\Sigma F')(\bar{x}) = g_0(\bar{x})
\]
for all $\bar{x} \in C(\mathfrak{A}_i)$. By the defining property of $g_0$, see Proposition \ref{pARinput}, this implies that 
\begin{align}
\label{eARbound}
\left|\sum_{x \in X(a, Q, \{\textup{Art}_k\}_{2 \leq k < m}) \cap \widetilde{Z}(Q_{\text{post}}) \cap (Z \times A_i)} \iota(F(\textup{Art}_{m, x}))\right| \leq \epsilon \cdot |Z \times A_i|.
\end{align}
Equation (\ref{eQpost}) now falls as a consequence of equations (\ref{ePartitionCheb}), (\ref{eLbound}) and (\ref{eARbound}), which completes the proof.
\end{proof}

\end{document}